\Crefname{figure}{Figure}{Figure}
\newcommand{\Hthree}{\mathbb{H}^3}
\newcommand{\Sthree}{\mathbb{S}^3}
\newcommand{\knotcomp}{\Sthree \setminus K}
\newcommand{\ZZ}{\mathbb{Z}}
\newcommand{\tang}[1]{\theta_{#1}}
\newcommand\prgr{\Pi}
\newcommand\dpr{\widetilde{P}}
\newcommand\dprgr{\widetilde\Pi}
\newcommand\doh{\widetilde{O}}
\newcommand\co{\colon\thinspace}
\newcommand\be{\mathbf{e}}
\theoremstyle{plain}
\newtheorem{counter}{}[section]
\newtheorem{subcounter}{}[subsection]
\newtheorem{theorem}[counter]{Theorem}
\newtheorem{prop}[counter]{Proposition}
\newtheorem{lemma}[counter]{Lemma}
\newtheorem{corollary}[counter]{Corollary} 
\newtheorem{question}[counter]{Question}
\newtheorem{conjecture}[counter]{Conjecture}
\newtheorem{remark}[counter]{Remark}
\newtheorem{para}[subcounter]{}
\theoremstyle{definition}
\newtheorem{definition}[counter]{Definition} 
\newtheorem{example}[counter]{Example} 
\newtheorem{notation}[counter]{Notation}
\begin{document}

\begin{abstract} We describe four hyperbolic knot complements in $\mathbb{S}^3$, each of which covers a \emph{prism orbifold}: the quotient of $\mathbb{H}^3$ by the action of a discrete group generated by reflections in the faces of a polyhedron that has the combinatorial type of a triangular prism. The prism orbifolds are rigid-cusped and contain compact, totally geodesic hyperbolic triangle sub-orbifolds; as a result, the knot complements covering them have hidden symmetries and contain closed, embedded, totally geodesic surfaces.
\end{abstract}

\subjclass[2020]{57K32 , 57K10}

\title{Knot complements decomposing into prisms}

\author[J. De{B}lois]{Jason De{B}lois\textsuperscript{$\dagger$}}

\author[A. Gharagozlou]{Arshia Gharagozlou\textsuperscript{$*$}}

\author[N. Hoffman]{Neil R Hoffman\textsuperscript{*}}

\address{\textsuperscript{$\dagger$}Department of Mathematics\\
University of Pittsburgh\\
Pittsburgh, PA}
\email{jdeblois@pitt.edu}

\address{\textsuperscript{*}  Department of Mathematics and Statistics \\
University of Minnesota Duluth\\
Duluth, MN} 
\email{neilhoff@d.umn.edu} 
\email{ghara027@d.umn.edu}

\date{\today}

\maketitle

\section{Introduction}

Three hyperbolic knot complements in $\mathbb{S}^3$ are known to cover a \emph{tetrahedral {reflection} orbifold}: the quotient of $\mathbb{H}^3$ by a discrete group generated by reflections in the four faces of a tetrahedron. For each of these knot complements, the cover to a tetrahedral orbifold is obtained from a \emph{platonic} decomposition, into isometric copies of a single non-compact, finite-volume polyhedron in the hyperbolic space $\mathbb{H}^3$ whose isometry group acts transitively on ``flags'' of the form (face, edge, ideal vertex). They are the figure-eight knot complement, which decomposes into two regular ideal tetrahedra as shown in Thurston's Notes \cite{Th_notes}, and the two ``dodecahedral knots'' {attributed to} Aitchison--Rubinstein\footnote{The dodecahedral knot with a fibered complement appears as Figure 1b in \cite{RileyComputers}. Riley refers to it there as `Thurston's knot', based on a 1981 lecture in which, according to Riley's account, Thurston labeled it as a ``totally asymmetric knot". However as shown by Aitchison--Rubinstein \cite[\S 12]{AitchRubgeodesic}, it is not in fact totally asymmetric. SnapPy \cite{SnapPy} computes the full symmetry group of its complement as $\ZZ/2\ZZ \times \ZZ/2\ZZ$.} \cite{AitchRubdodec}.

Those listed above are the only platonic knot complements in $\mathbb{S}^3$, by works of A.W.~Reid, who proved that the only arithmetic hyperbolic knot complement is that of the figure-eight \cite{reidarith}, and of the third author concerning the dodecahedral knots \cite{NeilExperiment}. 
After the tetrahedron, the next least-complicated three-dimensional polyhedron having all trivalent vertices is a triangular prism. 
We will say that a \emph{prism orbifold} is the quotient of $\mathbb{H}^3$ by the action of a discrete group generated by reflections in the five faces of such a prism. 
Our main result is:

\begin{theorem}\label{main_thm}
There exist hyperbolic knot complements in $\mathbb{S}^3$ that cover prism orbifolds.
\end{theorem}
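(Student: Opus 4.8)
The plan is to prove the theorem by explicit construction: I will exhibit a hyperbolic triangular-prism reflection orbifold together with a knot $K \subset \Sthree$ whose complement $\knotcomp$ covers it. First I would set up the combinatorics of the prism $\Pthree$, a polyhedron with two triangular faces, three quadrilateral faces, nine edges, and six vertices. To realize $\Pthree$ as a finite-volume hyperbolic polyhedron whose reflection group is discrete, I would assign to each edge a dihedral angle of the form $\pi/n$ and invoke Andreev's theorem, in the form admitting ideal vertices, to pin down which angle assignments are realizable and when the resulting polyhedron has the non-compact, cusped shape needed for a knot complement to cover it. Since a reflection group is determined by its angle data with no further deformations, the orbifold is automatically rigid-cusped, as claimed in the abstract, and Mostow--Prasad rigidity guarantees that the hyperbolic structure is unique.

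Second, writing $\Gamma < \mathrm{Isom}(\Hthree)$ for the reflection group of $\Pthree$, I would locate a torsion-free subgroup $\Gamma' < \Gamma$ of finite index for which $M = \Hthree/\Gamma'$ is an orientable, one-cusped hyperbolic $3$-manifold with $H_1(M;\ZZ) \cong \ZZ$, the homology forced on any knot complement by Alexander duality. The inclusion $\Gamma' < \Gamma$ then exhibits $M \to \Hthree/\Gamma$ as the desired cover. In practice this amounts to finding a decomposition of $M$ into isometric copies of $\Pthree$ compatible with the reflection structure, paralleling the platonic decompositions of the tetrahedral examples recalled in the introduction; I would carry out the search computationally, enumerating low-index torsion-free subgroups and filtering by cusp number and first homology, with tools such as SnapPy making both the enumeration and the subsequent identification feasible.

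Third, and this is where the real content lies, I must certify that $M$ is not merely some one-cusped manifold but is genuinely the complement of a knot in $\Sthree$. The cleanest criterion is to produce a slope $\gamma$ on the cusp torus of $M$ whose Dehn filling satisfies $M(\gamma) \cong \Sthree$; then $M \cong \knotcomp$, with $K$ the core of the filling solid torus. Establishing $M(\gamma) \cong \Sthree$ reduces to recognizing the filled manifold as the three-sphere, which I would do by matching it against a census and rigorously verifying the identification. I expect this final step, namely selecting the correct covering subgroup and exhibiting the trivial $\Sthree$-filling, to be the main obstacle: torsion-free covers of $\Gamma$ are plentiful, but only very special ones are $\Sthree$-knot complements, so the search must be guided and its output verified rather than merely plausible. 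Repeating the construction across several admissible angle assignments would then yield the four examples promised in the abstract.
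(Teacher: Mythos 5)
Your overall architecture is right---Andreev's theorem to realize the prism, a torsion-free finite-index subgroup with one cusp and $H_1\cong\ZZ$, then a Dehn-filling certificate---but the third step as you have framed it contains a genuine gap that would sink the search. The knot complements in $\Sthree$ that actually cover these prism orbifolds do so with degree $312$ and $528$. No cover of the minimal possible degree ($24$, the least common multiple of the vertex-group orders) Dehn-fills to $\Sthree$; the degree-$24$ covers with the right cusp and homology data fill instead to the lens spaces $L(13,3)$ and $L(22,5)$. So a search that enumerates subgroups and tests each candidate for a trivial filling will find nothing at any index that is computationally reachable: the enumeration at index $24$ already produces on the order of $3\times 10^5$ conjugacy classes of subgroups for one of the relevant orbifolds, and an enumeration at index $312$ is hopeless. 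Your plan as written would terminate with an empty list.

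The missing idea is a two-stage factorization. One searches only at the minimal degree $24$, filters for one-cusped manifold covers $M$ with $H_1(M)\cong\ZZ$, and tests for \emph{lens space} fillings rather than $\Sthree$-fillings. Having found such an $M$, a result of Gonz\'alez-Acu\~na and Whitten guarantees that if $M$ is a knot complement in $L(p,q)$ with $H_1(M)\cong\ZZ$, then the preimage of $M$ in the universal cover $\Sthree$ of $L(p,q)$ is connected and is itself a knot complement; the homology hypothesis is exactly what forces the knot in the lens space to have connected preimage upstairs. This is where the condition $H_1\cong\ZZ$ earns its keep in the argument---not merely as a necessary condition imposed by Alexander duality, as you use it, but as the hypothesis that lets you pass from a lens-space knot complement to an $\Sthree$-knot complement without ever enumerating subgroups of index in the hundreds. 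One further small correction: you should take $\Gamma'$ inside the orientation-preserving (rotation) subgroup of the reflection group, since a torsion-free subgroup of the full reflection group need not yield an orientable quotient unless it avoids the reflections.
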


In fact we exhibit four such knot complements, see \Cref{main_tech_thm}---or eight, if one distinguishes handedness, since they are chiral (\Cref{isoms})---and compute ancillary data including their volumes (\Cref{volumes}). \Cref{main_thm} gives counterexamples to two conjectures from the early-to-mid 1990's, on the non-existence of hyperbolic knot complements having hidden symmetries or closed, embedded, totally geodesic surfaces. A commensurability classification of the relevant prism orbifolds supplied in \Cref{commensurator} also shows that \Cref{main_tech_thm}'s manifolds are counterexamples to the later ``rigid cusp conjecture'' \cite[Conjecture 1.3]{BBCW2} on knot complements with hidden symmetries.

We discuss the conjectures mentioned above, and their relation to the present examples, immediately below in Sections \ref{subsec: hidden sym} and \ref{subsec: M-R}. \Cref{subsec: compute} describes the role of machine computation in our search for these examples.

The remainder of the paper is organized as follows. Section \ref{sec:background} gives general background on one-cusped prism orbifolds, following and expanding on work of Lakeland-Roth \cite{LakelandRoth}. In \Cref{sec: two_n_three} we zoom in on the prism orbifolds most relevant to the present work, computing their volumes to high precision and establishing minimality in their commensurability classes. Section \ref{sec:main_proof} describes the proof of \Cref{main_tech_thm}, appealing at times to machine computation; this is followed by a complete hand proof of one case in \Cref{sec:M21}, cf.~\Cref{main by hand}. \Cref{sec: mutants} establishes results on isometries, chirality, and mutation. We then discuss which prism orbifolds more generally might and cannot be covered by knot complements, in \Cref{sec:which prisms}, before concluding with further questions in \Cref{sec: ques}.

\subsection{Hidden symmetries of knot complements}\label{subsec: hidden sym} A \emph{hidden symmetry} of a space $X$ is a homeomorphism between finite-degree covers of $X$ that does not lift a self-homeomorphism of $X$. Among all hyperbolic manifolds, the arithmetic ones are characterized by having infinitely many hidden symmetries up to a natural equivalence. Non-arithmetic hyperbolic 3-manifolds having \emph{some} hidden symmetries are also not uncommon, though. For instance, any manifold that non-normally covers an orbifold has a hidden symmetry. 

However, \emph{hyperbolic knot complements in $\mathbb{S}^3$} with hidden symmetries seem exceedingly rare: until now, only the three platonic knot complements were known to have them, and many more are known not to. The reasons for this go back to a landmark paper of Neumann--Reid from the early 1990's \cite{NeumannReidArith}. Proposition 9.1 there implies that a hyperbolic knot complement in $\mathbb{S}^3$ has hidden symmetries if and only if it covers a rigid-cusped orbifold. Here we pause to recall that it follows from Margulis's Lemma that cusp cross-sections of hyperbolic $3$-orbifolds are Euclidean $2$-orbifolds (see eg.~\cite[\S 2]{DunbarMeyerhoff}). We call an orbifold \emph{rigid-cusped} if any such cusp cross-section is a \emph{rigid} Euclidean orbifold: one of the $(2,3,6)$, $(2,4,4)$, or $(3,3,3)$-triangle orbifolds, or their orientable double covers (``turnovers''). 

Proposition 9.1 of \cite{NeumannReidArith} implies that the dodecahedral knot complements have hidden symmetries, since they cover rigid-cusped tetrahedral orbifolds. Having observed this, and in light of Reid's result (recently proved at that time) that the only arithmetic knot complement is that of the figure-eight, Neumann--Reid asked if there exist knot complements with hidden symmetries other than those of the figure-eight and dodecahdral knots \cite[Q.~1, p.~307]{NeumannReidArith}. A few years later, with no new examples having emerged in the meantime, they conjectured in the Kirby problem list that in fact there are no others \cite[Problem 3.64(a)]{KirbyList}.

Since this conjecture was posed, multiple authors have contributed to a now-substantial literature showing that various classes of hyperbolic knot complements satisfy it: those of the two-bridge knots \cite{ReidWalsh} and of all ``highly twisted'' knots \cite{HMW}, for instance, and many others \cite{NeilCommClasses}, \cite{MacasiebMattman}, \cite{Millichap}, \cite{CDM}, \cite{CDHMMW}. These all rely on the criterion given in \cite[Prop.~9.1]{NeumannReidArith}. One can apply the same criterion, using rigorous computations with SnapPy within Sage \cite{sagemath}, to rule out eg.~all at-most-fifteen-crossing knots in the census besides the figure-eight. 

However since the prism orbifolds have rigid cusps (cf.~\Cref{one cusp prism}), \Cref{main_tech_thm} yields:

\newcommand\HidSymKnots{In addition to the figure-eight and dodecahedral knot complements, there are at least four more hyperbolic knot complements in $\mathbb{S}^3$ that have hidden symmetries.}
\theoremstyle{plain}
\newtheorem*{HidSymKnotsCor}{Corollary \ref{cor:HidSymKnots}}
\begin{HidSymKnotsCor}\HidSymKnots\end{HidSymKnotsCor}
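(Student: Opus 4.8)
The plan is to deduce the statement formally from \Cref{main_tech_thm} together with the Neumann--Reid criterion. First I would invoke \Cref{main_tech_thm} to produce four hyperbolic knot complements in $\Sthree$, each of which covers a prism orbifold. Next I would recall from \Cref{one cusp prism} that every prism orbifold in play is rigid-cusped. Proposition 9.1 of \cite{NeumannReidArith} asserts that a hyperbolic knot complement in $\Sthree$ has hidden symmetries if and only if it covers a rigid-cusped orbifold; since each of our four complements covers a rigid-cusped prism orbifold, each one has hidden symmetries. This is the entire logical core of the corollary, and it is immediate once the two cited ingredients are in hand.

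What remains is to verify that these four examples are genuinely \emph{new}, i.e.\ distinct from the figure-eight and the two dodecahedral knot complements, which are the only previously known knot complements with hidden symmetries. For this I would appeal to the commensurability classification of \Cref{commensurator}: the prism orbifolds covered by our examples lie in commensurability classes distinct from the tetrahedral orbifolds covered by the platonic knots, and hence no one of our four complements can be commensurable with — let alone equal to — the figure-eight or a dodecahedral complement. Alternatively, and more crudely, one can separate them using the volumes computed in \Cref{volumes}, since hyperbolic volume is a homeomorphism invariant and the four prism volumes differ from those of the three platonic knot complements. Distinctness of the four from one another is likewise recorded in \Cref{main_tech_thm}.

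The main point to stress is that all of the substantive difficulty lies upstream of this corollary. The hard part is \Cref{main_tech_thm} itself — establishing that these four cusped manifolds really are knot complements in $\Sthree$ (not merely finite-volume covers of prisms) — together with the rigid-cusp computation of \Cref{one cusp prism}; granting those, the passage to hidden symmetries is a purely formal application of \cite[Prop.~9.1]{NeumannReidArith}. The only genuinely new bookkeeping at this stage is confirming that the examples are distinct from the three previously known ones, which the commensurability data of \Cref{commensurator} settle cleanly.
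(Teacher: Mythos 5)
Your proposal is correct and follows essentially the same route as the paper: apply \cite[Prop.~9.1]{NeumannReidArith} using the rigid $(3,3,3)$-turnover cusps of the covered prism orbifolds, then distinguish the four examples from the figure-eight and dodecahedral complements. The paper uses the volume comparison (your ``cruder'' alternative) for that last step rather than the commensurability classification, but both work and the substance is identical.
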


In Conjecture 1.3 of their 2015 paper \cite{BBCW2} on commensurability and hidden symmetries of knot complements, Boileau-Boyer-Cebanu-Walsh conjectured that every knot complement with hidden symmetries covers an orbifold with a $(2,3,6)$-cusp. Our new examples are also counterexamples to this conjecture.

\newcommand\NotTTSKnots{There are at least four hyperbolic knot complements in $\mathbb{S}^3$ that have hidden symmetries and do not cover an orbifold with a $(2,3,6)$-cusp.}
\newtheorem*{NotTTSKnotsCor}{Corollary \ref{cor: Not236Knots}}
\begin{NotTTSKnotsCor}\NotTTSKnots\end{NotTTSKnotsCor}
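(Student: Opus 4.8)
The plan is to derive this corollary from \Cref{main_tech_thm} together with a precise determination of the cusp type of the prism orbifolds. By Corollary~\ref{cor:HidSymKnots}, each of the four knot complements $M$ of \Cref{main_tech_thm} has hidden symmetries, since it covers a rigid-cusped prism orbifold $\mathcal{O}$. What remains is to show that no such $M$ covers \emph{any} orbifold whose cusp cross-section is a $(2,3,6)$-orbifold. The crucial input, coming from the analysis of one-cusped prism orbifolds (cf.~\Cref{one cusp prism}), is that the single cusp of $\mathcal{O}$ is a $(2,4,4)$-orbifold rather than a $(2,3,6)$- or $(3,3,3)$-orbifold; equivalently, its Euclidean cross-section is a \emph{square} torus quotient, with cusp field $\QQ(i)$.

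First I would record the elementary but decisive fact that a Euclidean torus admitting a locally isometric covering of a $(2,4,4)$-orbifold must be square, since its deck group then contains an order-$4$ rotation, whereas a torus covering a $(2,3,6)$-orbifold must be hexagonal, with an order-$6$ rotation and cusp field $\QQ(\sqrt{-3})$. By the crystallographic restriction, no rank-two lattice is simultaneously invariant under rotations of orders $4$ and $6$, so a single torus cannot cover both a $(2,4,4)$- and a $(2,3,6)$-orbifold.

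With this in hand the corollary follows by a common-cover argument. Suppose, for contradiction, that some $M$ as above also covered an orbifold $\mathcal{O}'$ with a $(2,3,6)$-cusp. Since $M$ covers both $\mathcal{O}$ and $\mathcal{O}'$, the two orbifolds are commensurable, and the single cusp torus $T$ of $M$ covers the $(2,4,4)$-cusp of $\mathcal{O}$ and the $(2,3,6)$-cusp of $\mathcal{O}'$. By the previous paragraph $T$ would have to be both square and hexagonal, which is impossible. More invariantly, the cusp field is a commensurability invariant, and $\QQ(i)\neq\QQ(\sqrt{-3})$. Hence no such $\mathcal{O}'$ exists, and all four knot complements are the desired counterexamples; this is precisely the refutation of the rigid-cusp conjecture recorded in \Cref{commensurator}.

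The main obstacle is the cusp-type determination itself: everything hinges on the cusp being $(2,4,4)$ and not hexagonal, because a $(3,3,3)$-cusp \emph{would} be commensurable with a $(2,3,6)$-cusp and could leave the conjecture intact. I would therefore isolate this computation---reading off the dihedral angles at the ideal vertex of the prism to identify the Euclidean triangle-orbifold link---and confirm that it yields a $(2,4,4)$-cusp uniformly across all four prisms underlying \Cref{main_tech_thm}. Given \Cref{one cusp prism}, this is the one place where genuine geometric content, rather than formal commensurability bookkeeping, enters.
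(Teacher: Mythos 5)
There is a genuine gap, and it sits exactly where you locate the ``one place where genuine geometric content enters'': the cusp-type determination comes out the opposite of what your argument needs. The orbifolds covered by the four knot complements of \Cref{main_tech_thm} are $O^{333}_2$ and $O^{333}_3$, whose defining nine-tuples have $(a_1,a_2,a_5)=(3,3,3)$; their single cusp is a $(3,3,3)$-orbifold, not a $(2,4,4)$-orbifold. (Indeed, the paper notes, citing \cite{NeilOrbiCusps}, that no $(2,4,4)$-cusped prism orbifold is covered by a knot complement at all, so a $(2,4,4)$ cusp could never have arisen here.) With a $(3,3,3)$ cusp your crystallographic dichotomy gives no obstruction: the $(3,3,3)$ and $(2,3,6)$ triangle groups preserve the \emph{same} hexagonal lattice, with the former of index $2$ in the latter, and the cusp field is $\QQ(\sqrt{-3})$ in both cases. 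Your own closing caveat---that a $(3,3,3)$-cusp ``would be commensurable with a $(2,3,6)$-cusp and could leave the conjecture intact''---is precisely the situation, so the proposed proof collapses.

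The statement is nonetheless true, but for a different reason, and the missing ingredient is \Cref{commensurator} (resting on \Cref{no cover} and non-arithmeticity, \Cref{non-arithmetic}): each $O^{333}_i$ ($i=2,3$) equals the quotient of $\mathbb{H}^3$ by its own commensurator and is therefore the unique minimal orbifold in its commensurability class. Consequently every orbifold covered by some $\widetilde{M}_{i,j}$ must itself cover $O^{333}_i$, and hence its orbifold fundamental group sits inside $\prgr^{333}_i$, whose peripheral subgroup is a $(3,3,3)$ triangle reflection group containing no rotations of order $2$ or $6$. A $(2,3,6)$-cusped orbifold requires such peripheral torsion, so none exists in the class. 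This minimality step is genuinely geometric (it is proved via the maximal horoball packing in \Cref{max cusp} and \Cref{no cover}) and cannot be replaced by lattice bookkeeping, precisely because the hexagonal lattice is compatible with a $(2,3,6)$ quotient.
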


This follows from \Cref{no cover}, which shows that the $(3,3,3)$-cusped prism orbifolds covered by our examples do not cover any other orbifold. That result also allows us to organize our examples of knot complements with hidden symmetries into \emph{commensurability classes}: collections of manifolds that pairwise share a finite-degree cover.

\begin{corollary}\label{cor_hidden_syms}
There are at least four distinct commensurability classes of hyperbolic 3-orbifolds that contain knot complements with hidden symmetries. 
\end{corollary}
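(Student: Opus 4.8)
The plan is to reduce the corollary to distinguishing the prism orbifolds that our knot complements cover. Each of the four knot complements from \Cref{main_tech_thm} covers a $(3,3,3)$-cusped prism orbifold and has hidden symmetries by \Cref{cor:HidSymKnots}; so each commensurability class I produce will automatically contain a knot complement with hidden symmetries, as required. Since the figure-eight knot complement is the only arithmetic one, these four manifolds are non-arithmetic, and hence by Margulis's theorem their commensurators are discrete. Thus each commensurability class in play contains a unique minimal orbifold, namely the quotient of $\Hthree$ by the commensurator, which is covered by every orbifold in the class.

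The key structural input is \Cref{no cover}: each of the relevant $(3,3,3)$-cusped prism orbifolds covers no other orbifold. This identifies each prism orbifold with the minimal orbifold of its own commensurability class. It follows that two of our prism orbifolds are commensurable if and only if they are isometric, and so the number of commensurability classes represented equals the number of isometry types among the four prism orbifolds.

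It therefore remains to show that the four prism orbifolds are pairwise non-isometric. I would read this off the commensurability analysis in \Cref{commensurator}, using the volumes computed in \Cref{volumes} as a first invariant---isometric orbifolds have equal volume---and resorting to finer commensurability invariants (the invariant trace field, or the cusp shape) to resolve any equal-volume coincidences. Combining these observations gives four pairwise non-commensurable prism orbifolds, each covered by a knot complement with hidden symmetries, which is exactly the assertion of the corollary.

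The step I expect to be the main obstacle is this last one: certifying that no two of the four minimal prism orbifolds are isometric. Volume gives a quick separation but may not distinguish all four on its own, so the crux is the rigorous arithmetic computation underlying \Cref{commensurator}---in particular, pinning down the invariant trace fields (and associated invariants) precisely enough to exclude any hidden coincidence among the four orbifolds.
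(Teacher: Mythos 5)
There is a genuine gap here, rooted in a miscount of the orbifolds in play. The four knot complements of \Cref{main_tech_thm} do not cover four distinct prism orbifolds: $\widetilde{M}_{2,1}$ and $\widetilde{M}_{2,2}$ both cover $\doh^{333}_2$, and $\widetilde{M}_{3,1}$ and $\widetilde{M}_{3,2}$ both cover $\doh^{333}_3$. Two manifolds that finitely cover a common orbifold are commensurable (their fundamental groups are finite-index subgroups of the same group, so their intersection is a common finite-index subgroup), so the four new examples fall into at most two commensurability classes. Your reduction via \Cref{no cover} and \Cref{commensurator} correctly shows these two classes are distinct---$\doh^{333}_2$ and $\doh^{333}_3$ are each the unique minimal element of its class and they are non-isometric---but this delivers only \emph{two} classes, not four. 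Your plan to ``show the four prism orbifolds are pairwise non-isometric'' cannot be carried out because there are only two of them; and as a further warning sign, volume would not even separate those two, since $O^{333}_2$ and $O^{333}_3$ are OR-scissors congruent and have identical volume (\Cref{scis cong}).

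The missing ingredient is that the paper's count of four includes the two \emph{previously known} classes: that of the figure-eight knot complement and that of the two dodecahedral knot complements (which are commensurable with each other but not with the figure-eight, the latter being the unique arithmetic knot complement). The two new classes are distinct from these: non-arithmeticity (\Cref{non-arithmetic}) separates them from the figure-eight's class, and the minimality of $O^{333}_2$ and $O^{333}_3$ in their classes (\Cref{commensurator}) rules out commensurability with the dodecahedral knot complements, whose minimal orbifold is a tetrahedral orbifold. Adding these two old classes to your two new ones is what yields four.
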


For this result, our four new examples divide into two new commensurability classes, containing two each---see \Cref{main_tech_thm}---adding to those of the figure-eight and of the dodecahedral knot complements (which are commensurable with each other but not the figure-eight).

\subsection{The Menasco-Reid conjecture}\label{subsec: M-R} In the same collection as \cite{NeumannReidArith}, \emph{Topology '90}, Menasco--Reid published a paper in which they conjectured that no hyperbolic knot complement in $\mathbb{S}^3$ contains a closed, embedded totally geodesic surface \cite[Conjecture 1]{MenascoReid}, and established this for the complements of \emph{alternating} knots and of tunnel number one knots (which includes the two-bridge knots). This conjecture is also in the Kirby list \cite[Problem 1.76]{KirbyList}. It is now known to hold for many more families of knots, including almost alternating knots \cite{AdamsEtAlAlmostAlt}, toroidally alternating knots \cite{AdamsTorAlt}, knots of braid index $3$ \cite{LozanoPrzy} and $4$ \cite{Matsuda}, Montesinos knots \cite{Oertel}, and three-bridge and double-torus knots \cite{IchiharaOzawa}.

In the other direction, Leininger showed in \cite{Leininger_SmallCurv} that `one can get ``as close as possible'' to a counter-example', by constructing a sequence of hyperbolic knot complements in $\mathbb{S}^3$ containing embedded surfaces whose principal curvatures approach $0$. We go all the way: in \Cref{geodesic surface} below, we observe that every manifold cover of a prism orbifold contains a closed, embedded totally geodesic surface, hence obtaining the following from \Cref{main_tech_thm}.

\begin{corollary}\label{cor_geodesic}
There are at least four hyperbolic knot complements in $\mathbb{S}^3$ such that each contains a closed embedded totally geodesic surface. 
\end{corollary}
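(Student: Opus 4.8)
The plan is to obtain the statement immediately from \Cref{main_tech_thm} together with the observation recorded in \Cref{geodesic surface}: every manifold cover of a prism orbifold contains a closed, embedded, totally geodesic surface. \Cref{main_tech_thm} produces four hyperbolic knot complements $N_1,\dots,N_4 \subset \mathbb{S}^3$, each of which is a manifold covering some prism orbifold; applying \Cref{geodesic surface} to each $N_i$ then yields the four surfaces. The real content is thus \Cref{geodesic surface}, and I would organize its proof as follows.

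The first step is to exhibit, inside a prism orbifold $O = \mathbb{H}^3/\Gamma$, a compact, embedded, totally geodesic triangle $2$-suborbifold $T$. I would locate a totally geodesic plane $\Pi \subset \mathbb{H}^3$ that crosses the three quadrilateral faces of the defining prism $P$ orthogonally and meets $P$ in a compact triangle. Orthogonality guarantees that the reflections in those three faces preserve $\Pi$ and restrict to it as the reflections in the three sides of $\Pi \cap P$; hence $\mathrm{Stab}_\Gamma(\Pi)$ contains a cocompact hyperbolic triangle reflection group, and $T := \Pi/\mathrm{Stab}_\Gamma(\Pi)$ is a compact triangle orbifold. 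For embeddedness I would verify that the $\Gamma$-orbit of $\Pi$ consists of pairwise disjoint (or equal) planes: the reflections preserving $\Pi$ already generate $\mathrm{Stab}_\Gamma(\Pi)$, so it remains to check that every remaining face-reflection carries $\Pi$ to a plane disjoint from it, whence the induced map $T \to O$ is injective.

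With $T \subset O$ in hand the remainder is formal. For a finite-degree manifold cover $p \co M \to O$, set $\widetilde{T} = p^{-1}(T)$. Then $\widetilde{T}$ is totally geodesic because $p$ is a local isometry and $T$ is; it is an honest surface because $M$ has no singular locus, so the orbifold points of $T$ are unwrapped in the cover $\widetilde{T} \to T$; it is compact because $T$ is compact and $p$ has finite degree, and closed because $T$ and $M$ are boundaryless; and it is embedded because the preimage under a covering of an embedded subset is embedded. (Being totally geodesic in a hyperbolic $3$-manifold, $\widetilde{T}$ carries a hyperbolic metric and so has genus at least $2$.) Specializing $M$ to $N_1,\dots,N_4$ completes the proof.

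I expect the principal difficulty to be the embeddedness of $T$, equivalently the disjointness of the $\Gamma$-orbit of $\Pi$: distinct mirror planes of a reflection group can meet at a dihedral angle, and excluding this for $\Pi$ seems to require the explicit face and dihedral-angle combinatorics of the prism (cf.~\Cref{one cusp prism}) rather than a soft argument. A related point that must be confirmed from the explicit geometry is that $\Pi \cap P$ is a \emph{compact} triangle disjoint from the cusp, which is exactly what makes the resulting surface closed rather than cusped.
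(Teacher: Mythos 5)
Your proposal is correct and follows the paper's route exactly: \Cref{cor_geodesic} is deduced from \Cref{main_tech_thm} together with \Cref{geodesic surface}, whose proof in turn rests on a compact totally geodesic triangle in the prism meeting the three quadrilateral faces orthogonally (\Cref{corandreev}) and a chart-by-chart verification that its preimage in any manifold cover is a closed embedded totally geodesic surface. The one step you flag as the principal difficulty---existence, compactness, and uniqueness of that triangle, hence disjointness of its $\Gamma$-orbit and embeddedness of the resulting surface---is settled in the paper by applying Andreev's theorem to the two sub-prisms $P^{\pm}$ obtained by cutting $P(\be)$ along $T$, which is precisely the explicit dihedral-angle argument you anticipated would be needed.
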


In fact, widening the lens to closed, not-necessarily-embedded, totally geodesic surfaces in hyperbolic knot complements  does not grow  the list of already-known examples much. The figure-eight \cite{MRparametrizingFuchsian} and dodecahedral knot \cite{AitchRubgeodesic} complements contain closed, immersed (not embedded) totally geodesic surfaces, but these are all that we  know of. Thus:

\begin{corollary}\label{cor: immersed geod}
    In addition to the figure-eight and dodecahedral knot complements, there are at least four more hyperbolic knot complements in $\mathbb{S}^3$ that have closed immersed totally geodesic surfaces.
\end{corollary}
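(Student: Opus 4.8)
The plan is to deduce this immediately from \Cref{cor_geodesic} together with the trivial observation that an embedded surface is, in particular, an immersed one. By \Cref{cor_geodesic}, each of the four knot complements produced in \Cref{main_tech_thm} contains a closed, embedded, totally geodesic surface; regarding each such embedding as an immersion shows that all four complements contain closed, immersed, totally geodesic surfaces. It therefore remains only to confirm that these four examples are genuinely distinct from the figure-eight and dodecahedral knot complements, which are already known to contain closed immersed totally geodesic surfaces by \cite{MRparametrizingFuchsian} and \cite{AitchRubgeodesic} respectively.

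For the distinctness I would appeal to the commensurability data already assembled in the paper rather than argue afresh. By \Cref{cor_hidden_syms} the four new complements fall into two commensurability classes, each distinct from the class of the figure-eight and from that of the dodecahedral knots; in particular no new example is homeomorphic to either of the previously known ones, so they do count as ``additional.'' Alternatively one can argue directly at the level of orbifolds: the new complements cover $(3,3,3)$-cusped prism orbifolds, whereas the figure-eight and dodecahedral complements cover tetrahedral orbifolds, and \Cref{no cover} shows that the relevant prism orbifolds cover no smaller orbifold, placing them in separate commensurability classes.

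I do not anticipate any real obstacle here: essentially all of the mathematical content of the statement lives in \Cref{cor_geodesic} and in the cited pre-existing results for the two classical families. The only point requiring any care is the bookkeeping that establishes distinctness from the known examples, and this is fully handled by the commensurability classification already in hand. Consequently the proof reduces to a short remark chaining \Cref{cor_geodesic} to the definition of an immersion and invoking the earlier commensurability results.
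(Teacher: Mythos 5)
Your proposal is correct and matches the paper's intended argument: the corollary is stated as an immediate consequence of \Cref{cor_geodesic} (embedded implies immersed), with distinctness from the figure-eight and dodecahedral complements handled by the commensurability/volume data already established (the paper itself uses a volume comparison in the proof of \Cref{cor:HidSymKnots}, but your commensurability argument via \Cref{cor_hidden_syms} and \Cref{no cover} is equally valid). No gaps.
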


To motivate our next corollary, we note that the three previously known knots in $\mathbb{S}^3$ whose complements have hidden symmetries are each alternating. But since the knot complements of \Cref{cor_geodesic} have closed, embedded totally geodesic surfaces, by \cite{MenascoReid} they are not alternating. More broadly, from the other works cited in this subsection's first paragraph we obtain:

\begin{corollary}\label{cor_non_alternating}
There exist  knots in $\mathbb{S}^3$ with complements admitting hidden symmetries that are neither alternating, almost alternating, toroidally alternating, nor Montesinos. 
\end{corollary}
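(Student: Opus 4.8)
The plan is to play \Cref{cor_geodesic} off against the known cases of the Menasco--Reid conjecture recorded in this subsection's opening paragraph. By \Cref{cor_geodesic}, there are at least four hyperbolic knot complements in $\mathbb{S}^3$ that have hidden symmetries and each contain a closed, embedded, totally geodesic surface. It therefore suffices to show that a knot belonging to any of the four named classes---alternating, almost alternating, toroidally alternating, or Montesinos---has a complement containing \emph{no} such surface; the four knots of \Cref{cor_geodesic} will then lie outside all four families simultaneously, and they admit hidden symmetries by that same corollary.

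First I would cite, class by class, the results that verify the Menasco--Reid conjecture for exactly these families: \cite{MenascoReid} for alternating knots, \cite{AdamsEtAlAlmostAlt} for almost alternating knots, \cite{AdamsTorAlt} for toroidally alternating knots, and \cite{Oertel} for Montesinos knots. Each asserts that a knot complement of the relevant type contains no closed, embedded, totally geodesic surface. Invoking the contrapositive, any knot whose complement \emph{does} contain such a surface is excluded from all four families at once. Applying this to the knots of \Cref{cor_geodesic} then finishes the argument.

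The proof carries essentially no new mathematical content beyond the assembly of existing results, so there is no serious obstacle. The one point deserving care---and the reason the statement is sharp---is that each cited theorem rules out \emph{embedded} totally geodesic surfaces, matching precisely the embeddedness guaranteed by \Cref{cor_geodesic}. Passing to merely immersed surfaces would break the argument: as noted just before \Cref{cor: immersed geod}, the figure-eight and dodecahedral knot complements contain closed immersed totally geodesic surfaces and yet are alternating. Thus the crucial step is to apply each source at the level of embedded surfaces, which is exactly the level at which both those theorems and \Cref{cor_geodesic} operate.
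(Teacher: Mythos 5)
Your proposal is correct and is essentially the paper's own argument: the knots of \Cref{cor_geodesic} contain closed, embedded, totally geodesic surfaces, while \cite{MenascoReid}, \cite{AdamsEtAlAlmostAlt}, \cite{AdamsTorAlt}, and \cite{Oertel} rule out such surfaces for the four named families, so the contrapositive excludes these knots from all four classes. Your remark that the argument hinges on \emph{embedded} (not merely immersed) surfaces is exactly the right caveat and matches the paper's own discussion.
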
 

We finally mention the following consequence of the more-technical \Cref{cover mutants}.

\begin{corollary}
    There exist distinct knot complements in $\mathbb{S}^3$ that are related by mutation along a closed, embedded totally geodesic surface.
\end{corollary}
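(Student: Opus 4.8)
The plan is to read the corollary off from \Cref{cover mutants}, using \Cref{main_tech_thm} to supply the manifolds and \Cref{cor_geodesic} (via \Cref{geodesic surface}) to supply the surface, so that no genuinely new machinery is needed. Recall that \Cref{main_tech_thm} produces four knot complements covering prism orbifolds, which the commensurability analysis of \Cref{commensurator} and \Cref{no cover} organizes into two commensurability classes with two members each. Because \Cref{no cover} shows the relevant prism orbifold $\mathcal{O}$ covers no smaller orbifold, the two knot complements $M_1, M_2$ in a single class are both covers of this one $\mathcal{O}$; the content of \Cref{cover mutants} is precisely that such a pair differs by a mutation carried out on their common quotient. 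So the first step is to fix one such pair $M_1, M_2$ of distinct knot complements in $\mathbb{S}^3$, both covering $\mathcal{O}$.

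Next I would pin down the mutation surface. The prism orbifold $\mathcal{O}$ contains compact, totally geodesic triangle sub-orbifolds, the images of the two triangular faces of the prism; pulling such a sub-orbifold $T \subset \mathcal{O}$ back to $M_i$ produces a closed, embedded, totally geodesic surface $S_i$. This is exactly the mechanism underlying \Cref{geodesic surface} and hence \Cref{cor_geodesic}, so the properties ``closed, embedded, totally geodesic'' require no separate verification here. The mutation involution is then an isometric involution $\tau$ of $S_i$ drawn from the symmetry group of the rigid triangle orbifold $T$ that $S_i$ covers; since $\tau$ is an isometry of the totally geodesic surface $S_i$, cutting $M_i$ along $S_i$ and regluing by $\tau$ produces a hyperbolic manifold of equal volume, in the sense of mutation along a totally geodesic surface. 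Invoking \Cref{cover mutants} identifies this mutant with the other member of the pair, so $M_1$ and $M_2$ are related by mutation along the closed, embedded, totally geodesic surface $S_1 \cong S_2$.

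The remaining obligation, and what I expect to be the crux, is to certify that $M_1$ and $M_2$ are genuinely \emph{distinct}, i.e. non-homeomorphic. This is delicate precisely because mutation preserves volume and many standard invariants, so the two complements cannot be separated by volume or by any mutation-invariant quantity; distinctness must instead be extracted from the explicit description of the four knots in \Cref{main_tech_thm}, where they are already exhibited as pairwise non-isometric. Granting that, the corollary follows at once: $M_1$ and $M_2$ are distinct knot complements in $\mathbb{S}^3$ related by mutation along a closed, embedded, totally geodesic surface. As a sanity check one should also confirm that the mutation is non-trivial, i.e. that $\tau$ does not reglue the two pieces back to an isometric copy of $M_i$, but this is automatic once $M_1 \not\cong M_2$ has been established.
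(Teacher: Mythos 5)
There is a genuine gap, and it sits at the very first step: you have identified the wrong pair of knot complements. You take $M_1,M_2$ to be the two knot complements lying in a \emph{single} commensurability class (i.e.\ $\widetilde{M}_{2,1}$ and $\widetilde{M}_{2,2}$, both covering $O^{333}_2$), and assert that \Cref{cover mutants} says such a pair differs by a mutation "carried out on their common quotient." But \Cref{cover mutants} relates $\widetilde{M}_{2,1}$ to $\widetilde{M}_{3,1}$, which cover the \emph{incommensurable} orbifolds $O^{333}_2$ and $O^{333}_3$ and in particular have no common quotient. Your proposed pair cannot work even in principle: $\widetilde{M}_{2,1}$ and $\widetilde{M}_{2,2}$ cover $\doh^{333}_2$ with degrees $312$ and $528$, so they have different volumes (roughly $585.4$ versus $990.6$ by \Cref{volumes}), whereas mutation along a separating surface preserves volume. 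The entire interest of the example is the opposite of what you describe — mutation carries a knot complement out of its commensurability class, which is why the paper gets two incommensurable knot complements of equal volume.

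The misidentification also causes the mechanism you propose for the mutation map to fail. There is no covering-theoretic reason for an involution of the triangle sub-orbifold to induce the regluing; the paper instead builds the mutation by hand. It uses the OR-scissors congruence of \Cref{scis cong} (the non-compact sub-prisms $P^-$ of $P^{333}_2$ and $P^{333}_3$ are isometric, the compact sub-prisms $P^+$ are mirror images) to produce explicit isometries $\rho_1^{\pm}\co M_{2,1}^{\pm}\to M_{3,1}^{\pm}$ in \Cref{mutants}, whose mismatch $\mu_1=(\rho_1^-)^{-1}\circ\rho_1^+$ on $S_{2,1}$ is the mutation. Lifting this to the knot complements $\widetilde{M}_{i,1}$ in \Cref{cover mutants} is itself nontrivial: it requires knowing that $M_{2,1}^-(\mu_0)$ is a handlebody (\Cref{M_21 minus}) and that $\mu_1$ sends the compressing-disk boundaries to their inverses up to free homotopy, so that $\rho_1^{\pm}$ lift to the cyclic covers. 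None of this is "read off" from earlier results. The correct statement of the corollary's proof is: $\widetilde{M}_{2,1}$ and $\widetilde{M}_{3,1}$ are knot complements in $\mathbb{S}^3$ by \Cref{main_tech_thm}, they are non-isometric because they are incommensurable (\Cref{commensurator}), and they are related by mutation along the closed embedded totally geodesic surface $\widetilde{S}_{2,1}$ by \Cref{cover mutants}.
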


Here, \emph{mutation} refers to cutting along the surface and re-gluing the resulting pieces along their boundaries by a self-isometry. Like the well-known mutation along four-punctured spheres (cf.~\cite{Ruberman}), hyperbolic knot complements related in this way have the same volume.

\subsection{Computational Tools}\label{subsec: compute}

In \Cref{sec:main_proof} we present the main examples of this paper in such a way that their properties can in principle be extracted by human computation. We carry this out for one of our four examples in \Cref{sec:M21}, culminating in \Cref{main by hand} which implies \Cref{main_thm}. Hence this paper does contain a complete by-hand proof of \Cref{main_thm} and the existence of counterexamples to the conjectures mentioned below it. The proof of \Cref{main_tech_thm} in \Cref{sec:main_proof} appeals at points to machine computation, however, using both standard tools of computational topology (SnapPy \cite{SnapPy} and Regina \cite{regina}) and custom-coded Python scripts (available as ancillary files to the arxiv version of this article: arXiv:2507.01263). These computations are combinatorial in nature and fully rigorous.

These examples were also \emph{produced} with the aid of machine computation, and it is fair to say that we could not have produced them without it. We give a detailed description of this process in \Cref{sec:which prisms}. Here is a high-level overview of its steps.

\begin{para}[Pre-filtering]\label{pre-filter} For each member of an initial list of 54 individual prism orbifolds and 8 infinite families, we tested a sequence of necessary conditions for being covered by a knot complement. This process was expedited by custom-coded Python scripts and the use of Sage \cite{sagemath}. Twelve individual prism orbifolds, and two infinite sub-families, survived all tests.\end{para}

\begin{para}[Enumeration]\label{enumerate}
    The degree of any manifold cover of an orbifold is at least the least common multiple of its vertex group orders. The smallest of these values, among the prism orbifolds surviving pre-filtering, is $24$. For each of the three (3,3,3)-cusped prism orbifolds attaining this bound we used the \texttt{low-index} Python method coded by Culler et al \cite{lowindex}, which implements a computational group theory algorithm due to C.C.~Sims, to enumerate all subgroups of its orbifold $\pi_1$ up to index $24$, yielding $32\,425$, $29\,432$, and $306\,552$ subgroups.
\end{para}

\begin{para}[Filtering]\label{filter}
    Each subgroup from \ref{enumerate} is given in terms of a right-permutation representation, which we converted to a polyhedral decomposition of the corresponding cover (cf.~\Cref{cell decomp} below). Guided by this, we custom-coded Python scripts to test each cover for having one cusp, being a manifold, and having first homology $\mathbb{Z}$. These yielded $20$, $22$, and $12$ subgroups, respectively, corresponding to covers having all three properties.\end{para}

\begin{para}[Lens space recognition] For each subgroup remaining after \ref{filter}, we converted the corresponding cover's cell decomposition to a triangulation and used methods of Regina \cite{regina} and SnapPy \cite{SnapPy} to test for lens space fillings. These tests were successful for eight covers, belonging to four orientation-reversing-isometric pairs, which yield the four examples of \Cref{main_tech_thm} (eight, up to orientation-preserving isometry) after passing to further covers.
\end{para}

\subsection*{Acknowledgments} This work was initiated in the second author's University of Pittsburgh PhD thesis \cite{Arshia_thesis}, supervised by the first author, from which it draws substantially. The steps described  in \ref{pre-filter}, \ref{enumerate}, and \ref{filter} above were originally carried out there.

The work's strategy and execution benefited greatly from an AIM SQuaRE in which the first and third authors participated. We thank AIM (the American Institute of Mathematics) for providing a supportive and mathematically rich environment, and our fellow SQuaRE members Eric Chesebro, Michelle Chu, Priyadip Mondal, and Genevieve Walsh for numerous stimulating and helpful conversations. We thank Genevieve for her comments on a draft of this paper. We are extremely grateful to Alan Reid for his extensive help and guidance throughout our careers, and for always being willing to stir the pot. We also acknowledge the extensive efforts of Marc Culler, Nathan Dunfield, and Matthias Goerner for maintaining many of the underlying software packages that were essential to this project. 

This research was supported in part by the University of Pittsburgh Center for Research Computing and Data, RRID:SCR\_022735, through the resources provided. Specifically, this work used the H2P cluster, which is supported by NSF award number OAC-2117681. We are grateful to Kim F. Wong from the CRCD for technical assistance.

Finally, we very much appreciate the referee's thorough reading and detailed feedback, which included corrections to equation (2) and Lemma 5.5 and a number of useful suggestions for improvements on how the ancillary files of the accompanying arxiv post were presented and documented.

\section{Background: the classification and geometry of prism orbifolds}\label{sec:background}

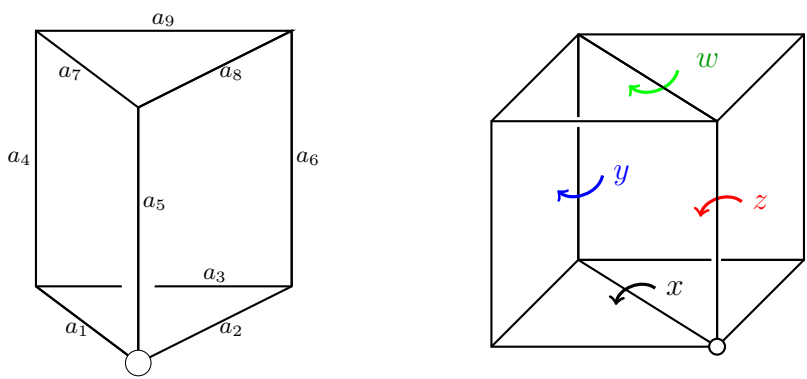
\begin{figure}
\begin{tikzpicture}

\begin{scope} [xshift=-1.5in, yshift=1.2in, scale = 1.7]
    \draw [thick] (0,-2) -- (-2,-2);
    \fill [color=white] (-1.2,-2.6) -- (-1.35,-1.9) -- (-1.05,-1.9);
    \draw [thick] (0,-2) -- (0,0) -- (-1.2,-0.6) -- (-1.2,-2.6) -- (-2,-2);
    \draw [thick] (0,0) -- (0,-2) -- (-1.2,-2.6) -- (-1.2,-0.6) -- (0,0) -- (-2,0) -- (-2,-2) -- (-1.2,-2.6);
    \draw [thick] (-1.2,-0.6) -- (-2,0);
    \fill [color=white] (-1.2,-2.6) circle [radius=0.1];
    \draw (-1.2,-2.6) circle [radius=0.1];

    \node [right] at (-0.05,-1) {\scriptsize $a_6$}; 
    \node [below right] at (-0.65,-2.2) {\scriptsize $a_2$}; 
    \node [below left] at (-1.5,-2.2) {\scriptsize $a_1$}; 
    \node [above] at (-0.6,-2.05) {\scriptsize $a_3$}; 
    \node [right] at (-1.25,-1.35) {\scriptsize $a_5$}; 
    \node [left] at (-1.95,-1) {\scriptsize $a_4$}; 
    \node [below right] at (-0.65,-0.2) {\scriptsize $a_8$}; 
    \node [below left] at (-1.55,-0.2) {\scriptsize $a_7$}; 
    \node [above] at (-1,-0.05) {\scriptsize $a_9$}; 
\end{scope}

\begin{scope}
    \coordinate (A) at (0,0,0);      
    \coordinate (B) at (3,0,0);      
    \coordinate (C) at (3,3,0);      
    \coordinate (D) at (0,3,0);      
    \coordinate (E) at (0,0,3);      
    \coordinate (F) at (3,0,3);      
    \coordinate (G) at (3,3,3);      
    \coordinate (H) at (0,3,3);      

    \draw[thick] (A) -- (B) -- (C) -- (D) -- cycle;  

    \draw[thick] (A) -- (0,1.77,0);  
    \draw[white, line width=3pt] (0,1.77,0) -- (0,1.9,0);
    \draw[thick] (0,1.9,0) -- (D); 

    \draw[thick] (A) -- (1.77,0,0); 
    \draw[white, line width=3pt] (1.77,0,0) -- (1.9,0,0); 
    \draw[thick] (1.9,0,0) -- (B);

    \draw[thick] (E) -- (F) -- (G) -- (H) -- cycle;  

    \draw[thick] (A) -- (E); 
    \draw[thick] (B) -- (F); 
    \draw[thick] (C) -- (G); 
    \draw[thick] (D) -- (H); 
    \draw[thick] (D) -- (G);
    \draw[thick] (A) -- (F);

    \draw[->, very thick, black, bend right=60] (1.6,0.2,1.5) to (0.5,-0.6,0);

    \draw[->, very thick, blue, bend left=60] (0.4,1.2,0.2) to (-0.2,1,0.2);

    \draw[->, very thick, red, bend right=60] (2.25,0.85,0.2) to (1.7,0.65,0.2);

    \draw[->, very thick, green, bend left=60] (1.4,2.6,0.2) to (0.75,2.4,0.2);

    \draw[thick] (D) -- (G);
    \draw[thick] (F) -- (G);
    
    \node[blue, right] at (0.4,1.2,0.2) { $y$};    
    \node[red, right] at (2.25,0.85,0.2) { $z$};   
    \node[green!60!black, above right] at (1.5,2.5,0.2) { $w$};   
    \node[black, right] at (1.6,0.2,1.5) {$x$};   


    
    \fill[white] (F) circle (3pt);
    \draw[thick] (F) circle (3pt);
\end{scope}

\end{tikzpicture}
\caption{\label{fig:piOne} A hyperbolic prism (left), and its double across a face, with group generators for $\dprgr(\mathbf{e})$ from the presentation (\ref{able}) shown.}
\end{figure}
In this paper, \emph{prism orbifold} will always refer to the quotient of $\mathbb{H}^3$ by the group generated by reflections in the sides of a non-compact, finite-volume polyhedron with the combinatorial type of a triangular prism. By Andreev's theorem \cite{Andreev}, such a polyhedron is determined up to isometry by its combinatorial type and the collection of its dihedral angles. Here, a nine-tuple $\mathbf{e} = (a_1,a_2,\hdots,a_9)$ of integers greater than 1 specifies a hyperbolic prism as pictured on the left in Figure \ref{fig:piOne}: for each $i$, the prism has dihedral angle $\tang{i}=\pi/a_i$ at its edge corresponding to the one labeled $a_i$ in the Figure. We will refer to the hyperbolic prism determined by $\be$ as $P(\be)$, the group generated by reflections in its faces as $\prgr(\be)$, and $O(\be)$ as the quotient of $\Hthree$ by $\prgr(\be)$.

\begin{remark}\label{one cusp prism}
    For each nine-tuple $\mathbf{e}$ considered in this paper, we require the following:\begin{itemize}
        \item The triple $(a_1,a_2,a_5)$ must equal one of $(2,3,6)$, $(3,3,3)$, or $(2,4,4)$ up to permutation of entries, so that the ``vertex'' shared by the edges labeled $a_1$, $a_2$, and $a_5$ (circled in the Figure) is ideal in the prism's geometric realization. 
        \item For each other vertex, shared by edges labeled $a_i$, $a_j$ and $a_k$, we require that $\frac{1}{a_i} + \frac{1}{a_j} + \frac{1}{a_k} > 1$, so that the corresponding vertex of the geometric realization belongs to $\mathbb{H}^3$.
    \end{itemize}
    These ensure that each prism orbifold considered here has exactly one cusp, which is rigid.
\end{remark}
 As the ultimate goal of this paper is to discuss examples of prism orbifolds covered by knot complements, the one-cusped case is the only one relevant for this investigation. 

The unique orientable double cover of the prism orbifold specified by a tuple $\be$ as above is the quotient $\doh(\be)$ of $\mathbb{H}^3$ by a group $\dprgr(\be)$ generated by rotations around the sides of one of the prism $P(\be)$'s quadrilateral faces. Precisely, let $\bar{P}(\be)$ be the image of $P(\be)$ under reflection across the plane containing the face with edges labeled $a_1$, $a_4$, $a_5$, and $a_7$, let $\dpr(\be) = P(\be)\cup\bar{P}(\be)$, and let $x$, $y$, $z$, and $w$ be the respective rotations fixing these four edges and by angles of $2\pi/a_i$ for the corresponding $i$, each taking a face of $P(\be)$ to one of $\bar{P}(\be)$. They thus act as face-pairings of $\dpr(\be)$, as pictured on the right side of Figure \ref{fig:piOne}. By a standard application of the Poincar\'e polyhedron theorem, the group $\dprgr(\be)$ that they generate is discrete, with $\dpr(\be)$ as a fundamental domain and the following presentation:\begin{equation}\label{able}
\dprgr(\be) \\
\cong\langle x, y,z, w\, |\, 
x^{a_1}, y^{a_4}, 
z^{a_5}, w^{a_7}, 
(y^{-1}x)^{a_3}, 
(z^{-1}x)^{a_2},
(z^{-1}y)^{a_6}, 
(y^{-1}w)^{a_9}, 
(z^{-1}w)^{a_8} \rangle
\end{equation}

(The first complete proof of the three-dimensional Poincar\'e polyhedron theorem is generally attributed to Maskit \cite{Maskit}. For a more recent reference see eg.~\cite[Theorem 13.5]{Ratcliffe}.) Each relation above featuring a product of two generators is associated to an edge of the doubled prism that does not belong to the face shared by the prism and its mirror image. Such an edge belongs to an edge cycle with one other. For example, the rightmost vertical edge of the doubled prism in Figure \ref{fig:piOne} is carried by each of the generators $y$ and $z$ to the leftmost, and hence fixed by $z^{-1}y$; it determines the relation $(z^{-1}y)^{a_6}=1$.

The prism orbifolds satisfying the requirements of \Cref{one cusp prism} were classified up to isometry by Lakeland-Roth in \cite{LakelandRoth}, as we recall in \Cref{subsec: classn}. We describe the $(3,3,3)$-cusped case of an explicit embedding of $P(\be)$ in $\mathbb{H}^3$, also from \cite{LakelandRoth}, in \Cref{subsec: embedding333}, then make some observations about a triangle suborbifold of $O(\be) = \mathbb{H}^3/\prgr(\be)$ in \Cref{tot geod}.

\begin{table}
\centering

\resizebox{\textwidth}{!}{%
\begin{tabular}{lrrrrlrrrrr}
\toprule
\char"0023 & $a_1$ &  $a_2$ &  $a_3$ &  $a_4$ &  $a_5$ &  $a_6$ &  $a_7$ &  $a_8$ &  $a_9$ \\
\midrule
$O^{236}_1$  & 2 & 3 & 3 & 4 & 6 & 2 & 2 & 2 & 2 \\
$O^{236}_2$  & 2 & 3 & 3 & 4 & 6 & 2 & 2 & 2 & 3 \\
$O^{236}_3$  & 2 & 3 & 3 & 5 & 6 & 2 & 2 & 2 & 2 \\
$O^{236}_4$  & 2 & 3 & 3 & 5 & 6 & 2 & 2 & 2 & 3 \\
$O^{236}_{5,n}$  & 2 & 6 & 2 & n & 3 & 2 & 2 & 2 & 2 \\
$O^{236}_{6,n}$  & 2 & 6 & 2 & n & 3 & 2 & 2 & 3 & 2 \\
$O^{236}_{7,n}$  & 2 & 6 & 2 & n & 3 & 2 & 2 & 4 & 2 \\
$O^{236}_{8,n}$  & 2 & 6 & 2 & n & 3 & 2 & 2 & 5 & 2 \\
$O^{236}_9$  & 2 & 3 & 2 & 4 & 6 & 2 & 2 & 2 & 2 \\
$O^{236}_{10}$ & 2 & 3 & 2 & 4 & 6 & 2 & 2 & 2 & 3 \\
$O^{236}_{11}$ & 2 & 3 & 2 & 5 & 6 & 2 & 2 & 2 & 2 \\
$O^{236}_{12}$ & 2 & 3 & 2 & 5 & 6 & 2 & 2 & 2 & 3 \\
$O^{236}_{13,n}$ & 2 & 3 & 2 & n & 6 & 2 & 2 & 2 & 2 \\
$O^{236}_{14}$ & 2 & 3 & 2 & 3 & 6 & 3 & 2 & 2 & 2 \\
$O^{236}_{15}$ & 2 & 3 & 2 & 3 & 6 & 3 & 2 & 2 & 3 \\
$O^{236}_{16}$ & 2 & 3 & 2 & 3 & 6 & 3 & 2 & 2 & 4 \\
$O^{236}_{17}$ & 2 & 3 & 2 & 3 & 6 & 3 & 2 & 2 & 5 \\
$O^{236}_{18}$ & 2 & 3 & 2 & 4 & 6 & 3 & 2 & 2 & 2 \\
$O^{236}_{19}$ & 2 & 3 & 2 & 4 & 6 & 3 & 2 & 2 & 3 \\
$O^{236}_{20}$ & 2 & 3 & 2 & 5 & 6 & 3 & 2 & 2 & 2 \\
\bottomrule
\end{tabular}
\begin{tabular}{lrrrrlrrrrr}
\toprule
\char"0023 & $a_1$ &  $a_2$ &  $a_3$ &  $a_4$ &  $a_5$ &  $a_6$ &  $a_7$ &  $a_8$ &  $a_9$ \\
\midrule
$O^{236}_{21}$ & 2 & 3 & 2 & 5 & 6 & 3 & 2 & 2 & 3 \\
$O^{236}_{22,n}$ & 2 & 3 & 2 & n & 6 & 3 & 2 & 2 & 2 \\
$O^{236}_{23}$ & 2 & 3 & 2 & 2 & 6 & 4 & 2 & 2 & 2 \\
$O^{236}_{24}$ & 2 & 3 & 2 & 2 & 6 & 4 & 2 & 2 & 3 \\
$O^{236}_{25}$ & 2 & 3 & 2 & 3 & 6 & 4 & 2 & 2 & 2 \\
$O^{236}_{26}$ & 2 & 3 & 2 & 3 & 6 & 4 & 2 & 2 & 3 \\
$O^{236}_{27}$ & 2 & 3 & 2 & 4 & 6 & 4 & 2 & 2 & 2 \\
$O^{236}_{28}$ & 2 & 3 & 2 & 4 & 6 & 4 & 2 & 2 & 3 \\
$O^{236}_{29}$ & 2 & 3 & 2 & 5 & 6 & 4 & 2 & 2 & 2 \\
$O^{236}_{30}$ & 2 & 3 & 2 & 5 & 6 & 4 & 2 & 2 & 3 \\
$O^{236}_{31,n}$ & 2 & 3 & 2 & n & 6 & 4 & 2 & 2 & 2 \\
$O^{236}_{32}$ & 2 & 3 & 2 & 2 & 6 & 5 & 2 & 2 & 2 \\
$O^{236}_{33}$ & 2 & 3 & 2 & 2 & 6 & 5 & 2 & 2 & 3 \\
$O^{236}_{34}$ & 2 & 3 & 2 & 3 & 6 & 5 & 2 & 2 & 2 \\
$O^{236}_{35}$ & 2 & 3 & 2 & 3 & 6 & 5 & 2 & 2 & 3 \\
$O^{236}_{36}$ & 2 & 3 & 2 & 4 & 6 & 5 & 2 & 2 & 2 \\
$O^{236}_{37}$ & 2 & 3 & 2 & 4 & 6 & 5 & 2 & 2 & 3 \\
$O^{236}_{38}$ & 2 & 3 & 2 & 5 & 6 & 5 & 2 & 2 & 2 \\
$O^{236}_{39}$ & 2 & 3 & 2 & 5 & 6 & 5 & 2 & 2 & 3 \\
$O^{236}_{40,n}$ & 2 & 3 & 2 & n & 6 & 5 & 2 & 2 & 2 \\
\bottomrule
\end{tabular}
}
\caption{Prism Orbifolds with (2,3,6) cusp. For $O^{236}_{i,n}$ with $i \in \{5,6,7,8\}$, we enforce $n\geq 7$ in order to ensure that the corresponding orbifold is hyperbolic. For $O^{236}_{i,n}$ with $i \in \{13,22,31,40 \}$, $n\geq 6$  is required for hyperbolicity (see \cite{LakelandRoth} for more details). }
\label{table236}
\end{table}

\subsection{The classification of prism orbifolds}\label{subsec: classn}

In \cite{LakelandRoth}, Lakeland-Roth checked the necessary and sufficient conditions of Andreev's theorem for hyperbolic prisms satisfying the conditions of \Cref{one cusp prism} and gave a complete classification up to isometry for those with each cusp geometry, in terms of nine-tuples $(a_1,\hdots,a_9)$ as described above. Tables \ref{table236} and \ref{table333}  reproduce the results of Sections 5.1 and 5.3 of \cite{LakelandRoth}, respectively, in each case with the addition of a column recording a name for each tuple. These names have the form ``$O^{236}_k$'' for the members of Table \ref{table236}, which all have $(a_1,a_2,a_5) = (2,3,6)$ (up to permutation of entries), or ``$O^{236}_{k,n}$'' for the table rows representing infinite families that depend on a natural number parameter $n$. The row names for Table \ref{table333} all have the form ``$O^{333}_k$'', since here $(a_1,a_2,a_5) = (3,3,3)$ in all cases. There are no infinite families in the $(3,3,3)$ case. 

By the prior work \cite{NeilOrbiCusps} of the third author, the prism orbifolds with $(2,4,4)$-cusp geometry are not relevant to the present work: none is covered by a knot complement in $S^3$.

In the rest of the paper, we will refer to prism orbifolds by the names given in Tables \ref{table236} and \ref{table333}. We also establish the following further notational conventions.

\begin{notation}\label{table names} For a row of Table \ref{table236} labeled ``$O^{236}_k$'', we will refer by ``$P^{236}_k$'' to the hyperbolic prism $P(\be)$ determined by the corresponding nine-tuple $\be$, and by ``$\prgr^{236}_k$'' and ``$\dprgr^{236}_k$'' to the corresponding reflection and rotation groups $\prgr(\be)$ and $\dprgr(\be)$, respectively, when convenient. Thus in particular, $O^{236}_k = \mathbb{H}^3/\prgr^{236}_k$. 

We will define $\doh^{236}_k = \mathbb{H}^3/\dprgr^{236}_k$ and call it the \emph{orientable prism orbifold}. We treat the infinite families $O^{236}_{k,n}$ in Table \ref{table236} and rows $O^{333}_k$ of Table \ref{table333} analogously.
\end{notation}

\begin{table}
\centering

\resizebox{\textwidth}{!}{%
\begin{tabular}{lrrrrlrrrrr}
\toprule
\char"0023 & $a_1$ &  $a_2$ &  $a_3$ &  $a_4$ &  $a_5$ &  $a_6$ &  $a_7$ &  $a_8$ &  $a_9$ \\
\midrule
$O^{333}_1$  & 3 & 3 & 2 & 3 & 3 & 4 & 2 & 2 & 2 \\
$O^{333}_2$  & 3 & 3 & 2 & 3 & 3 & 4 & 2 & 2 & 3 \\
$O^{333}_3$  & 3 & 3 & 2 & 3 & 3 & 4 & 2 & 3 & 2 \\
$O^{333}_4$  & 3 & 3 & 2 & 3 & 3 & 4 & 3 & 2 & 2 \\
$O^{333}_5$  & 3 & 3 & 2 & 3 & 3 & 4 & 4 & 2 & 2 \\
$O^{333}_6$  & 3 & 3 & 2 & 3 & 3 & 4 & 5 & 2 & 2 \\
$O^{333}_7$  & 3 & 3 & 2 & 3 & 3 & 5 & 2 & 2 & 2 \\
$O^{333}_8$  & 3 & 3 & 2 & 3 & 3 & 5 & 2 & 2 & 3 \\
$O^{333}_9$  & 3 & 3 & 2 & 3 & 3 & 5 & 2 & 3 & 2 \\
$O^{333}_{10}$ & 3 & 3 & 2 & 3 & 3 & 5 & 3 & 2 & 2 \\
$O^{333}_{11}$ & 3 & 3 & 2 & 3 & 3 & 5 & 4 & 2 & 2 \\
\bottomrule
\end{tabular}
\begin{tabular}{lrrrrlrrrrr}
\toprule
\char"0023 & $a_1$ &  $a_2$ &  $a_3$ &  $a_4$ &  $a_5$ &  $a_6$ &  $a_7$ &  $a_8$ &  $a_9$ \\
\midrule
$O^{333}_{12}$ & 3 & 3 & 2 & 3 & 3 & 5 & 5 & 2 & 2 \\
$O^{333}_{13}$ & 3 & 3 & 2 & 4 & 3 & 4 & 2 & 2 & 2 \\
$O^{333}_{14}$ & 3 & 3 & 2 & 4 & 3 & 4 & 2 & 2 & 3 \\
$O^{333}_{15}$ & 3 & 3 & 2 & 4 & 3 & 4 & 2 & 3 & 2 \\
$O^{333}_{16}$ & 3 & 3 & 2 & 4 & 3 & 5 & 2 & 2 & 2 \\
$O^{333}_{17}$ & 3 & 3 & 2 & 4 & 3 & 5 & 2 & 2 & 3 \\
$O^{333}_{18}$ & 3 & 3 & 2 & 4 & 3 & 5 & 2 & 3 & 2 \\
$O^{333}_{19}$ & 3 & 3 & 2 & 4 & 3 & 5 & 3 & 2 & 2 \\
$O^{333}_{20}$ & 3 & 3 & 2 & 5 & 3 & 5 & 2 & 2 & 2 \\
$O^{333}_{21}$ & 3 & 3 & 2 & 5 & 3 & 5 & 2 & 2 & 3 \\
$O^{333}_{22}$ & 3 & 3 & 2 & 5 & 3 & 5 & 2 & 3 & 2 \\
\bottomrule
\end{tabular}
}
\caption{Prism Orbifolds with (3,3,3) cusp}
\label{table333}
\end{table}

\subsection{Explicit embeddings and maximal cusp neighborhoods}\label{subsec: embedding333}

Another set of results of \cite{LakelandRoth} explicitly embed the prisms $P(\be)$ in $\mathbb{H}^3$, for nine-tuples $\be$ of possible edge labels. In this subsection we review the salient features of their construction; 
use it to describe a maximal horoball cusp neighborhood for each corresponding prism orbifold; then reproduce and elaborate on Lakeland-Roth's arithmetic determining exact locations of prism faces. 

Sections 3, 4.1, and 4.2 of \cite{LakelandRoth} describe embeddings of the prisms $P(\be)$ in the upper half-space model for $\mathbb{H}^3$---as a set, $\mathbb{C}\times(0,\infty)$, with ``ideal boundary'' the extended complex plane $\widehat{\mathbb{C}} = \mathbb{C}\cup\{\infty\}$---having the following common features:\begin{itemize}
    \item The ideal vertex of $P(\be)$, the endpoint of the edges labeled $a_1$, $a_2$, and $a_5$, is at $\infty$.
    \item The compact quadrilateral face of $P(\be)$ lies in the Euclidean hemisphere of unit radius centered at the origin.
    \item The non-compact triangular face of $P(\be)$ lies in a plane of the form $\ell\times(0,\infty)$, where:
        \begin{itemize}\item if $a_3 = 2$, $\ell\subset \mathbb{C}$ is the imaginary axis.
        \item if $a_3=3$, $\ell$ is the line $\mathrm{re}(z) = -\frac{1}{2}$.\end{itemize}
    In each case, projections to $\mathbb{C}$ of points of $P(\be)$ lie to the right of $\ell$.
    \item The (non-ideal) vertices of $P(\be)$ projecting into $\ell$---those of its non-compact triangular face---have the following complex coordinates, for $\theta_i = \pi/a_i$ as defined above:\begin{itemize}
        \item if $a_3 = 2$: $i\,y_1$ and $i\,y_2$, where $y_1 = \cos(\tang{4})/\sin(\tang{1})$ and $y_2 = -\cos(\tang{6})/\sin(\tang{2})$.
        \item if $a_3=3$: $z_1 =-\frac{1}{2}+(\cos(\tang{4})/\sin(\tang{1})+\cot(\tang{1})/2)i$ and\\ \mbox{}~\hfill $z_2 =-\frac{1}{2}-(\cos(\tang{6})/\sin(\tang{2})+\cot(\tang{2})/2)i$ 
    \end{itemize}
    \item The compact triangular face of $P(\be)$ lies in a Euclidean hemisphere of radius $r$ centered at $s+it\in\mathbb{C}$, for $s$, $t$, and $r$ solving equations (4), (5), and (6) of \cite{LakelandRoth} (here $(s,t)$ replace Lakeland and Roth's $(x,y)$ because we used $x,y$ as part of our generating set for $\dprgr(\be)$).
\end{itemize}

\begin{remark} The $a_3= 3$ case above corrects a typo in \cite{LakelandRoth}, which omits a negative sign when recording $z_2$ in Section 4.2 there (compare with the equation of the ``blue line'' given at the beginning of Section 4.1 there). In all the examples, in their paper $a_6=2$, hence $\cos(\tang{6})=0$ and the typo does not affect their results. However, their methods can be generalized to multi-cusped prism orbifolds, in those cases the typo corrected here would become apparent.\end{remark}

We now describe the equations defining $r$, $s$, and $t$, which are (4), (5), and (6) of \cite[\S 4.1]{LakelandRoth}. With a little algebra, equations (4) and (5) there are equivalent to:

\begin{align}\label{4n5}
\begin{pmatrix} s \\ t \end{pmatrix} = \frac{1}{\mbox{det}(A)}\begin{pmatrix} r\left(\cos(\tang{8})\sin(\tang{1})+\cos(\tang{7})\sin(\tang{2})\right) - \cos(\tang{6})\sin(\tang{1})-\cos(\tang{4})\sin(\tang{2}) \\
   r\left(-\cos(\tang{1})\cos(\tang{8})+\cos(\tang{2})\cos(\tang{7})\right)+\cos(\tang{1})\cos(\tang{6})-\cos(\tang{2})\cos(\tang{4})\end{pmatrix} \end{align}
where $\mbox{det}(A)= -\cos(\tang{2})\sin(\tang{1})-\cos(\tang{1})\sin(\tang{2})$.
   
Equation 6 from  \cite{LakelandRoth} is:\begin{align}\label{six}
    s^2 + t^2 = 1 + r^2 + 2r\cos\left(\tang{9}\right) \end{align}

For ease of notation, we will say
$s = s_A r + s_B$
and $t = t_A r+ t_B$. 
Then, if $a=s_A^2+t_A^2-1$, $b=2s_As_B+2t_A t_B -2 \cos(\tang{9})$, and $c=s_B^2+t_B^2-1$, then $r$ is a solution to $ar^2+br+c=0$, which we can solve for using the quadratic equation then determine the values of $s,t$ accordingly.

We will give more detail below about the equations defining $s$, $t$ and $r$, but first draw a basic conclusion about maximal embedded cusp neighborhoods in the $O^{236}_k$ and $O^{333}_k$.

\begin{lemma}\label{maximal general}
    For $\be$ in Table \ref{table236} or \ref{table333}, and $P(\be)$ embedded in $\mathbb{H}^3$ as described above, a maximal embedded horoball cusp neighborhood in the orbifold $\mathbb{H}^3/\prgr(\be)$ is obtained as the projection of the horoball at height $\max\{1,r\}$ centered at $\infty$, where $r$ solves equations (4), (5), and (6) of \cite{LakelandRoth}.
\end{lemma}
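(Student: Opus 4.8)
The plan is to carry out the entire argument in the given upper half-space embedding, where the single cusp of $\mathbb{H}^3/\prgr(\be)$ sits at $\infty$ and its horoball neighborhoods are exactly the sets $B_h = \{(z,u) : u > h\}$, $h > 0$. First I would record the geometric disposition of the five faces of $P(\be)$. The three faces incident to the ideal vertex---the non-compact triangular face and the two quadrilateral faces meeting it---pass through $\infty$, so each is a vertical half-plane; the two faces not containing the ideal vertex are the compact quadrilateral and compact triangular faces, which by the embedding lie on the Euclidean hemispheres of radii $1$ and $r$, centered at $0$ and $s+it$ respectively. The reflections in the three vertical faces generate the cusp stabilizer $\Gamma_\infty \le \prgr(\be)$; by \Cref{one cusp prism} they act on each horosphere $\{u = h\}$ as the rigid Euclidean triangle reflection group determined by $(a_1,a_2,a_5)$, with fundamental domain the vertical sector $S$ over the corresponding Euclidean triangle in $\mathbb{C}$. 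Two facts follow that drive everything: $B_h$ is $\Gamma_\infty$-invariant for every $h$, and $S$ is a fundamental domain for $\Gamma_\infty$ in $\mathbb{H}^3$. Finally, since $P(\be)$ is cut out of $S$ by the two hemispheres from below, $P(\be) = S \cap \mathrm{ext}(B^{1}) \cap \mathrm{ext}(B^{r})$, where $B^1, B^r$ are the open Euclidean balls bounded by the two hemispherical faces and $\mathrm{ext}$ denotes their exteriors.

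For the ``embedded'' direction, suppose $h > \max\{1,r\}$. The hemispheres of radii $1$ and $r$ have their tops at heights $1$ and $r$, so every point of height exceeding $\max\{1,r\}$ lies outside both balls; hence $B_h \cap S \subseteq P(\be)$. Now I would use the two fundamental-domain properties together: given $p, p' \in B_h$ with $p' = \gamma p$ for some $\gamma \in \prgr(\be)$, I move each into $S$ by elements of $\Gamma_\infty$ (legitimate since $B_h$ is $\Gamma_\infty$-invariant), landing in $B_h \cap S \subseteq P(\be)$; as $P(\be)$ is a fundamental domain for $\prgr(\be)$, the two images of $P(\be)$-points must coincide, forcing $p$ and $p'$ to be $\Gamma_\infty$-equivalent. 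Thus the projection of $B_h$ to $\mathbb{H}^3/\prgr(\be)$ is injective modulo the cusp group, i.e.\ the cusp neighborhood is embedded.

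For the threshold and the ``not embedded'' direction, I would compute the effect of the two hemisphere reflections. Reflection in a hemisphere of radius $\rho$ centered at $c_0 \in \mathbb{C}$ is an inversion sending $B_h$ to the horoball based at $c_0$ of Euclidean diameter $\rho^2/h$, whose interior meets $B_h$ precisely when $\rho^2/h > h$, that is, when $h < \rho$; moreover such a reflection does not fix $\infty$, so it lies outside $\Gamma_\infty$. Applying this with $\rho = \max\{1,r\}$ (the larger of the two hemispherical faces) shows that for $h < \max\{1,r\}$ the horoball $B_h$ has nonempty-interior intersection with a non-$\Gamma_\infty$ translate of itself, so its projection is not embedded. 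Combined with the previous paragraph, the closed horoball $B_{\max\{1,r\}}$---tangent to its reflected image in exactly this way---projects to a maximal embedded neighborhood, which is the assertion.

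The main obstacle is making the sector argument of the second paragraph fully rigorous, since this is the step that replaces an element-by-element bound on all of $\prgr(\be) \setminus \Gamma_\infty$. Two points need care: that $B_h \cap S \subseteq P(\be)$, which requires checking that the prism lies on the ``above'' side of each hemispherical face (so that exceeding the hemisphere tops really places a point inside $P(\be)$), and that $S$ is genuinely a fundamental domain for $\Gamma_\infty$, which is where the rigid-cusp hypothesis of \Cref{one cusp prism} enters: the three vertical faces meet $\mathbb{C}$ in a Euclidean triangle with angles $\pi/a_1, \pi/a_2, \pi/a_5$ summing to $\pi$, and $\Gamma_\infty$ is exactly the reflection group of that triangle. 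Everything else is the routine inversion computation recorded above, together with the fact that the hemisphere radii are $1$ and $r$.
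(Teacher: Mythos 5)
Your proof is correct and follows essentially the same route as the paper's: maximality comes from tangency of the height-$\max\{1,r\}$ horoball with its image under reflection in the larger hemispherical face, and embeddedness comes from observing that the horoball meets the peripheral fundamental sector inside $P(\be)$ (because it clears the tops of both hemispheres), so it is covered by $\Lambda(\be)$-translates of $P(\be)$ and cannot meet a non-peripheral translate of itself. Your version merely spells out the orbit-chasing and the inversion computation that the paper leaves implicit.
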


\begin{proof}
    Let $B_\infty$ be the horoball centered at $\infty$ having height $\max\{1,r\}$. Note that $B_{\infty}$ is tangent to a Euclidean hemisphere containing a compact face of $P(\be)$, and hence to its image under the reflection in this hemisphere, which belongs to $\prgr(\be)$. Therefore no horoball properly containing $B_{\infty}$ has image embedded in $\mathbb{H}^3/\prgr(\be)$. 
    Let $\Lambda(\be)$ be the peripheral subgroup of $\prgr(\be)$ fixing $\infty$, generated by reflections in the three non-compact faces of $P(\be)$. We claim that the projection of $B_{\infty}$ to $\mathbb{H}^3/\prgr(\be)$ factors through an embedding of $B_{\infty}/\Lambda(\be)$. 
    
    This follows from the fact that the intersection of $B_{\infty}$ with $P(\be)$ is a full fundamental domain for the action of $\Lambda(\be)$ on $B_{\infty}$, which is itself a consequence of $B_{\infty}$ having been chosen at height at least the radius of each Euclidean hemisphere containing a compact face of $P(\be)$. As a result, $B_{\infty}$ is entirely contained in the union of $\Lambda(\be)$-translates of $P(\be)$, so it does not overlap any of its distinct $\prgr(\be)$-translates.
\end{proof}

\begin{remark}
Though not relevant for this paper, the above lemma also applies to maximal embedded horoball cusp neighborhoods of prism orbifolds with $(2,4,4)$ cusps. 
\end{remark}

Following Lakeland and Roth \cite[Section 4]{LakelandRoth}, we can get matrix representations for $\dprgr(\be)$ as determined by the following cases:

{\bf Case 1: $a_3=2$.}
In this case, we have that 

$$ M_1 = \begin{pmatrix} 0 & -1\\ 1 & 0\end{pmatrix},$$

$$M_2 = \begin{pmatrix} e^{-i\tang{1}} & -y_1 i (e^{-i\tang{1}}-e^{i\tang{1}})\\ 0 & e^{i\tang{1}} \end{pmatrix}$$


$$M_3 = \begin{pmatrix} e^{i\tang{2}} & -y_2 i (e^{-i\tang{2}}-e^{i\tang{2}})\\ 0 & e^{-i\tang{2}} \end{pmatrix}$$


$$M_4 = \begin{pmatrix} \frac{1}{r}(-s+ti) & \frac{1}{r} (s^2+t^2)-r \\ \frac{1}{r} & \frac{1}{r}(-s-ti)  \end{pmatrix},$$

where $\theta_i = \frac{\pi}{a_i}$ and $y_1, y_2, r,s,$ and $t$ defined above.

{\bf Case 2: $a_3=3$.}
In this case, we have that 

$$ M_1 = \begin{pmatrix} -1 & -1\\ 1 & 0\end{pmatrix},$$

$$M_2 = \begin{pmatrix} e^{-i\tang{1}} & -z_1 i (e^{-i\tang{1}}-e^{i\tang{1}})\\ 0 & e^{i\tang{1}} \end{pmatrix}$$


$$M_3 = \begin{pmatrix} e^{i\tang{2}} & z_2 i (e^{-i\tang{2}}-e^{i\tang{2}})\\ 0 & e^{-i\tang{2}} \end{pmatrix}$$


$$M_4 = \begin{pmatrix} \frac{1}{r}(-s-1+ti) & \frac{1}{r} (-s-1+ti)(-s-ti)-r \\ \frac{1}{r} & \frac{1}{r}(-s-ti)  \end{pmatrix},$$

where $z_1,z_2,s,t$ and $r$ are defined above.

If we denote by $\rho: \dprgr(\be) \rightarrow PSL(2,\mathbb{C})$, then in either case we have that  $\rho(x)=M_1$,
$\rho(y) = M_2^{-1}M_1$, $\rho(z) = M_3^{-1}M_2$, $\rho(w)=M_4^{-1}M_2$.

\subsection{Totally geodesic surfaces}\label{tot geod} Here we make an observation that was not recorded in \cite{LakelandRoth}, on the existence of a certain triangle embedded in each prism, and hence on the existence of totally geodesic surfaces in manifold covers of the prism orbifolds.

\begin{lemma}\label{corandreev}
The hyperbolic prism $P(\be)$ determined by a nine-tuple $\be = (a_1, a_2, \dots, a_9)$ from one of Tables \ref{table236} or \ref{table333} contains a unique compact, totally geodesic triangle $T$ with its vertices in the edges labeled $a_4$, $a_5$, $a_6$, which meets each quadrilateral face at a right angle and hence has vertex angles $\tang{4}$, $\tang{5}$, and $\tang{6}$. Either $T$ is the compact triangular face of $P(\be)$, or it intersects only quadrilateral faces.
\end{lemma}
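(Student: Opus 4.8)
The plan is to realize $T$ as the intersection with $P(\be)$ of the unique geodesic plane meeting all three quadrilateral faces orthogonally, and to read off its angles from the dihedral angles of $P(\be)$. First I would fix the combinatorics by reading edge--face incidences off the presentation (\ref{able}): the three quadrilateral faces $F_1,F_2,F_3$ are pairwise adjacent, meeting along the three edges labeled $a_4$, $a_5$, $a_6$ (the edges joining the two triangular faces), with dihedral angles $\tang4,\tang5,\tang6$ respectively, and they share no common vertex. Thus $\{F_1,F_2,F_3\}$ is a prismatic $3$-circuit. A plane $\Pi$ perpendicular to all three $F_i$ cuts the dihedral angle along $a_4$ (resp.\ $a_5$, $a_6$) in an angle equal to $\tang4$ (resp.\ $\tang5$, $\tang6$), so setting $T=\Pi\cap P(\be)$ will automatically produce the asserted vertex angles and place the vertices of $T$ on $a_4,a_5,a_6$.

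The reason this is a corollary of Andreev's theorem is that the necessary conditions Lakeland--Roth verified for each tuple include the prismatic $3$-circuit inequality $\tang4+\tang5+\tang6<\pi$; this holds for every entry of Tables \ref{table236} and \ref{table333} simply because each is a genuine finite-volume hyperbolic prism. I would then work in the Minkowski model $\mathbb{R}^{3,1}$, taking outward unit spacelike normals $v_1,v_2,v_3$ to $F_1,F_2,F_3$, so that $\langle v_i,v_j\rangle=-\cos\theta$ for the dihedral angle $\theta$ along the shared edge. The Gram matrix $G=(\langle v_i,v_j\rangle)$ has $1$'s on its diagonal and determinant $1-\cos^2\tang4-\cos^2\tang5-\cos^2\tang6-2\cos\tang4\cos\tang5\cos\tang6$, which is negative exactly because $\tang4+\tang5+\tang6<\pi$. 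Its leading principal minors are then $1$, $\sin^2\tang4$, and $\det G$, with signs $+,+,-$, so $G$ has signature $(2,1)$. Hence $W=\operatorname{span}(v_1,v_2,v_3)$ is three-dimensional of signature $(2,1)$, its orthogonal complement $W^\perp$ is one-dimensional and spacelike, and a unit vector $u$ spanning $W^\perp$ defines the plane $\Pi=\{p:\langle p,u\rangle=0\}$ perpendicular to every $F_i$. Uniqueness of $\Pi$, and with it uniqueness of $T$ among triangles with the stated properties, follows from $\dim W^\perp=1$.

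Showing that $T:=\Pi\cap P(\be)$ is a compact triangle contained in $P(\be)$, with vertices on the edges $a_4,a_5,a_6$ rather than on their geodesic extensions, is the main obstacle. Writing $L_i$ for the complete geodesic carrying $a_i$ and $\Pi_{\mathrm{bot}},\Pi_{\mathrm{top}}$ for the planes of the compact and of the (cusped) non-compact triangular faces, the point $\Pi\cap L_i$ lies on the actual edge $a_i$ precisely when it falls between $\Pi_{\mathrm{bot}}$ and $\Pi_{\mathrm{top}}$ along $L_i$; equivalently, $\Pi$ must separate the two triangular faces within the infinite prism bounded by $F_1,F_2,F_3$. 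I would verify this using the explicit embedding of \Cref{subsec: embedding333}: placing the ideal vertex at $\infty$ makes $F_1,F_3$ and the non-compact triangular face vertical half-planes and $F_2$ the unit hemisphere, which forces $\Pi$ to be a hemisphere orthogonal to all of these, after which a direct computation locates its feet on $L_4,L_5,L_6$ inside the edges (a convexity argument, using that every dihedral angle of $P(\be)$ is at most $\pi/2$, should give the same conclusion).

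Finally, the dichotomy follows from the uniqueness of $\Pi$. If $a_7=a_8=a_9=2$, then the plane $\Pi_{\mathrm{bot}}$ of the compact triangular face already meets $F_1,F_2,F_3$ at right angles along $a_7,a_8,a_9$, so $\Pi_{\mathrm{bot}}=\Pi$ and $T$ is exactly that face. Otherwise $\Pi$ coincides with neither $\Pi_{\mathrm{bot}}$ nor $\Pi_{\mathrm{top}}$ (the latter excluded since $T$ is compact while the non-compact triangular face is not), so $T$ meets $\partial P(\be)$ only in the three quadrilateral faces.
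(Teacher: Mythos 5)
Your overall strategy is genuinely different from the paper's and is viable in principle: you produce the plane $\Pi$ perpendicular to the three quadrilateral faces directly, via the signature of the Gram matrix of their normals in the Lorentzian model, whereas the paper never constructs $\Pi$ at all --- it instead uses Andreev's theorem to prove existence of the two sub-prisms $P^{\pm}$ with the edge labels $(2,2,2,a_4,a_5,a_6,a_7,a_8,a_9)$ and $(a_1,a_2,a_3,a_4,a_5,a_6,2,2,2)$, glues them along their isometric $(\tang{4},\tang{5},\tang{6})$-triangular faces, and invokes Andreev uniqueness to identify the result with $P(\be)$, so that the gluing triangle is automatically a compact triangle sitting inside the prism with its vertices on the correct edges. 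Your Gram-matrix computation (determinant $1-\cos^2\tang{4}-\cos^2\tang{5}-\cos^2\tang{6}-2\cos\tang{4}\cos\tang{5}\cos\tang{6}<0$ precisely when $\tang{4}+\tang{5}+\tang{6}<\pi$, hence signature $(2,1)$ and a one-dimensional spacelike orthogonal complement) is correct, and it cleanly gives both existence and uniqueness of the perpendicular \emph{plane}.

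The gap is the step you yourself flag as ``the main obstacle'': showing that the feet $\Pi\cap L_i$ lie on the actual edge segments $a_4$, $a_5$, $a_6$ rather than on their geodesic extensions, equivalently that $\Pi$ separates the two triangular faces inside the infinite prism $H_1^+\cap H_2^+\cap H_3^+$. This is the substantive content of the lemma --- without it you have a triangle perpendicular to the three planes $H_i$, but not a triangle contained in $P(\be)$ --- and you do not prove it. You offer two unexecuted strategies: a coordinate computation in the Lakeland--Roth embedding, and an unspecified ``convexity argument'' from the dihedral angles being at most $\pi/2$. The first is feasible but is a genuine case-by-case calculation (it must be carried out for each cusp type and each value of $a_3$, and for the infinite families in Table \ref{table236} it must be done uniformly in $n$); the second is not obviously routine, since relating the position of the common perpendicular foot on $L_i$ to the endpoints of $a_i$ requires hyperbolic trigonometry in each quadrilateral face (e.g.\ Lambert-quadrilateral estimates), not just convexity of $P(\be)$. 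Until one of these is actually carried out, the existence half of the lemma is unproven; the final dichotomy paragraph also silently uses strict containment of the feet in the open edges to conclude that $T$ meets only quadrilateral faces. By contrast, the paper's route makes this issue disappear: once $P^+$ and $P^-$ exist by Andreev, the triangle is by construction a face of each and hence interior to the reassembled $P(\be)$. I would either complete the containment argument or switch to the paper's cut-and-reassemble approach for that step.
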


\begin{proof}
If $a_7 = a_8 = a_9 = 2$, then $T$ is the triangular face of $P\doteq P(\be)$ whose sides are edges labeled $a_7$, $a_8$, and $a_9$ (the compact one). We thus assume below that at least one of those three edge labels is larger than $2$. In this case we will find $T$ by separately proving existence of two sub-polyhedra $P^+$ and $P^-$ of $P$ that would result from cutting it along $T$, as pictured in Figure \ref{compact triangle}. $P$ is then recovered as the union of $P^+$ with $P^-$ along triangular faces, which produce $T$ when identified.

Each of $P^+$ and $P^-$ itself has the combinatorial type of a prism. Their edge labels are:\begin{align*} 
 (a_1^+, a_2^+, a_3^+, a_4^+, a_5^+, a_6^+, a_7^+, a_8^+, a_9^+) & = (2, 2, 2, a_4, a_5, a_6, a_7, a_8, a_9),\mbox{ and}\\ 
 (a_1^-, a_2^-, a_3^-, a_4^-, a_5^-, a_6^-, a_7^-, a_8^-, a_9^-) & = (a_1, a_2, a_3, a_4, a_5, a_6, 2, 2, 2), 
\end{align*}
 respectively. We just need to check that the collection of dihedral angles $\tang{i}^{\pm} = \pi/a_i^{\pm}$ determined by each tuple of labels satisfies the conditions of Andreev's theorem. As listed in \cite[Theorem 2]{Andreev}, these are:\begin{itemize}
     \item m0: $0<\tang{i}^{\pm} \le \pi/2$ for all $i$ holds by construction.
     \item m$1^*$: The sum of dihedral angles at edges meeting at a vertex is at least $\pi$. At each vertex of $P^{\pm}$ that is not a vertex of $P$, the dihedral angle equals $\pi/2$ at at least two such edges.
     \item m2: for the unique triangular prismatic element---the cycle of three quadrilateral faces---$\tang{4}^{\pm} + \tang{5}^{\pm} + \tang{6}^{\pm} < \pi$. This is inherited from $P$ in both cases.
     \item m3 holds vacuously: there is no quadrangular prismatic element.
     \item m4: the sum of all dihedral angles at edges of the two triangular faces is less than $3\pi$. For $P^+$ this follows from the hypothesis that at least one of $a_7$, $a_8$ and $a_9$ is greater than $2$; for $P^-$, the same holds for $a_1$ and $a_2$.
     \item m5 again holds vacuously, as there are no three faces of a prism that satisfy the conditions of its hypothesis.
 \end{itemize}
Andreev's theorem thus implies the existence of $P^+$ and $P^-$, and hence the desired triangle. It also asserts that $P^+$ and $P^-$ are prescribed uniquely up to isometry by their collection of edge labels, from which it follows that $T$ is the unique triangle meeting the quadrilateral faces at right angles. Uniqueness also implies that $T$ is unique in the case $a_7 = a_8 = a_9 = 2$, since otherwise $P$ would be isometric to a sub-polyhedron of itself.
\end{proof}

\begin{figure}[ht]
\centering
\begin{tikzpicture}[scale=1]

\begin{scope}[xshift=-6cm]
    \draw [thick] (0,-2) -- (-2,-2);
    \fill [color=white] (-1.2,-2.6) -- (-1.35,-1.9) -- (-1.05,-1.9);
    \draw [thick] (0,-2) -- (0,0) -- (-1.2,-0.6) -- (-1.2,-2.6) -- (-2,-2);
    \draw [thick] (0,0) -- (0,-2) -- (-1.2,-2.6) -- (-1.2,-0.6) -- (0,0) -- (-2,0) -- (-2,-2) -- (-1.2,-2.6);
    \draw [thick] (-1.2,-0.6) -- (-2,0);
    \fill [color=white] (-1.2,-2.6) circle [radius=0.1];
    \draw (-1.2,-2.6) circle [radius=0.1];
    
    \draw [dashed] (0,-1) -- (-2,-1) -- (-1.2,-1.6) -- (0, -1);
    \fill [color=red, opacity=0.2] (0,-1) -- (-2,-1) -- (-1.2,-1.6);
    \fill [color=white] (-1.2,-1.6) -- (-1.35,-0.9) -- (-1.05,-0.9);
    \draw [thick] (-1.2,-0.6) -- (-1.2,-2);

    \node [below] at (-1.6, -1.3) {\scriptsize \textcolor{red}{2}};
    \node [below] at (-0.6, -1.3) {\scriptsize \textcolor{red}{2}};
    \node [above] at (-0.8, -1) {\scriptsize \textcolor{red}{2}};

    \node at (-1,-3.3) {\Large $P$};
        
\end{scope}

\node at (-3.8,-1.1) {\Huge $=$};

\begin{scope}
    \draw [thick] (0,-2) -- (-2,-2);
    \fill [color=white] (-1.2,-2.6) -- (-1.35,-1.9) -- (-1.05,-1.9);
    
    \draw [thick] (0,-2) -- (0,-1);
    \draw [thick] (-2,-2) -- (-2,-1);
    \draw [thick] (-1.2,-1.6) -- (-1.2,-2.6);

    \draw [thick] (-2,-2) -- (-1.2,-2.6);
    \draw [thick] (0,-2) -- (-1.2,-2.6);
    
    
    \fill [color=white] (-1.2,-2.6) circle [radius=0.1];
    
    \draw (-1.2,-2.6) circle [radius=0.1];

    \draw [dashed] (0,-1) -- (-2,-1) -- (-1.2,-1.6) -- (0, -1);
    \fill [color=red, opacity=0.2] (0,-1) -- (-2,-1) -- (-1.2,-1.6);

    \node [below] at (-1.6, -1.3) {\scriptsize \textcolor{red}{2}};
    \node [below] at (-0.6, -1.3) {\scriptsize \textcolor{red}{2}};
    \node [above] at (-0.8, -1) {\scriptsize \textcolor{red}{2}};

    \node at (-1,-3.3) {\Large $P^-$};
\end{scope}

\node at (2,-1.1) {\Huge $\cup$};

\begin{scope}[xshift=6cm]

    \draw [thick] (0,-1) -- (0,0);
    \draw [thick] (-2,-1) -- (-2,0);
    \draw [thick] (-1.2,-0.6) -- (-1.2,-1.6);

    \draw [thick] (0,0) -- (-2,0) -- (-1.2,-0.6) -- (0, 0);

    \draw [dashed] (0,-1) -- (-2,-1) -- (-1.2,-1.6) -- (0, -1);
    \fill [color=red, opacity=0.2] (0,-1) -- (-2,-1) -- (-1.2,-1.6);
    \fill [color=white] (-1.2,-1.6) -- (-1.35,-0.9) -- (-1.05,-0.9);
    \draw [thick] (-1.2,-0.6) -- (-1.2,-1.6);

    \node [below] at (-1.6, -1.3) {\scriptsize \textcolor{red}{2}};
    \node [below] at (-0.6, -1.3) {\scriptsize \textcolor{red}{2}};
    \node [above] at (-0.8, -1) {\scriptsize \textcolor{red}{2}};

    \node at (-1,-3.3) {\Large $P^+$};
\end{scope}

\end{tikzpicture}
\caption{The compact triangle from \Cref{corandreev}.}
\label{compact triangle}
\end{figure}
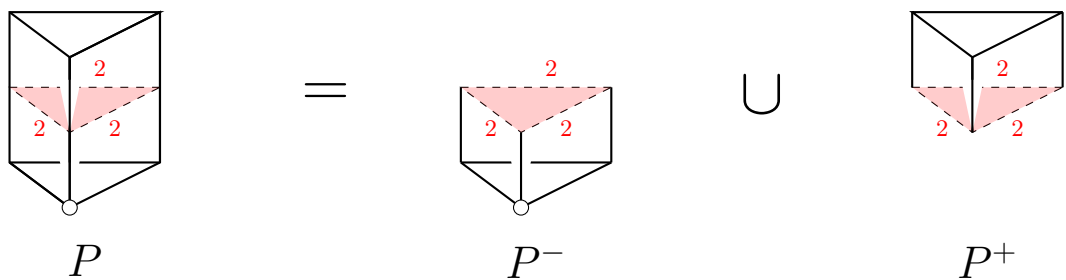

\begin{corollary}\label{triangle group} For any nine-tuple $\be = (a_1, a_2, \dots, a_9)$ from Table \ref{table236} or \ref{table333}, the subgroup of $\prgr(\be)$ generated by reflections in the quadrilateral faces of $P(\be)$ is a Fuchsian group generated by reflections in the sides of a triangle with vertex angles $\tang{4}$, $\tang{5}$, and $\tang{6}$.\end{corollary}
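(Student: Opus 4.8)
The plan is to read the conclusion off directly from the triangle $T$ produced by \Cref{corandreev}, using the fact that a reflection in a plane orthogonal to a totally geodesic plane restricts to a reflection of that plane. Let $G$ denote the subgroup of $\prgr(\be)$ generated by the reflections $R_1$, $R_2$, $R_3$ in the planes $H_1$, $H_2$, $H_3$ carrying the three quadrilateral faces of $P(\be)$, and let $\Sigma$ be the unique totally geodesic plane of $\Hthree$ containing the compact triangle $T$ of \Cref{corandreev}. Since that lemma asserts $T$ meets each quadrilateral face at a right angle, along each side of $T$ the plane $\Sigma$ meets the corresponding $H_j$ at dihedral angle $\pi/2$; that is, $\Sigma\perp H_j$ for $j=1,2,3$.

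First I would invoke the standard fact that a reflection in a plane orthogonal to $\Sigma$ preserves $\Sigma$ setwise and restricts there to the reflection in the geodesic $\gamma_j := H_j\cap\Sigma$. Thus $G$ preserves $\Sigma$, and its image under restriction to $\Sigma\cong\mathbb{H}^2$ is generated by the reflections in the three geodesics $\gamma_1,\gamma_2,\gamma_3$. These are precisely the geodesic extensions of the three sides of $T$: each side of $T$ is an arc of $T\cap H_j$ for exactly one $j$, since $T$ meets each quadrilateral face. Hence the $\gamma_j$ bound $T$ and meet pairwise at its vertices, where by \Cref{corandreev} the vertex angles are $\tang{4}$, $\tang{5}$, and $\tang{6}$. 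The restriction of $G$ to $\Sigma$ is therefore the reflection group of a hyperbolic triangle with these angles.

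It then remains to confirm that $G$ is genuinely Fuchsian, i.e. acts faithfully and discretely on $\Sigma$. Discreteness is immediate, as $G$ is a subgroup of the discrete group $\prgr(\be)$. For faithfulness I would observe that each generator $R_j$, being a reflection in a plane orthogonal to $\Sigma$, preserves each of the two half-spaces bounded by $\Sigma$; hence so does every element of $G$, so $G$ contains neither the reflection in $\Sigma$ nor any other isometry acting trivially on $\Sigma$. The restriction $G\to\mathrm{Isom}(\Sigma)$ is thus injective, identifying $G$ with the triangle reflection group of a hyperbolic triangle with angles $\tang{4}$, $\tang{5}$, $\tang{6}$, as claimed.

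The only content beyond \Cref{corandreev} is the passage from the right-angle condition to the statement that the face reflections restrict to the side reflections of $T$; everything else is bookkeeping. I expect the sole subtlety to be the faithfulness of the restriction map—equivalently, that no nontrivial element of $G$ fixes $\Sigma$ pointwise—which the half-space argument above is designed to settle.
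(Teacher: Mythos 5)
Your proof is correct and follows essentially the same route as the paper: both arguments take the totally geodesic plane through the triangle $T$ of \Cref{corandreev}, use the right-angle condition to see that each quadrilateral-face reflection stabilizes that plane and restricts to the reflection in the geodesic containing the corresponding side of $T$, and read the vertex angles off from the dihedral angles. Your additional remarks on discreteness and the injectivity of the restriction map are sound and merely make explicit what the paper leaves implicit.
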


\begin{proof} For such a tuple $\be$, let $H$ be the plane containing the triangle $T$ of \Cref{corandreev}. By that result, $H$ meets any quadrilateral face $Q$ of $P(\be)$ at right angles and therefore is stabilized by the reflection in this face. The reflection in the plane containing $Q$ restricts to $H$ as reflection in the geodesic containing the side $Q\cap H$ of $T$. Again because $H$ meets each quadrilateral face at right angles, the angle between any two sides of $T$ equals the dihedral angle along the edge of intersection of the corresponding quadrilateral faces.
\end{proof}

\begin{prop}\label{geodesic surface} Suppose $\be$ is a nine-tuple from one of Tables \ref{table236} or \ref{table333}, let  $P(\be)$ be the associated hyperbolic prism, and let $T\subset P(\be)$ be the totally geodesic triangle from \Cref{corandreev}. The preimage of $T$ in any finite-degree manifold cover $M$ of $P(\be)$ is a (possibly disconnected) closed, embedded totally  geodesic surface $S$. If at least one of $a_7$, $a_8$, or $a_9$ does not equal $2$ then $S$ separates $M$.
\end{prop}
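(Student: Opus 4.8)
The plan is to work in $\Hthree$ throughout. Realize $M=\Hthree/\Gamma$ for a torsion-free finite-index subgroup $\Gamma\le\prgr(\be)$, with covering map $\pi\co M\to O(\be)=\Hthree/\prgr(\be)$, and let $H\subset\Hthree$ be the totally geodesic plane containing $T$. Let $\Delta\le\prgr(\be)$ be the subgroup generated by reflections in the three quadrilateral faces of $P(\be)$. By \Cref{corandreev} these reflections preserve $H$, so $\Delta\le\mathrm{Stab}_{\prgr(\be)}(H)$, and by \Cref{triangle group} $\Delta$ acts on $H$ as the $(a_4,a_5,a_6)$-triangle reflection group with the compact triangle $T$ as a fundamental domain; in particular $\Delta$ is cocompact on $H$. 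Write $\bar T\subset O(\be)$ for the image of $T$ (equivalently of $H$), and set $S=\pi^{-1}(\bar T)$. The whole statement reduces to one geometric claim: \emph{the $\prgr(\be)$-translates of $H$ are pairwise equal or disjoint.} Granting it, $\mathrm{Stab}_{\prgr(\be)}(H)=\Delta$ and $\bar T=H/\Delta$ is an embedded totally geodesic $2$-suborbifold of $O(\be)$, so its preimage $S$ in $M$ is an embedded totally geodesic surface; it is closed because $T$, hence $\bar T$, is compact and $\pi$ has finite degree, and it has no boundary and no singular points because $\Gamma$ is torsion-free, so the mirror and corner-reflector points of $H/\Delta$ unwrap to interior surface points of the manifold $M$.

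To prove the claim, first note $H=\bigcup_{\delta\in\Delta}\delta T$, so the prisms meeting $H$ are exactly the $\Delta$-translates of $P=P(\be)$; these fill a ``slab'' $\Sigma=\Delta\cdot P$ whose frontier is the union of the $\Delta$-translates of the two triangular faces of $P$, namely the ideal face $F_{\mathrm{top}}$ (edges $a_1,a_2,a_3$) and the compact face $F_{\mathrm{bot}}$ (edges $a_7,a_8,a_9$). Crossing a frontier face enters a neighboring slab, whose central plane is a new $\prgr(\be)$-translate of $H$. The crux is that $H$ is \emph{ultraparallel} to the planes $\Pi_{\mathrm{top}}$ and $\Pi_{\mathrm{bot}}$ carrying these two faces: then $H$ lies in an open half-space of each, each neighboring translate of $H$ is separated from $H$ by one of these planes, and hence is disjoint from it. Propagating this along the tree-like nerve of the slab decomposition of $\Hthree$ yields pairwise disjointness of all translates that are not equal. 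Verifying the ultraparallelism of $H$ with $\Pi_{\mathrm{top}}$ and $\Pi_{\mathrm{bot}}$ — which can be extracted from the explicit embedding of \Cref{subsec: embedding333}, or from a convexity argument using Andreev's theorem — is the \emph{main obstacle} of the proof.

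Finally, suppose at least one of $a_7,a_8,a_9$ exceeds $2$, so $T$ lies in the interior of $P$ and $H$ is not the plane of a face of $P$. Then no $\prgr(\be)$-translate of $H$ is a mirror of the reflection group, and no element of $\prgr(\be)$ interchanges the two sides of any translate $gH$ (an element fixing $gH$ lies in a conjugate of $\Delta$, hence is generated by reflections in planes perpendicular to $gH$ and preserves each side). Thus $\prgr(\be)$ acts without inversions on the tree $\mathcal G$ dual to the disjoint family $\tilde S=\bigcup_g gH$ — vertices the components of $\Hthree\setminus\tilde S$, edges the planes $gH$ — and the bipartition of $\mathcal G$ defines a homomorphism $p\co\prgr(\be)\to\ZZ/2\ZZ$ recording the parity of the number of planes of $\tilde S$ separating the base chamber from its image. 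Evaluating $p$ on the five face-reflections: the three quadrilateral reflections fix $H$ and preserve its two sides, contributing $0$; each of the two triangular-face reflections carries the base chamber across its mirror $\Pi_{\mathrm{top}}$ or $\Pi_{\mathrm{bot}}$, which is disjoint from $\tilde S$, crossing zero or two planes of $\tilde S$, hence also contributing $0$. Therefore $p\equiv 0$, the two color classes of chambers are $\prgr(\be)$-invariant, and they descend to a partition of $M\setminus S$ into two nonempty open subsets; so $S$ separates $M$. (When $a_7=a_8=a_9=2$ the triangle $T$ is the compact face, $H$ is a mirror, reflection in $H$ lies in $\prgr(\be)$ and inverts the edge $H$ of $\mathcal G$; this is exactly what breaks the argument, consistent with $S$ then failing to separate.)
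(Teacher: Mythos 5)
Your architecture is sound and genuinely different from the paper's, but as written the proof has a real gap: the statement you yourself label ``the main obstacle'' --- that the plane $H$ is disjoint from the carrier planes $\Pi_{\mathrm{top}}$ and $\Pi_{\mathrm{bot}}$ of the two triangular faces --- is never proved, and everything else (embeddedness, $\mathrm{Stab}_{\prgr(\be)}(H)=\Delta$, the dual tree, the parity homomorphism) is conditional on it. The fact is true, and the cleanest route is the one you gesture at: $P^+$ and $P^-$ are acute-angled finite-volume polyhedra (every dihedral angle is $\pi/a$ with $a\ge 2$), and for such polyhedra the supporting planes of two facets that do not share an edge cannot intersect in $\Hthree$ (this is the standard divergence lemma for acute-angled polytopes, due to Andreev/Vinberg); applying it to the non-adjacent facets $T$ and $F_{\mathrm{top}}$ of $P^-$, and to $T$ and $F_{\mathrm{bot}}$ of $P^+$, gives exactly what you need. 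You must cite or prove that lemma --- it is not in the paper and does not follow from \Cref{corandreev} alone. A second, smaller imprecision: the frontier of the slab $\Delta\cdot P$ is not carried by the two planes $\Pi_{\mathrm{top}},\Pi_{\mathrm{bot}}$ but by the infinite family $\delta\Pi_{\mathrm{top}},\delta\Pi_{\mathrm{bot}}$ ($\delta\in\Delta$); the propagation still works because $H$ is $\Delta$-invariant, so disjointness from $\Pi_{\mathrm{top}}$ gives disjointness from every $\delta\Pi_{\mathrm{top}}$, but this should be said. Finally, when $a_7=a_8=a_9=2$ the slab as you define it is one-sided (its frontier includes $H$ itself), so the disjointness claim --- which you still need there, since the proposition asserts embeddedness in all cases --- requires the obvious modification of taking $\Delta$ together with the reflection in $H$.

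For comparison, the paper avoids the global disjointness question entirely. Its proof is local: it builds charts around points of $S=\bigcup T_i$ in the cover, using the fact that $T$ meets each quadrilateral face at right angles so that the reflection of $\widetilde T_i$ across a quadrilateral face lies in the same plane, and then invoking the \emph{uniqueness} clause of \Cref{corandreev} to identify that reflected image with $\widetilde T_j$; separation comes directly from the face-pairings preserving the $P^\pm$ decomposition. Your global approach buys a stronger intermediate statement (the $\prgr(\be)$-orbit of $H$ is a disjoint family of planes, hence the image of $H$ is an embedded totally geodesic suborbifold of $O(\be)$ itself, not just of its manifold covers) and a slick separation argument via the parity homomorphism on the dual tree --- but it costs you the acute-angled divergence lemma, which is precisely the piece still missing.
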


\begin{proof} Fix a tuple $\be$ belonging to one of Tables \ref{table236} or \ref{table333}, and let $T\subset P(\be)$ be the triangle supplied by Lemma \ref{corandreev}. The action of the reflection group $\prgr(\be)$ tiles $\mathbb{H}^3$ with a union of translates of $P(\be)$. Any manifold cover of the prism orbifold is cellulated by the projections of these translates. For such a manifold cover $M$, cellulated by copies $P_1,\hdots,P_n$ of $P(\be)$, let $T_1,\hdots,T_n$ be the corresponding copies of $T$. We claim that $S = T_1 \cup \cdots \cup T_n$ is a closed, embedded, totally geodesic surface in $M$.

One can show that $S$ is a totally geodesic surface by giving each $p\in S$ a chart that takes the intersection of $S$ with its domain into a totally geodesic copy of $\mathbb{H}^2$ in $\mathbb{H}^3$. If $p$ lies in the interior of $T_i$ for some $i$, we can take this chart to be the inverse of the restriction of the universal covering to the interior of some $\widetilde{P}_i\subset\mathbb{H}^3$ that projects to $P_i$. 

For $p$ in the interior of an edge of $T_i$, itself contained in a quadrilateral face $Q$ of $P_i$, let $P_j$ be the copy of $P(\be)$ sharing this face with $P_i$. Then for $\widetilde{P}_i$ as before, there is a copy $\widetilde{P}_j$ of $P(\be)$ projecting to $P_j$ that intersects $\widetilde{P}_i$ in a quadrilateral face $\widetilde{Q}$ projecting to $Q$. Because $\prgr(\be)$ is generated by reflections in the faces of $P(\be)$, $\widetilde{P}_j$ is the reflection of $\widetilde{P}_i$ across $\widetilde{Q}$. The preimage $\widetilde{T}_i$ of $T_i$ in $\widetilde{P}_i$ intersects $\widetilde{Q}$ at right angles; hence it and its image under the reflection across $\widetilde{Q}$ lie in a single totally geodesic plane. By the uniqueness assertion of Lemma \ref{corandreev}, this reflected image of $\widetilde{T}_i$ is the preimage $\widetilde{T}_j$ of $T_j$ under the projection $\widetilde{P}_j\to P_j$. There is thus a chart map around $p$, defined on the unions of the interiors of $P_i$ and $P_j$ with that of $Q$, sending the interior of $T_i\cup T_j$ into $\widetilde{T}_i\cup\widetilde{T}_j$.

One builds a chart for $p$ in a vertex of $T_i$ in a similar way, working around the sequence of faces containing the vertex. This shows that $S$ is a totally geodesic surface. Since each $T_i$ is compact and $M$ is a finite-degree cover, the surface $S$ is closed. For embeddedness, we recall that each $T_i$ meets only quadrilateral faces, and hence intersects a unique $T_j$ along each edge, in the case that $a_7$, $a_8$ and $a_9$ do not all equal $2$. In the case that $a_7=a_8=a_9=2$, we must additionally observe that since $T_i$ is the compact triangular face of $P_i$, it is equal to some $T_k$ for a copy $P_k$ of $P(\be)$ adjacent to $P_i$, and hence it intersects $T_j$ along an edge if and only if $P_i$ or $P_k$ shares a quadrilateral face with $P_j$.

To see that $S$ is separating if at least one of $a_7$, $a_8$, or $a_9$ does not equal $2$, we recall from the proof of \Cref{corandreev} that in this case, $T$ separates $P(\be)$ into sub-prisms $P^+$ and $P^-$. The resulting decomposition of each $P_i$ into $P_i^{\pm}$ is preserved by the face identifications, so $S = \bigcup T_i$ separates $M$ into submanifolds $M^+ = \bigcup P_i^+$ and $M^-=\bigcup P_i^-$.
\end{proof}

\section{Geometric particulars of \texorpdfstring{$O^{333}_2$}{O2} and \texorpdfstring{$O^{333}_3$}{O3}}\label{sec: two_n_three}

Here we will draw on the previous section's perspective to derive particular results about the orbifolds $O^{333}_2$ and $O^{333}_3$ of primary interest in this paper. Here is the data from Table \ref{table333} for these two orbifolds:

\begin{center}
\begin{tabular}{lrrrrrlrrrrrr}

\toprule
\char"0023 & $a_1$ &  $a_2$ &  $a_3$ &  $a_4$ &  $a_5$ &  $a_6$ &  $a_7$ &  $a_8$ &  $a_9$ \\
\midrule
$O^{333}_2$ & 3 & 3 & 2 & 3 & 3 & 4 & 2 & 2 & 3 \\
$O^{333}_3$ & 3 & 3 & 2 & 3 & 3 & 4 & 2 & 3 & 2 \\
\bottomrule
\end{tabular}
\end{center}

Section \ref{quick n easy} records two quick consequences of the work in \Cref{tot geod} for the present orbifolds: that they have equal volumes and are non-arithmetic, in Remarks \ref{scis cong} and \ref{non-arithmetic}, respectively. In \Cref{subsec: volumes} below we will give exact formulas for the volumes of $O^{333}_2$ and $O^{333}_3$ as sums of single integrals. Numerically computing these integrals yields the approximate value $0.938069938216186$ in both cases. The main result of \Cref{min orb}, \Cref{no cover}, asserts that neither $O^{333}_2$ nor $O^{333}_3$ has a non-trivial cover to another orbifold.

\subsection{Initial observations}\label{quick n easy} Here we record two quick consequences of the work in \Cref{tot geod} when it is specialized to the present setting.

\begin{remark}\label{scis cong}
    $O^{333}_2$ is \mbox{\rm OR-scissors congruent} to $O^{333}_3$, meaning that the two orbifolds decompose into collections of hyperbolic polyhedra which are identical up to possibly orientation-reversing isometry. In particular, they have the same volume.
\end{remark}

The decomposition here comes from the polyhedra $P^+$ and $P^-$ obtained by cutting the prism along the triangle $T$ from Lemma \ref{corandreev} as in its proof. Each of the two $\be$ in the table above yields the same non-compact prism $P^-$ after cutting, with associated nine-tuple
\[ \be^- = (3, 3, 2, 3, 3, 4, 2, 2, 2) \]
The compact prisms $P^+$ associated to the two $\be$ are mirror images of each other. We picture them in Figure \ref{prism caps} with the one for $O^{333}_2$ on the left. In each case the face shared with $P^-$---the triangle $T$ from \Cref{corandreev}---is shaded. In this case the vertex angles of $T$ are $\pi/3$, $\pi/3$, and $\pi/4$. As a result we obtain:

\begin{figure}[ht]
\begin{tikzpicture}

\begin{scope}
    \draw [thick] (-1.2,-1.4) -- (-1.2,0.6);
    \fill [opacity=0.1] (0,-2) -- (-1.2,-1.4) -- (-2,-2);
    \fill [color=white] (-1.2,0) circle [radius=0.15];
    \draw [thick] (0,0) -- (0,-2) -- (-1.2,-1.4) -- (-2,-2) -- (-2,0) -- (-1.2,0.6) -- (0,0) -- (-2,0);
    \draw [thick] (0,-2) -- (-2,-2);

    \node [right] at (0,-1) {\scriptsize $4$};
    \node [above right] at (-0.7,-1.8) {\scriptsize $2$};
    \node [above left] at (-1.45,-1.75) {\scriptsize $2$};
    \node [below] at (-1.1,-1.95) {\scriptsize $2$};
    \node [right] at (-1.25,-0.6) {\scriptsize $3$};
    \node [left] at (-2,-1) {\scriptsize $3$};
    \node [above right] at (-0.7,0.2) {\scriptsize $2$};
    \node [above left] at (-1.5,0.2) {\scriptsize $2$};
    \node [below] at (-0.65,0.05) {\scriptsize $3$};
\end{scope}

\begin{scope}[xshift=-2.5in]
    \draw [thick] (0,-2) -- (-2,-2);
    \fill [opacity=0.1] (0,-2) -- (-2,-2) -- (-1.2,-2.6);
    \fill [color=white] (-1.2,-2.6) -- (-1.35,-1.9) -- (-1.05,-1.9);
    \draw [thick] (0,0) -- (0,-2) -- (-1.2,-2.6) -- (-1.2,-0.6) -- (0,0) -- (-2,0) -- (-2,-2) -- (-1.2,-2.6);
    \draw [thick] (-1.2,-0.6) -- (-2,0);

    \node [right] at (0,-1) {\scriptsize $4$};
    \node [below right] at (-0.7,-2.2) {\scriptsize $2$};
    \node [below left] at (-1.5,-2.2) {\scriptsize $2$};
    \node [above] at (-0.6,-2.05) {\scriptsize $2$};
    \node [right] at (-1.25,-1.35) {\scriptsize $3$};
    \node [left] at (-2,-1) {\scriptsize $3$};
    \node [below right] at (-0.7,-0.2) {\scriptsize $2$};
    \node [below left] at (-1.5,-0.2) {\scriptsize $2$};
    \node [above] at (-1,-0.05) {\scriptsize $3$};
\end{scope}

\end{tikzpicture}
\caption{The compact prisms $P^+$ of \Cref{corandreev}'s proof, for $O^{333}_2$ and $O^{333}_3$.}
\label{prism caps}
\end{figure}
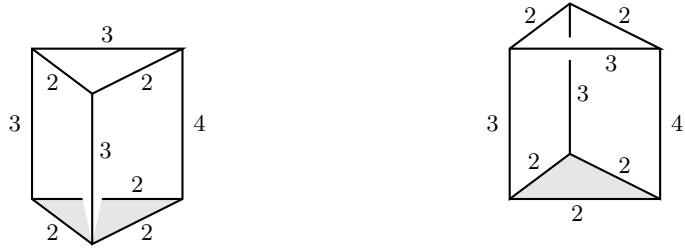

\begin{prop}
\label{non-arithmetic} For $i=2$ or $3$, $O^{333}_i$ is non-arithmetic.\end{prop}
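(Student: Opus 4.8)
The plan is to argue by contradiction, exploiting the compact totally geodesic $(3,3,4)$-triangle that $O^{333}_i$ inherits from \Cref{triangle group}. Write $\Gamma < \mathrm{PSL}_2(\mathbb{C})$ for the orientation-preserving index-two subgroup of $\prgr^{333}_i$, so that $\Hthree/\Gamma$ is the orientable double cover $\doh^{333}_i$; since arithmeticity is a commensurability invariant, it suffices to show $\Gamma$ is not arithmetic. By \Cref{one cusp prism} the orbifold $O^{333}_i$ has a (rigid) cusp, so $\Gamma$ is non-cocompact, i.e.\ contains parabolics. I would then invoke the standard structure theory of arithmetic Kleinian groups (as in Maclachlan--Reid): a non-cocompact arithmetic Kleinian group has invariant quaternion algebra $M_2(k\Gamma)$, which is split at every place and therefore ramified nowhere; this forbids any real place of the invariant trace field, forcing $k\Gamma$ to have exactly one complex and no real places, hence to be imaginary quadratic, $k\Gamma = \mathbb{Q}(\sqrt{-d})$.

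Next I would bring in the Fuchsian obstruction. By \Cref{triangle group}, the subgroup of $\prgr^{333}_i$ generated by reflections in the three quadrilateral faces of $P(\be)$ is the reflection group of a hyperbolic triangle with angles $\tang{4} = \tang{5} = \pi/3$ and $\tang{6} = \pi/4$; its orientation-preserving part $\Delta$ is the cocompact $(3,3,4)$ von Dyck group, and after conjugating $\Gamma$ we may regard $\Delta < \Gamma$. Because $\Delta^{(2)} < \Gamma^{(2)}$, the invariant trace fields satisfy $k\Delta \subseteq k\Gamma$. Since $\Delta$ stabilizes a geodesic plane it is conjugate into $\mathrm{PSL}_2(\mathbb{R})$, so $k\Delta \subset \mathbb{R}$; but $\mathbb{R}\cap\mathbb{Q}(\sqrt{-d}) = \mathbb{Q}$, whence under the arithmeticity hypothesis we would be forced to have $k\Delta = \mathbb{Q}$.

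The contradiction then comes from computing $k\Delta$ directly. For a $(p,q,r)$-triangle group the invariant trace field is $\mathbb{Q}(\lambda_p^2,\lambda_q^2,\lambda_r^2,\lambda_p\lambda_q\lambda_r)$ with $\lambda_n = 2\cos(\pi/n)$ (Takeuchi); here $\lambda_3 = 1$ and $\lambda_4 = \sqrt{2}$, so $k\Delta = \mathbb{Q}(\sqrt{2}) \neq \mathbb{Q}$, contradicting the previous paragraph. Hence $\Gamma$, and so $O^{333}_i$, is non-arithmetic. As $a_4,a_5,a_6$ agree for $i = 2,3$, the identical argument disposes of both cases, consistent with the scissors congruence of \Cref{scis cong}. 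I expect the only delicate points to be bookkeeping: passing cleanly between the reflection groups and their orientation-preserving subgroups without disturbing commensurability classes or invariant trace fields, and quoting the arithmeticity criterion in precisely the form that pins $k\Gamma$ down to an imaginary quadratic field. The geometric substance—exhibiting a nontrivial totally \emph{real} invariant trace field inside a would-be imaginary quadratic one—is supplied already by \Cref{corandreev} and \Cref{triangle group}.
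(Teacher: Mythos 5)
Your proposal is correct and follows essentially the same route as the paper: both exploit the totally geodesic $(3,3,4)$-triangle suborbifold from \Cref{triangle group} to force $\mathbb{Q}(\sqrt{2})$ into the invariant trace field, and both contrast this with the fact that a non-cocompact arithmetic Kleinian group has imaginary quadratic invariant trace field, whose only real subfield is $\mathbb{Q}$. The extra detail you supply (Takeuchi's formula, the inclusion $k\Delta\subseteq k\Gamma$ via $\Delta^{(2)}<\Gamma^{(2)}$) is sound and merely makes explicit what the paper cites from Maclachlan--Reid.
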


\begin{proof}
The Fuchsian $(3,3,4)$-triangle group, which is a subgroup of the orbifold fundamental group of $O^{333}_2$ and $O^{333}_3$ by \Cref{triangle group}, has $\mathbb{Q}(\sqrt{2})$ as its field of definition (cf.~eg.~\cite[\S 13.3]{book}). It follows that the invariant trace fields of $O^{333}_2$ and $O^{333}_3$ each contain $\mathbb{Q}(\sqrt{2})$. But every non-compact arithmetic $3$-orbifold is commensurable with a Bianchi group $\operatorname{PGL}_2(\mathcal{O}_d)$ for some square-free positive integer $d$, where $\mathcal{O}_d$ is the ring of integers of the imaginary quadratic field $\mathbb{Q}(\sqrt{-d})$ with $\mathbb{Q}(\sqrt{-d})$ its invariant trace field (see \cite[Theorem 8.2.3]{book} for example). The only real subfield of any such field is $\mathbb{Q}$. \end{proof}

\subsection{Volume computation}\label{subsec: volumes} Here we will set up a computation for the volumes of $O^{333}_i$, for $i=1$, $2$, and $3$, as a sum of four single integrals, recording its results in \Cref{rem:quot_vol}. The computation for $i=1$ is relevant since $O^{333}_1$ has the same volume as the sub-prism $P^-$ common to $P^{333}_2$ and $P^{333}_3$ as in the proof of \Cref{corandreev}.

Following \Cref{table names}, for each $i\in\{1,2,3\}$ let $P^{333}_i$ be the hyperbolic prism yielding $O^{333}_i$, arranged in $\mathbb{H}^3$ as in \Cref{subsec: embedding333}. In all cases, the quadrilateral face of $P^{333}_i$ that does not contain its ideal point lies in the unit Euclidean hemisphere centered at the origin; it meets the triangular face containing the ideal point in the hemisphere's equator over the $y$-axis; and $y_1 = 1/\sqrt{3}$ and $y_2 = -\sqrt{2/3}$ are the $y$-coordinates of the endpoints of the edge of intersection between these faces. This is part of the embedding described in \cite[\S 4.1]{LakelandRoth}.

The upshot of the previous paragraph for the ``view from infinity'' is that $y_1$ and $y_2$ as above are the vertices on the $y$-axis of the equilateral triangle $\Delta$ formed as a cross-section of $P^{333}_i$ by a(ny) horosphere centered at $\infty$. The three lines containing edges of $\Delta$ are $x=0$, $y_l(x) = \frac{1}{\sqrt{3}}x-\sqrt{\frac{2}{3}}$, and $y_u(x) = \frac{-1}{\sqrt{3}}x+\frac{1}{\sqrt{3}}$, again matching \cite[\S 4.1]{LakelandRoth}.

Let $T_i^{(c)}$ refer to the compact triangular face of $P^{333}_i$, for $i\in\{1,2,3\}$. Equations (\ref{4n5}) and (\ref{six}), governing the center $(s,t)$ and radius $r$ of the Euclidean hemisphere containing $T^{(c)}_i$, specialize here to the below:
\setcounter{equation}{3}
\begin{align}
    \begin{pmatrix} s \\ t \end{pmatrix} & = \begin{pmatrix} \frac{1+\sqrt{2}}{2} \\ \frac{1-\sqrt{2}}{2\sqrt{3}} \end{pmatrix} &
    \begin{pmatrix} s \\ t \end{pmatrix} & = \begin{pmatrix} \frac{1+\sqrt{2}}{2} \\ \frac{1-\sqrt{2}}{2\sqrt{3}} \end{pmatrix} &
    \begin{pmatrix} s \\ t \end{pmatrix} & = \begin{pmatrix} -\frac{r}{2} + \frac{1+\sqrt{2}}{2} \\ \frac{1}{\sqrt{3}}\left(\frac{r}{2} + \frac{1-\sqrt{2}}{2}\right) \end{pmatrix} \nonumber\\
 s^2+t^2 & = r^2 + 1 & s^2+t^2 & = r^2 + r + 1    & s^2+t^2 & = r^2 + 1 \label{spec r}
\end{align}
The columns above correspond to the case $i=1$, $2$, and $3$, left-to-right. Call unique solutions having positive $r$-value $(X_i,Y_i,R_i)$ in the respective cases. Note that $(X_1,Y_1) = (X_2,Y_2)$ is a vertex of $\Delta$. This reflects the fact that each of $P^{333}_1$ and $P^{333}_2$ has dihedral angle $\pi/2$ at each of the two edges of $T^{(c)}_i$ that meet at its vertex of intersection with an edge containing the ideal point. Solving the final equation in these cases yields
\[ R_1 = \sqrt{\frac{\sqrt{2}}{3}}\approx 0.686589048 \quad\mbox{and}\quad R_2  = \frac{1}{2}\left[-1+\sqrt{1+\frac{4\sqrt{2}}{3}}\right] \approx 0.349355356. \]
Formulas for $X_3$, $Y_3$ and $R_3$ are messier, but we have approximately:
\[ X_3 \approx 1.0486436547687,\ \ Y_3 \approx -0.028084427175087, \ \ R_3 \approx 0.31692625283566. \]

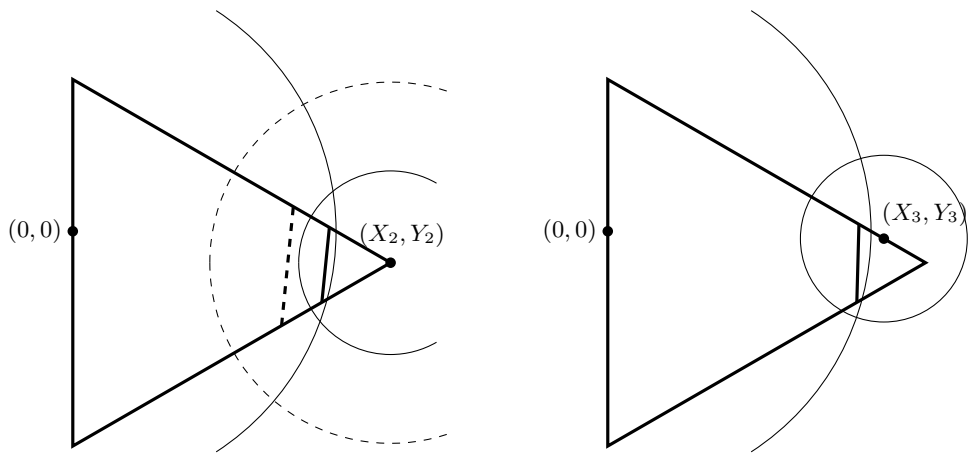
\begin{figure}[ht]
\begin{tikzpicture}

\begin{scope}[scale=3.5]

\draw [thin] (1,0) arc (0:57:1);
\draw [thin] (1,0) arc (0:-57:1);
\draw [very thick] (0,0.577) -- (0,-0.8165) -- (1.2071,-0.1196) -- cycle;
\draw [very thick] (0.946,-0.27) -- (0.954,0.026);

\fill (0,0) circle [radius=0.02];
\node [left] at (0,0) {\scriptsize{$(0,0)$}};
\fill (1.0486,-0.0281) circle [radius=0.02];
\node [above right] at (1,-0.02) {\scriptsize{$(X_3,Y_3)$}};
\draw [thin] (1.0486,-0.0281) circle [radius=0.3169];

\end{scope}

\begin{scope}[xshift=-2.8in, scale=3.5]

\draw [thin] (1,0) arc (0:57:1);
\draw [thin] (1,0) arc (0:-57:1);
\draw [very thick] (0,0.577) -- (0,-0.8165) -- (1.2071,-0.1196) -- cycle;
\draw [very thick] (0.946,-0.27) -- (0.9746,0.0147);
\draw [very thick, dashed] (0.7929,-0.3587) -- (0.8377,0.0937);

\fill (0,0) circle [radius=0.02];
\node [left] at (0,0) {\scriptsize{$(0,0)$}};
\fill (1.2071,-0.1196) circle [radius=0.02];
\node [above] at (1.25,-0.1) {\scriptsize{$(X_2,Y_2)$}};
\draw [thin] (0.8581,-0.1196) arc (180:60:0.349);
\draw [thin] (0.8581,-0.1196) arc (180:300:0.349);
\draw [thin, dashed] (0.5205,-0.1196) arc (180:70:0.6866);
\draw [thin, dashed] (0.5205,-0.1196) arc (180:290:0.6866);

\end{scope}

\end{tikzpicture}

\caption{$P^{333}_1$ and $P^{333}_2$ (left), and $P^{333}_3$ (right), viewed from above, with the circles bounding the hemispheres that contain their non-vertical faces.}
\label{rigeur}
\end{figure}

For each of $i=1$, $2$ and $3$, with $P^{333}_i$ arranged as in Figure \ref{rigeur}, its edge of intersection between the two faces that do not contain the ideal point is contained in the vertical plane over the line $\ell_i$ through the points of intersection between the unit and non-unit circles. This line is perpendicular to the one joining the two circles' centers, so it has slope $-X_i/Y_i$. Its equation is therefore
\[ y - t_iY_i = -\frac{X_i}{Y_i}(x-t_iX_i)\quad \rightsquigarrow\quad y = -\frac{X_i}{Y_i}x + \frac{t_i(X_i^2+Y_i^2)}{Y_i}, \]
where $t_i(X_i,Y_i)$ is the point at which $\ell_i$ meets the line through the two circle centers. This satisfies $t_i(X_i^2+Y_i^2) = \frac{1}{2}\left[1-R_i^2 + X_i^2 + Y_i^2\right]$, by a calculation with the Pythagorean theorem, so using equations (\ref{spec r}) above we obtain:
\[ \ell_1(x) = -\frac{X_1}{Y_1}x + \frac{1}{Y_1},\quad \ell_2(x) = -\frac{X_2}{Y_2}x + \frac{R_2+2}{2Y_2}\quad\mbox{and}\quad \ell_3(x) = -\frac{X_3}{Y_3}x + \frac{1}{Y_3}. \]

For $i=1$, $2$ or $3$, let $X_{l,i}$ be $x$-value of the point of intersection between $\ell_i$ and $y_l$, and let $X_{u,i}$ be the $x$-value of the point of intersection between $\ell_i$ and $y_u$. These can be exactly computed; we give their approximations below:\begin{align*}
    & X_{l,1} \approx 0.79289321881345 & & X_{l,2} \approx 0.94637894091280 & & X_{l,3} = X_{l,2}\ \mbox{(exactly)} \\
    & X_{u,1} \approx 0.83770871866842 & & X_{u,2} \approx 0.97458817776402 & & X_{u,3} \approx 0.95431912485788
    \end{align*}
The volume computation itself is a sum of four triple integrals, each over a region consisting of points in $P^{333}_i$ ($i=1$, $2$ or $3$) above a sub-quadrilateral of $\Delta$ bounded by four lines:\begin{enumerate}
    \item $x = 0$ and $x = X_{l,i}$, and the lines $y_l(x)$ and $y_u(x)$;
    \item $x = X_{l,i}$ and $x = X_{u,i}$, and the lines $\ell_i(x)$ and $y_u(x)$;
    \item $x = X_{l,i}$ and $x = X_{u,i}$, and the lines $y_l(x)$ and $\ell_i(x)$; and
    \item $x = X_{u,i}$ and $x = \frac{1+\sqrt{2}}{2}$, and the lines $y_l(x)$ and $y_u(x)$.
\end{enumerate}
The lower bound on $z$ for points in regions (1) and (2) is the unit hemisphere; ie.~$z = \sqrt{1-x^2-y^2}$. Here is the integral over the first region:\begin{align*}
    \int_0^{X_{l,i}}\int_{y_l(x)}^{y_u(x)} & \int_{\sqrt{1-x^2-y^2}}^{\infty} \frac{1}{z^3} \mathit{dz}\,\mathit{dy}\,\mathit{dx} = \int_0^{X_{l,i}}\int_{y_l(x)}^{y_u(x)} \frac{1}{2}\,\frac{1}{1-x^2-y^2} \mathit{dy}\,\mathit{dx} \\
        & = \int_0^{X_{l,i}} \frac{1}{4\sqrt{1-x^2}}\ln \left(\frac{(\sqrt{1-x^2}+y_u(x))(\sqrt{1-x^2}-y_l(x))}{(\sqrt{1-x^2}-y_u(x))(\sqrt{1-x^2}+y_l(x))}\right)\,\mathit{dx}
\end{align*}
The integral with respect to $z$ is straightforward, and for the integral with respect to $y$ we use partial fractions, to arrive at the above.
For the integral in region (2), the $x$-bounds are replaced by $X_{l,i}$ and $X_{u,i}$, and $y_l(x)$ is replaced by $\ell_i(x)$. 

The integral for region (3) has the same $x$ bounds as in region (2), but now $\ell_i(x)$ is an upper bound for $y$, $y_l(x)$ is a lower bound, and the lower bound on $z$ is given by the \emph{non-unit} hemisphere; ie. $z \ge \sqrt{R_i^2 - (x-X_i)^2-(y-Y_i)^2}$. After integrating with respect to $z$ and then $y$ in the same way as the other cases, the resulting integral has the form below:
\begin{align*} 
    \int_{X_{l,i}}^{X_{u,i}} & \frac{1}{4\sqrt{R_i^2-(x-X_i)^2}}\cdot \\
        &\ln \left(\frac{(\sqrt{R_i^2-(x-X_i)^2}+\ell_i(x)-Y_i)(\sqrt{R_i^2-(x-X_i)^2}-y_l(x)+Y_i)}{(\sqrt{R_i^2-(x-X_i)^2}-\ell_i(x)+Y_i)(\sqrt{R_i^2-(x-X_i)^2}+y_l(x)-Y_i)}\right)\mathit{dx}
\end{align*}
The integral for region (4) is similar to the above, but with $x$-bounds of $X_{u,i}$ (below) and $(1+\sqrt{2})/2$ (above), and $y_u(x)$ replacing $\ell_i(x)$ in the integrand. 

\begin{remark}\label{rem:quot_vol} Taking the sum of the four numerically computed integrals in each case yields: 
\[ \mathrm{vol}(P^{333}_1) \approx 0.672771983317043,\ \ \mathrm{vol}(P^{333}_2),\ \mathrm{vol}(P^{333}_3) \approx 0.938069938216186.\]
(Totals for $i=2$ and $3$ are identical to this precision, as expected by \Cref{scis cong}, although individual integrals' values differ.) The orientable orbifolds $\doh^{333}_i$ thus have volumes approximately equal to $1.34554396663409$, for $i=1$, and $1.87613987643237$ for $i=2, 3$.
\end{remark}

\subsection{Maximal horoballs and commensurability}\label{min orb} We begin by giving an intrinsic characterization of maximal horoball cusp neighborhoods in $O^{333}_i$, for $i\in\{1,2,3\}$, by specializing the discussion in \Cref{subsec: embedding333} to the present context. Below as in the previous subsection, we take $P^{333}_i$ to be the hyperbolic prism that is the fundamental domain for the orbifold fundamental group of $O^{333}_i$, given the Lakeland-Roth embedding in $\mathbb{H}^3$.

\begin{lemma}\label{max cusp} For $i\in\{1,2,3\}$, the maximal horoball cusp neighborhood $\mathcal{N}\subset O^{333}_i$ has
\[ \mathrm{vol}(\mathcal{N}) = \frac{3+2\sqrt{2}}{8\sqrt{3}} \approx 0.4206304962 \]
and a unique point of self-tangency, contained in the projection to $O^{333}_i$ of the edge of intersection between the compact quadrilateral and non-compact triangular faces of $P^{333}_i$.\end{lemma}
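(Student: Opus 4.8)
The plan is to specialize \Cref{maximal general} to the three orbifolds at hand and then read off both assertions from the elementary geometry of the horoball at $\infty$. Throughout I take $P^{333}_i$ embedded in upper half-space as in \Cref{subsec: embedding333}, so that its ideal vertex is at $\infty$, its compact quadrilateral face lies on the unit hemisphere centered at the origin, its non-compact triangular face lies in the vertical plane over the imaginary axis $\{x=0\}$ (recall $a_3=2$ here), and its compact triangular face lies on the hemisphere of radius $R_i$ centered at $(X_i,Y_i)$. The first step is to record that $R_i<1$ in every case: the values $R_1=\sqrt{\sqrt{2}/3}\approx 0.687$, $R_2\approx 0.349$, and $R_3\approx 0.317$ from \Cref{subsec: volumes} all lie below $1$. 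Hence $\max\{1,R_i\}=1$, and \Cref{maximal general} (together with its proof) identifies $\mathcal{N}$ with the isometrically embedded image of $B_\infty/\Lambda(\be)$, where $B_\infty=\{(x,y,z):z\ge 1\}$ and $\Lambda(\be)$ is the peripheral subgroup generated by reflections in the three vertical faces of $P^{333}_i$.

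For the volume, I would use that $\Lambda(\be)$ acts on the horosphere $\{z=1\}\cong\mathbb{C}$ as the Euclidean $(3,3,3)$-reflection group, with the equilateral triangle $\Delta$ of \Cref{subsec: volumes}---bounded by $x=0$, $y_l$, and $y_u$---as a fundamental domain (its vertex angles are $\tang{1}=\tang{2}=\tang{5}=\pi/3$). Thus $\{(x,y)\in\Delta,\ z\ge 1\}$ is a fundamental domain for $\Lambda(\be)$ acting on $B_\infty$, and integrating the hyperbolic volume form gives
\[ \mathrm{vol}(\mathcal{N}) = \int_\Delta\int_1^\infty z^{-3}\,\mathit{dz}\,\mathit{dA} = \tfrac{1}{2}\,\mathrm{Area}(\Delta). \]
Taking the base of $\Delta$ along the $y$-axis, of length $y_1-y_2=(1+\sqrt{2})/\sqrt{3}$, and its height $(1+\sqrt{2})/2$ (the $x$-coordinate of the third vertex $(X_2,Y_2)$), one finds $\mathrm{Area}(\Delta)=(3+2\sqrt{2})/(4\sqrt{3})$, whence $\mathrm{vol}(\mathcal{N})=(3+2\sqrt{2})/(8\sqrt{3})$ as claimed. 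Note this is identical for $i=1,2,3$ since $\Delta$ depends only on $(a_1,a_2,a_5,a_3)=(3,3,3,2)$.

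For the self-tangency I would first exhibit one. The plane $\{z=1\}$ meets the unit hemisphere only at its apex $(0,0,1)$, and the reflection $R_0\in\prgr(\be)$ in that hemisphere carries $B_\infty$ to a horoball based at $0$ tangent to $B_\infty$ at exactly $(0,0,1)$. Since $(0,0,1)$ also lies on the plane $\{x=0\}$ carrying the non-compact triangular face (and has $y=0\in(y_2,y_1)$, so lies on the actual prism edge), it belongs to the edge between that face and the compact quadrilateral face, as asserted. To see this is the \emph{only} self-tangency, I would argue that every self-tangency of $\mathcal{N}$ is the image of a point where $B_\infty$ is tangent to some distinct orbit horoball $gB_\infty$; embeddedness forces such a horoball to have Euclidean diameter at most $1$, hence exactly $1$ and tangent to $B_\infty$ at its top. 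In the tessellation of $\mathbb{H}^3$ by $\prgr(\be)$-translates of $P^{333}_i$, the horoballs tangent to $B_\infty$ arise by reflecting $\infty$ across compact faces meeting $B_\infty$: reflecting across the unit hemisphere yields the diameter-$1$ horoball at $0$, whereas reflecting across the radius-$R_i$ hemisphere (whose apex sits at height $R_i<1$) produces a horoball of diameter $R_i^2<1$, which never reaches $\{z=1\}$. Modulo $\Lambda(\be)$ this leaves a single tangency.

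I expect the uniqueness half to be the main obstacle: one must rule out that some deeper word in $\prgr(\be)$, rather than a single face-reflection, produces a further diameter-$1$ horoball tangent to $B_\infty$. The intended resolution is the diameter bookkeeping above combined with maximality from \Cref{maximal general}: any tangent orbit horoball has diameter exactly $1$ with top on $\{z=1\}$; its base point may be moved into $\overline{\Delta}$ by $\Lambda(\be)$; and within $\overline{\Delta}\times\{z=1\}$ the only incidence with a nonvertical face of $P^{333}_i$ occurs at $(0,0,1)$, the radius-$R_i$ hemisphere lying entirely below height $1$ and the vertical faces contributing only elements of $\Lambda(\be)$ that fix $B_\infty$. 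Hence the unique self-tangency is the projection of $(0,0,1)$, completing both claims.
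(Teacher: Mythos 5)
Your proof is correct and follows essentially the same route as the paper's: specialize \Cref{maximal general} using $R_i<1$ to place $B_\infty$ at height $1$, obtain the volume as half the Euclidean area of $\Delta$, exhibit the tangency at $(0,0,1)$ via the reflection in the compact quadrilateral face, and derive uniqueness from the fact that this is the only point where $B_\infty$ meets a compact face of $P^{333}_i$. Your final paragraph on ruling out deeper-word tangencies is in fact a more careful spelling-out of the paper's one-sentence justification of uniqueness, so nothing is missing.
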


\begin{proof}
    By \Cref{maximal general}, the maximal horoball cusp neighborhood in $O^{333}_i$ is the projection of the horoball $B_{\infty}$ centered at $\infty$ of height $\max\{1,r\}$, where $r$ is the radius of the Euclidean hemisphere containing the compact triangular face $T^{(c)}_i$ of $P^{333}_i$. We computed the radii of this non-unit hemisphere for the relevant $i$ in the previous subsection, respectively obtaining $R_1\approx 0.687$, $R_2 \approx 0.317$ and $R_3\approx 0.349$. Therefore $B_{\infty}$ is at height $1$ and does not intersect the Euclidean hemisphere of radius $R_i$; or, in particular, the face $T^{(c)}_i$ of $P^{333}_i$ that it contains. Its unique point of tangency with the unit hemisphere centered at the origin is the point directly above the origin, which lies in the edge shared by the compact quadrilateral face and non-compact triangular face of $P^{333}_i$ (compare Figure \ref{rigeur}).

    Reflection in the compact quadrilateral face $Q$ of $P^{333}_i$ sends $B_{\infty}$ to the horosphere centered at the origin and tangent to $Q$ at $B_{\infty}\cap Q$, which is fixed by this reflection. Since the reflection is a generator for the orbifold fundamental group of $O^{333}_i$, these two horoballs belong to the same orbit and hence their point of tangency projects to a point of self-tangency in $O^{333}_i$. This is the unique point of self-tangency since $B_{\infty}$ has no other points of intersection with compact faces of the fundamental domain $P^{333}_i$ for the orbifold group.

    Because $B_{\infty}$ is at height $1$, the volume of its intersection with $P^{333}_i$ is half the area of the Euclidean equilateral triangle $\Delta$ which is the projection of $B_{\infty}\cap P^{333}_i$ to the plane. The vertices of $\Delta$ are $(0,y_1)$ and $(0,y_2)$, so its area is $(y_1-y_2)^2\frac{\sqrt{3}}{4}$, where $y_1 = 1/\sqrt{3}$ and $y_2 = -\sqrt{2}/\sqrt{3}$. This gives the volume of $\mathcal{N}$.
\end{proof}

We will use the horoball packing determined by the maximal cusp to prove:

\begin{prop}\label{no cover}
    For $i=2$ or $3$, $O^{333}_i$ as defined in \Cref{table names} does not non-trivially cover another orbifold.
\end{prop}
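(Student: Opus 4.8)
The plan is to prove the equivalent statement that $\Gamma := \prgr^{333}_i$ coincides with its commensurator $\operatorname{Comm}(\Gamma)$ in $\operatorname{Isom}(\Hthree)$. By \Cref{non-arithmetic}, $O^{333}_i$ is non-arithmetic, so Margulis's theorem guarantees that $\operatorname{Comm}(\Gamma)$ is discrete, hence a lattice containing $\Gamma$ with finite index, and that $O_{\min} := \Hthree/\operatorname{Comm}(\Gamma)$ is the unique minimal orbifold in the commensurability class. Any orbifold covered by $O^{333}_i$ has the form $\Hthree/\Gamma'$ with $\Gamma\le\Gamma'\le\operatorname{Comm}(\Gamma)$, so it suffices to show $\Gamma = \operatorname{Comm}(\Gamma)$. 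Since $O^{333}_i$ has a single cusp, so does $O_{\min}$; write $\Lambda = \Lambda(\be)\le\Gamma$ and $\Lambda'\le\operatorname{Comm}(\Gamma)$ for the stabilizers of $\infty$, where $\Lambda$ is the $(3,3,3)$-triangle reflection group of \Cref{triangle group}. Because both orbifolds are one-cusped, the degree of the cover $O^{333}_i\to O_{\min}$ equals the degree $[\Lambda':\Lambda]$ of the induced cover of cusp cross-sections, so everything reduces to proving $\Lambda' = \Lambda$.

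The core of the argument is to show that the maximal horoball packing $\mathcal{B}$ of $O^{333}_i$ is invariant under all of $\operatorname{Comm}(\Gamma)$. Normalize as in \Cref{max cusp}, with the ideal vertex at $\infty$ and the maximal cusp lifting to the horoball $B_\infty$ of height $1$; then $\mathcal{B}$ is the $\Gamma$-orbit of $B_\infty$, and since $B_\infty$ is embedded, no horoball of $\mathcal{B}$ centered in $\mathbb{C}$ has Euclidean diameter exceeding $1$. Now pull back the maximal cusp $\mathcal{N}_{\min}$ of $O_{\min}$: its preimage in $O^{333}_i$ is an embedded horoball cusp neighborhood, hence contained in the maximal one $\mathcal{N}$, so its lift $\mathcal{B}_0$ (the $\operatorname{Comm}(\Gamma)$-orbit of the horoball defining $\mathcal{N}_{\min}$) satisfies $\mathcal{B}_0\subseteq\mathcal{B}$ horoball by horoball. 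The defining self-tangency of $\mathcal{N}_{\min}$ exhibits a finite-centered horoball of $\mathcal{B}_0$ whose diameter equals the height $h_0$ of the horoball of $\mathcal{B}_0$ at $\infty$; comparison with the corresponding horoball of $\mathcal{B}$ forces $h_0\le 1$, while the containment in $\mathcal{N}$ gives $h_0\ge 1$. Thus $h_0 = 1$, the base horoballs of $\mathcal{B}_0$ and $\mathcal{B}$ agree, and the two inclusions combine to give $\mathcal{B}_0 = \mathcal{B}$. In particular $\mathcal{B}$ is $\operatorname{Comm}(\Gamma)$-invariant.

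It remains to compute the symmetries of $\mathcal{B}$ fixing $\infty$. By discreteness of $\operatorname{Comm}(\Gamma)$, every element of $\Lambda'$ is a Euclidean isometry of $\mathbb{C}$ (a similarity with nonunit scale factor would accumulate against the translations of $\Lambda$), and by the previous paragraph each such isometry preserves the decorated planar pattern recording the centers and Euclidean diameters of the horoballs of $\mathcal{B}$. By \Cref{max cusp} there is a unique self-tangency, so the full-sized (diameter-one) horoballs have centers exactly the single orbit $\Lambda\cdot 0\subset\mathbb{C}$, where $0$ is a non-special point on a mirror edge of the triangle $\Delta$; augmenting this with the next-largest horoballs, whose positions I can read off from the matrices $M_1,\dots,M_4$ and the data $R_i$, $X_i$, $Y_i$ of \Cref{subsec: volumes}, I would verify that the full Euclidean symmetry group of the pattern is exactly $\Lambda$. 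Then $\Lambda'\le\Lambda$, so $\Lambda' = \Lambda$, the cover $O^{333}_i\to O_{\min}$ has degree $[\Lambda':\Lambda]=1$, and $\Gamma = \operatorname{Comm}(\Gamma)$.

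The main obstacle is this last symmetry computation. Its content is to rule out the two ways a crystallographic overgroup $\Lambda'\supsetneq\Lambda$ could arise: enhancement of the $(3,3,3)$-reflection group to the $(2,3,6)$-reflection group (index $2$), and refinement of the underlying hexagonal translation lattice. I expect the placement of the distinguished full-sized horoball over $0$---a generic mirror point that is not a special point of the $(2,3,6)$ group---to obstruct the first, and the asymmetric positions of the smaller horoballs (which differ between $O^{333}_2$ and $O^{333}_3$, reflecting the mirror-image prisms of \Cref{prism caps}) to obstruct the second. Making this rigorous requires carefully computing enough of the horoball diagram, rather than the orbit $\Lambda\cdot 0$ alone, since that orbit by itself may retain extra symmetry.
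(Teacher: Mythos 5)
Your overall strategy coincides with the paper's: reduce to showing that the peripheral subgroup cannot grow in an overgroup, establish that the maximal horoball packing is invariant under that overgroup, and then determine the Euclidean symmetry group of the resulting pattern on $\partial B_{\infty}$. Your first two stages are sound. The detour through $\operatorname{Comm}(\Gamma)$ and Margulis is not needed --- the paper argues directly with an arbitrary overgroup $\Pi\supseteq\prgr^{333}_i$ and in fact deduces \Cref{commensurator} \emph{from} \Cref{no cover} rather than the reverse --- but it is valid, and your height argument pinning $h_0=1$ is a careful rendering of the paper's terser assertion that the maximal cusp of $O$ pulls back to the maximal cusp of $O^{333}_i$.

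The genuine gap is the final step, which you explicitly leave unexecuted (``I would verify that the full Euclidean symmetry group of the pattern is exactly $\Lambda$''). That verification is the entire content of the proposition; without it nothing is proved. Moreover, your stated reason for deferring it --- that the orbit $\Lambda\cdot 0$ of full-size tangency points ``may retain extra symmetry,'' so that one must decorate the picture with the second-largest horoballs --- is a misdiagnosis. The paper shows this orbit alone already has symmetry group exactly $\Lambda^{333}_i$, by an elementary nearest-neighbour computation: the tangency point $o$ over the origin has exactly two other tangency points at distance $1$, namely $\rho_u(o)$ and $\rho_y(\rho_u(o))$ (its orbit under the order-six stabilizer of the vertex $(0,y_1)$ of $\Delta$), while all other orbit points lie at distance at least $\sqrt{2}$ (those come from the stabilizer of $(0,y_2)$), the disk of radius $1$ about $o$ being covered by the $\Delta$-translates around those two vertices. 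Hence any symmetry of the tangency set can be composed with an element of $\Lambda^{333}_i$ to fix $o$, and then with $\rho_y$ if necessary to fix both nearest neighbours; fixing three non-collinear points, it is the identity. To complete your proof you must either supply this computation or actually carry out the harder decorated-pattern analysis you sketch; as written, the argument stops just short of its conclusion.
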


\begin{figure}
\begin{tikzpicture}

\begin{scope}[scale=1.5]

\draw [thin] (0,0) circle [radius=1];
\draw [very thick] (0,0.577) -- (0,-0.8165) -- (1.2071,-0.1196) -- (0,0.577) -- (1.2071,1.2742) -- (1.2071,-0.1196);
\draw [very thick] (1.2071,1.2742) -- (0,1.9712) -- (0,0.577) -- (-1.2071,1.2742) -- (0,1.9712);
\draw [very thick] (-1.2071,1.2742) -- (-1.2071,-0.1196) -- (0,0.577);
\draw [very thick] (-1.2071,-0.1196) -- (0,-0.8165);

\draw (0,0) circle [radius=0.04];
\node [left] at (0,0) {\scriptsize{$o$}};
\draw (0.5,0.866) circle [radius=0.04];
\node [above] at (0.5,0.866) {\scriptsize $\rho_u(o)$};
\draw (-0.5,0.866) circle [radius=0.04];
\node [above] at (-0.45,0.92) {\scriptsize $\rho_y(\rho_u(o))$};

\fill (0,0.577) circle [radius=0.01];
\node [right] at (0,0.577) {\scriptsize{$(0,y_1)$}};
\fill (0,-0.8165) circle [radius=0.01];
\node [right] at (0,-0.8165) {\scriptsize{$(0,y_2)$}};

\node at (0.5,-0.1196) {$\Delta$};

\end{scope}

\end{tikzpicture}

\caption{Some translates of $\Delta$ and points of tangency.}
\label{bit o tiling}
\end{figure}
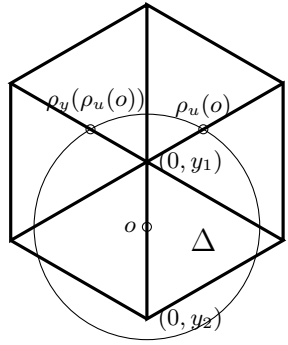

\begin{proof}[Proof of \Cref{no cover}]
    For an orbifold cover $O^{333}_i\to O$, a maximal cusp neighborhood in $O$ pulls back to a maximal cusp neighborhood in $O^{333}_i$. But since $O^{333}_i$ is one-cusped, the cusp neighborhood $\mathcal{N}$ described in \Cref{max cusp} is the unique such neighborhood in $O^{333}_i$. Let $O = \mathbb{H}^3/\Pi$, for $\Pi$ containing $\prgr^{333}_i$. It follows that the packing $\mathcal{B}$ of $\mathbb{H}^3$ by $\prgr^{333}_i$-translates of the horoball $B_{\infty}$ of \Cref{max cusp} is also $\Pi$-invariant. In particular, the collection of ``full-size'' horoballs of $\mathcal{B}$---those tangent to $B_{\infty}$---is invariant under the action of the peripheral subgroup $\Lambda$ of $\Pi$ stabilizing $\infty$.

    Again since $O^{333}_i$ is one-cusped, the degree of the cover $O^{333}_i\to O$ equals the degree of its restriction to the cusp neighborhood $\mathcal{N}$, covering a cusp neighborhood in $O$. This degree is the index $[\Lambda:\Lambda^{333}_i]$ of the peripheral subgroup $\Lambda^{333}_i$ of $\prgr^{333}_i$ in $\Lambda$. We will show that this index is $1$, and hence that the cover is trivial, by establishing that $\Lambda^{333}_i$ is the full symmetry group of the collection of points of tangency of full-size horoballs with $B_{\infty}$.

    To start, we note that because $\mathcal{N}$ has a unique point of self-tangency (by \Cref{max cusp}), $\Lambda^{333}_i$ acts transitively on this collection. Equivalently, as in the proof of \Cref{max cusp} the fundamental domain $\Delta$ for the action of $\Lambda^{333}_i$ on $B_{\infty}$ contains a unique point $o$ of tangency with a full-size horoball---directly above the origin. The reflection $\rho_y$ in the side of $\Delta$ that projects into the $y$-axis fixes $o$. We claim that the image of $o$ under reflection $\rho_u$ in the other side of $\Delta$ containing $(0,y_1)$ (the one that projects into the line with equation $y_u(x)$ from \Cref{subsec: volumes}) is one of exactly two closest points of tangency with full-sized horoballs, among all of them, having minimal distance from $o$; the other one being $\rho_y(\rho_u(o))$.

    Before addressing the claim, let us note that it implies that $\Lambda^{333}_i$ is the full symmetry group: any symmetry of the collection of full-sized horoballs can be right-multiplied by an element of $\Lambda^{333}_i$ so that the product fixes $o$, by transitivity; then, after further right-multiplying by $\rho_y$ if necessary, so that it also fixes each of the two closest points of tangency to $o$. But the only isometry of $\partial B_{\infty}$ fixing all three members of this collection pointwise is the identity, so we have expressed the original symmetry as the inverse of an element of $\Lambda^{333}_i$.

    The proof of the claim is essentially contained in Figure \ref{bit o tiling}. Each of $\rho_u(o)$ and $\rho_y(\rho_u(o))$ is at distance $1$ from $o$, by a Euclidean trigonometry calculation. The circle of radius $1$ around $o$ is entirely contained in the union of translates of $\Delta$ by the subgroups of $\Lambda^{333}_i$ fixing $(0,y_1)$ and $(0,y_2)$ (with only the former union pictured), so the other possible points of tangency within that circle are points of the orbit of $o$ under the subgroup fixing $(0,y_2)$ (again, not pictured). But these have distance $\sqrt{2}>1$ from $o$, and the claim is proved.
\end{proof}

Our next result follows from \Cref{no cover} and the non-arithmeticity of $\doh^{333}_2$ and $\doh^{333}_3$. Below, two orbifolds are \emph{commensurable} if they have a common cover, of finite degree over each; dually, two discrete groups of isometries of $\mathbb{H}^3$ are \emph{commensurable} if they have a common finite-index subgroup. The \emph{commensurator} of $\Gamma$ is the group of isometries $f$ such that $\Gamma$ and $f\Gamma f^{-1}$ are commensurable (it can be directly shown to be a group). The \emph{commensurability class} of an orbifold is the set of all orbifolds commensurable to it; and ``minimality'' is measured  with respect to orbifold covers.

\begin{corollary}\label{commensurator}
    The commensurator of $\Pi^{333}_i$ is $\Pi^{333}_i$, and $O^{333}_i$ is the unique minimal orbifold in its commensurability class. In particular, $O^{333}_2$ and $O^{333}_3$ are not commensurable.
\end{corollary}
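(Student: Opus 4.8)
The plan is to deduce this from \Cref{no cover} together with Margulis's characterization of the commensurators of non-arithmetic lattices. Recall that if $\Gamma < \mathrm{Isom}(\mathbb{H}^3)$ is a finite-covolume discrete group that is \emph{non}-arithmetic, then its commensurator $\mathrm{Comm}(\Gamma)$ is again discrete and of finite covolume, contains $\Gamma$ as a subgroup of finite index, and contains a conjugate of every group commensurable with $\Gamma$; equivalently, $\mathbb{H}^3/\mathrm{Comm}(\Gamma)$ is the unique minimal orbifold in the commensurability class of $\mathbb{H}^3/\Gamma$, covered with finite degree by every orbifold in that class (cf.~\cite{book}).

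First I would apply \Cref{non-arithmetic} to see that $\prgr^{333}_i$ is non-arithmetic for $i = 2, 3$, so that the above applies. It produces a finite-degree orbifold cover $O^{333}_i \to \mathbb{H}^3/\mathrm{Comm}(\prgr^{333}_i)$, of degree $[\mathrm{Comm}(\prgr^{333}_i) : \prgr^{333}_i]$. I would then invoke \Cref{no cover}: since $O^{333}_i$ admits no nontrivial cover of any orbifold, this degree must equal $1$, giving $\mathrm{Comm}(\prgr^{333}_i) = \prgr^{333}_i$. Because $\mathbb{H}^3/\mathrm{Comm}(\prgr^{333}_i)$ is the unique minimal orbifold in the commensurability class, the equality $\mathrm{Comm}(\prgr^{333}_i) = \prgr^{333}_i$ says exactly that $O^{333}_i$ is itself that unique minimal orbifold.

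For the final assertion I would argue by contradiction. If $O^{333}_2$ and $O^{333}_3$ were commensurable, they would lie in a single commensurability class, and by the previous paragraph each would be the unique minimal orbifold of that class; hence $O^{333}_2$ and $O^{333}_3$ would be isometric. But they are not. Any isometry between the reflection orbifolds would carry the canonical fundamental chamber $P^{333}_2$ to $P^{333}_3$ while matching their unique ideal vertices, hence would be induced by a combinatorial symmetry of the labeled prism fixing the cusp triple $(a_1,a_2,a_5)$. Since the prism's automorphism group is vertex-transitive of order $12$, the stabilizer of the ideal vertex has order $2$, and a short check identifies its nontrivial element as the involution swapping $a_1 \leftrightarrow a_2$, $a_4 \leftrightarrow a_6$, and $a_7 \leftrightarrow a_8$ (fixing $a_3$, $a_5$, $a_9$). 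Applied to the tuple of $O^{333}_2$ this yields $(3,3,2,4,3,3,2,2,3)$, not the tuple of $O^{333}_3$; so no cusp-preserving symmetry relates them, matching the fact that the Lakeland--Roth classification lists these as distinct entries of \Cref{table333}. This contradiction completes the argument.

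The substantive content lives in the already-established \Cref{no cover} and \Cref{non-arithmetic}, so the remaining deduction is short; the one point demanding care is the precise invocation of Margulis's theorem. In particular I would ensure it is applied to lattices in the full isometry group $\mathrm{Isom}(\mathbb{H}^3)$---in which the reflection groups $\prgr^{333}_i$ naturally sit---rather than only to its orientation-preserving subgroup, passing if convenient between $\prgr^{333}_i$ and its index-two rotation subgroup $\dprgr^{333}_i$ (whose arithmeticity is equivalent) and confirming that the commensurator and minimality conclusions are unaffected by this passage.
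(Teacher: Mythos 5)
Your proposal is correct and follows essentially the same route as the paper: invoke \Cref{non-arithmetic} so that Margulis's theorem makes the commensurator a discrete group whose quotient is the unique minimal orbifold in the class, use \Cref{no cover} to force the resulting cover $O^{333}_i\to\mathbb{H}^3/\mathrm{Comm}(\prgr^{333}_i)$ to be trivial, and conclude non-commensurability from the fact that $O^{333}_2$ and $O^{333}_3$ are non-isometric. Your explicit combinatorial check that no cusp-preserving prism symmetry relates the two nine-tuples is a harmless elaboration of the non-isometry fact, which the paper simply takes from the Lakeland--Roth classification.
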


\begin{proof}
Since $O^{333}_i$ is non-arithmetic (cf. \Cref{non-arithmetic}), by Margulis's arithmeticity theorem (see \cite[Theorem~10.3.5]{book}) the commensurator of $\prgr^{333}_i$ is itself a discrete group $\Gamma_i$, and the commensurability class of $O^{333}_i$ has $\mathbb{H}^3/\Gamma_i$ as its unique minimal orbifold. $O^{333}_i$ therefore covers this orbifold, so by \Cref{no cover}, it equals the minimal orbifold and $\prgr^{333}_i = \Gamma_i$. Since $O^{333}_2$ and $O^{333}_3$ are non-isometric, it follows that they are also not commensurable.
\end{proof}

\section{Proof of the main theorem}\label{sec:main_proof}

This section is primarily devoted to establishing the following result.

\begin{theorem}\label{main_tech_thm}
Let $\doh^{333}_2$ and $\doh^{333}_3$ be the orientable prism orbifolds determined by the relevant rows of \Cref{table333}. For each $i\in\{2,3\}$ and $j\in\{1,2\}$, the right-permutation representation $\sigma_{i,j}$ of \Cref{table_pr} determines a degree-24 cover $M_{i,j}\to \doh^{333}_i$ by a one-cusped manifold $M_{i,j}$, with $H_1(M_{i,j})\cong\mathbb{Z}$, such that $M_{i,1}$ is a knot complement in the lens space $L(13,3)$, and $M_{i,2}$ is a knot complement in $L(22,5)$ for each $i$. In consequence:
\begin{itemize}
\item There is a knot complement $\widetilde{M}_{2,1}$ in $\Sthree$ covering $\doh^{333}_2$ with degree $312=24*13$.
\item There is a knot complement $\widetilde{M}_{2,2}$ in $\Sthree$ covering $\doh^{333}_2$ with degree $528=24*22$.
\item There is a knot complement $\widetilde{M}_{3,1}$ in $\Sthree$ covering $\doh^{333}_3$ with degree $312=24*13$.
\item There is a knot complement $\widetilde{M}_{3,2}$ in $\Sthree$ covering $\doh^{333}_3$ with degree $528=24*22$.
\end{itemize}
The manifolds $\widetilde{M}_{2,j}$ are incommensurable, and in particular non-isometric, to the $\widetilde{M}_{3,j}$. 
\end{theorem}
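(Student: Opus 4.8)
The plan is to derive this final statement as a formal consequence of the commensurability results already proved for the base orbifolds, above all \Cref{commensurator}. The essential observation is that each $\widetilde{M}_{i,j}$ lies in the commensurability class of $O^{333}_i$, and that these two classes (for $i=2$ and $i=3$) are distinct.

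First I would assemble the relevant chain of covers. By the main part of the theorem, $\widetilde{M}_{i,j}$ covers $\doh^{333}_i$ (with degree $312$ or $528$), and $\doh^{333}_i$ is by construction the orientable double cover of $O^{333}_i$, so it covers $O^{333}_i$ with degree $2$. Recall from the definition preceding \Cref{commensurator} that two orbifolds are commensurable when they have a common cover of finite degree over each; a single finite cover $A\to B$ already witnesses this, since $A$ is a finite-degree cover of both $A$ (trivially) and $B$. Hence $\widetilde{M}_{i,j}$, $\doh^{333}_i$, and $O^{333}_i$ all lie in one commensurability class, which is therefore the commensurability class of $O^{333}_i$. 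Because commensurability of finite-volume hyperbolic orbifolds is an equivalence relation, with transitivity following from passing to common refinements (fiber products) of covers, this identification is legitimate.

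Next I would argue by contradiction. Suppose some $\widetilde{M}_{2,j}$ were commensurable with some $\widetilde{M}_{3,j'}$. Transitivity, together with the previous paragraph, would then place $O^{333}_2$ and $O^{333}_3$ in a common commensurability class, directly contradicting the final assertion of \Cref{commensurator} that $O^{333}_2$ and $O^{333}_3$ are not commensurable. Thus $\widetilde{M}_{2,j}$ and $\widetilde{M}_{3,j'}$ are incommensurable for all choices of $j,j'\in\{1,2\}$. Non-isometry is then immediate: an isometry $\widetilde{M}_{2,j}\to\widetilde{M}_{3,j'}$ would exhibit each as a degree-one cover of the other, making them trivially commensurable; so incommensurability forces non-isometry, which is what ``in particular non-isometric'' asserts.

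I do not expect a genuine obstacle at this stage, since essentially all of the difficulty has been absorbed into \Cref{no cover} and \Cref{commensurator}, whose proofs handle the maximal-cusp horoball-packing analysis and the appeal to Margulis arithmeticity that underlie the incommensurability of the base orbifolds. The only points needing explicit care are the bookkeeping that $\doh^{333}_i$ shares a commensurability class with $O^{333}_i$ (immediate from the degree-two cover) and the clean invocation of transitivity of commensurability; both are routine, so the argument is short.
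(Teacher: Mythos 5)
There is a genuine gap: your proposal proves only the final sentence of the theorem and assumes everything else. The phrase ``by the main part of the theorem, $\widetilde{M}_{i,j}$ covers $\doh^{333}_i$'' takes as a hypothesis precisely the claims that constitute the theorem's substance: that each $\sigma_{i,j}$ extends to a well-defined right-permutation representation with transitive image (\Cref{itsapermrep}); that the resulting degree-$24$ cover is a manifold (\Cref{torfree}, via the cycle-structure criterion of \Cref{torsion order}); that it is one-cusped (\Cref{onecusp}, via \Cref{cusp count}); that $H_1(M_{i,j})\cong\ZZ$ (\Cref{homologyZ}, via the spine of \Cref{spine}); and that each $M_{i,j}$ is a knot complement in $L(13,3)$ or $L(22,5)$ (\Cref{lem:lifts}, which also needs the Six Theorem together with the cusp-area estimate from \Cref{max cusp} to pin down the unique non-hyperbolic filling). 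None of these is addressed, and none is a formality --- the verifications are concrete computations with the permutation data of \Cref{table_pr} and with triangulations.

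Most critically, your argument never produces the manifolds $\widetilde{M}_{i,j}$ at all. The existence of a knot complement in $\Sthree$ covering $\doh^{333}_i$ does not follow from commensurability bookkeeping: a priori the preimage of the knot in the universal cover $\Sthree$ of $L(p,q)$ could be disconnected, yielding a link complement. The paper's proof closes exactly this gap by combining $H_1(M_{i,j})\cong\ZZ$ with the Gonzalez-Acu\~na--Whitten result (\Cref{prop:GonalezAcunaWhitten}), which guarantees the preimage is connected, a knot complement, and a degree-$p$ cover of $M_{i,j}$; composing with the degree-$24$ cover gives the stated degrees $312 = 24\cdot 13$ and $528 = 24\cdot 22$. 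The one piece you do prove --- incommensurability of the $\widetilde{M}_{2,j}$ and $\widetilde{M}_{3,j'}$ from \Cref{commensurator} via transitivity of commensurability --- is correct and is essentially the paper's own argument, but it is the short coda of the proof, not the proof.
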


The remainder of this section establishes the assertions of \Cref{main_tech_thm}. In \Cref{subsec:the M_ij} we establish existence of the $\sigma_{i,j}$ and $M_{i,j}$ and, in \Cref{cell decomp}, give polyhedral decompositions of the $M_{i,j}$ by copies of the corresponding doubled prisms $\dpr^{333}_i$; then establish that the $M_{i,j}$ are one-cusped manifolds. In \Cref{spinal subsec} we describe a two-dimensional spine for the $M_{i,j}$ and how it is used to show that $H_1(M_{i,j})\cong\ZZ$ for each $i$ and $j$. In \Cref{subsec: lens space} we convert the polyhedral decompositions of the $M_{i,j}$ to triangulations, then to ideal triangulations, using the tools Regina \cite{regina} and SnapPy \cite{SnapPy} of computational three-manifold topology to establish that they are knot complements in lens spaces; then use this to prove \Cref{main_tech_thm}.

First, however, we record some consequences of \Cref{main_tech_thm} and our prior results. 

\begin{corollary}\label{cor:HidSymKnots}\HidSymKnots\end{corollary}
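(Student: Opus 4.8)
The plan is to combine \Cref{main_tech_thm} with the criterion of Neumann--Reid. Recall from \cite[Prop.~9.1]{NeumannReidArith} that a hyperbolic knot complement in $\Sthree$ has hidden symmetries if and only if it covers a rigid-cusped orbifold. The four knot complements $\widetilde{M}_{2,1}$, $\widetilde{M}_{2,2}$, $\widetilde{M}_{3,1}$, $\widetilde{M}_{3,2}$ produced by \Cref{main_tech_thm} each cover an orientable prism orbifold $\doh^{333}_i$, which in turn covers the reflection orbifold $O^{333}_i$ with degree two. By \Cref{one cusp prism}, $O^{333}_i$ has a single, rigid $(3,3,3)$-cusp. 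Hence each $\widetilde{M}_{i,j}$ covers the rigid-cusped orbifold $O^{333}_i$ and so has hidden symmetries. This already produces four knot complements in $\Sthree$ with hidden symmetries; it remains only to check that they are genuinely new, i.e.\ distinct from one another and from the figure-eight and the two dodecahedral knot complements.

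First I would verify that the four are pairwise non-isometric. For fixed $i$, the complements $\widetilde{M}_{i,1}$ and $\widetilde{M}_{i,2}$ cover $\doh^{333}_i$ with degrees $312$ and $528$, so their volumes are $312\,\mathrm{vol}(\doh^{333}_i)$ and $528\,\mathrm{vol}(\doh^{333}_i)$; since these differ, the two are non-isometric. For the remaining pairs, \Cref{main_tech_thm} asserts outright that $\widetilde{M}_{2,j}$ is incommensurable to $\widetilde{M}_{3,j}$, and a fortiori non-isometric. Thus the four examples fall into (at least) two distinct commensurability classes, one for $i=2$ and one for $i=3$; note that the volume coincidence $\mathrm{vol}(\doh^{333}_2)=\mathrm{vol}(\doh^{333}_3)$ recorded in \Cref{scis cong,rem:quot_vol} means volume alone cannot separate the $i=2$ examples from the $i=3$ examples, so the incommensurability supplied by \Cref{main_tech_thm} (ultimately resting on \Cref{commensurator}) is essential here.

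It remains to separate the four new examples from the three previously known ones, and this is where the only real work lies. Since $O^{333}_i$ is non-arithmetic by \Cref{non-arithmetic} and arithmeticity is a commensurability invariant, each $\widetilde{M}_{i,j}$ is non-arithmetic; as the figure-eight knot complement is arithmetic, no $\widetilde{M}_{i,j}$ is commensurable with, in particular isometric to, it. To rule out the dodecahedral knot complements I would compare invariant trace fields: by the proof of \Cref{non-arithmetic}, the invariant trace field of each $\widetilde{M}_{i,j}$ contains $\QQ(\sqrt{2})$, arising from its $(3,3,4)$-triangle subgroup, whereas the invariant trace field of the dodecahedral knot complements is generated by numbers involving $\sqrt{5}$ and does not contain $\sqrt{2}$; since the invariant trace field is a commensurability invariant, the dodecahedral complements are incommensurable with, hence distinct from, all four $\widetilde{M}_{i,j}$. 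The main obstacle is thus not the production of hidden symmetries---which is immediate from \Cref{main_tech_thm} together with the Neumann--Reid criterion---but the confirmation of distinctness from the dodecahedral examples, for which volume is unhelpful and a commensurability invariant such as the invariant trace field must be invoked.
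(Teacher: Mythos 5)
Your derivation of hidden symmetries is exactly the paper's: each $\widetilde{M}_{i,j}$ covers the rigid-cusped $O^{333}_i$, so \cite[Prop.~9.1]{NeumannReidArith} applies. Where you diverge is in separating the new examples from the figure-eight and dodecahedral knot complements, and here your route is both heavier than necessary and premised on a false claim. You assert that ``volume is unhelpful'' for distinguishing the $\widetilde{M}_{i,j}$ from the dodecahedral knots, but the paper does precisely this: by \Cref{volumes} the new knot complements have volumes approximately $585.36$ and $990.60$, while the figure-eight complement has volume just over $2$ and the dodecahedral knot complements have volume approximately $41.16$, so volume settles distinctness immediately. (The volume coincidence you correctly note is between $\widetilde{M}_{2,j}$ and $\widetilde{M}_{3,j}$ for fixed $j$ --- which is why \Cref{commensurator} is needed to separate \emph{those} --- but it has no bearing on comparison with the previously known examples.) Your alternative, via non-arithmeticity for the figure-eight and invariant trace fields for the dodecahedral knots, does work in principle, and it proves the stronger statement that the new examples are \emph{incommensurable} with the old ones rather than merely non-isometric; but as written it rests on the unproved assertion that the invariant trace field of the dodecahedral knot complements does not contain $\sqrt{2}$, which you would need to justify with a computation or citation (it is $\QQ(\sqrt{-3},\sqrt{5})$, so the claim is true, but the paper never needs it). Finally, note that the corollary as stated only requires the four to be distinct from the three known examples, not pairwise non-isometric to each other; your extra verification of that is harmless but is really the content of \Cref{main_tech_thm} and \Cref{cor_hidden_syms} rather than this corollary.
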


\begin{proof}
We observe first that by Proposition 9.1 of \cite{NeumannReidArith}, each $\widetilde{M}_{i,j}$ above has hidden symmetries since the prism orbifolds $\doh^{333}_2$ and $\doh^{333}_3$ each have a rigid $(3,3,3)$-turnover cusp. That these are distinct from the dodecahedral and figure-eight knot complements can be seen by considering hyperbolic volume: the figure-eight knot complement has volume just over $2$, and the dodecahedral knot complements each have volume approximately $41.16$.
\end{proof}

\begin{corollary}\label{cor: Not236Knots}\NotTTSKnots\end{corollary}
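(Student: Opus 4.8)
The plan is to derive Corollary \ref{cor: Not236Knots} directly from Corollary \ref{cor:HidSymKnots} together with the commensurability information established earlier. The statement to prove is that there are at least four hyperbolic knot complements in $\Sthree$ that have hidden symmetries and do \emph{not} cover an orbifold with a $(2,3,6)$-cusp. By Corollary \ref{cor:HidSymKnots} we already have (at least) four such knot complements with hidden symmetries, namely the $\widetilde{M}_{i,j}$ for $i\in\{2,3\}$, $j\in\{1,2\}$ supplied by \Cref{main_tech_thm}. So the entire content left to verify is that none of these covers an orbifold with a $(2,3,6)$-cusp.

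First I would recall that each $\widetilde{M}_{i,j}$ covers the orientable prism orbifold $\doh^{333}_i$, and that the cusp cross-section of $\doh^{333}_i$ is the orientable double cover of a $(3,3,3)$-triangle orbifold, i.e.\ a $(3,3,3)$-turnover, as noted in the proof of \Cref{cor:HidSymKnots}. The key structural input is \Cref{no cover}, which asserts that for $i=2,3$ the orbifold $O^{333}_i$ does not non-trivially cover any other orbifold; equivalently, via \Cref{commensurator}, that $\prgr^{333}_i$ is its own commensurator and $O^{333}_i$ is the unique minimal orbifold in its commensurability class. The plan is to use this minimality to pin down exactly which orbifolds $\widetilde{M}_{i,j}$ can cover.

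The main step is the following commensurability argument. Suppose toward a contradiction that some $\widetilde{M}_{i,j}$ covered an orbifold $O'$ with a $(2,3,6)$-cusp. Then $\widetilde{M}_{i,j}$, $O'$, and $O^{333}_i$ (which $\widetilde{M}_{i,j}$ also covers) would all lie in the same commensurability class. By \Cref{commensurator}, $O^{333}_i$ is the unique minimal orbifold in that class, so $O'$ would cover $O^{333}_i$, and in particular the $(2,3,6)$-cusp of $O'$ would cover the $(3,3,3)$-turnover cusp of $O^{333}_i$. This is the crux, and I would phrase it carefully: a covering map of cusped hyperbolic orbifolds restricts on a cusp cross-section to a covering of Euclidean $2$-orbifolds, and the rigid Euclidean $2$-orbifolds $(2,3,6)$, $(2,4,4)$, $(3,3,3)$ and their turnovers fall into \emph{distinct} commensurability classes of Euclidean orbifolds, determined by the shape of the cusp torus lattice (hexagonal for the $(2,3,6)$ and $(3,3,3)$ types, square for $(2,4,4)$). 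The obstruction is that a $(2,3,6)$-orbifold cannot cover a $(3,3,3)$-turnover: the $(2,3,6)$-triangle orbifold is \emph{itself} minimal in the hexagonal Euclidean commensurability class (it is the smallest-area rigid hexagonal orbifold), so nothing properly covering it has smaller area, and in particular it cannot sit above a $(3,3,3)$-turnover, whose underlying cusp orbifold has strictly larger area. This contradiction shows no $\widetilde{M}_{i,j}$ covers a $(2,3,6)$-cusped orbifold.

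I expect the main obstacle to be making the cusp-covering incompatibility fully rigorous rather than merely plausible: one must confirm that a covering $O'\to O^{333}_i$ induces an actual orbifold covering on cusp cross-sections and then rule out a $(2,3,6)$-Euclidean orbifold covering a $(3,3,3)$-turnover on area and symmetry grounds. An alternative, cleaner route that sidesteps delicate Euclidean-orbifold bookkeeping is to invoke minimality even more sharply: since $O^{333}_i$ is the unique minimal orbifold in its commensurability class (\Cref{commensurator}), \emph{any} orbifold covered by $\widetilde{M}_{i,j}$ must cover $O^{333}_i$, whose sole cusp type is the $(3,3,3)$-turnover; hence every orbifold covered by $\widetilde{M}_{i,j}$ has a cusp covering a $(3,3,3)$-turnover, and such a cusp cannot be of type $(2,3,6)$. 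I would present the argument in this second form, making the single clean appeal to \Cref{commensurator} and then citing the standard fact (e.g.\ from \cite{NeumannReidArith} or the cusp-orbifold discussion in Section \ref{subsec: hidden sym}) that $(2,3,6)$ and $(3,3,3)$ cusp types are not comparable under covering. This yields all four required examples at once.
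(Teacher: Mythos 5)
Your proposal is correct and follows essentially the same route as the paper: the paper's proof likewise invokes \Cref{commensurator} to get minimality of $O^{333}_2$ and $O^{333}_3$ in their commensurability classes and then concludes that the absence of (peripheral) two- and six-torsion rules out any cover of a $(2,3,6)$-cusped orbifold. Your second, ``cleaner'' formulation is exactly this argument; just note that the justification should be the torsion obstruction (an order-$6$ peripheral element cannot inject into the $(3,3,3)$ peripheral group) rather than an area comparison of Euclidean orbifolds, since flat orbifold areas are not intrinsic.
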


\begin{proof}
    By \Cref{commensurator} the orbifolds $O^{333}_2$ (covered by $\widetilde{M}_{2,1}$ and $\widetilde{M}_{2,2}$) and $O^{333}_3$ (covered by $\widetilde{M}_{3,1}$ and $\widetilde{M}_{3,2}$) are each minimal in their commensurability classes. This implies in particular that there is no two- or six-torsion in either commensurability class, and hence that no $\widetilde{M}_{i,j}$ covers a $(2,3,6)$-cusped orbifold.
\end{proof}

\begin{corollary}\label{volumes}
For all $i\in\{2,3\}$ and $j\in\{1,2\}$, the $M_{i,j}$ from \Cref{main_tech_thm} have equal volume. This is approximately 
$45.0273570343769$, accurate to at least $7$ digits right of the decimal point. The knot complements $\widetilde{M}_{i,1}$ in $\mathbb{S}^3$ have equal volumes, approximately $585.3556414469$; likewise the $\widetilde{M}_{i,2}$ have equal volumes, approximately $990.6018547563$.

For each $i$ and $j$, let $S_{i,j}\subset M_{i,j}$ be the separating totally geodesic surface supplied by \Cref{geodesic surface}. The compact and non-compact submanifolds $M_{i,j}^+$ and $M_{i,j}^-$ of $M_{i,j}$ bounded by $S_{i,j}$, respectively, have approximate volumes $12.73430184$ and $32.2930552$.
\end{corollary}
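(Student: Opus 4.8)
The plan is to reduce every assertion to the multiplicativity of hyperbolic volume under finite-degree covers, feeding in the orbifold volumes computed in \Cref{rem:quot_vol}. First I would record that $\mathrm{vol}(\doh^{333}_i) = 2\,\mathrm{vol}(P^{333}_i)$, since the doubled prism $\dpr^{333}_i$ is a fundamental domain for $\dprgr^{333}_i$ and consists of two copies of $P^{333}_i$; by \Cref{rem:quot_vol} this gives $\mathrm{vol}(\doh^{333}_2) = \mathrm{vol}(\doh^{333}_3) \approx 1.87613987643237$, the equality being reinforced by the scissors congruence of \Cref{scis cong}. Since \Cref{main_tech_thm} exhibits each $M_{i,j}$ as a degree-$24$ cover of $\doh^{333}_i$, multiplicativity yields $\mathrm{vol}(M_{i,j}) = 24\,\mathrm{vol}(\doh^{333}_i)$, which is independent of $i$ and $j$ and numerically equals the stated $45.0273570343769$. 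The precision claim follows by propagating the precision of \Cref{rem:quot_vol} through this single multiplication.

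For the knot complements in $\Sthree$, I would observe that the degrees recorded in \Cref{main_tech_thm} force $\widetilde{M}_{i,1}\to M_{i,1}$ to have degree $13 = 312/24$ and $\widetilde{M}_{i,2}\to M_{i,2}$ to have degree $22 = 528/24$; concretely, $\widetilde{M}_{i,1}$ is the preimage of the knot under the universal cover $\Sthree\to L(13,3)$, and similarly for $j=2$. Multiplicativity then gives $\mathrm{vol}(\widetilde{M}_{i,1}) = 13\,\mathrm{vol}(M_{i,1})$ and $\mathrm{vol}(\widetilde{M}_{i,2}) = 22\,\mathrm{vol}(M_{i,2})$, reproducing the stated approximate values $585.3556414469$ and $990.6018547563$; equality within each fixed $j$ is inherited from the equality of the $\mathrm{vol}(M_{i,j})$.

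For the submanifold volumes, the key point is that $M_{i,j}$ is tiled by prisms. Composing $M_{i,j}\to\doh^{333}_i$ (degree $24$) with the orientable double cover $\doh^{333}_i\to O^{333}_i$ shows that $M_{i,j}$ is a degree-$48$ manifold cover of the reflection orbifold $O^{333}_i$, hence is cellulated by $48$ copies of $P^{333}_i$. In the notation of the proof of \Cref{geodesic surface}, cutting each copy along its triangle $T$ splits $M_{i,j}$ into $M_{i,j}^+$ and $M_{i,j}^-$, the unions respectively of the $48$ copies of the compact sub-prism $P^+$ and of the non-compact sub-prism $P^-$. Since the tuple $\be^- = (3, 3, 2, 3, 3, 4, 2, 2, 2)$ governing $P^-$ is exactly that of $O^{333}_1$, I would identify $\mathrm{vol}(P^-) = \mathrm{vol}(P^{333}_1)$ from \Cref{rem:quot_vol} and set $\mathrm{vol}(P^+) = \mathrm{vol}(P^{333}_i) - \mathrm{vol}(P^{333}_1)$; then $\mathrm{vol}(M_{i,j}^-) = 48\,\mathrm{vol}(P^{333}_1) \approx 32.2930552$ and $\mathrm{vol}(M_{i,j}^+) = 48\bigl(\mathrm{vol}(P^{333}_i) - \mathrm{vol}(P^{333}_1)\bigr) \approx 12.73430184$, which sum to $\mathrm{vol}(M_{i,j})$ as a consistency check.

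The argument is essentially bookkeeping, so rather than a genuine obstacle there are two care points. The first is correctly tracking covering degrees: distinguishing the degree $24$ over $\doh^{333}_i$ from the degree $48$ over the reflection orbifold $O^{333}_i$, and the secondary degrees $13$ and $22$ arising from the lens-space universal covers. The second is verifying that the prism cellulation underlying \Cref{geodesic surface} is compatible with this degree count, so that $M_{i,j}^{\pm}$ really are unions of $48$ sub-prisms each; confirming that $\be^- $ coincides with the edge-label tuple of $O^{333}_1$ is precisely what lets the sub-prism volumes be read off without any new integral computation.
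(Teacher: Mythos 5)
Your proposal is correct and follows essentially the same route as the paper's proof: multiplicativity of volume over the degree-$24$ covers $M_{i,j}\to\doh^{333}_i$ (and the further degrees $13$ and $22$ for the $\widetilde{M}_{i,j}$), the degree-$48$ cellulation of $M_{i,j}$ by copies of $P^{333}_i$, and the identification of $P^-$ with $P^{333}_1$ via the edge-label tuple $(3,3,2,3,3,4,2,2,2)$. The only cosmetic difference is that the paper obtains $\mathrm{vol}(M_{i,j}^+)$ by subtracting $\mathrm{vol}(M_{i,j}^-)$ from $\mathrm{vol}(M_{i,j})$ rather than from $48\bigl(\mathrm{vol}(P^{333}_i)-\mathrm{vol}(P^{333}_1)\bigr)$, which is the same computation.
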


\begin{proof} By \Cref{scis cong}, $O^{333}_2$ and $O^{333}_3$ have equal volumes. Since the $M_{i,j}$ are each degree-$24$ covers of one or the other, it follows that their volumes are all equal. Their volume recorded above is obtained by multiplying those of the $\doh^{333}_i$ from \Cref{rem:quot_vol} by $24$. The volumes of the $\widetilde{M}_{i,j}$ are obtained by further multiplying by the appropriate values ($13$ and $22$). 

For each $i,j$, the surface $S_{i,j}$ supplied by \Cref{geodesic surface} is separating, since neither nine-tuple $\be$ associated to either $\doh^{333}_i$ has $a_7=a_8=a_9 = 2$. 
    For $i=2$ or $3$, as in the proof of \Cref{corandreev}, the triangle $T\subset P^{333}_i$ that tiles $S_{i,j}$ divides $P^{333}_i$ into sub-prisms $P^+$ and $P^-$. As discussed in the proof of \Cref{geodesic surface}, the compact and non-compact submanifolds $M_{i,j}^+$ and $M_{i,j}^-$ of $M_{i,j}$, respectively, that are bounded by $S_{i,j}$ divide into copies of the respective doubles $\dpr^+$ and $\dpr^-$ of $P^+$ and $P^-$. The volume of $M_{i,j}^{\pm}$ is therefore $48$ times that of $P^{\pm}$.

    The approximate volumes above follow from \Cref{rem:quot_vol}. It is implicit in the computations of \Cref{subsec: volumes} leading up to that Remark that for $i=2$ or $3$, $P^- = P^{333}_1$. We now argue this explicitly. Each edge of $P^-$ that is not an edge of $T$ is contained in one of $P^{333}_i$, and shares that edge's dihedral angle. Each edge of $P^-$ that is contained in $T$ has dihedral angle $\pi/2$, so the nine-tuple $\be^-$ recording the labels of edges of $T$ is as follows:
\[ \be' = (3,3,2,3,3,4,2,2,2). \]
Here the first six labels are for the edges contained in those of $P^{333}_i$---which have identical labels for $i=2$ and $3$---and the last three are of the edges of $T$. Comparing with \Cref{table333}, we see that this nine-tuple matches the one defining $P^{333}_1$. Multiplying the volume of $P^{333}_1 = P^-$ recorded in \Cref{rem:quot_vol} by $48$ thus yields the approximate value of $32.2930552$ for the volume of $M_{i,j}^-$ above. The volume of $M_{i,j}^+$ is the volume of $M_{i,j}$ minus that of $M_{i,j}^-$.
\end{proof}

\begin{remark}\label{knot vol}
Using SnapPy \cite{SnapPy} with the ideal triangulations of the $M_{i,j}$ described below in the proof of \Cref{lem:lifts}, we can independently compute their approximate volumes to within a rigorously determined interval of accuracy. Each such computation agrees with that of \Cref{volumes} to at least 7 digits right of the decimal point, within this interval.
\end{remark}

\subsection{Degree-24 covers and their properties.}\label{subsec:the M_ij} In \Cref{main_tech_thm}, the manifolds $M_{i,j}$ are described in terms of a ``permutation representation'' of the generators of certain orientable orbifold groups $\dprgr(\be)$, presented as in (\ref{able}), on right cosets of finite-index subgroups. We begin by defining the term.

\begin{definition}\label{permrep}
    The \emph{(right-)permutation representation} of a group $G$ associated to an index-$n$ subgroup $H<G$ and a set $\mathcal{S} = \{Hg_0,\hdots,H g_{n-1}\}$ of distinct right-coset representatives for $H$ in $G$ is the map $\sigma\co G\to S_n$, where $S_n$ is the symmetric group on $n$ letters, that records the actions of elements of $G$ on $\mathcal{S}$ by right-multiplication. That is, for any $g\in G$ and $i\in\{0,\hdots,n-1\}$, $\sigma(g)(i) = j$, where $(H g_i)g = H g_j$.
\end{definition}

Note that because a \emph{right-}action is used to define the permutation representation $\sigma$ above, it reverses the order of multiplication; ie.~$\sigma(gh) = \sigma(h) \circ \sigma(g)$ for any $g$ and $h$ in $G$. We now construct the $M_{i,j}$ using right-permutation representations $\sigma_{i,j}$ defined in \Cref{table_pr}.

\begin{table}
\centering
\centering
\begin{tabular}{ccrl}
\toprule
\textbf{i} & name & & permutation representation \\
\midrule
$\mathbf{2}$ & $\sigma_{2,1}$ & $x\mapsto$ & $[1, 2, 0, 12, 10, 19, 18, 3, 20, 8, 16, 6, 7, 17, 13, 5, 4, 14, 11, 15, 9, 22, 23, 21]$ \\ 
    & & $y\mapsto$ & $[3, 6, 10, 4, 0, 20, 7, 1, 19, 14, 11, 2, 18, 15, 23, 22, 12, 8, 16, 17, 21, 5, 13, 9]$ \\ 
    & & $z\mapsto$ & $[2, 8, 5, 13, 17, 0, 22, 12, 9, 1, 4, 18, 23, 14, 3, 19, 15, 10, 20, 16, 11, 6, 21, 7]$ \\ 
    & & $w\mapsto$ & $[1, 0, 9, 15, 18, 8, 12, 22, 5, 2, 20, 17, 6, 16, 19, 3, 13, 11, 4, 14, 10, 23, 7, 21]$ \\
\midrule
$\mathbf{2}$ & $\sigma_{2,2}$ & $x\mapsto$ & $[1, 2, 0, 13, 11, 17, 12, 3, 14, 8, 6, 15, 10, 7, 9, 4, 23, 18, 5, 20, 21, 19, 16, 22]$ \\ 
    & & $y\mapsto$ & $[3, 6, 11, 4, 0, 19, 7, 1, 20, 16, 2, 10, 15, 12, 5, 13, 18, 23, 9, 14, 22, 17, 8, 21]$\\ 
    & & $z\mapsto$ & $[2, 8, 5, 14, 16, 0, 20, 19, 9, 1, 22, 4, 6, 3, 13, 18, 11, 15, 17, 21, 12, 7, 23, 10]$ \\ 
    & & $w\mapsto$ & $[6, 10, 12, 11, 13, 20, 0, 15, 23, 22, 1, 3, 2, 4, 16, 7, 14, 19, 21, 17, 5, 18, 9, 8]$ \\
\midrule
$\mathbf{3}$ & $\sigma_{3,1}$ & $x\mapsto$ & $[1, 2, 0, 13, 10, 18, 21, 3, 14, 8, 15, 6, 23, 7, 9, 4, 20, 16, 19, 5, 17, 11, 12, 22]$ \\ 
    & & $y\mapsto$ & $[3, 6, 10, 4, 0, 20, 7, 1, 5, 17, 11, 2, 19, 21, 12, 13, 18, 22, 23, 14, 8, 15, 9, 16]$ \\ 
    & & $z\mapsto$ & $[2, 8, 5, 7, 16, 0, 19, 14, 9, 1, 23, 21, 22, 12, 3, 10, 17, 4, 6, 18, 11, 20, 13, 15]$ \\ 
    & & $w\mapsto$ & $[6, 3, 12, 1, 7, 21, 0, 4, 15, 16, 14, 19, 2, 20, 10, 8, 9, 23, 22, 11, 13, 5, 18, 17]$ \\
\midrule
$\mathbf{3}$ & $\sigma_{3,2}$ & $x\mapsto$ & $[1, 2, 0, 13, 10, 17, 19, 3, 14, 8, 15, 6, 20, 7, 9, 4, 23, 18, 5, 11, 21, 12, 16, 22]$ \\ 
    & & $y\mapsto$ & $[3, 6, 10, 4, 0, 12, 7, 1, 20, 16, 11, 2, 14, 19, 5, 13, 18, 23, 9, 15, 22, 17, 8, 21]$ \\ 
    & & $z\mapsto$ & $[2, 8, 5, 14, 16, 0, 20, 12, 9, 1, 4, 22, 21, 3, 13, 18, 10, 15, 17, 6, 19, 7, 23, 11]$ \\ 
    & & $w\mapsto$ & $[6, 3, 12, 1, 7, 10, 0, 4, 18, 22, 5, 14, 2, 17, 11, 23, 20, 13, 8, 21, 16, 19, 9, 15]$
\end{tabular}
\caption{Permutation representations for $\dprgr^{333}_i$, $i = 2$ or $3$.}
\label{table_pr}
\end{table}

\begin{lemma}\label{itsapermrep}
    For $i=2$ or $3$, let $\doh^{333}_i = \mathbb{H}^3/\dprgr^{333}_i$ be the orientable prism orbifold double-covering $O^{333}_i$ from Table \ref{table333}. Specializing the presentation for $\dprgr^{333}_i$ from (\ref{able}) to:\begin{align*}
    \dprgr^{333}_2\ & \cong\ \langle x, y,z, w\, |\, x^{3}, y^{3}, z^{3}, w^{2}, 
        (y^{-1}x)^{2}, (z^{-1}x)^{3}, (z^{-1}y)^{4}, (y^{-1}w)^{3}, (z^{-1}w)^{2} \rangle,\mbox{ and}\\
    \dprgr^{333}_3\ & \cong\ \langle x, y,z, w\, |\, x^{3}, y^{3}, z^{3}, w^{2}, 
        (y^{-1}x)^{2}, (z^{-1}x)^{3}, (z^{-1}y)^{4}, (y^{-1}w)^{2}, (z^{-1}w)^{3} \rangle,
    \end{align*}
    each map $\sigma_{i,j}\co \{x,y,z,w\}\to S_{24}$ recorded in \Cref{table_pr} extends to a right-permutation representation of $\dprgr^{333}_i$ with image acting transitively on $\{0,1,\hdots,23\}$. The stabilizer $G_{i,j}$ of $0$ therefore has index $24$ in $\dprgr^{333}_i$, and $\sigma_{i,j}$ records the generators' action by right-multiplication on right cosets of $G_{i,j}$.
\end{lemma}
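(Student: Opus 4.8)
The plan is to treat this as a finite verification, exploiting the fact that $\dprgr^{333}_i$ is presented by the generators $x,y,z,w$ together with the relators listed in the statement (the specialization of (\ref{able})). By the universal property of a group presentation, a set-map $\{x,y,z,w\}\to S_{24}$ extends over $\dprgr^{333}_i$ precisely when the images of the generators satisfy every defining relator. One caveat is that, as recorded just after \Cref{permrep}, a right-permutation representation is an anti-homomorphism: $\sigma(gh)=\sigma(h)\circ\sigma(g)$. However, every defining relator here has the single-syllable form $u^m$ with $u$ one of $x,y,z,w,\,y^{-1}x,\,z^{-1}x,\,z^{-1}y,\,y^{-1}w,\,z^{-1}w$, and $\sigma(u^m)=\sigma(u)^m$ regardless of convention; moreover $\sigma(u)$ computed in either order differs only by replacing a permutation by a conjugate, which preserves order. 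So the order-reversal is harmless here, and the whole check reduces to confirming that certain explicit permutations have order dividing a prescribed integer.

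First I would verify, for each $i$ and $j$, that $\sigma_{i,j}(x)$, $\sigma_{i,j}(y)$, $\sigma_{i,j}(z)$ each have order dividing $3$ and that $\sigma_{i,j}(w)$ has order dividing $2$, disposing of the relators $x^3,y^3,z^3,w^2$. Then I would form the products governing the mixed relators and check their orders: $\sigma(x)\sigma(y)^{-1}$ of order dividing $2$ for $(y^{-1}x)^2$, $\sigma(x)\sigma(z)^{-1}$ of order dividing $3$ for $(z^{-1}x)^3$, and $\sigma(y)\sigma(z)^{-1}$ of order dividing $4$ for $(z^{-1}y)^4$; these three are common to both groups. For $i=2$ I would additionally check $\sigma(w)\sigma(y)^{-1}$ of order dividing $3$ and $\sigma(w)\sigma(z)^{-1}$ of order dividing $2$, whereas for $i=3$ the exponents on those last two relators swap, so I would instead require orders dividing $2$ and $3$ respectively. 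Each check is a direct multiplication of permutations in $S_{24}$; with only nine relators per representation and four representations, the computation is bounded and elementary, if tedious---hence naturally automated, although the case $i=2$, $j=1$ is carried out entirely by hand in \Cref{sec:M21}.

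With the homomorphism property established, I would prove transitivity by a direct orbit computation: starting from $\{0\}$ and closing up under the four generating permutations and their inverses, one checks that every element of $\{0,1,\dots,23\}$ is reached. Concretely, reading off \Cref{table_pr} one sees $0$ mapping to $1$, $2$, $3$ under $x,z,y$ immediately, and a short breadth-first sweep exhausts all $24$ labels. Finally, transitivity on a $24$-element set forces the point-stabilizer $G_{i,j}$ of $0$ to have index exactly $24$ in $\dprgr^{333}_i$ by orbit--stabilizer, and the canonical $\dprgr^{333}_i$-equivariant bijection between the orbit of $0$ and the right-coset space identifies the given action with right multiplication on cosets, exactly as required by \Cref{permrep}.

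I do not expect a genuine obstacle, since the content here is bookkeeping rather than insight: the truly hard part---producing permutations that satisfy all the relations and yield a one-cusped knot-complement cover---was accomplished by the machine search described in the introduction and is not re-derived. The only conceptual pitfalls are respecting the anti-homomorphism convention consistently, which I have argued is immaterial for these single-syllable relators, and the sheer scale of the hand computation, which is why the general statement is checked by machine while one representative case is verified by hand later.
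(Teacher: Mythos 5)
Your proposal is correct and follows essentially the same route as the paper: a direct finite verification that each relator's image is the identity permutation (the paper records the cycle decompositions in \Cref{M21 permreps} for the case $(i,j)=(2,1)$ and notes each consists of $a_{i_j}$-cycles), followed by an orbit computation for transitivity and the standard orbit--stabilizer identification of the action with right multiplication on cosets of $G_{i,j}$. Your handling of the anti-homomorphism convention matches the paper's definition $\sigma(g)=\sigma(\chi_k)^{\alpha_k}\circ\cdots\circ\sigma(\chi_1)^{\alpha_1}$, and the products you list (e.g.\ $\sigma(x)\sigma(y)^{-1}$ for $(y^{-1}x)^2$) are exactly those the paper checks.
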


\begin{definition}\label{first manifold covers}
    For each $i\in\{2,3\}$ and $j\in\{1,2\}$, taking $G_{i,j}<\dprgr^{333}_i$ as in \Cref{itsapermrep}, let $M_{i,j} = \mathbb{H}^3/G_{i,j}$ be the corresponding cover of $\doh^{333}_i$.
\end{definition}

\begin{proof} We define the right-representation extending $\sigma_{i,j}$ as follows. For a word $g = \chi_1^{\alpha_1}\cdots \chi_k^{\alpha_k}$ in the generators $\{x,y,z,w\}$, with each ${\alpha_i}\in\mathbb{Z}$, and $n\in\{0,1,\hdots,23\}$, let
\[ \sigma_{i,j}(g)(n) = \left(\sigma_{i,j}(\chi_k)^{\alpha_k} \circ \hdots \circ \sigma_{i,j}(\chi_1)^{\alpha_1}\right)(n).\]
(Note that the composition here is in the opposite order from the usual product in $S_{24}$.) Well-definedness of this map on $\dprgr^{333}_i$ is equivalent to it sending each relator to the identity, which can be checked directly. See eg.~\Cref{M21 permreps} below for the case of $\sigma_{2,1}$. There, writing each relation from the presentation for $\dprgr^{333}_2$ given above in the form $g_j^{a_{i_j}}$ for $j\in\{1,\hdots,9\}$, cycle decompositions for the images of the $g_j$ are given. Each consists of $a_{i_j}$-cycles, verifying the $(i,j) = (2,1)$-case.

Another straightforward check establishes that each $\sigma_{i,j}$ determines a transitive action on $\{0,1,\hdots,23\}$. Therefore for any $i,j$, the stabilizer $G_{i,j}$ of $0$ in $\dprgr^{333}_i$ has index $24$. In particular, elements $g, h\in\dprgr^{333}_i$ determine the same right coset of $G_{i,j}$ if and only if $gh^{-1}\in G_{i,j}$; ie.~if and only if $0.g = 0.h$. For a complete set $\{g_0,\hdots,g_{23}\}$ of right coset representatives, where $0.g_n = n$ for each $n$, we then have $0.(g_nx) = n.x$, so $g_{n.x}$ represents the right coset $\left(G_{i,j}g_n\right)x$, and likewise for the other generators.\end{proof}

We now describe polyhedral decompositions of arbitrary covers of orbifolds $\doh(\be)$ determined by permutation representations of their orbifold fundamental groups $\dprgr(\be)$.

\begin{lemma}\label{cell decomp} Suppose $\dprgr(\be)$ is an orientable prism group, presented as in (\ref{able}), for a nine-tuple $\be$ from Table \ref{table236} or \ref{table333}. Label the faces of the doubled prism $\dpr(\be)$ on which the generators $x,y,z,w$ of $\dprgr(\be)$ act as face-pairings according to Figure \ref{numberings} (right), i.e.~as $0_+$, $1_+$, $2_+$, $3_+$ and $0_-$, $1_-$, $2_-$, $3_-$ so that $x$ takes $0_-$ to $0_+$, $y$ takes $1_-$ to $1_+$, $z$ takes $2_-$ to $2_+$, and $w$ takes $3_-$ to $3_+$.

Further suppose that \( H \) is an index-\( n \) subgroup of \( \dprgr(\be) \), and for a complete set $\mathcal{S} = \{g_0,\hdots,g_{n-1}\}$ of right coset representatives for $H$, let \( \sigma_x, \sigma_y, \sigma_z, \sigma_w \) be the values of the associated right-permutation representation $\sigma$ from \Cref{permrep}, on the generators. Then \( \mathbb{H}^3 / H \) is isometric to the orbifold obtained from distinct copies \( \dpr_0, \dots, \dpr_{n-1} \) of $\dpr(\be)$ by the equivalence relation generated by the face-pairing below. For every $i \in \{0, 1,\hdots, n-1\}$:

\begin{itemize}   
    \item Identify face \( 0_+ \) of \( \dpr_i \) to face \( 0_- \) of \( \dpr_j \), where \( j = \sigma_x(i) \).
    \item Identify face \( 1_+ \) of \( \dpr_i \) to face \( 1_- \) of \( \dpr_j \), where \( j = \sigma_y(i) \).
    \item Identify face \( 2_+ \) of \( \dpr_i \) to face \( 2_- \) of \( \dpr_j \), where \( j = \sigma_z(i) \).
    \item Identify face \( 3_+ \) of \( \dpr_i \) to face \( 3_- \) of \( \dpr_j \), where \( j = \sigma_w(i) \).
\end{itemize}

These identifications use the appropriate generator, pre- and post-composed by the appropriate marking. For example, the first identification map is \( g_j x g_i^{-1} \), where \( g_i: \dpr(\be) \rightarrow \dpr_i \) and \( g_j: \dpr(\be) \rightarrow \dpr_j \) are the markings. In particular, for each $f\in\{0,1,2,3\}$ they take the edge $f_+\cap f_-$ of $\dpr_i$ to the corresponding edge of $\dpr_j$.
\end{lemma}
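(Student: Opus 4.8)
The plan is to realize $\Hthree/H$ directly as a quotient of the tiling of $\Hthree$ by $\Gamma\doteq\dprgr(\be)$, and then to read the face-identifications off the right-coset combinatorics. Since $\dpr\doteq\dpr(\be)$ is a strict fundamental domain for $\Gamma$ by the Poincar\'e polyhedron theorem (as recorded around presentation (\ref{able})), its $\Gamma$-translates $\{g\dpr \co g\in\Gamma\}$ tile $\Hthree$ and are indexed bijectively by $\Gamma$ via $g\mapsto g\dpr$. Writing $q_H\co\Hthree\to\Hthree/H$ for the covering projection, I would first check that $q_H(g\dpr)=q_H(g'\dpr)$ exactly when $Hg=Hg'$: the reverse implication is clear, and the forward one holds because $q_H$ is injective on the interior of each tile (the $H$-translates of an open tile are disjoint, being among the disjoint $\Gamma$-translates). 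Hence the tile images are indexed by the right cosets $H\backslash\Gamma$, of which there are $n$, and for the chosen representatives $g_0,\dots,g_{n-1}$ I set $\dpr_i=q_H(g_i\dpr)$ with marking $g_i\co\dpr\to\dpr_i$ the restriction of $q_H\circ g_i$.

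Next I would argue that $\Hthree/H$ is exactly the result of gluing the disjoint copies $\dpr_0,\dots,\dpr_{n-1}$ along their faces. These copies cover $\Hthree/H$ and $q_H$ is injective on each open tile, so all identifications occur along boundary faces; it remains to match each identification with the stated face-pairing, which is the heart of the argument. In $\Hthree$, the face $0_+$ of $\dpr$ is shared with the neighbor $x\dpr$, since $x(0_-)=0_+$ forces $x\dpr$ to meet $\dpr$ along $x(0_-)=0_+$. Applying $g_i$, the face $0_+$ of $g_i\dpr$ is shared with $g_ix\dpr$. Because $Hg_ix=Hg_{\sigma_x(i)}$ by the definition of the right-permutation representation (\Cref{permrep}), the tile $g_ix\dpr$ has the same $q_H$-image as $g_j\dpr$ with $j=\sigma_x(i)$, so in $\Hthree/H$ the face $0_+$ of $\dpr_i$ is glued to the face $0_-$ of $\dpr_j$. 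The gluing map is the isometry realizing this identification through the markings and the face-pairing $x$, of the form $g_j\,x\,g_i^{-1}$; and since the generator $x$ is a rotation fixing the common edge $0_+\cap 0_-$ pointwise, this map carries the edge $0_+\cap 0_-$ of $\dpr_i$ to the corresponding edge of $\dpr_j$. The identical computation with $y$, $z$, $w$ and $\sigma_y$, $\sigma_z$, $\sigma_w$ yields the remaining three families of identifications.

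Finally I would note that this gluing recovers $\Hthree/H$ not merely as a topological space but as a hyperbolic orbifold: each $\dpr_i$ carries the metric pulled back from $\Hthree$, the face-pairings are isometries, and $q_H$ is an orbifold covering, so the glued object is isometric to $\Hthree/H$ by construction rather than only homeomorphic to it.

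I expect the main obstacle to be orientation- and order-of-composition bookkeeping rather than any geometric difficulty. Since the permutation representation encodes a \emph{right} action, it reverses the order of composition (as noted after \Cref{permrep}), and each face-pairing has a definite direction ($x$ sends $0_-$ to $0_+$, not the reverse); care is needed to confirm that right-multiplication by $x$ in the coset identity $Hg_ix=Hg_j$ is precisely what selects the neighbor \emph{across $0_+$} (and not across $0_-$), and correspondingly that the composite of markings with the face-pairing produces the gluing map in the asserted form with the correct generator. Fixing these conventions once, consistently with \Cref{permrep}, and verifying them for a single generator settles the lemma, the other three generators being handled identically.
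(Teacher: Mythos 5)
Your proposal is correct and follows essentially the same route as the paper's proof: both realize $\mathbb{H}^3/H$ via the $\Gamma$-tiling by translates of $\dpr(\be)$, index the projected tiles by right cosets, and read off each face-pairing from the identity $g_i\chi = hg_{\sigma_\chi(i)}$, which exhibits $g_i\chi(\dpr(\be))$ as the neighbor of $g_i(\dpr(\be))$ across the relevant face. Your explicit handling of the right-action bookkeeping and of the final edge claim (via the generators fixing their axis edges pointwise) matches what the paper does or leaves implicit.
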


\begin{figure}[ht]
\begin{tikzpicture}

\begin{scope}[xshift=-2.5in]
    \coordinate (A) at (0,0,0);      
    \coordinate (B) at (3,0,0);      
    \coordinate (C) at (3,3,0);      
    \coordinate (D) at (0,3,0);      
    \coordinate (E) at (0,0,3);      
    \coordinate (F) at (3,0,3);      
    \coordinate (G) at (3,3,3);      
    \coordinate (H) at (0,3,3);      

    \draw[thick, orange] (C) -- (D);

    \draw[thick] (A) -- (0,1.1,0); 
    \draw[white, line width=3pt] (0,1.1,0) -- (0,1.5,0);
    \draw[thick] (0,1.5,0) -- (0,1.77,0);    
    \draw[white, line width=3pt] (0,1.77,0) -- (0,1.9,0);
    \draw[thick] (0,1.9,0) -- (D);

    \draw[thick, green] (A) -- (1.77,0,0);
    \draw[white, line width=3pt] (1.77,0,0) -- (1.9,0,0);
    \draw[thick, green] (1.9,0,0) -- (B);

    \draw[thick, violet] (H) -- (G);

    \draw[thick, brown] (E) -- (0,1.35,3);
    \draw[white, line width=3pt] (0,1.35,3) -- (0,1.66,3);
    \draw[thick, brown] (0,1.65,3) -- (H);

    \draw[thick, brown] (B) -- (3,1.3,0);
    \draw[white, line width=3pt] (3,1.3,0) -- (3,1.6,0);
    \draw[thick, brown] (3,1.6,0) -- (C);

    \draw[thick] (F) -- (3,1.55,3);
    \draw[white, line width=3pt] (3,1.55,3) -- (3,1.85,3);
    \draw[thick] (3,1.85,3) -- (G);
    
    \draw[thick, green] (A) -- (E);

    \draw[thick, blue] (B) -- (F); 

    \draw[thick, blue] (E) -- (F);

    \draw[thick, violet] (C) -- (G);

    \draw[thick, orange] (D) -- (H); 
    
    \draw[thick] (D) -- (1.35,3,1.35);
    \draw[white, line width=3pt] (1.35,3,1.35) -- (1.65,3,1.65);
    \draw[thick] (1.65,3,1.65) -- (G);

    \draw[thick] (A) -- (1.35,0,1.35);
    \draw[white, line width=3pt] (1.35,0,1.35) -- (1.65,0,1.65);
    \draw[thick] (1.65,0,1.65) -- (F);

    \node[blue] at (1.5,0,3) { $1_+$};
    \node[green!60!black] at (0,0.1,1.5) { $2_+$};
    \node[green!60!black] at (1.4,0,0) { $2_-$};
    \node[black] at (3,1.7,3) { $4$};
    \node[blue] at (3.15,0,1.5) { $1_-$};
    \node[black] at (1.5,0,1.5) { $0$};
    \node[black] at (0,1.3,0) { $3$};
    \node[brown] at (3.1,1.4,0) { $5_-$};
    \node[brown] at (0,1.5,3) { $5_+$};
    \node[black] at (1.5,3,1.5) { $6$};
    \node[violet] at (3,3,1.5) { $7_-$};
    \node[violet] at (1.5,3.05,3.2) { $7_+$};
    \node[orange] at (1.5,2.95,0) { $8_-$};
    \node[orange] at (0.01,3,1.5) { $8_+$};

    \fill[white] (F) circle (3pt);
    \draw[thick] (F) circle (3pt);
\end{scope}

\begin{scope}

    \coordinate (A) at (0,0,0);      
    \coordinate (B) at (3,0,0);      
    \coordinate (C) at (3,3,0);      
    \coordinate (D) at (0,3,0);      
    \coordinate (E) at (0,0,3);      
    \coordinate (F) at (3,0,3);      
    \coordinate (G) at (3,3,3);      
    \coordinate (H) at (0,3,3);      

    \draw[thick] (A) -- (B) -- (C) -- (D) -- cycle;  

    \draw[thick] (A) -- (0,1.77,0);  
    \draw[white, line width=3pt] (0,1.77,0) -- (0,1.9,0);
    \draw[thick] (0,1.9,0) -- (D); 

    \draw[thick] (A) -- (1.77,0,0); 
    \draw[white, line width=3pt] (1.77,0,0) -- (1.9,0,0); 
    \draw[thick] (1.9,0,0) -- (B);

    \draw[thick] (E) -- (F) -- (G) -- (H) -- cycle;  

    \draw[thick] (A) -- (E); 
    \draw[thick] (B) -- (F); 
    \draw[thick] (C) -- (G); 
    \draw[thick] (D) -- (H); 
    \draw[thick] (D) -- (G);
    \draw[thick] (A) -- (F);

    
    \fill [color=black, opacity=0.1] (A) -- (E) -- (F);
    \fill [color=black, opacity=0.1] (A) -- (B) -- (F);


    \fill [color=green, opacity=0.1] (D) -- (G) -- (H);
    \fill [color=green, opacity=0.1] (D) -- (G) -- (C);

    
    \node[blue] at (-0.2,1,0.2) { $1_+$};    
    \node[red] at (1.7,0.65,0.2) { $2_+$};   
    \node[green!60!black] at (0.75,2.3,0.2) { $3_+$};    
    \node[blue] at (0.4,1.2,0.2) { $1_-$};    
    \node[red] at (2.25,0.85,0.2) { $2_-$};   
    \node[green!60!black] at (1.5,2.5,0.2) { $3_-$};   
    \node[black] at (0.5,-0.6,0) { $0_+$};    
    \node[black] at (1.6,0.2,1.5) { $0_-$};    
    
    \fill[white] (F) circle (3pt);
    \draw[thick] (F) circle (3pt);
\end{scope}

\end{tikzpicture}
\caption{Edge numbering (left); Face numbering (right - note that the back faces are numbered by $1_+$ and $1_-$  while front are numbered by $2_+$ and $2_-$).}
\label{numberings}
\end{figure}
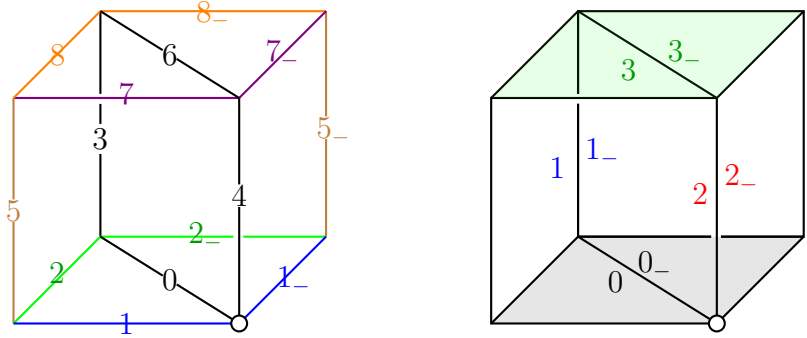

\begin{proof}
Let \( \dpr_i = g_i(\dpr(\be)) \) for every $i$, where \( \{g_0, \dots, g_{n-1}\} \) is the complete set of right coset representatives for \( H \) in \( \dprgr(\be) \). Since \( \dpr(\be) \) is a fundamental domain for \( \dprgr(\be) \), we have \( \mathbb{H}^3 = \bigcup_{g \in \dprgr(\be)} g(\dpr(\be)) \). Thus, for any \( g \in \dprgr(\be) \), expressing $g$ as \( h g_i \) for some \( h \in H \) and coset representative \( g_i \), we have \( g(\dpr(\be)) = h(\dpr_i) \). Hence, $\bigcup_{i=0}^{n-1} \dpr_i$ maps onto \( \mathbb{H}^3 / H \) under the covering projection from $\mathbb{H}^3$.

For some \( h, h' \in H \) and \( i, j \in \{0, \dots, n - 1\} \), if the interior of \( h(\dpr_i) \) meets that of \( h'(\dpr_j) \), then again since \( \dpr(\be) \) is a fundamental domain for \( \dprgr(\be) \), we must have that \( h(\dpr_i) = h'(\dpr_j) \) and \( h g_i = h' g_j \). Hence \( i = j \), since \( g_i \) and \( g_j \) are coset representatives, and it follows that the map
\[
\bigcup_{i=0}^{n-1} \dpr_i \to \mathbb{H}^3 / H
\]
identifies only points of faces of the \( \dpr_i \).

The face gluings between \( \dpr_i \)'s are determined by the permutation representations of the generators of \( \dprgr(\be) \). For example, for a fixed \( i \) $\in \{0, 1, 2, ..., n-1\}$, taking \( j = \sigma_z(i) \), we  have \( g_i z = h g_j \) for some \( h \in H \). Noting that $z(\dpr(\be))$ intersects $\dpr(\be)$ along its face $2_-$, which is the face $2_+$ of $\dpr(\be)$, hence that $g_iz(\dpr(\be))$ intersects $\dpr_i = g_i(\dpr(\be))$ along the corresponding face(s), we obtain the following information about
\[ \dpr_j = g_j(\dpr(\be)) = h^{-1}g_iz(\dpr(\be)). \]
Since the projection to $\doh(\be)$ is $H$-invariant, the projection of $\dpr_j$ meets that of $\dpr_i = g_i(\dpr(\be))$ along the face corresponding to $2_-$ in the former and $2_+$ in the latter. The cases of the other generators are identical, and the result is proved.
\end{proof}

\begin{lemma}\label{torsion order} Suppose $\dprgr(\be)$ is an orientable prism group, presented as in (\ref{able}), for a nine-tuple $\be$ from Table \ref{table236} or \ref{table333}, and that $\sigma\co \dprgr(\be)\to S_n$ is the right-permutation representation associated to the right cosets of an index-\( n \) subgroup \( H \) of \( \dprgr(\be) \). Let $\dpr_0,\hdots,\dpr_{n-1}$ be the three-cells of the decomposition of $\mathbb{H}^3/H$ given by \Cref{cell decomp}. Fix an edge $e$ of the prism $P(\be)$, and write the relation of (\ref{able}) associated to the edge cycle of $e$ as $g^{a_j}$ for a word $g$ in the generators $\{x,y,z,w\}$ and some $j\in\{1,\hdots,9\}$. Then for any $k\in\{0,\hdots,n-1\}$, the edge $e_k$ of $\dpr_k$ corresponding to $e$ is identified to the edge $e_{k'}$ of $\dpr_{k'}$ corresponding to $e$ if and only $k$ and $k'$ belong to the same cycle of $\sigma(g)$.

In particular, $\mathbb{H}^3/H$ is a manifold if and only if for each relation $g^{a_j}$, $\sigma(g)$ is the product of $\frac{n}{a_j}$ disjoint $a_j$ cycles.
\end{lemma}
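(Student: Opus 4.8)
The plan is to reduce the manifold condition to a local, per-edge condition and then read that condition off from the cycle structure of $\sigma(g)$ using the edge-identification statement already established in the first part of this lemma. Since the cells $\dpr_0,\dots,\dpr_{n-1}$ are isometric copies of the convex polyhedron $\dpr(\be)$, glued along isometries of their faces by \Cref{cell decomp}, the space $\mathbb{H}^3/H$ is automatically a manifold in the interior of each cell and each face. Moreover the singular locus of $\doh(\be)=\mathbb{H}^3/\dprgr(\be)$ lies along its edges, so the only possible non-manifold points of $\mathbb{H}^3/H$ lie over these, i.e.\ on the images of edges of the $\dpr_i$. Thus $\mathbb{H}^3/H$ is a manifold if and only if every edge of the decomposition is nonsingular, equivalently has a neighborhood isometric to a ball, equivalently has cone angle exactly $2\pi$. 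First I would record this reduction, noting that once every edge is nonsingular, every vertex is as well: the link of a finite vertex then has all of its would-be cone points (one per incident edge) regular, so it is a genuine sphere, while the link of the ideal vertex becomes a torus; hence no singularity survives at the vertices or the cusp.

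Next I would compute the cone angle of an edge class in terms of its cycle length. Fix a prism edge $e$ with associated relation $g^{a_j}$ from the presentation (\ref{able}). In $\doh(\be)$ the corresponding edge has cone angle $2\pi/a_j$, since $g$ is the rotation of order $a_j$ about that edge and the edge-cycle angle of the fundamental domain $\dpr(\be)$ there is $2\pi/a_j$. By the edge-identification statement already proved in this lemma, the edges $e_0,\dots,e_{n-1}$ group into classes indexed by the cycles of $\sigma(g)$, the class coming from a cycle of length $\ell$ being the cyclic union of $\ell$ copies of this orbifold edge. Going once around such a class advances the cell index by $\sigma(g)$ and rotates by $2\pi/a_j$ at each step, closing up after exactly $\ell$ steps; hence the edge class has cone angle $\ell\cdot(2\pi/a_j)$ and edge-cycle holonomy conjugate to $g^{\ell}$.

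I would then conclude the local criterion and assemble the result. The edge class is nonsingular exactly when $\ell\cdot(2\pi/a_j)=2\pi$, that is when $\ell=a_j$; this agrees with the holonomy picture, since $\ell$ divides $\operatorname{ord}(\sigma(g))$, which divides $a_j=\operatorname{ord}(g)$, so $g^{\ell}=1$ forces $\ell=a_j$ and conversely. Combining with the first paragraph, $\mathbb{H}^3/H$ is a manifold if and only if, for every relation $g^{a_j}$, every cycle of $\sigma(g)$ has length $a_j$. Because the cycles of $\sigma(g)$ partition $\{0,\dots,n-1\}$, this happens precisely when $\sigma(g)$ is a product of $n/a_j$ disjoint $a_j$-cycles (in particular forcing $a_j\mid n$), which is exactly the assertion.

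I expect the main obstacle to be the second paragraph: justifying cleanly that the cover-edge from a length-$\ell$ cycle has cone angle exactly $\ell\cdot(2\pi/a_j)$, i.e.\ that each traversal through the $\ell$ cells contributes precisely the orbifold cone angle $2\pi/a_j$ and that the loop closes after $\ell$, and no fewer, steps. This amounts to matching the combinatorial face-pairing data of \Cref{cell decomp} (which advances cells by $\sigma(g)$) with the geometric rotation angle of $g$, and it must be carried out uniformly for the two kinds of edges: the axis edges $a_1,a_4,a_5,a_7$, where a single edge of $\dpr(\be)$ already carries the full angle $2\pi/a_j$, and the remaining edges, where the angle $2\pi/a_j$ arises from gluing the mirror pair $\{e,\bar e\}$ inside each $\dpr_i$.
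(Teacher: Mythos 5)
Your treatment of the second conclusion is essentially correct and follows the same route as the paper: reduce the manifold condition to the requirement that each edge class have total dihedral angle $2\pi$ (cell interiors and faces are automatically fine, and no singularity can survive only at vertices), observe that the angle accumulated around an edge class coming from a length-$\ell$ cycle is $\ell\cdot(2\pi/a_j)$, and conclude that the class is nonsingular exactly when $\ell=a_j$, i.e.\ when $\sigma(g)$ is a product of $n/a_j$ disjoint $a_j$-cycles.

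The genuine gap is that you never prove the first conclusion of the lemma --- that $e_k$ is identified to $e_{k'}$ if and only if $k$ and $k'$ lie in the same cycle of $\sigma(g)$. You invoke it as ``the edge-identification statement already established in the first part of this lemma,'' but it is itself one of the two assertions to be proved, and it is the combinatorial heart of the argument; your closing paragraph then names exactly this step as the main obstacle and leaves it undone. To close it you must trace the edge cycle through the face-pairings of \Cref{cell decomp} in two cases. When $e$ is one of the four edges fixed by a generator $g\in\{x,y,z,w\}$, the two faces of $\dpr_k$ containing $e_k$ are a face of the $P(\be)$-half and its mirror image, the pairing carries the latter onto the corresponding face of $\dpr_{k.g}$, and so the cells met in order around the edge are $\dpr_k,\dpr_{k.g},\dpr_{k.g^2},\dots$, every edge in the class being a copy of $e$; each cell contributes its full dihedral angle $2\pi/a_j$ there. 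When $g$ is a length-two word, say $g=z^{-1}y$ for the edge labeled $a_6$, the cells met around the edge \emph{alternate} between indices in the $\sigma(g)$-cycle of $k$ (contributing copies of $e$, at angle $\pi/a_j$ each) and their images under $\sigma(z)^{-1}$ (contributing copies of the mirror edge $\bar e$, again at angle $\pi/a_j$ each); only every other edge of the geometric edge cycle is a copy of $e$, and those are indexed precisely by the cycle of $\sigma(g)$ containing $k$. This alternation is exactly what makes your angle count $\ell\cdot(2\pi/a_j)$ correct in the second case, so it cannot be absorbed into a uniform ``rotate by $2\pi/a_j$ at each step'' without verification.
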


\begin{proof}
    Fix an edge $e$. The first conclusion has two slightly different cases, corresponding to whether or not $e$ is fixed by one of the generators $x$, $y$, $z$, or $w$. Suppose first that it is, so that $g$ equals this generator. For each $k$, let $f_k$ and $\bar{f}_k$ be the faces of $\dpr_k$ containing the edge $e_k$ corresponding to $e$, and respectively belonging to the copies of $P(\be)$ and $\bar{P}(\be)$ in $\dpr_k$. By \Cref{cell decomp}, $\bar{f}_k$ is identified to $f_{k.g}$, taking $e_k$ to $e_{k.g}$. Moreover, through the other face $\bar{f}_{k.g}$ of $\dpr_{k.g}$ containing $e_{k.g}$ we see $\dpr_{k.g^2}$, and so on. Thus all edges identified to the copy of $e$ in $\dpr_k$ are other copies of $e$, and such an edge in $\dpr_{k'}$ is identified to this copy if and only if $k' = k.g^p$ for some power $p$; ie.~if and only if $k'$ belongs to the cycle of $k$ under the $g$-action.

    Now suppose that $e$ is not fixed by a generator, and let $\bar{e}$ be its mirror image under the reflection of $\dpr(\be)$ exchanging $P(\be)$ and $\bar{P}(\be)$. In this case there are four relevant faces of $\dpr(\be)$: the two of $P(\be)$ containing $e$ and their mirror images containing $\bar{e}$. For the sake of tangibility let us suppose that $e$ is the edge labeled $a_6$ of the left-hand prism of \Cref{fig:piOne}; thus $e$ and $\bar{e}$ are the left- and right-most vertical edges of the doubled prism $\dpr(\be)$ pictured on the right side of Figure \ref{fig:piOne}. These two edges comprise the edge cycle determining the relation $(z^{-1}y)^{a_6}$ in the presentation (\ref{able}), so $g = z^{-1}y$ in this case. 
    
    Fixing some $k$, let $e_k$ and $\bar{e}_k$ be the edges of $\dpr_k$ respectively corresponding to $e$ and $\bar{e}$. The face of $\dpr_k$ labeled $2_-$ in \Cref{numberings} is identified to the face of $\dpr_{k.z^{-1}}$ labeled $2_+$, taking $\bar{e}_{k.z^{-1}}$ to $e_k$. Proceeding around this edge class through face $1_+$ of $\dpr_{k.z^{-1}}$, we next encounter $\dpr_{k.(z^{-1}y)}$ with its edge $e_{k.z^{-1}y}$ identified to $\bar{e}_{k.z^{-1}}$, and hence also to $e_k$. This index $k.(z^{-1}y)$ is the next one in the cycle of $\sigma_{2,1}(y)\circ\sigma_{2,1}(z)^{-1}$ containing $k$. Continuing around the edge, we find that in this case the sequence of indices of polyhedra containing that edge contains the cycle elements as every other index, alternating with their images under the action of $z^{-1}$. Furthermore, for $k'$ belonging to the cycle of $k$, the edge of $P_{k'}$ identified to $e_k$ is $e_{k'}$; for $P_{k'.z^{-1}}$ the edge is $\bar{e}_{k'.z^{-1}}$. The first conclusion is thus established for this particular edge class. An entirely analogous argument shows the same for all other edge classes containing two edges.

    For the second conclusion, we note that $\mathbb{H}^3/H$ is a manifold if and only if every point in the interior of an edge $e$ of some $\dpr_k$ has a neighborhood isometric to one in $\mathbb{H}^3$, since this is clear for points in the interior of $\dpr_k$ or one of its faces, and the singular locus of a hyperbolic $3$-orbifold has no isolated vertices. This in turn reduces to checking, for each such edge $e$, that the dihedral angle sum is $2\pi$ around all edges identified to $e$. There are two cases here, matching the two cases above. If $g\in\{x,y,z,w\}$ then $\dpr_k$ has dihedral angle $2\pi/a_j$ at $e$, and all other edges in the cycle of $e$ are isometric copies of the same edge of $\dpr(\be)$; hence with identical dihedral angles. If $g\notin\{x,y,z,w\}$ then $\dpr_k$ has angle $\pi/a_j$ at $e$ and also at its mirror image $\bar{e}$, and in this case copies of $e$ alternate with copies of $\bar{e}$ in the edge cycle. Thus in either case, the total dihedral angle is $2\pi$ if and only if there are $a_j$ copies of $e$ in the edge cycle. By the first conclusion, this occurs if and only if the cycle of $\sigma_{i,j}(g)$ containing $k$ has length $a_{j}$.
\end{proof}

\begin{corollary}\label{torfree}
    For $i=2$ or $3$ and $j=1$ or $2$, taking $G_{i,j}<\dprgr^{333}_i$ as in \Cref{itsapermrep}, the corresponding cover $M_{i,j} = \mathbb{H}^3/G_{i,j}$ of $\doh^{333}_i$ is a manifold.
\end{corollary}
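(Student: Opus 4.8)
The plan is to apply the manifold criterion established in \Cref{torsion order}, which reduces the statement to a finite, purely combinatorial check on each $\sigma_{i,j}$. That lemma asserts that $M_{i,j} = \mathbb{H}^3/G_{i,j}$ is a manifold if and only if, for every relation $g^{a_j}$ in the presentation of $\dprgr^{333}_i$ recorded in \Cref{itsapermrep}, the permutation $\sigma_{i,j}(g)\in S_{24}$ is a product of exactly $24/a_j$ disjoint $a_j$-cycles. So it suffices to verify this cycle-type condition for all nine relations of each of the four presentations.

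Because $\sigma_{i,j}$ is a \emph{right}-permutation representation, care is needed when assembling $\sigma_{i,j}(g)$ for a word $g$ in the generators: I would use the order-reversing rule $\sigma_{i,j}(gh) = \sigma_{i,j}(h)\circ\sigma_{i,j}(g)$ together with $\sigma_{i,j}(g^{-1}) = \sigma_{i,j}(g)^{-1}$, so that for instance $\sigma_{i,j}(z^{-1}y) = \sigma_{i,j}(y)\circ\sigma_{i,j}(z)^{-1}$. The four single-generator relations $x^3$, $y^3$, $z^3$, $w^2$ are checked by reading the relevant rows of \Cref{table_pr} directly: one confirms that $\sigma_{i,j}(x)$, $\sigma_{i,j}(y)$, and $\sigma_{i,j}(z)$ each decompose into eight disjoint $3$-cycles and that $\sigma_{i,j}(w)$ is a product of twelve transpositions. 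For the five relations involving a product of two generators, one first composes the tabulated permutations (with inverses) in the correct order and then inspects the cycle type. For $\dprgr^{333}_2$ these are $(y^{-1}x)^2$ and $(z^{-1}w)^2$ (twelve transpositions each), $(z^{-1}x)^3$ and $(y^{-1}w)^3$ (eight $3$-cycles each), and $(z^{-1}y)^4$ (six $4$-cycles); for $\dprgr^{333}_3$ the last two relations instead read $(y^{-1}w)^2$ and $(z^{-1}w)^3$, with the correspondingly adjusted cycle types.

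The case $(i,j) = (2,1)$ is in fact already exhibited: the cycle decompositions of the relator images under $\sigma_{2,1}$ are displayed in \Cref{M21 permreps}, and one reads off there that each relation $g^{a_j}$ yields a permutation $\sigma_{2,1}(g)$ consisting entirely of $a_j$-cycles, confirming the criterion of \Cref{torsion order} in this case. The remaining three cases $(2,2)$, $(3,1)$, and $(3,2)$ are handled identically, appealing to the data of \Cref{table_pr}.

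There is no conceptual obstacle here; the entire argument is a bounded, mechanical verification (which is also carried out by the custom Python scripts discussed in \Cref{subsec: compute}, and is fully rigorous). The only points demanding genuine attention are getting the composition order right under the right-action convention and confirming, in each of the composite cases, that \emph{every} cycle of $\sigma_{i,j}(g)$ has the prescribed length $a_j$: a single cycle of the wrong length would signal an orbifold singularity, i.e. torsion surviving in $G_{i,j}$, and so must be ruled out.
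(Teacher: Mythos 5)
Your proposal is correct and follows the paper's own argument exactly: both reduce the claim to the cycle-type criterion of \Cref{torsion order} and verify it by a finite combinatorial check on the permutations of \Cref{table_pr}, with the $(2,1)$ case already displayed in \Cref{M21 permreps}. Your handling of the right-action composition order matches the convention established in the proof of \Cref{itsapermrep}, so nothing is missing.
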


\begin{proof}  In light of \Cref{torsion order}, we must simply check that for each relation in the presentation for $\dprgr^{333}_i$ obtained by specializing (\ref{able}), of the form $g^{a_j}$ for a word $g$ of length $1$ or $2$ in the generators $x,y,z,w$,  
$\sigma_{i,j}(g)$ is the product of $\frac{24}{a_j}$ disjoint $a_j$ cycles. 
This is a computation. (For $M_{2,1}$, cf.~\Cref{M21 permreps}.)
\end{proof}

\begin{lemma}\label{cusp count}  Suppose $\dprgr(\be)$ is an orientable prism group, presented as in (\ref{able}), for a nine-tuple $\be$ from Table \ref{table236} or \ref{table333}, and that $\sigma\co \dprgr(\be)\to S_n$ is the right-permutation representation associated to the right cosets of an index-\( n \) subgroup \( H \) of \( \dprgr(\be) \). The cusps of $\mathbb{H}^3/H$ correspond to orbits of $\langle \sigma(x),\sigma(z)\rangle$ acting on $\{0,\hdots,n-1\}$.
\end{lemma}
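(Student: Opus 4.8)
The plan is to realize the cusps of $\mathbb{H}^3/H$ as $H$-orbits of parabolic fixed points and then translate this into an orbit count for $\langle\sigma(x),\sigma(z)\rangle$ via a double-coset identification. The essential geometric input, which I expect to be the only non-formal step, is to pin down the peripheral subgroup of $\dprgr(\be)$ at the cusp.

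First I would identify the stabilizer $\Lambda$ of the ideal vertex $\infty$ in $\dprgr(\be)$ with $\langle x,z\rangle$. Recall from the construction in \Cref{subsec: embedding333} that the ideal vertex of $P(\be)$ sits at $\infty$ and is the common endpoint of the edges labeled $a_1$, $a_2$, and $a_5$. Among the four rotation generators, $x$ and $z$ fix the edges labeled $a_1$ and $a_5$ respectively; since these edges run out to $\infty$, the rotations $x$ and $z$ fix $\infty$, whereas $y$ and $w$ fix the finite edges labeled $a_4$ and $a_7$ and so do not. Intersecting the fundamental domain $\dpr(\be)$ with a horosphere centered at $\infty$ gives a fundamental domain for the action of $\Lambda$ on that horosphere, and the side-pairings of the resulting Euclidean polygon are exactly the restrictions of those face-pairings of $\dpr(\be)$ that fix $\infty$, namely $x$ and $z$ (a face-pairing carries the faces incident to $\infty$ among themselves only when it fixes $\infty$). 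Hence $\Lambda=\langle x,z\rangle$; concretely it is the orientation-preserving $(a_1,a_5,a_2)$-triangle group cut out by the relations $x^{a_1}$, $z^{a_5}$, and $(z^{-1}x)^{a_2}$ of (\ref{able}), which is Euclidean precisely because the vertex shared by $a_1$, $a_2$, $a_5$ is ideal.

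Next I would run the standard covering-space dictionary. Because $[\dprgr(\be):H]=n<\infty$, the two groups have the same limit set and the same parabolic fixed points in $\partial\mathbb{H}^3$, and since $\doh(\be)$ has a single cusp these form one orbit $\dprgr(\be)\cdot\infty$. The cusps of $\mathbb{H}^3/H$ are in bijection with the $H$-orbits on this set. Writing $\dprgr(\be)\cdot\infty\cong \dprgr(\be)/\Lambda$ as a left-coset space via $g\Lambda\mapsto g\cdot\infty$, the $H$-orbits are exactly the double cosets $H\backslash\dprgr(\be)/\Lambda$.

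Finally I would match these double cosets with the orbits in the statement. By \Cref{permrep} the label set $\{0,\dots,n-1\}$ is identified with the right cosets $H\backslash\dprgr(\be)$, on which $\dprgr(\be)$ acts on the right through $\sigma$. Two right cosets lie in the same $\Lambda$-orbit under this right action if and only if they lie in a common double coset $Hg\Lambda$, so the $\Lambda$-orbits on $\{0,\dots,n-1\}$ are precisely the double cosets $H\backslash\dprgr(\be)/\Lambda$. Since $\Lambda=\langle x,z\rangle$, the right $\Lambda$-action is generated by $\sigma(x)$ and $\sigma(z)$, whence these orbits coincide with the orbits of $\langle\sigma(x),\sigma(z)\rangle$. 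Composing the three bijections gives the claimed correspondence. Everything after the identification $\Lambda=\langle x,z\rangle$ is the routine translation between cusps, parabolic-point orbits, double cosets, and the permutation representation, so that identification is where the real content lies.
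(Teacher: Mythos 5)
Your argument is correct, but it takes a genuinely different route from the paper's. The paper argues cellularly: using the decomposition of $\mathbb{H}^3/H$ into copies $\dpr_0,\dots,\dpr_{n-1}$ of the doubled prism from \Cref{cell decomp}, it identifies cusps with equivalence classes of the cells' ideal vertices and observes that the unique ideal vertex of $\dpr(\be)$ lies only in the four faces paired by $x$ and $z$, so the ideal vertices of $\dpr_k$ and $\dpr_{k'}$ are identified exactly when $k$ and $k'$ lie in a common $\langle\sigma(x),\sigma(z)\rangle$-orbit. You instead pin down the full peripheral subgroup $\Lambda=\mathrm{Stab}_{\dprgr(\be)}(\infty)=\langle x,z\rangle$ and run the standard dictionary: cusps of $\mathbb{H}^3/H$ correspond to $H$-orbits of cusp points, hence to double cosets $H\backslash\dprgr(\be)/\Lambda$, hence to $\langle\sigma(x),\sigma(z)\rangle$-orbits on $\{0,\dots,n-1\}$. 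Both proofs rest on the same geometric fact---that $x$ and $z$ are precisely the face-pairings incident to the ideal vertex---but yours needs the slightly stronger statement that $\langle x,z\rangle$ is the \emph{entire} stabilizer of $\infty$, which your horospherical fundamental-domain argument does supply (crucially using that $\dpr(\be)$ has a single ideal vertex, so the cross-section $\dpr(\be)\cap S_\infty$ really is a fundamental domain for the stabilizer), whereas the paper never has to identify the peripheral subgroup. In exchange, your double-coset formulation is independent of the particular polyhedral decomposition and applies verbatim to any finite-index subgroup of a one-cusped lattice whose peripheral subgroup is known. One small caution: your parenthetical remark that a face-pairing carries the faces incident to $\infty$ among themselves only when it fixes $\infty$ is valid here only because $\dpr(\be)$ has a unique ideal vertex; it is the fundamental-domain argument you give immediately afterwards, not that parenthetical, that actually establishes $\Lambda=\langle x,z\rangle$.
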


\begin{proof} Let $\dpr_0,\hdots,\dpr_{n-1}$ be the three-cells of the decomposition of $\mathbb{H}^3/H$ by copies of $\dpr(\be)$ given by \Cref{cell decomp}. The cusps of $\mathbb{H}^3/H$ are in bijective correspondence with equivalence classes of the ideal vertices of the $\dpr_k$. Since the sole ideal vertex of $\dpr(\be)$ is contained in the faces $1_+$, $1_-$, $2_+$, and $2_-$, the first two being paired by $x$ and the last two by $z$, from \Cref{cell decomp} we have that these equivalence classes correspond to orbits under the action of $\langle \sigma(x),\sigma(z)\rangle$ on $\{0,\hdots,n-1\}$ as claimed.
\end{proof}

\begin{corollary}\label{onecusp}
    For $i=2$ or $3$ and $j=1$ or $2$, taking $G_{i,j}<\dprgr^{333}_i$ as in \Cref{itsapermrep}, the corresponding cover $M_{i,j} = \mathbb{H}^3/G_{i,j}$ of $\doh^{333}_i$ has one cusp.
\end{corollary}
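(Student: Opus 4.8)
The plan is to invoke \Cref{cusp count} directly. That lemma identifies the cusps of any cover $\mathbb{H}^3/H$ of $\doh(\be)$ with the orbits of the subgroup $\langle \sigma(x),\sigma(z)\rangle \le S_n$ acting on $\{0,\hdots,n-1\}$, where $\sigma$ is the right-permutation representation associated to $H$. Specializing to $H = G_{i,j}$ and $n = 24$, it follows that $M_{i,j}$ has exactly one cusp if and only if $\langle \sigma_{i,j}(x),\sigma_{i,j}(z)\rangle$ acts transitively on $\{0,1,\hdots,23\}$. So the entire content of the corollary reduces to checking this transitivity for each of the four permutation representations recorded in \Cref{table_pr}.

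First I would confirm that the hypotheses of \Cref{cusp count} are met: the groups $\dprgr^{333}_2$ and $\dprgr^{333}_3$ are orientable prism groups presented as in (\ref{able}), and by \Cref{itsapermrep} each $\sigma_{i,j}$ is the right-permutation representation associated to the index-$24$ subgroup $G_{i,j}$. Then, for each fixed pair $(i,j)$, I would compute the orbit of $0$ under repeated application of the two permutations $\sigma_{i,j}(x)$ and $\sigma_{i,j}(z)$, reading their images off the explicit lists in \Cref{table_pr} (entry $k$ of each list being the image of $k$), and verify that the orbit eventually exhausts all of $\{0,\hdots,23\}$. Since there are only $24$ indices, each such orbit computation terminates in a few steps, and transitivity is established the moment one orbit is seen to have size $24$.

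This is a finite, fully mechanical verification, and it is essentially the only step: there is no conceptual obstacle, and the lemma has already done the topological work. The only practical concern is bookkeeping across the four cases. For a by-hand check of $(i,j)=(2,1)$, one traces the orbit of $0$, which grows under alternating applications of $\sigma_{2,1}(x)$ and $\sigma_{2,1}(z)$: from $0$ one reaches $1$ and $2$, then $8$, $20$, $5$, $9$, $11$, and so on, until every index from $0$ through $23$ has appeared. The remaining three cases are entirely analogous and may be confirmed identically or by machine, so the transitivity needed for \Cref{cusp count} holds in all four cases and each $M_{i,j}$ has a single cusp.
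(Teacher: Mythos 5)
Your proposal is correct and follows exactly the paper's own proof: the paper likewise reduces the statement to \Cref{cusp count} and then simply notes that checking transitivity of $\langle \sigma_{i,j}(x),\sigma_{i,j}(z)\rangle$ on $\{0,\hdots,23\}$ is straightforward. Your orbit trace for $(i,j)=(2,1)$ is consistent with the cycle decompositions recorded in \Cref{M21 permreps}, so nothing is missing.
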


\begin{proof}
     In light of \Cref{cusp count},  for each $(i,j)$ as in the hypotheses, we must check that the action of $\langle \sigma_{i,j}(x),\sigma_{i,j}(z)\rangle$ on $\{0,1,\hdots,23\}$ is transitive. This is straightforward.
\end{proof}

\subsection{Spines, and a first crack at first homology}\label{spinal subsec} Here we will construct a spine for the manifolds $M_{i,j}$ from \Cref{first manifold covers} and use it to prove this subsection's main theorem:

\begin{theorem}\label{homologyZ}
    For $i=2$ or $3$ and $j=1$ or $2$, taking $G_{i,j}<\dprgr^{333}_i$ as in \Cref{itsapermrep}, the corresponding cover $M_{i,j} = \mathbb{H}^3/G_{i,j}$ of $\doh^{333}_i$ has $H_1 (M_{i,j})\cong\mathbb{Z}$.
\end{theorem}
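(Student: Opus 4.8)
The plan is to use that $M_{i,j}$ is a manifold (\Cref{torfree}) on which $G_{i,j}$ acts freely, so $\pi_1(M_{i,j})\cong G_{i,j}$ and hence $H_1(M_{i,j})\cong G_{i,j}^{\mathrm{ab}}$. I would compute this abelianization from a finite presentation of $G_{i,j}$ extracted from a spine of $M_{i,j}$, and then show that the abelianized relation matrix has cokernel $\mathbb{Z}$.

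To build the spine I would dualize the polyhedral decomposition of \Cref{cell decomp}: take one vertex for each of the $24$ copies $\dpr_0,\dots,\dpr_{23}$ of $\dpr^{333}_i$, one edge for each of the $24\cdot 4 = 96$ face-pairings (each recording a generator $x,y,z,w$ together with the index pair $k\mapsto\sigma_{i,j}(k)$ it realizes), and one $2$-cell for each edge cycle of the decomposition. The edge cycles, and in particular which copies of which edge they identify, are described precisely by \Cref{torsion order}; the boundary of the dual $2$-cell is then the word in the face-pairings traversed going once around the corresponding edge. Collapsing the dual $3$-cells sitting at the finite (non-ideal) vertices of the decomposition leaves a genuine two-dimensional spine $K$ with $M_{i,j}\simeq K$. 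For the purpose of computing $H_1$ one need not even verify that $K$ is a spine: by the Poincar\'e polyhedron theorem the dual $2$-complex already has fundamental group $G_{i,j}$, so it computes $H_1$ correctly in any case. Counting as in \Cref{torsion order}, each relation $g^{a_j}$ of the specialized presentation (\ref{able}) contributes $24/a_j$ edge cycles, for $82$ edge cycles in total in each of the four cases.

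From $K$ I would read off a presentation of $G_{i,j}$ in the standard way: choose a maximal tree in the $1$-skeleton (whose complement has $96-24+1 = 73$ edges, giving $73$ generators) and take the $2$-cell boundary words as relators. Abelianizing presents $H_1(M_{i,j})$ as the cokernel of the integer matrix whose rows are the exponent-sum vectors of the relators in these $73$ generators. The theorem then amounts to showing this cokernel is $\mathbb{Z}$, i.e.\ that the matrix has corank exactly $1$ and no torsion (every nonunit invariant factor equal to $1$), which I would certify via a Smith normal form. Useful consistency checks are the Euler-characteristic identity $\chi(K)=\chi(M_{i,j})=0$ against the cell counts and, a posteriori, agreement of the resulting $H_1$ with the lens-space fillings identified in \Cref{subsec: lens space}.

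The main obstacle is organizational and arithmetic rather than conceptual: with $24$ polyhedra there are $96$ face-pairings and $82$ edge cycles, so the relation matrix is large, and the genuine work lies in orienting every face-pairing and every edge cycle consistently so that the relator words---and hence the signs entering the integer matrix---are correct. Sign and index bookkeeping at this scale is exactly where such a computation is most likely to go wrong. To guard against this I would carry out one case, $M_{2,1}$, entirely by hand (as in \Cref{sec:M21}, using the cycle data of \Cref{M21 permreps}) as an independent check on the machine computation handling all four $M_{i,j}$, and rely on the $\chi=0$ check above to catch any miscount of cells.
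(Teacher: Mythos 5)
Your strategy---read off a finite presentation of $G_{i,j}\cong\pi_1 M_{i,j}$ from a $2$-complex attached to the polyhedral decomposition of \Cref{cell decomp}, abelianize, certify the cokernel by machine, and hand-check one case---is essentially the paper's proof. The only real difference is the choice of $2$-complex: the paper uses the primal spine of \Cref{spine} (the union of the compact faces of the $\dpr_k$, with $48$ two-cells), whereas you use the dual $2$-complex, i.e.\ the face-pairing presentation with $73$ generators and $82$ edge-cycle relators. That variant is equally valid, and for exactly the reason you give: the face-pairing presentation computes $\pi_1 M_{i,j}$ whether or not the dual $2$-complex is a spine. One concrete correction, though: it is \emph{not} a spine, and the consistency check you propose to rely on fails for it. Its Euler characteristic is $24-96+82=10$, not $0$; the discrepancy is accounted for by the ten finite vertex classes of the decomposition, whose dual $3$-cells each make one of the $82$ edge-cycle relators a consequence of the others. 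The identity $\chi=0$ holds for the paper's genuine spine, or for your complex only after actually collapsing those $3$-cells (which removes two-cells and so changes your counts), so running $\chi=0$ against the numbers $24$, $96$, $82$ would wrongly suggest a miscount. None of this affects the $H_1$ computation itself, since redundant relators do not change the cokernel of the abelianized relation matrix.
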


The spine in question is more generally a deformation retract of the underlying space of an arbitrary orbifold cover of an arbitrary $\doh(\be)$.

\begin{lemma}\label{spine} Suppose \( \dprgr(\be) \) is an orientable prism group, presented as in (\ref{able}), for a nine-tuple $\be$ from Table \ref{table236} or \ref{table333}, and that \( H \) is an index-\( n \) subgroup of \( \dprgr(\be) \). Encoding the respective actions of \( x, y, z, w \) by right multiplication on a set of right coset representatives for $H$ as \( \sigma_x, \sigma_y, \sigma_z, \sigma_w \) as in \Cref{cell decomp}, let $\dpr_0,\hdots,\dpr_{n-1}$ be the $3$-cells of that result's cell decomposition of $\mathbb{H}^3/H$. There is a 2-dimensional subcomplex which is a deformation retract of the underlying topological space of $\mathbb{H}^3/H$, with two-cells in bijective correspondence with the faces of the $\dpr_i$ numbered $1_+$ and $3_+$ in Figure \ref{numberings}.
\end{lemma}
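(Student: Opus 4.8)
The plan is to realize the subcomplex as the outcome of a sequence of elementary collapses, starting at the cusp and working inward, using the cell decomposition of $\mathbb{H}^3/H$ into copies $\dpr_0,\dots,\dpr_{n-1}$ of $\dpr(\be)$ supplied by \Cref{cell decomp}. First I would truncate: cut each $\dpr_i$ along a horosphere about its ideal vertex, producing a compact polyhedron $\dpr_i^{\mathrm{tr}}$ with one new \emph{cusp face} $C_i$, and let $\overline{M}$ be the compact space obtained by gluing the $\dpr_i^{\mathrm{tr}}$ along the identifications induced from \Cref{cell decomp}. The underlying space of $\mathbb{H}^3/H$ deformation retracts onto $\overline{M}$ by pushing each product end (cusp cross-section)$\times[0,\infty)$ onto $\partial\overline{M}$, so it suffices to collapse $\overline{M}$. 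The key structural input is that the ideal vertex of $\dpr(\be)$ is incident to exactly the four faces paired by $x$ and $z$, namely $0$, $0_-$, $2$, $2_-$ (this is precisely what makes the cusps correspond to $\langle\sigma(x),\sigma(z)\rangle$-orbits in \Cref{cusp count}); consequently the faces $1$, $1_-$, $3$, $3_-$, paired by $y$ and $w$, are compact and miss the ideal vertex. These are the faces we retract onto.

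The collapse proceeds in two stages. In the first stage, each cusp face $C_i$ lies on $\partial\overline{M}$ and hence is a free face of the single three-cell $\dpr_i^{\mathrm{tr}}$; collapsing all $n$ pairs $(\dpr_i^{\mathrm{tr}}, C_i)$ deletes every three-cell and every cusp face, leaving a $2$-complex $K$ whose $2$-cells are exactly the $4n$ faces of types $0,1,2,3$. In the second stage I would collapse the type-$0$ and type-$2$ cells. Each such cell meets the cusp, and truncation endowed it with a single boundary edge lying on a cusp face; before the first stage that edge was incident only to cusp faces, so once those are gone it is a free edge of its own $2$-cell. Distinct cells yield distinct such edges—they are the sides of the cusp cross-section, each on a unique lateral face—so all type-$0$ and type-$2$ cells collapse simultaneously through these free edges. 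What remains is a $2$-complex $S$ whose $2$-cells are precisely the type-$1$ and type-$3$ cells, in bijection with the faces of the $\dpr_i$ numbered $1$ and $3$. Since each stage is a deformation retraction, $S$ is a deformation retract of the underlying space of $\mathbb{H}^3/H$; the compactness of the faces $1,1_-,3,3_-$ ensures they are never used as free cells and so survive.

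The main obstacle is verifying the free-face and free-edge conditions \emph{in the quotient}, not merely polyhedron-by-polyhedron: one must check from the incidence data of \Cref{cell decomp} that each cusp face bounds only its own three-cell, and that a truncation edge of a type-$0$ or type-$2$ cell is incident only to cusp faces (and its own cell) prior to the first collapse. A secondary point, needed for the stated generality, is that the entire argument concerns only the underlying topological space, so it applies verbatim to orbifold covers that are not manifolds; one needs only that self-identifications among the faces of a single $\dpr_i$ never place a second three-cell against a cusp face, which again follows from each cusp face lying on $\partial\overline{M}$.
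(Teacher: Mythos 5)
Your argument is correct, but it is a genuinely different route from the paper's. The paper does not truncate or collapse: it constructs an explicit deformation retraction $F_0$ of the single doubled prism $\dpr(\be)$ onto the union of its compact faces, pushing along vertical straight lines in the upper half-space model of \Cref{subsec: embedding333} (where the ideal vertex sits at $\infty$), arranges $F_0$ to commute with the face-pairings $x$ and $z$ on the faces $0_-$ and $2_-$, extends equivariantly over the whole $\dprgr(\be)$-tiling of $\mathbb{H}^3$, and then descends to $\mathbb{H}^3/H$. That approach lands exactly on the union of the compact faces (a genuine subcomplex of the decomposition from \Cref{cell decomp}) in one step, at the cost of having to set up the geometric embedding and check equivariance. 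Your truncate-and-collapse argument is purely combinatorial and correctly identifies the two delicate points (that the cusp faces and the truncation edges are free \emph{in the quotient}, which follows from the incidence data of \Cref{cell decomp} exactly as you say, provided the truncating horoballs are chosen as the preimage of a single embedded cusp neighborhood so that the face-pairings respect the truncation). Two small loose ends: after your second stage the truncated edges running to the ideal vertex (those lying only on faces of types $0$, $0_-$, $2$, $2_-$) survive as dangling $1$-cells, so you need a third round of collapses from their free truncation-vertex endpoints to land on an honest subcomplex of the original decomposition, as the statement asserts and as the later hand computations (e.g.\ \Cref{M_21 spine}) require; and the justification that the type-$1$ and type-$3$ cells survive is not their compactness per se but the fact that none of them, nor any cell in their boundaries used as a free face, is removed by your collapses. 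Neither point affects the validity of the approach.
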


\begin{proof} A standard argument shows that there is a deformation retract $F_0\co\dpr(\be)\times I \to \dpr(\be)$  to the union of its compact faces. With $P(\be)$ arranged as discussed in \Cref{subsec: embedding333}, this can be taken to occur along vertical straight lines. If one further takes care to arrange that $F_0(x(p),t) = x(F_0(p,t))$ for all $p$ in the face $0_-$, and $F_0(z(p),t)=z(F_0(p,t))$ for all $p\in 2_-$, then defining $F(g(p),t) = g(F_0(p,t))$ for arbitrary $p\in \dpr(\be)$ and $g\in\dprgr(\be)$ produces a well-defined, $\dprgr(\be)$-equivariant deformation retract from $\mathbb{H}^3$---which is tiled by $\dprgr(\be)$-translates of $\dpr(\be)$---to the union of compact faces of these translates. This induces the deformation retract of $\mathbb{H}^3/H$ claimed in the Lemma's statement.

Note that the compact faces of $\dpr(\be)$ are those numbered $1_-$, $1_+$, $3_-$, and $3_+$ in Figure \ref{numberings}. From the definition of the cell decomposition in \Cref{cell decomp}, each face numbered $1_-$ has the same projection as another numbered $1_+$, and likewise for $3_-$ and $3_+$, so the image of the deformation retract is the union of the projections of faces numbered $1_+$ and $3_+$ only.
\end{proof}

\begin{proof}[Proof of \Cref{homologyZ}]
For each $i$ and $j$ in question, since $M_{i,j}$ is a manifold the spine constructed in \Cref{spine} carries $\pi_1 M_{i,j} \cong G_{i,j}$. We may therefore produce a presentation for $G_{i,j}$ using the standard strategy for a $2$-complex: fix a maximal tree in the one-skeleton, associate a generator to each edge that does not belong to it, and associate a relation to each two-cell by reading off the edges of its boundary. Having done so, $H_1(M_{i,j})$ is obtained as the abelianization of this presentation.

What results is a rather forbiddingly complex complex; for instance, by Lemma \ref{spine} it has 48 two-cells. We have written Python code to compute the presentations in question; the scripts are included in the ancillary files in the directory \path{anc/Finding_low_index_subgroups/code}, where a sequence of modules can be run to reproduce the computations (see the \texttt{README} file therein). These computations are described further below in \Cref{subsec: filtering}. It returns $\mathbb{Z}$ for each $H_1(M_{i,j})$ here.
\end{proof}

\begin{remark} It is also possible to compute the homology of manifold covers of the $\doh(\be)$ by triangulating them and using tools of computational topology as in the next subsection. However, the spine constructed here (actually a coarsening of it) is fundamental to the hand proof laid out in \Cref{sec:M21}. And in any case, since this subsection's computational methods are independent from those of the next, each is useful for corroborating the other.\end{remark}

\subsection{Ideal triangulations and surgery}\label{subsec: lens space} This section establishes our main theorem, on the existence of knot complements covering $\doh^{333}_2$ and $\doh^{333}_3$.

\begin{lemma}\label{triangulation}
    Suppose $\dprgr(\be)$ is an orientable prism group, presented as in (\ref{able}), for a nine-tuple $\be$ from Table \ref{table236} or \ref{table333}, and that $\sigma\co \dprgr(\be)\to S_n$ is the right-permutation representation associated to the right cosets of an index-\( n \) subgroup \( H \) of \( \dprgr(\be) \), such that $\mathbb{H}^3/H$ has $c$ cusps. The underlying topological space of $\mathbb{H}^3/H$ has a triangulation by $6n$ tetrahedra, with some finite vertex classes and $c$ ideal vertex classes, obtained by subdividing the polyhedral decomposition of \Cref{cell decomp}.
\end{lemma}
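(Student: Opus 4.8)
The plan is to subdivide each of the $n$ doubled prisms of \Cref{cell decomp} into six tetrahedra and to organize these subdivisions so that they agree along every face identification. Recall that $\dpr(\be)$ is combinatorially the union of the two triangular prisms $P(\be)$ and $\bar P(\be)$ along the quadrilateral face $Q$ with edges $a_1,a_4,a_5,a_7$. A triangular prism is subdivided into exactly three tetrahedra by coning to any one of its six vertices: such a vertex lies on one triangular and two quadrilateral faces, so the facets avoiding it are one triangle and one quadrilateral; coning the former gives one tetrahedron and coning the latter (after a diagonal splits it into two triangles) gives two, for a total of three, with no new vertices introduced. Thus each doubled prism yields $6$ tetrahedra and the full decomposition yields $6n$. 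Because no vertices are added, the $0$-cells of the resulting complex are exactly those of the polyhedral decomposition; by \Cref{cusp count} the ideal ones fall into $c$ classes (the cusps), while the finite ones give finitely many further classes.

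The real content is compatibility. To cone a prism to a vertex one must choose a diagonal on each of its quadrilateral faces, and wherever two polyhedra meet along a quadrilateral---either an exterior face paired by $\sigma$ as in \Cref{cell decomp}, or the interior face $Q$ separating $P_i$ from $\bar P_i$ inside a single $\dpr_i$---the two chosen diagonals must coincide for the subdivided pieces to glue into honest tetrahedra. I would resolve this uniformly by fixing a total order on the set of vertex classes ($0$-cells) of $\mathbb{H}^3/H$ and, on each quadrilateral $2$-cell of the quotient, selecting the diagonal issuing from the lowest of its four vertex classes. Since this rule refers only to the image $2$-cell in $\mathbb{H}^3/H$, it makes a single choice per quadrilateral; the face-pairing isometries of \Cref{cell decomp} carry $0$-cells to $0$-cells within the same class, so the induced diagonals on the two sides of any identification---self-identifications included---are forced to match.

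It then remains to check that these globally consistent diagonals are simultaneously realized by coning. Here I would invoke the standard fact about ordered (``pulling'') triangulations: coning a convex polytope to its lowest-ordered vertex triangulates each of its facets by the same lowest-vertex rule, because the global minimum is also the minimum on every facet containing it, while each facet avoiding it is pulled to its own minimum. Applying this within each $\dpr_i$---coning $P_i$ and $\bar P_i$ separately to their respective lowest vertex classes---produces on every boundary quadrilateral precisely the diagonal prescribed above. The per-prism subdivisions therefore agree along all shared faces, assemble into a triangulation of the underlying space of $\mathbb{H}^3/H$ by $6n$ tetrahedra refining the decomposition of \Cref{cell decomp}, and carry the vertex classes described.

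The main obstacle is exactly this diagonal-matching across the identifications of \Cref{cell decomp}; everything else (three tetrahedra per prism, the absence of new vertices, and the enumeration of vertex classes) is routine once the global order is in place. A secondary point worth verifying is that ties---quadrilaterals two of whose corners lie in the same vertex class---do not spoil the rule; these can be handled by refining the order to a choice on the corners of each individual $2$-cell, which the face-pairing isometries still respect.
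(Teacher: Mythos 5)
Your approach is genuinely different from the paper's, and it contains a gap at exactly the point you flag but then dismiss. The pulling-triangulation argument is sound only when the order you use restricts to a \emph{linear} order on the vertices of each individual $3$-cell; but your order lives on vertex classes of $\mathbb{H}^3/H$, and distinct vertices of a single prism $P_i$ routinely lie in the same class (the quotients here have on the order of ten vertex classes shared among hundreds of prism-vertices, so ties within a single cell are unavoidable, and the lemma is in any case asserted for arbitrary finite-index $H$). Your proposed repair---breaking ties independently on each quadrilateral $2$-cell of the quotient---can fail outright. Label a prism's vertices $1,2,3$ (bottom) and $1',2',3'$ (top) and suppose $[1]=[2]=[3]$ is the minimal class on all three quadrilateral facets. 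If the per-facet tie-breaks select $1$ on the facet $122'1'$, $2$ on $233'2'$, and $3$ on $311'3'$, the prescribed diagonals are $12'$, $23'$, $31'$. This cyclic configuration is not induced by coning to any vertex, and in fact admits \emph{no} triangulation of the prism without new vertices: every vertex pair of a triangular prism is either a prism edge or a facet diagonal, so no interior edges are available (the unused diagonals cross the chosen boundary triangles), and one checks that no tetrahedron on these twelve edges contains the face $123$. So the count of $6n$ tetrahedra, and indeed the existence of any compatible subdivision, is not established by your argument. A consistent repair would require a tie-break giving a linear order on the vertices of every $3$-cell that agrees across all shared facets---which is precisely a global order on vertices respected by the identifications, and that is generally impossible once the gluings merge vertices of a single cell.

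The paper avoids this issue entirely by working with a single model. It fixes one subdivision of $P(\be)$ into three tetrahedra, reflects it across the doubling face to get a reflection-invariant subdivision of $\dpr(\be)$ into six (\Cref{fig:tetrahedral_subdivision_of_prism}), chosen so that the diagonal induced on each exterior quadrilateral face $f_-$ is carried to the diagonal on $f_+$ by the corresponding generator $x,y,z,w$. Since every identification in \Cref{cell decomp} is a marked copy $g_jgg_i^{-1}$ of a generator, compatibility across all $n$ copies reduces to this one finite check on the model, with the internal matching along the doubling face handled by reflection-invariance. If you want to keep an ordering-based argument, you would at minimum need to verify a no-ties hypothesis (false here), or replace the global order by a model-level choice---at which point you have rediscovered the paper's proof.
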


\begin{remark}\label{triangulation code}
    For $\sigma$, $\dprgr(\be)$, and $H$ as in \Cref{triangulation}, we have coded a method that outputs the triangulation described in that result, in Regina format \cite{regina}, given the input of the right-permutation representation $\sigma$. These scripts are included as ancillary files with this arxiv posting (see \path{anc/Analysis_of_covers/code/PrismCoverBuilder.py}).
\end{remark}

\begin{figure}[h!]
    \centering
    \begin{minipage}[t]{0.48\textwidth}
        \centering
        \vspace{0pt}
        \begin{tikzpicture}[scale=1.2]
        
            \coordinate (A) at (0,0,0);      
            \coordinate (B) at (4,0,0);      
            \coordinate (C) at (4,3,0);      
            \coordinate (D) at (0,3,0);      
            \coordinate (E) at (0,0,3);      
            \coordinate (F) at (4,0,3);      
            \coordinate (G) at (4,3,3);      
            \coordinate (H) at (0,3,3);      
        
            \draw[thick] (A) -- (B) -- (C) -- (D) -- cycle;  

            \draw[thick] (A) -- (0,1.77,0);  
            \draw[white, line width=3pt] (0,1.77,0) -- (0,1.9,0);
            \draw[thick] (0,1.9,0) -- (D); 
        
            \draw[thick] (A) -- (2.77,0,0); 
            \draw[white, line width=3pt] (2.77,0,0) -- (2.9,0,0); 
            \draw[thick] (2.9,0,0) -- (B);

            \draw[thick] (E) -- (F) -- (G) -- (H) -- cycle;  

            \draw[thick] (A) -- (E); 
            \draw[thick] (B) -- (F); 
            \draw[thick] (C) -- (G); 
            \draw[thick] (D) -- (H); 
            \draw[thick] (D) -- (G);
            \draw[thick] (A) -- (F);
        
            \draw[line width=1mm] (H) -- (F);
            \draw[line width=1mm] (C) -- (F);
            \draw[line width=1mm] (A) -- (H);
            \draw[line width=1mm] (A) -- (C);
            \draw[line width=1mm] (A) -- (G);

            


        
            
            \node[blue] at (-0.1,1.5,0.2) { $1_+$};
            \node[blue] at (0.35,1.46,0.2) { $1_-$};
            
            \node[red] at (2.75,0.68,0.2) { $2_+$};
            \node[red] at (3.15,0.65,0.2) { $2_-$}; 
            
            \node[green!60!black] at (1.2,2.4,0.2) { $3_+$};    
            \node[green!60!black] at (1.75,2.6,0.2) { $3_-$};   
            
            \node[black] at (0.95,-0.6,0) { $0_+$};    
            \node[black] at (1.95,0.2,1.5) { $0_-$};    
            
            \fill[white] (F) circle (3pt);
            \draw[thick] (F) circle (3pt);
            
        \end{tikzpicture}
    \end{minipage}
    \hfill
    \begin{minipage}[t]{0.48\textwidth}
        \centering
        \vspace{30pt} 
        \small 
        \renewcommand{\arraystretch}{1.1} 
        \setlength{\tabcolsep}{2pt} 
        \resizebox{\linewidth}{!}{ 
        \begin{tabular}{|l|c|c|c|c|}
            \hline
            Tet no. & 012 & 013 & 023 & 123 \\
            \hline
            6k & - & 6k+3(013) & - & 6k+1(120) \\
            \hline
            6k+1 & 6k(312) & 6k+4(013) & 6k+2(023) & - \\
            \hline
            6k+2 & - & - & 6k+1(023) & - \\
            \hline
            6k+3 & - & 6k(013) & - & 6k+4(120) \\
            \hline
            6k+4 & 6k+3(312) & 6k+1(013) & 6k+5(023) & - \\
            \hline
            6k+5 & - & - & 6k+4(023) & - \\
            \hline
        \end{tabular}
        }
    \end{minipage}
    \caption{\label{fig:tetrahedral_subdivision_of_prism} A decomposition of the prism into tetrahedra such that faces can be paired by gluings as in \Cref{fig:piOne}.}
\end{figure}

\begin{proof} We first sub-divide $P(\be)$ into three tetrahedra, then reflect this across its quadrilateral face that intersects the interior of $\dpr(\be)$ to produce a reflection-invariant decomposition of $\dpr(\be) = P(\be)\cup\overline{P}(\be)$ into six tetrahedra. The resulting triangulation is pictured in \Cref{fig:tetrahedral_subdivision_of_prism}. Its constituent tetrahedra can be numbered as follows: from $0$ to $2$ in $P(\be)$, with $0$ containing the triangular face labeled ``$3_-$'' in \Cref{numberings} and $2$ containing the one labeled ``$0_-$'' there; and with $a+3$ the mirror image of $a$ in $\overline{P}(\be)$ for each $a\in\{0,1,2\}$.

The face identifications of tetrahedra $0$ through $5$ that are internal to $\dpr(\be)$ are depicted on the right in \Cref{fig:tetrahedral_subdivision_of_prism}. To triangulate the underlying space of $\mathbb{H}^3/H$, taking $\dpr_0,\hdots,\dpr_{n-1}$ to be the cells of the decomposition given by \Cref{cell decomp}, we triangulate each $\dpr_k$ by tetrahedra numbered $6k,6k+1,\hdots,6k+5$ using the decomposition of $\dpr(\be)$ from the paragraph above and the marking isometry from \Cref{cell decomp}. The identifications of tetrahedral faces contained in the boundaries of the $\dpr_k$ are then inherited from the face identifications described in \Cref{cell decomp}.
\end{proof}

\begin{lemma}\label{lem:lifts}
For $i\in\{2,3\}$ and $j\in\{1,2\}$, let $M_{i,j}$ be the cover of $\doh^{333}_i$ described in \Cref{first manifold covers}. The $M_{i,j}$ are pairwise non-isometric, and those covering $\doh^{333}_2$ are not commensurable with those covering $\doh^{333}_3$. Furthermore:
\begin{itemize}
    \item $M_{2,1}$ has a unique non-hyperbolic Dehn filling, homeomorphic to the lens space $L(13,3)$.
    \item $M_{2,2}$ has a unique non-hyperbolic Dehn filling, homeomorphic to the lens space $L(22,5)$.
    \item $M_{3,1}$ has a unique non-hyperbolic Dehn filling, homeomorphic to the lens space $L(13,3)$. 
    \item $M_{3,2}$ has a unique non-hyperbolic Dehn filling, homeomorphic to the lens space $L(22,5)$.
\end{itemize}
\end{lemma}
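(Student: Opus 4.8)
The plan is to reduce the statement to two ingredients: the commensurability data of \Cref{commensurator}, and a rigorous machine-assisted analysis of Dehn fillings built on the triangulation of \Cref{triangulation}. I would first dispatch the non-commensurability claim purely by covering-space considerations. For any $j,j'\in\{1,2\}$, the manifold $M_{2,j}$ covers $\doh^{333}_2$ and hence $O^{333}_2$, while $M_{3,j'}$ covers $O^{333}_3$; a common finite-sheeted cover of $M_{2,j}$ and $M_{3,j'}$ would exhibit $O^{333}_2$ and $O^{333}_3$ as commensurable, contradicting \Cref{commensurator}. Thus the $M_{2,j}$ are incommensurable with the $M_{3,j'}$, and in particular non-isometric. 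This leaves only the task of distinguishing $M_{i,1}$ from $M_{i,2}$ for each fixed $i$, which I would defer to the filling computation below: the unordered collection of homeomorphism types of non-hyperbolic fillings is an isometry invariant, and $L(13,3)\not\cong L(22,5)$ (their orders differ), so distinct lens-space fillings force $M_{i,1}\not\cong M_{i,2}$.

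For the geometric core I would start from the $144$-tetrahedron triangulation of the underlying space of $M_{i,j}$ output in Regina format by the method of \Cref{triangulation code}. Since this triangulation carries finite vertex classes in addition to the single ideal class guaranteed by \Cref{onecusp}, I would use Regina to remove the finite vertices and produce a one-vertex ideal triangulation of the cusped manifold, then import it into SnapPy. Using SnapPy's verified interval-arithmetic routines I would certify that each $M_{i,j}$ admits a complete one-cusped hyperbolic structure and compute its maximal embedded cusp; this simultaneously corroborates the volume appearing in \Cref{volumes} and fixes an explicit peripheral basis in which $H_1(M_{i,j})\cong\mathbb{Z}$, consistent with \Cref{homologyZ}.

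The technical heart is the rigorous enumeration of exceptional slopes. By Thurston's hyperbolic Dehn surgery theorem only finitely many fillings are non-hyperbolic, and by the $6$-theorem of Agol and Lackenby any slope of length greater than $6$ on the maximal cusp yields a filling with infinite word-hyperbolic fundamental group, hence a hyperbolic manifold, and in particular not a lens space. Using the rigorously computed maximal-cusp data I would therefore list the finitely many slopes of length at most $6$, and for each attempt to certify hyperbolicity of the filled manifold by verified computation. The slopes that resist certification are the only candidates for non-hyperbolic fillings; for each I would hand the filled triangulation to Regina and apply its connected-sum and lens-space recognition. The assertion to confirm is that exactly one slope produces a non-hyperbolic manifold, namely $L(13,3)$ when $j=1$ and $L(22,5)$ when $j=2$, with all remaining short slopes verified hyperbolic. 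Recognizing the lens space then exhibits $M_{i,j}$ as the complement of the core of the filling solid torus, completing both the filling statement and, via the invariance remark above, the non-isometry of $M_{i,1}$ and $M_{i,2}$.

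I expect the main obstacle to be the uniqueness half of the filling statement. Exhibiting a single lens-space filling is comparatively easy, but ruling out every competing non-hyperbolic filling requires the verified-hyperbolicity certification to succeed on each of the finitely many remaining short slopes, and requires confidence that the $6$-theorem length bound, evaluated on the rigorously computed maximal cusp, genuinely captures all exceptional behaviour. A secondary hazard is ensuring that the removal of finite vertices provably preserves the homeomorphism type of the underlying space, so that Regina's recognition is applied to the intended manifold rather than an artifact of retriangulation.
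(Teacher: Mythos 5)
Your proposal follows the paper's proof in its overall structure: incommensurability of the $M_{2,j}$ with the $M_{3,j'}$ from \Cref{commensurator}, non-isometry of $M_{i,1}$ and $M_{i,2}$ from their distinct lens-space fillings, and a Regina/SnapPy pipeline (simplify to a one-ideal-vertex triangulation, fill, recognize against the closed census) to exhibit the lens-space filling. The one place you diverge is the step you flag as the main obstacle, and there the paper has a much lighter argument that you should notice: rather than enumerating all slopes of length at most $6$ and certifying hyperbolicity of each competing filling, the paper shows there is \emph{at most one} such slope. The maximal cusp cross-section of $O^{333}_i$ has area greater than $0.84$ (twice the cusp volume of \Cref{max cusp}), so the degree-$48$ cover $M_{i,j}\to O^{333}_i$ has maximal cusp area greater than $40.32$; two linearly independent slopes of length at most $6$ would span a sublattice of the cusp lattice of coarea at most $36$, a contradiction. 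Combined with the Six Theorem this gives uniqueness of the non-hyperbolic filling for free once a single lens-space filling is found, with no verified-hyperbolicity certification needed. Your route would also work—the enumeration of short slopes would simply return one candidate—but you would be doing rigorous interval-arithmetic work to rule out fillings that a two-line area estimate already excludes. Your secondary concern about removing finite vertices is legitimate and is handled in the paper by appeal to \cite{JacoRubinstein}.
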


\begin{proof} Recall from \Cref{torfree} that each $M_{i,j}$ is a manifold, and from \Cref{onecusp} that it has one cusp. It follows from \Cref{commensurator} that the $M_{2,j}$ are not commensurable, hence also not isometric, to the $M_{3,j}$. That $M_{2,1}$ is not isometric to $M_{2,2}$, and likewise for $M_{3,1}$ and $M_{3,2}$, can be seen from their having unique lens space fillings yielding distinct lens spaces. In fact, area considerations imply that each $M_{i,j}$ has at most one filling slope of length $6$ or less on a maximal horoball cusp cross-section, as we show in the next paragraph. It therefore follows from the Six Theorem \cite{AgolSix}, \cite{Lackenby_sixes} that $M_{i,j}$ has at most one non-hyperbolic filling slope. 

A maximal cusp cross-section for $M_{i,j}$ pulls back from the cross-section for $O^{333}_i$ described in \Cref{max cusp}, which has area greater than $0.84$ (twice the cusp \emph{volume} listed in the Lemma's statement). Since the cover $M_{i.j}\to O^{333}_i$ has degree $48$, factoring as $M_{i,j}\stackrel{24:1}{\longrightarrow} \doh^{333}_i \stackrel{2:1}{\to} O^{333}_i$, the cusp cross-section of $M_{i.j}$ has area greater than $48*0.84 = 40.32$. A set of linearly independent slopes in the cusp lattice that each had length at most $6$ would span a sub-lattice having coarea at most $36$, a contradiction.

We establish the existence of lens space fillings with the aid of computers. Using the code available in the ancillary files of this arxiv post, we triangulate the $M_{i,j}$ as described in \Cref{triangulation}. Since the $M_{i,j}$ cover the $\doh^{333}_i$ with degree $24$, the resulting triangulations have $96$ tetrahedra; since the $M_{i,j}$ are single-cusped, they have a single ideal vertex class as well as some finite vertices. Applying the method {\tt intelligentSimplify} in Regina \cite{regina} to each of these four triangulations results in an \emph{ideal} triangulation with a single (ideal) vertex class (see \cite{JacoRubinstein}, in particular Prop.~5.15 there, for background on how to change a triangulation to reduce the number of vertex classes to $1$), with a number of tetrahedra in the low 50's.

The ideal triangulations are then passed to SnapPy \cite{SnapPy} via their triangulation isomorphism signatures, which recognizes them as having a finite cyclic filling (see \path{anc/Analysis_of_covers/data} for files that containing these signatures). Experimentation with fillings in SnapPy produces one for each $M_{i,j}$ that is recognized to have a single-generator, single-relator fundamental group. The SnapPy \texttt{filled\_triangulation} method rigorously produces and simplifies a closed triangulation for the filled manifold. The resulting, considerably simpler triangulations---with single-digit numbers of tetrahedra---are then passed back to Regina, which compares them to an existing census of triangulations of closed manifolds that it ships with (see \cite{burton}), recognizing each as the lens space identified in the statement.
\end{proof}

It is an essentially algebraic-topological fact that for a knot complement $M$ in a lens space $L(p,q)$ such that $H_1(M)\cong\ZZ$, the preimage $\widetilde{M}$ of $M$ in the universal cover $\mathbb{S}^3$ of $L(p,q)$ is also a knot complement; that is, that the corresponding knot in $L(p,q)$ has connected preimage in $\mathbb{S}^3$. This was laid out in work of Gonzalez-Acu\~na--Whitten \cite{GonzalezAcunaWhitten1992}. The statement below combines \cite[Proposition 4.7]{GonzalezAcunaWhitten1992} and its preceding remark, updated to acknowledge the positive resolution of the Geometrization Conjecture.

\begin{prop}[Gonzalez-Acu\~{n}a--Whitten \cite{GonzalezAcunaWhitten1992}]\label{prop:GonalezAcunaWhitten}
Let $K$ be a hyperbolic knot in a lens space $L(p,q)$ and denote by $Q\cong L(p,q) \setminus K$. If $H_1(Q,\ZZ)\cong \ZZ$, then the universal covering of $L(p,q)$ restricts to a $p$-fold covering of $\pi:\Sthree \setminus K' \rightarrow Q$ for some hyperbolic knot $K'$. 
\end{prop}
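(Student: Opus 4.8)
The plan is to analyze the restriction of the universal covering $\Phi\colon \Sthree \to L(p,q)$, whose deck group is $\pi_1(L(p,q)) \cong \ZZ/p\ZZ$, to the preimage of $Q$. Set $\widehat{K} = \Phi^{-1}(K)$; since $\Phi$ is a local homeomorphism this is an embedded closed $1$-manifold in $\Sthree$, and $\Phi^{-1}(Q) = \Sthree \setminus \widehat{K}$. The two things to establish are: (a) $\widehat{K}$ is \emph{connected}, so that it is a single knot $K'$ and $\Phi^{-1}(Q) = \Sthree \setminus K'$ is genuinely a knot complement; and (b) $K'$ is hyperbolic. Granting these, $\Phi$ restricts to a covering $\pi\colon \Sthree\setminus K' \to Q$ whose number of sheets equals $\deg\Phi = p$, which is the assertion.

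For (a) I would count the components of $\widehat K$ by covering-space theory. As $\widehat K \to K$ is the pullback of $\Phi$ along $K \hookrightarrow L(p,q)$, its components biject with the orbits of $\pi_1(K)$ acting on the fiber $\ZZ/p\ZZ$, hence with the cosets of the image of $\pi_1(K) \to \pi_1(L(p,q)) = \ZZ/p\ZZ$; their number is $p/\operatorname{ord}([K])$, where $[K] \in H_1(L(p,q))$ is the class of $K$. Thus (a) is equivalent to $[K]$ generating $H_1(L(p,q))\cong\ZZ/p\ZZ$. To identify $[K]$, pass to the compact knot exterior $Q$ with torus boundary $\partial Q$, meridian $\mu$, and a longitude $\lambda$. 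Since $L(p,q)$ is the Dehn filling of $Q$ along $\mu$, one has $H_1(L(p,q)) \cong H_1(Q)/\langle[\mu]\rangle$; as $H_1(Q)\cong\ZZ$ and this quotient is $\ZZ/p\ZZ$, the class $[\mu]$ equals $\pm p\,g$ for a generator $g$ of $H_1(Q)$. Because $K$ is isotopic to the core of the filling solid torus, $[K]$ equals the image of $\lambda$ in $H_1(L(p,q))$; writing $[\lambda] = m\,g$ in $H_1(Q)$, we get $[K] = \pm m \in \ZZ/p\ZZ$, which generates exactly when $\gcd(m,p) = 1$.

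The crux, and what I expect to be the main obstacle, is to force $\gcd(m,p) = 1$ from $H_1(Q)\cong\ZZ$ alone; for this I would invoke Lefschetz duality and the universal coefficient theorem. The image of $\phi\colon H_1(\partial Q) \to H_1(Q)\cong\ZZ$ is generated by $[\mu]=\pm p\,g$ and $[\lambda]=m\,g$, namely $\gcd(m,p)\,\ZZ$, so $\operatorname{coker}\phi \cong \ZZ/\gcd(m,p)\ZZ$. The long exact sequence of the pair $(Q,\partial Q)$ shows that $\operatorname{coker}\phi$ injects into $H_1(Q,\partial Q)$. By Lefschetz duality $H_1(Q,\partial Q)\cong H^2(Q)$, and by the universal coefficient theorem $H^2(Q) \cong \operatorname{Hom}(H_2(Q),\ZZ)\oplus\operatorname{Ext}(H_1(Q),\ZZ)$; since $H_1(Q)\cong\ZZ$ is free, the $\operatorname{Ext}$ term vanishes and $H^2(Q)$ is torsion-free. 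A finite subgroup of a torsion-free group is trivial, so $\ZZ/\gcd(m,p)\ZZ = 0$, i.e. $\gcd(m,p)=1$. Hence $[K]$ generates $H_1(L(p,q))$, $\widehat K$ is connected, and $K' := \widehat K$ is a knot in $\Sthree$.

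Finally, (b) is routine. By hypothesis $K$ is a hyperbolic knot, so $Q = L(p,q)\setminus K$ carries a complete finite-volume hyperbolic structure; its finite cover $\Sthree\setminus K'$ inherits one, so $K'$ is a hyperbolic knot. Combining (a) and (b), $\Phi$ restricts to the asserted $p$-fold covering $\pi\colon \Sthree\setminus K' \to Q$. Note that only the hypothesis $H_1(Q)\cong\ZZ$ enters the connectivity argument, while hyperbolicity of $K$ is used solely to transfer to $K'$.
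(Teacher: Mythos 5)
Your argument is correct, but it is worth noting that the paper does not prove this statement at all: it is quoted from Gonzalez-Acu\~na--Whitten \cite{GonzalezAcunaWhitten1992} (combining their Proposition 4.7 with its preceding remark), so you have supplied a self-contained proof where the authors simply cite the literature. Your proof is essentially the standard one and all the steps check out: the components of $\Phi^{-1}(K)$ are counted by the index of $\langle [K]\rangle$ in $H_1(L(p,q))\cong\mathbb{Z}/p\mathbb{Z}$; the class $[K]$ is the image of a longitude $\lambda$ since the meridian dies in the filling; and the crux --- that $\gcd(m,p)=1$ --- is exactly the statement that $H_1(\partial Q)\to H_1(Q)\cong\mathbb{Z}$ is surjective, which you derive correctly from the long exact sequence of the pair together with Lefschetz duality and the vanishing of $\operatorname{Ext}(H_1(Q),\mathbb{Z})$. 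This is the same ``half lives--half dies'' input that the paper itself invokes (citing Lemma 3.0 of \cite{GonzalezAcunaWhitten1992}) in the remark following \Cref{filling slope}, so your proof is consistent with the machinery the authors use elsewhere. The only cosmetic caveats are that one should pass to the compact exterior before applying duality (which you do) and that the covering-space count of components uses that $\pi_1(L(p,q))$ is abelian so basepoint/conjugacy issues do not arise; neither affects correctness. The transfer of hyperbolicity to the finite cover is routine, as you say.
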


This section's main theorem quickly follows from this and the results established above.

\begin{proof}[Proof of \Cref{main_tech_thm}] For $i\in\{2,3\}$ and $j\in\{1,2\}$, the cover $M_{i,j}\to\doh^{333}_i$ defined in \Cref{first manifold covers} is a knot complement in a lens space $L(p,q)$, by \Cref{lem:lifts}. By \Cref{homologyZ} it has first homology isomorphic to $\ZZ$. Therefore the preimage $\widetilde{M}_{i,j}$ of $M_{i,j}$ in the universal cover $\mathbb{S}^3$ of the relevant $L(p,q)$ is a knot complement, by \Cref{prop:GonalezAcunaWhitten}. Since the $M_{2,j}$ are not commensurable to the $M_{3,j}$, again by \Cref{lem:lifts}, the same is true for the $\widetilde{M}_{2,j}$ and $\widetilde{M}_{3,j}$. 
By \Cref{lem:lifts} the $M_{2,j}$ are not commensurable to the $M_{3,j}$, so this also holds for the $\widetilde{M}_{2,j}$ and $\widetilde{M}_{3,j}$.
\end{proof}

\section{Hand-computations for $M_{2,1}$}\label{sec:M21}

This section gives a paper-and-pencil proof of the main theorem for $M_{2,1}$. First we recall relevant data from earlier parts of the paper. Here is the nine-tuple describing $O^{333}_2$:

\begin{table}[ht]
\begin{tabular}{lrrrrrrrrrr}
\toprule
\char"0023 & $a_1$ &  $a_2$ &  $a_3$ &  $a_4$ &  $a_5$ &  $a_6$ &  $a_7$ &  $a_8$ &  $a_9$ \\
\midrule
$O^{333}_2$  & 3 & 3 & 2 & 3 & 3 & 4 & 2 & 2 & 3 \\
\end{tabular}
\end{table}

\noindent The presentation (\ref{able}) thus specifies to 
\[ \dprgr^{333}_2
\cong\langle x, y,z, w\, |\, 
x^{3}, y^{3}, 
z^{3}, w^{2}, 
(y^{-1}x)^{2}, 
(z^{-1}x)^{3},
(z^{-1}y)^{4}, 
(y^{-1}w)^{3}, 
(z^{-1}w)^{2} \rangle. \]
Next, the right-representation $\sigma_{2,1}$ specifying the cover $M_{2,1}\to \doh^{333}_2$, from \Cref{table_pr}.

\begin{table}[ht]
\begin{tabular}{ccrl}
\toprule
\textbf{i} & name & & permutation representation \\
\midrule
$\mathbf{2}$ & $\sigma_{2,1}$ & $x\mapsto$ & $[1, 2, 0, 12, 10, 19, 18, 3, 20, 8, 16, 6, 7, 17, 13, 5, 4, 14, 11, 15, 9, 22, 23, 21]$ \\ 
    & & $y\mapsto$ & $[3, 6, 10, 4, 0, 20, 7, 1, 19, 14, 11, 2, 18, 15, 23, 22, 12, 8, 16, 17, 21, 5, 13, 9]$ \\ 
    & & $z\mapsto$ & $[2, 8, 5, 13, 17, 0, 22, 12, 9, 1, 4, 18, 23, 14, 3, 19, 15, 10, 20, 16, 11, 6, 21, 7]$ \\ 
    & & $w\mapsto$ & $[1, 0, 9, 15, 18, 8, 12, 22, 5, 2, 20, 17, 6, 16, 19, 3, 13, 11, 4, 14, 10, 23, 7, 21]$ \\
\midrule
\end{tabular}
\end{table}

Recall from the proof of \Cref{itsapermrep} that the formulas above define $\sigma_{2,1}\co\dprgr^{333}_2\to S_{24}$ as follows: for a word $g = \chi_1^{\alpha_1}\cdots\chi_k^{\alpha_k}$ in the generators $x,y,z,w$, and $n\in\{0,1,\hdots,23\}$, take:
\[ \sigma_{2,1}(g)(n) = \left(\sigma_{2,1}(\chi_k)^{\alpha_k} \circ \hdots \circ \sigma_{2,1}(\chi_1)^{\alpha_1}\right)(n). \]
The cycle decompositions given below are used in that proof to show that $\sigma_{2,1}$ is well-defined.

\begin{example}\label{M21 permreps} Here for each relator of $\dprgr^{333}_2$ in the presentation above, of the form $g_j^{a_{i_j}}$ for some $j\in\{1,\hdots,9\}$, we exhibit the cycle decomposition of $\sigma_{2,1}(g_j)$. 

\begin{para}\label{ex} $\sigma_{2,1}(x) = (0,1,2)(3,12,7)(4,10,16)(5,19,15)(6,18,11)(8,20,9)(13,17,14)(21,22,23)$ \end{para}
\begin{para}\label{why} $\sigma_{2,1}(y) = (0,3,4)(1,6,7)(2,10,11)(5,20,21)(8,19,17)(9,14,23)(12,18,16)(13,15,22)$ \end{para}
\begin{para}\label{zee} $\sigma_{2,1}(z) = (0,2,5)(1,8,9)(3,13,14)(4,17,10)(6,22,21)(7,12,23)(11,18,20)(15,19,16)$ \end{para}
\begin{para}\label{dub} \small $\sigma_{2,1}(w) = (0,1)(2,9)(3,15)(4,18)(5,8)(6,12)(7,22)(10,20)(11,17)(13,16)(14,19)(21,23)$ \end{para}
\begin{para}\label{whyex} $\sigma_{2,1}(x)\sigma_{2,1}(y)^{-1} = (0,10)(1,3)(2,6)(4,12)(5,22)(7,18)(8,14)(9,21)(11,16)(13,23)$\\
    \strut\hfill $(15,17)(19,20)$\end{para}
\begin{para}\label{zeeyex} $\sigma_{2,1}(x)\sigma_{2,1}(z)^{-1} = (0,19,5)(1,8,2)(3,13,12)(4,16,15)(6,22,18)(7,21,23)(9,20,11)$\\
    \strut\hfill$(10,14,17)$\end{para}
\begin{para}\label{whyzee} $\sigma_{2,1}(y)\sigma_{2,1}(z)^{-1} = (0,20,16,17)(1,14,15,12)(2,3,23,18)(4,11,21,13)(5,10,8,6)$\\
    \strut\hfill $(7,9,19,22)$\end{para}
\begin{para}\label{whydub} $\sigma_{2,1}(w)\sigma_{2,1}(y)^{-1} = (0,18,6)(1,22,3)(2,17,14)(4,15,16)(5,23,19)(7,12,13)(8,11,20)$\\
    \strut\hfill$(9,21,10)$\end{para}
\begin{para}\label{zeedub} $\sigma_{2,1}(w)\sigma_{2,1}(z)^{-1} = (0,8)(1,2)(3,19)(4,20)(5,9)(6,23)(7,21)(10,11)(12,22)(13,15)$\\
    \strut\hfill$(14,16)(17,18)$ \end{para}
\end{example}

As recorded in \Cref{torfree}, 
the fact that each element is collection of $\frac{24}{a_j}$ disjoint $a_j$ cycles (with $a_j=2,3,4$) 
implies that $M_{2,1}$ is a manifold; 
per \Cref{onecusp}, the fact that $\langle \sigma_{2,1}(x),\sigma_{2,1}(z)\rangle$ acts transitively implies that it has one cusp. We will also use the cycle decompositions given above to coarsen the spine for $M_{2,1}$ constructed in \Cref{spine}.

\begin{lemma}\label{M_21 spine} The cover $M_{2,1}\to \doh^{333}_2$ prescribed by the right-permutation representation $\sigma_{2,1}\co\dprgr^{333}_2\to S_{24}$ given in \Cref{table_pr} has a spine $\Sigma$ with $16$ two-cells, $22$ edges, and 6 vertices. Numbering polyhedra of the decomposition of $M_{2,1}$ given in \Cref{cell decomp} as $\dpr_0,\hdots,\dpr_{23}$, and labeling their faces as in \Cref{numberings}, twelve $2$-cells of $\Sigma$ are each the union of two faces labeled $1_-$, and four are each the union of six faces labeled $3_-$.
\begin{para}\label{whyex again} The union of faces of $\dpr_k$ and $\dpr_{k'}$ labeled $1_-$ is a face of $\Sigma$ for these pairs $(k,k')$:
\[ (0,10)(1,3)(2,6)(4,12)(5,22)(7,18)(8,14)(9,21)(11,16)(13,23)(15,17)(19,20) \]\end{para}
\begin{para}\label{three minus} The $6$-tuples of indices of polyhedra whose faces labeled $3_-$ comprise a face of $\Sigma$ are: 
\[  (0,1,2,9,5,8), (3,15,13,16,14,19), (4,18,17,11,10,20), (6,12,22,7,21,23) \]
These $6$-tuples are arranged so that the face corresponding to each entry shares an edge with the faces corresponding to the entries immediately before and after it (taken cyclically).\end{para}
\end{lemma}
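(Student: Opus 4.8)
The plan is to obtain $\Sigma$ as a coarsening of the fine spine $\Sigma_0$ furnished by \Cref{spine}, whose $48$ two-cells are the projections of the compact faces numbered $1$ and $3$ of the cells $\dpr_0,\dots,\dpr_{23}$ (equivalently, of the faces $1_-$ and $3_-$). Reading the relators of (\ref{able}) against \Cref{numberings}, I would first record that the compact quadrilateral face $1$ carries the edges labeled $a_3,a_4,a_6,a_9$ and the compact triangular face $3$ carries $a_7,a_8,a_9$, while the non-compact faces $0$ (a triangle on $a_1,a_2,a_3$) and $2$ (a quadrilateral on $a_2,a_5,a_6,a_8$) contribute no two-cells; thus the $1$-skeleton of $\Sigma_0$ is carried by the images of the six edge classes $a_3,a_4,a_6,a_7,a_8,a_9$. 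Call a two-cell of $\Sigma_0$ a \emph{$1$-cell} or a \emph{$3$-cell} according to whether it is the image of a face labeled $1$ or $3$.

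The coarsening repeatedly merges two two-cells across an edge that meets exactly two of them; since this only absorbs the open edge into the interior of the resulting two-cell, it re-cellulates the same underlying space and so preserves homotopy type. Using \Cref{torsion order} with the cycle decompositions \ref{ex}--\ref{zeedub}, I would determine which edges qualify. For $O^{333}_2$ we have $a_3=a_7=a_8=2$ but $a_4=3$, $a_6=4$, $a_9=3$, and: the image of $a_3$ (whose other incident face, $0$, is non-compact) meets exactly two $1$-cells; the images of $a_7$ (the $w$-axis, the common edge of faces $3$ and $3_-$) and of $a_8$ (whose other incident face, $2$, is non-compact) each meet exactly two $3$-cells; while the images of $a_4$ and $a_6$ meet three and four $1$-cells respectively, and the image of $a_9$ borders both a $1$-cell and a $3$-cell, so these three survive in $\Sigma$. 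Merging $1$-cells across $a_3$ fuses the faces $1_-$ of $\dpr_k$ and $\dpr_{k'}$ with $k'=\sigma_{2,1}(x)\sigma_{2,1}(y)^{-1}(k)=\sigma_{2,1}(y^{-1}x)(k)$; this is the fixed-point-free involution \ref{whyex}, whose $12$ transpositions give exactly the $12$ hexagonal two-cells of \ref{whyex again}.

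For the $3$-cells the two permitted merges, across $a_7$ and across $a_8$, are governed by the involutions $\sigma_{2,1}(w)$ (from \ref{dub}) and $\sigma_{2,1}(w)\sigma_{2,1}(z)^{-1}=\sigma_{2,1}(z^{-1}w)$ (from \ref{zeedub}); their product is $\sigma_{2,1}(z)^{-1}$, of order $a_5=3$, so $\langle\sigma_{2,1}(z),\sigma_{2,1}(w)\rangle$ acts with orbits of size $6$, and a short computation from \ref{zee} and \ref{dub} exhibits exactly the four orbits of \ref{three minus} (applying the two involutions alternately recovers the stated cyclic orderings). Each orbit's six triangles fan about the image of the vertex $a_7\cap a_8$; absorbing the radial edges $a_7,a_8$ together with this central vertex into the cell interior re-cellulates the union as a single hexagonal two-cell bounded by six $a_9$-edges. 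This gives $F=12+4=16$. By \Cref{torsion order} the surviving edge classes of $a_4$, $a_6$, $a_9$ number the cycles of $\sigma_{2,1}(y)$, of $\sigma_{2,1}(y)\sigma_{2,1}(z)^{-1}$, and of $\sigma_{2,1}(w)\sigma_{2,1}(y)^{-1}$, namely $8$, $6$, and $8$ (see \ref{why}, \ref{whyzee}, \ref{whydub}), so $E=22$; and since $\Sigma$ is a spine of the one-cusped manifold $M_{2,1}$ we have $\chi(\Sigma)=\chi(M_{2,1})=0$, whence $V=E-F=6$.

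The step I expect to be the main obstacle is confirming that each six-triangle merge produces a \emph{disk} rather than an annulus, i.e. that the link of the vertex $a_7\cap a_8$ in the spine is a single circle. This rests on the local picture there: because the two merging edges $a_7,a_8$ of the face $3$ share that endpoint, the six triangles arrange as a fan about its image and close up once around it, so that absorbing the center is a legitimate re-cellulation---provided the orbit genuinely has size $6$ with no degenerate self-gluings. That is exactly what the cyclic structure of the orbits in \ref{three minus}, and the explicit cycles of \Cref{M21 permreps}, certify. Checking in the same way that each edge-merge is performed across an edge incident to two \emph{distinct} two-cells (so that no coincidence is forced by a short cycle) is the remaining point requiring care.
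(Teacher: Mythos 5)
Your proposal is correct and follows essentially the same route as the paper: coarsen the spine of \Cref{spine} by absorbing the valence-two edge classes (those labeled $a_3$, $a_7$, $a_8$, the labels equal to $2$), reading the merged two-cells off from the cycles of $\sigma_{2,1}(y^{-1}x)$ in \ref{whyex} and the orbits of $\langle\sigma_{2,1}(w),\sigma_{2,1}(z^{-1}w)\rangle$ from \ref{dub} and \ref{zeedub}. The only (harmless) divergence is that you get the vertex count from $\chi(\Sigma)=0$ rather than by enumerating vertex classes via stabilizer orders as the paper does in the proof of \Cref{M_21}, and your extra care that the six-triangle unions are disks and that each merge is across distinct cells fills in details the paper leaves implicit.
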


\begin{proof} Let $X$ be the spine for $M_{2,1}$ described in \Cref{spine}. We produce the complex $\Sigma$ from $X$ by removing edges of $X$ having valence two, and for each such edge, replacing the two $2$-cells of $X$ containing it by their union. Below we give more detail on this process. 

Because $a_3 = 2$ in the tuple $(a_1,\hdots,a_9)$ determining $O^{333}_2$, each copy of the edge of $\dpr^{333}_2$ with that label in \Cref{fig:piOne} has valence two in $X$: it follows from \Cref{torsion order} that such an edge of $P_k$ is contained in the faces labeled $1_-$ of $P_k$ and $P_{k'}$ only, where $k'$ is the other member of the cycle of $k$ for $\sigma_{2,1}(y^{-1}x)$. (The relator of (\ref{able}) corresponding to the edge cycle of this edge is $(y^{-1}x)^{a_3}$.) The set of these cycles is thus in bijective correspondence with the set of twelve faces of $\Sigma$ that are each the union of two of $X$ labeled $1_-$. Reproducing this set from \ref{whyex} yields \ref{whyex again}.

The other two edges with valence two in $X_0$ are those labeled $a_7$ and $a_8$ in \Cref{fig:piOne}, since here $a_7 = a_8 = 2$, and only the faces labeled $3_{\pm}$ containing these edges belong to $X_0$. Each $2$-cell of $\Sigma$ containing a face labeled $3_-$ will contain six, since this face contains the edges labeled $a_7$ and $a_8$, with an angle of $\pi/3$ at their vertex of intersection. These $2$-cells of $X$ correspond to unions of cycles for $\sigma_{2,1}(w)$, since $w$ fixes the edge labeled $a_7$, and $\sigma_{2,1}(z^{-1}w)$ corresponding to $a_8$. Using the decompositions recorded in \ref{dub} and \ref{zeedub}, respectively, we obtain the sets of indices recorded in \ref{three minus}.

The vertices of $\Sigma$ are identical to those of $X$, as are the remaining edges---those that were not removed in the construction of the faces of $\Sigma$.
\end{proof}

We use the spine $\Sigma$ from \Cref{M_21 spine} to produce the presentation for $\pi_1 M_{2,1}$ given below.

\begin{prop}\label{M_21}
    For the cover $M_{2,1}\to \doh^{333}_2$ prescribed by the right-permutation representation $\sigma_{2,1}\co\dprgr^{333}_2\to S_{24}$ given in \Cref{table_pr},\begin{align*}
    \pi_1 M_{2,1} & \cong \langle\,A_1,A_2,A_4,\hdots,A_7,B_1,\hdots,B_5,C_1,C_3,\hdots,C_7\,|\\
        & \ \qquad C_1^{-1}C_7^{-1}C_4C_6^{-1},\ C_1C_3^{-1}C_5C_3^{-1}C_4^{-1},\ C_3C_6^{-1}C_7C_6^{-1},\ C_5^{-1}C_1C_5^{-1}C_7C_4^{-1},\\
        & \ \ \qquad A_2C_7^{-1}B_4^{-1},\ B_1C_1A_1^{-1}C_1^{-1}B_2^{-1},\ B_2A_2^{-1}A_1B_4^{-1},\ B_3C_3A_6C_5^{-1}B_1^{-1},\\
        & \ \ \qquad B_4C_4A_7C_1^{-1}B_5^{-1},\ B_5C_5A_1^{-1}A_6B_2^{-1},\ B_4C_6A_4^{-1}A_5B_1^{-1},\ B_5C_7A_5^{-1}C_7^{-1}B_3^{-1},\\
        & \ \qquad\left. B_3C_6A_2^{-1}A_6C_3^{-1},\ B_3C_5A_7^{-1}A_5C_4^{-1}B_2^{-1},\ B_1C_3A_7^{-1}A_4,\ B_5C_4A_4^{-1}C_6^{-1}\ \right\rangle
\end{align*}
\end{prop}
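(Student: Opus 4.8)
The plan is to apply the standard procedure for extracting a presentation of the fundamental group of a connected two-complex to the spine $\Sigma$ of $M_{2,1}$ furnished by \Cref{M_21 spine}. Since $\Sigma$ is a deformation retract of $M_{2,1}$, we have $\pi_1 M_{2,1}\cong \pi_1(\Sigma)$, so it suffices to read off a presentation of the latter. Recall that $\Sigma$ has $6$ vertices, $22$ edges, and $16$ two-cells: twelve of the two-cells are the ``doubled'' quadrilaterals obtained by merging two faces labeled $1_-$ across a valence-two edge labeled $a_3$, and four are the hexagons obtained by merging six faces labeled $3_-$ around a central vertex. The method produces one generator for each of the $22-5 = 17$ edges not lying in a chosen maximal tree $T$ of the $1$-skeleton $\Sigma^{(1)}$ (a tree on $6$ vertices has $5$ edges), and one relator for each two-cell, read as the boundary word of its attaching map in terms of these generators, with edges of $T$ contributing trivially.

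First I would pin down $\Sigma^{(1)}$ explicitly. Its edges are exactly the edge classes of the decomposition carried by the compact spine faces $1$ and $3$ of \Cref{numberings}, namely those labeled $a_4$, $a_6$, and $a_9$ in \Cref{fig:piOne}; by \Cref{torsion order} these are enumerated by the cycles of $\sigma_{2,1}(y)$, of $\sigma_{2,1}(z^{-1}y) = \sigma_{2,1}(y)\sigma_{2,1}(z)^{-1}$, and of $\sigma_{2,1}(y^{-1}w) = \sigma_{2,1}(w)\sigma_{2,1}(y)^{-1}$, displayed in \ref{why}, \ref{whyzee}, and \ref{whydub}. Counting cycles gives $8$, $6$, and $8$ such classes, totalling the $22$ edges of $\Sigma$. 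This naturally groups the generators into those dual to the $a_4$-, $a_6$-, and $a_9$-edge classes, which become the families $A_\bullet$, $B_\bullet$, and $C_\bullet$; the maximal tree $T$ then consists of five edges --- two among the $a_4$-classes, one among the $a_6$-classes, and two among the $a_9$-classes --- and the omitted labels (the skipped $A_3$ and $C_2$, and the unnamed sixth $a_6$-class) record exactly this choice. To build the graph I would track, for each representative edge of $\dpr$ and each generator-gluing of \Cref{cell decomp}, the two finite vertex classes at its endpoints, thereby recovering the incidence data of $\Sigma^{(1)}$, and then select $T$ so that the resulting relators match the stated ones.

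With $T$, the orientations, and the edge names fixed, I would read one relator per two-cell. For the four hexagonal cells this is governed by \ref{three minus}: each listed $6$-tuple $(k_1,\dots,k_6)$ records, in cyclic order, the polyhedra whose $3_-$ faces tile the hexagon, so its boundary word is the product of the six $a_9$-edges (the edges of these faces opposite the central vertex), giving a word purely in the $C_\bullet$, which after suppressing tree edges yields the four relators $C_1^{-1}C_7^{-1}C_4C_6^{-1}$, $C_1C_3^{-1}C_5C_3^{-1}C_4^{-1}$, $C_3C_6^{-1}C_7C_6^{-1}$, and $C_5^{-1}C_1C_5^{-1}C_7C_4^{-1}$. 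For the twelve doubled-quadrilateral cells, the pairing $(k,k')$ from \ref{whyex again} identifies the two faces labeled $1_-$ that are merged, and since each such face is a copy of the quadrilateral with cyclically ordered edges $a_6$, $a_9$, $a_4$ (the fourth edge, $a_3$, being the interior seam), the boundary word visits an $a_4$-, an $a_6$-, and an $a_9$-edge from each half; this produces the twelve mixed relators in $A_\bullet$, $B_\bullet$, $C_\bullet$. Finally I would assemble the presentation and, as an independent check, abelianize it to reconfirm $H_1\cong\mathbb{Z}$, in agreement with \Cref{homologyZ}.

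The main obstacle is none of these steps individually but the sustained, orientation-sensitive bookkeeping they demand: correctly determining the vertex--edge incidences of $\Sigma^{(1)}$, choosing $T$, and fixing consistent edge orientations and a basepoint-path convention so that the boundary words emerge with precisely the signs and cyclic starting points recorded in the statement. In particular the two cases of \Cref{torsion order} --- whether an edge is fixed by a single generator ($a_4$, fixed by $y$) or only by a length-two word ($a_6$ by $z^{-1}y$, $a_9$ by $y^{-1}w$), in which case copies of the edge alternate with copies of its mirror image around the edge cycle --- must be handled separately when matching an abstract edge class to its two occurrences on a given two-cell. This is exactly the sort of large but mechanical tracking that the paper's Python scripts automate in general; carrying it out by hand for this single example is tedious but routine once $\Sigma^{(1)}$ and $T$ have been fixed.
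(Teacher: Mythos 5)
Your proposal follows essentially the same route as the paper's proof: it uses the spine $\Sigma$ from \Cref{M_21 spine}, identifies the $22$ edge classes with the cycles of \ref{why}, \ref{whyzee}, and \ref{whydub}, chooses a five-edge maximal tree (the paper takes $a_0$, $a_3$, $b_0$, $c_0$, $c_2$), and reads one relator per two-cell, with the four hexagons built from faces labeled $3_-$ yielding the purely-$C$ relators and the twelve doubled quadrilaterals yielding the mixed ones. The plan is correct, and the bookkeeping you defer is exactly the content the paper supplies via its explicit edge orientations, the boundary words of \ref{del three minus}, and the worked example $b_0c_0\bar{a}_0a_2\bar{c}_7\bar{b}_4$.
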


Before proving this result, we record a consequence.

\begin{corollary}\label{M_21 H1} For $M_{2,1}$ as in \Cref{first manifold covers}, $H_1(M_{2,1})\cong\mathbb{Z}$ is generated by $\gamma \doteq 5C_6-C_7$. (Here ``$C_i$'' refers to the homology class of the corresponding generator from the presentation for $\pi_1 M_{2,1}$ given in \Cref{M_21}.)\end{corollary}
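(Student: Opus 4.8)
The plan is to abelianize the presentation for $\pi_1 M_{2,1}$ from \Cref{M_21} and then carry out a purely integer linear reduction. Writing each of the sixteen relators additively turns the presentation into a $\mathbb{Z}$-module presentation on the seventeen generators $A_1,A_2,A_4,\dots,A_7$, $B_1,\dots,B_5$, $C_1,C_3,\dots,C_7$, so that $H_1(M_{2,1})$ is the cokernel of the resulting $16\times 17$ relation matrix. The goal is to show this cokernel is free of rank one, generated by the image of $C_5$, and then to identify $\gamma = 5C_6 - C_7$ with $C_5$.

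First I would process the four relators involving only the $C_i$ (the first line of relations in \Cref{M_21}). These suffice to solve for $C_3, C_4, C_6, C_7$ as integer combinations of $C_1$ and $C_5$; in particular they give $C_3 = 2C_6 - C_7$ and, after back-substitution, $C_6 = -2C_5$ and $C_7 = -11C_5$. The decisive bookkeeping observation here is that $\gamma = 5C_6 - C_7 = 5(-2C_5) - (-11C_5) = C_5$, so the claimed generator is literally the class of $C_5$, and proving the corollary reduces to showing $H_1(M_{2,1}) \cong \mathbb{Z}$ with $C_5$ a generator. At this stage two free $C$-parameters, namely $C_1$ and $C_5$, remain.

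Next I would feed in the twelve mixed relations. Relators $5$, $6$, $12$, $16$ express $A_2$, $A_1$, $A_5$, $A_4$ in terms of the $B_i$ and $C_1, C_5$, relator $15$ then gives $A_7$, and relator $8$ gives $A_6$, so all six $A$-generators are eliminated. The surviving relators $7$, $9$, $10$, $11$, $13$, $14$ become relations purely among $B_1,\dots,B_5$ and $C_1, C_5$, which I would solve in turn: relator $7$ and relator $13$ each constrain $B_1$, and their consistency is what pins down $B_4 = 15C_5$, whence relator $9$ forces $C_1 = -17C_5$, eliminating the last free $C$-parameter; the remaining relators then determine $B_1, B_2, B_3, B_5$ as explicit integer multiples of $C_5$. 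After all substitutions every one of the seventeen generators has been written as a $\mathbb{Z}$-multiple of $C_5$, with no surviving relation of the form $nC_5 = 0$.

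It then follows that the abelianized group is generated by $C_5$ with $C_5$ unconstrained, so $H_1(M_{2,1}) \cong \mathbb{Z}$; together with $\gamma = C_5$ this proves the statement. The main obstacle is bookkeeping discipline rather than any conceptual difficulty: one must verify that the sixteen relations are mutually consistent and leave no torsion, i.e.\ that the relation matrix has rank exactly sixteen with trivial Smith normal form. The delicate points are precisely the relators that overdetermine $B_1$ (forcing $B_4 = 15C_5$) and relator $9$ (forcing $C_1 = -17C_5$), since these are what collapse the naive two-parameter family down to the single free parameter $C_5$; checking that they do so consistently, rather than introducing a torsion relation $nC_5 = 0$, is the heart of the verification.
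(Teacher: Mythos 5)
Your proposal is correct and is essentially the paper's own argument: the paper likewise abelianizes the presentation from Proposition \ref{M_21} and row-reduces the resulting $16\times 17$ integer matrix (recorded in Table \ref{H1 REF}), arriving at the same conclusions ($C_3=2C_6-C_7$, $C_5=5C_6-C_7=\gamma$, the relation $-11C_6+2C_7=0$, and no torsion). Your elimination order, which leaves $C_5$ as the surviving free generator rather than reading off $\gamma$ from the pair $(C_6,C_7)$ via the determinant $\det\left(\begin{smallmatrix}-11 & 2\\ 5 & -1\end{smallmatrix}\right)=1$, is only a cosmetic reorganization of the same computation.
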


\begin{proof}This can be showed by abelianizing the presentation given in \Cref{M_21}. For a fully by-hand computation, one can convert each relation into an equation over $\mathbb{Z}$ in variables corresponding to the generators---so for instance, the second relation would yield $C_1 - 2C_3 - C_4 + C_5 = 0$---and row-reduce the matrix corresponding to the resulting linear system over $\mathbb{Z}$. After doing so, we obtain the matrix of \Cref{H1 REF}. 

\begin{table}[ht]
\begin{tabular}{l | rrrrrrrrrrrrrrrrr}

 & $A_1$ & $A_2$ & $A_4$ & $A_5$ & $A_6$ & $A_7$ & $B_1$ & $B_2$ & $B_3$ & $B_4$ & $B_5$ & $C_1$ & $C_3$ & $C_4$ & $C_5$ & $C_6$ & $C_7$ \\ \hline
$R_6'$& -1 & & & & & & 1 & -1 & & & & & & & & & \\
$R_5'$& & 1 & & & & & & & & -1 & & & & & & & -1 \\
$R_{15}'$& & & 1 & & & -1 & 1 & & & & & & 1 & & & & \\
$R_{12}'$& & & & -1 & & & & & -1 & & 1 & & & & & & \\
$R_{10}'$& & & & & 1 & & -1 & & & & 1 & & & & 1 & & \\
$R_9'$& & & & & & 1 & & & & 1 & -1 & -1 & & 1 & & & \\
$R_7'$& & & & & & & 1 & & & -2 & & & & & & & -1 \\
$R_{14}'$& & & & & & & & -1 & & 1 & & -1 & & & 1 & & \\
$R_{8}'$& & & & & & & & & 1 & & -1 & & 1 & & -2 & & \\
$R_{13}'$& & & & & & & & & & 1 & & & -2 & & 1 & 1 & \\
$R_{11}'$& & & & & & & & & & & -1 & -1 & 6 & 1 & -4 & -1 & \\
$R_1'$& & & & & & & & & & & & -1 & & 1 & & -1 & -1 \\
$R_3'$& & & & & & & & & & & & & 1 & & & -2 & 1 \\
$R_{16}'$& & & & & & & & & & & & & & 1 & -3 & 11 & -5 \\
$R_2'$& & & & & & & & & & & & & & & 1 & -5 & 1 \\
$R_4'$& & & & & & & & & & & & & & & & -11 & 2 \\
\end{tabular}
\caption{A presentation matrix for $H_1(M_{2,1})$.}
\label{H1 REF}
\end{table}

In \Cref{H1 REF}, a row label $R_i'$ above means the row is descended from the $i$th relator, numbered starting at top left and proceeding left-to-right and top-to-bottom from there---after row-reduction. Regarding this as a matrix, its kernel is generated by:\begin{align*}
    & (A_1,A_2,A_4,A_5,A_6,A_7,B_1,B_2,B_3,B_4,B_5,C_1,C_3,C_4,C_5,C_6,C_7) \\
    & \quad = (14,-4,1,-5,9,-25,-19,-33,-22,-15,-27,17,-7,30,-1,2,11).
\end{align*}
One can also use this as a correctness check for the row reduction, by verifying that it also belongs to kernel of the original presentation matrix for $H_1$ obtained directly from the presentation for $\pi_1$.

The result shows that the class $5C_6-C_7$ generates $H_1(M_{2,1})$, with $-11C_6+2C_7$ vanishing and all other classes depending on those of $C_6$ and $C_7$.\end{proof}

\begin{proof}[Proof of \Cref{M_21}] We continue with the spine $\Sigma$ from \Cref{M_21 spine}, now analyzing its edges. These fall into three classes: those lifted from edges of $P^{333}_2$ labeled $a_4$, $a_6$, and $a_9$. By \Cref{torsion order}, each such lifted edge corresponds to a cycle associated to a relator of $\dprgr^{333}_2$; respectively, \ref{why}, \ref{whyzee}, and \ref{whydub}. There are $8$, $6$, and $8$ such edges, respectively, since $a_4 = a_9 = 3$ and $a_6=4$, for a total of $22$.

$\Sigma$ has six vertices total, coming from the vertices of the face $1_-$ of $P^{333}_2$. Each vertex of this face that belongs to the edge labeled $a_6$ has connected preimage in $M_{2,1}$ since its stabilizer in $\dprgr^{333}_2$ has order $24$, being a $(2,3,4)$-rotation group. Each of the other two vertices has two preimages in $M_{2,1}$, as its stabilizer is a $(2,3,3)$-rotation group of order twelve. These preimages may be identified with the orbits of the $\sigma_{2,1}$-images of the associated vertex groups: $\langle x,y\rangle$ in one case, and $\langle y,w\rangle$ in the other. Consulting the cycle decompositions of \Cref{M_21}, we obtain the following in the respective cases.

\begin{para}\label{ex+why} $\{0,1,2,3,4,6,7,10,11,12,16,18\}, \{5,8,9,13,14,17,15,19,20,21,22,23\}$\end{para}
\begin{para}\label{why+dub} $\{0,1,3,4,6,7,12,13,15,16,18,22\}, \{2,5,8,9,10,11,14,17,19,20,21,23\}$\end{para}

As a first step toward presenting $\pi_1 M_{2,1}$ using the complex $\Sigma$, we choose orientations for its edges. We orient the edges of $P^{333}_2$ labeled $a_4$ and $a_6$ pointing up in \Cref{fig:piOne}, and the one labeled $a_9$ pointing right-to-left, and carry these orientations to the corresponding edge classes of $M_{2,1}$ using the isometric embeddings $P^{333}_2\subset \dpr^{333}_2\to \dpr_k$ for $k\in \{0,\hdots,23\}$. Name these edge classes $a_0,\hdots,a_7$, $b_0,\hdots,b_5$, and $c_0,\hdots,c_7$, respectively, corresponding to order of appearance in the cycle decompositions \ref{why}, \ref{whyzee}, and \ref{whydub}, and let $\bar{a}_i$ etc.~denote an edge with orientation reversed.

With the orientations above, the boundary of each $2$-cell of $\Sigma$ can be read off as a concatenation of edges. For instance, the $2$-cell corresponding to the cycle $(0,10)$ of $\sigma_{2,1}(y^{-1}x)$ has boundary $b_0c_0\bar{a}_0a_2\bar{c}_7\bar{b}_4$; the first three edges belonging to the face $1_-$ of $\dpr_0$, and the last three to that of $\dpr_{10}$. Each $2$-cell composed of copies of $1_-$ will have a similar form, where by \Cref{torsion order}, we discern the indices of edges bounding a face of a particular $\dpr_k$ by searching for the cycle containing $k$ in the appropriate cycle decompositions. 

The $2$-cells that are the union of faces labeled $3_-$ have boundary consisting of edges entirely labeled $c$. From the tuples of \ref{three minus}, using their cyclic arrangment (see the note directly below it), we obtain in order:

\begin{para}\label{del three minus} $c_0\bar{c}_1c_2\bar{c}_7c_4\bar{c}_6$, $c_1\bar{c}_3c_5\bar{c}_3c_2\bar{c}_4$, $c_3\bar{c}_0c_2\bar{c}_6c_7\bar{c}_6$, $c_0\bar{c}_5c_1\bar{c}_5c_7\bar{c}_4$
\end{para}

The edge orientations alternate above because this is also true of the orientations that the copies of $3_-$ inherit from their ambient $\dpr_k$.

The next step to presenting $\pi_1 M_{2,1}$ is to choose a maximal subtree $T$ of the one-skeleton of $\Sigma$. As edges of $T$ we take $b_0$, joining the vertices with stabilizer of order $24$; $c_0$ and $c_2$, each of which join an endpoint of $b_0$ to a distinct preimage of the vertex stabilized by $y^{-1}w$; and $a_0$ and $a_3$, which respectively share a single endpoint with $c_0$ and $c_2$ and have distinct far endpoints. For each $i\ne 3$ between $1$ and $7$, let $A_i$ denote the element of $\pi_1 M_{2,1}$ that is the concatenation of the arc in $T$ from a designated basepoint vertex to the initial point of $a_i$, $a_i$ itself, and the arc in $T$ from the terminal vertex of $a_i$ back to the basepoint. Similarly define $B_j$, for $j\in\{1,2,3,4,5\}$, and $C_k$ for $k\ne 2$ between $1$ and $7$. 

The relations on the first line of the presentation given in the statement now come from the faces of type $3_-$; the others, from faces of type $1_-$. The first of the latter type comes from the face corresponding to the cycle $(0,10)$ for $\sigma_{2,1}(y^{-1}x)$, which we recall from above has boundary $b_0c_0\bar{a}_0a_2\bar{c}_7\bar{b}_4$. The edges of this cycle that do not belong to the maximal tree thus give rise to the product $A_2C_7^{-1}B_4^{-1}$ in $\pi_1$ of the one-skeleton. The others are obtained similarly.
\end{proof}

We now describe a peripheral subgroup of $\pi_1 M_{2,1}$, ie.~one representing the fundamental group of a torus cusp cross-section of $M_{2,1}$. Below a \emph{geometric basis} for the fundamental group of a Euclidean torus $T$ consists of an element $\mu_0$ representing the shortest geodesic on $T$ and one, $\lambda_0$ representing the shortest geodesic that is not a power of the first.

\begin{lemma}\label{filling slope} In the presentation for $\pi_1 M_{2,1}$ given in \Cref{M_21}, there is a geometric basis for a peripheral subgroup consisting of $\mu_0 = C_1A_2$ and 
\[ \lambda_0 = B_2^{-1}B_1A_5^{-1}C_7^{-1}C_6B_2^{-1}B_4C_4A_5C_7^{-1}C_5C_1^{-1}B_1^{-1}B_2C_4A_5^{-1}A_4C_6^{-1}C_7C_6^{-1}B_3^{-1}A_4^{-1}C_6^{-1}C_1^{-1}. \] 
These represent elements having lengths $1+\sqrt{2}$ and $4\sqrt{3}(1+\sqrt{2})$, respectively, in a maximal cusp cross-section. The homological longitude is $\lambda = \mu_0^{55}\lambda_0^{-13}$, and the shortest homological meridian (meaning a curve whose inclusion-induced image generates $H_1(M_{2,1})$) is $\mu_0^{-17}\lambda_0^{4}$.
\end{lemma}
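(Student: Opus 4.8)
The plan is to split the statement into a Euclidean computation on the cusp torus $T$ of $M_{2,1}$ and a linear-algebra computation in $H_1$. First I would recall from \Cref{max cusp} that the maximal horoball cusp neighborhood of $O^{333}_2$ sits at height $1$ and meets the fundamental prism in a triangle $\Delta$ with a side on the $y$-axis running from $y_1 = 1/\sqrt{3}$ to $y_2 = -\sqrt{2/3}$; since $(a_1,a_2,a_5)=(3,3,3)$, the link angles are all $\pi/3$, so $\Delta$ is equilateral with side $s = y_1 - y_2 = (1+\sqrt{2})/\sqrt{3}$. The maximal cusp of $M_{2,1}$ pulls back from that of $O^{333}_2$, so its boundary $T$ develops in the horosphere at height $1$ as a tiling by $48$ copies of $\Delta$ (the degree $48 = 24\cdot 2$ of $M_{2,1}\to O^{333}_2$). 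By \Cref{cusp count} the peripheral subgroup $\Lambda < \pi_1 M_{2,1}$ is the subgroup of $G_{2,1}$ fixing the chosen lift of $\infty$; it is a translation lattice of index $24$ in the orientation-preserving $(3,3,3)$-rotation group $\langle x,z\rangle$, of covolume $48\cdot\tfrac{\sqrt{3}}{4}s^2 = 4\sqrt{3}(1+\sqrt{2})^2$.

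Next I would identify $\mu_0 = C_1A_2$ and $\lambda_0$ as explicit translations of $T$. The triangular tiling by copies of $\Delta$ has a rotation-invariant translation sublattice $\Lambda_0$ (the index-three ``$\sqrt{3}\times\sqrt{3}$'' superlattice fixed setwise by the order-$3$ holonomy), whose shortest vectors have length $s\sqrt{3} = 1+\sqrt{2}$; this $\Lambda_0$ is exactly the translation lattice of $\langle x,z\rangle$, so $\Lambda \subseteq \Lambda_0$ and every nonzero element of $\Lambda$ has length at least $1+\sqrt{2}$. Tracing the loop $C_1A_2$ through the developed cusp I would show $\mu_0$ realizes one such shortest vector; being of minimal possible length it is a systole of $T$ and in particular primitive. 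Tracing $\lambda_0$ similarly, I would show it is a translation orthogonal to $\mu_0$ that completes $\mu_0$ to a basis of $\Lambda$; then orthogonality together with $|\mu_0| = 1+\sqrt{2}$ and the covolume identity forces $|\lambda_0| = \mathrm{covol}(\Lambda)/|\mu_0| = 4\sqrt{3}(1+\sqrt{2})$, and minimality of these two lengths makes $\{\mu_0,\lambda_0\}$ a geometric basis.

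For the homological claims I would pass to $H_1(M_{2,1})\cong\mathbb{Z}\langle\gamma\rangle$ with $\gamma = 5C_6 - C_7$, reading off each generator's class as a multiple of $\gamma$ from the row-reduced presentation matrix of \Cref{M_21 H1} and adding to obtain $[\mu_0] = 13\gamma$ and $[\lambda_0] = 62\gamma$. The homological longitude is then the primitive peripheral class mapping to $0$: since $62\cdot 13 - 13\cdot 62 = 0$ and $\gcd(62,13)=1$, it is $\mu_0^{62}\lambda_0^{-13}$. The shortest homological meridian is the shortest peripheral $\mu_0^{p}\lambda_0^{q}$ with $13p + 62q = \pm 1$; using orthogonality its squared length is $(1+\sqrt{2})^2(p^2 + 48q^2)$ (as $|\lambda_0|^2/|\mu_0|^2 = 48$), and minimizing $p^2 + 48q^2$ over the solution coset $\{(19+62k,\,-4-13k)\}$ gives the minimum at $k=0$, namely $\mu_0^{19}\lambda_0^{-4}$.

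The main obstacle is the geometric bookkeeping of the second paragraph: converting the spine-based generators into displacements on the developed cusp and verifying that the long word $\lambda_0$ has trivial rotational holonomy (so is a genuine translation), is primitive, and is orthogonal to $\mu_0$. This is where the explicit parabolic matrices $\rho(x),\rho(z)$ of \Cref{subsec: embedding333} (Case $a_3=2$) provide a rigorous fallback: one computes $\rho(\mu_0)$ and $\rho(\lambda_0)$ directly, checks the trace is $\pm 2$, and reads the translation lengths off the off-diagonal entries, at the cost of a longer matrix product. Once the two lengths, orthogonality, and the basis property are established, the remaining homological claims are routine integer linear algebra from \Cref{M_21 H1}.
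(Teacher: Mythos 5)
Your proposal is correct and follows essentially the same route as the paper: develop the maximal cusp cross-section as a Euclidean torus tiled by the equilateral triangles of side $(1+\sqrt{2})/\sqrt{3}$ coming from the prism decomposition, identify $\mu_0$ and $\lambda_0$ with the two translations of a rectangular fundamental domain, and then carry out the integer linear algebra in $H_1(M_{2,1})\cong\mathbb{Z}\langle 5C_6-C_7\rangle$ exactly as in \Cref{M_21 H1}, minimizing $a^2+48b^2$ over the meridian coset. The only (harmless) variations are that you derive $|\lambda_0|$ from orthogonality plus the covolume identity rather than reading it off the explicit fundamental domain of \Cref{fig: M21 cusp}, and you correctly flag that the real content---tracing the words $C_1A_2$ and the long $\lambda_0$ through the developed cusp---is the bookkeeping step the paper performs via Figures \ref{fig: diamonds} and \ref{fig: M21 cusp}.
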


\begin{remark}
    For any orientable one-cusped hyperbolic manifold $M$ with $H_1(M)\cong\mathbb{Z}$, the inclusion map of a cusp cross-section into $M$ induces a surjection on first homology. This follows from ``half lives--half dies'' with arbitrary field coefficients, or see \cite[Lemma 3.0]{GonzalezAcunaWhitten1992}.
\end{remark}

\begin{figure}[ht]
\begin{tikzpicture}

\begin{scope}

\draw [thick] (1.732,0) -- (0,1) -- (-1.732,0) -- (0,-1) -- cycle;
\draw [thin] (0,-1) -- (0,1);
\draw [thin] (-0.866,0.5) -- (0,0) -- (0.866,0.5);
\fill (0,-1) circle [radius=0.08];

\node at (0.8,-0.1) {$1_-$};
\node at (-0.7,-0.06) {$1_+$};
\node at (0.4,0.5) {\small $3_-$};
\node at (-0.25,0.5) {\small $3_+$};

\end{scope}

\begin{scope}[xshift=2in]

\draw [thick] (1.732,0) -- (0,1) -- (-1.732,0) -- (0,-1) -- cycle;
\draw [thin] (0,-1) -- (0,1);
\draw [thin] (-0.866,0.5) -- (0,0) -- (0.866,0.5);
\fill (0,-1) circle [radius=0.08];

\draw [very thick,->] (1.732,0) -- (0.866,0.5); 
\draw [very thick,->] (0.866,0.5) -- (0,0);
\draw [very thick,->] (0,-1) -- (0,0);

\node at (0.8,-0.1) {$i$};
\node at (-0.7,-0.06) {$(i.y)$};
\node [above right] at (1.199,0.2) {\footnotesize $b_{j_i}$};
\node [above] at (0.4,0.25) {\footnotesize $c_{k_i}$};
\node [right] at (-0.05,-0.5) {\footnotesize ${a}_{l_i}$};

\end{scope}

\end{tikzpicture}
\caption{Left: the diamond's decomposition into faces induced by the deformation retract, labeled with numbers from \Cref{numberings}. Right, labeling paths of the diamond in $\dpr_i$ by their images.}
\label{fig: diamonds}
\end{figure}
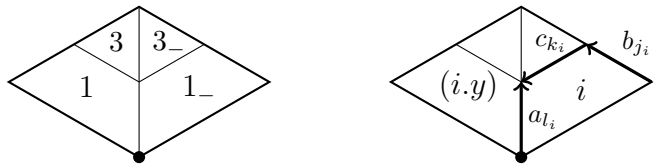

\begin{proof} A maximal horospherical cusp cross-section for $M_{2,1}$ is a union of $24$ diamonds (ie.~rhombi), each a cross-section of a copy of the doubled prism $\dpr^{333}_2$ by a horosphere centered at its ideal vertex. The half of such a diamond coming from the prism $\dpr^{333}_2$ is discussed in \Cref{max cusp}. It is an equilateral Euclidean triangle with side length $(1+\sqrt{2})/\sqrt{3}$, pictured in bold in \Cref{rigeur}; the diamond's other half is this triangle's reflection across its side projecting into a line with negative slope there.

The deformation retract described in \Cref{spine}, which takes $M_{2,1}$ to a spine that is a union of certain faces of copies $\dpr_i$ of the doubled prism, carries the diamond in each such $\dpr_i$ homeomorphically to the intersection of $\dpr_i$ with that spine---the union of the faces of $\dpr_i$ labeled $1_{\pm}$ and $3_{\pm}$ in \Cref{numberings}. The preimages of these four faces tile the diamond as pictured on the left in \Cref{fig: diamonds}. 

The ``model diamond'' in $\dpr^{333}_2$ has one vertex that is fixed by the face-pairing generator $x$ (rotation around the triangle vertex at $\left(0,\frac{1}{\sqrt{3}}\right)$ in \Cref{rigeur}), and its opposite vertex is fixed by $z$ (rotation about $(X_2,Y_2)$ from \Cref{subsec: volumes}). If we label the corresponding vertices of the diamond in each $\dpr_i$ accordingly, for $i\in\{0,\hdots,23\}$, then it follows from \Cref{cell decomp} that the two sides containing $x$ of the diamond in $\dpr_i$ are identified to sides of the diamonds in $\dpr_{i.x^{\pm 1}}$, and those containing $z$ are identified to sides in $\dpr_{i.z^{\pm 1}}$. Here the actions of $x$ and $z$ on the indices $i$ are determined by the right-permutation representation $\sigma_{2,1}$, respectively with cycle decompositions \ref{ex} and \ref{zee}.

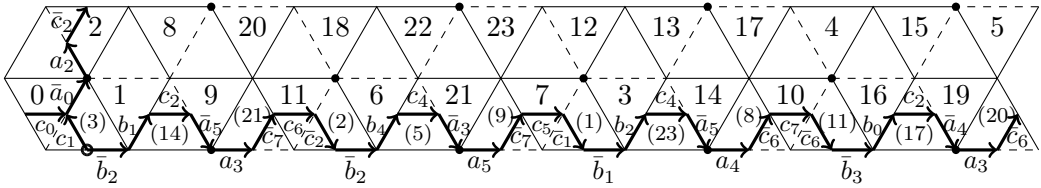
\begin{figure}[ht]
\begin{tikzpicture}

\begin{scope}[scale=1.1]
\draw (-1,0) -- (0,0) -- (0.5,-0.866) -- (1,0) -- (0.5,0.866) -- (0,0);
\draw (1,0) -- (2,0) -- (1.5,-0.866) -- (-0.5,-0.866) -- (-1,0) -- (-0.5,0.866) -- (1.5,0.866) -- (2,0);

\draw (0,-0.866) circle [radius=0.07];
\draw (0,-0.866) circle [radius=0.05];
\draw (0,-0.866) circle [radius=0.06];

\fill (0,0) circle [radius=0.05];
\fill (1.5,0.866) circle [radius=0.05];
\fill (1.5,-0.866) circle [radius=0.05];

\draw [dashed] (0,0) -- (1,0);
\draw [dashed] (-0.5,0.866) -- (0,0) -- (-0.5,-0.866);
\draw [dashed] (2.5,0.866) -- (1.5,0.866) -- (1,0) -- (1.5,-0.866) -- (2.5,-0.866);

\node [below] at (-0.6,0.05) {\small $0$};
\node [below] at (0.1,0.866) {\small $2$};
\node [below] at (0.4,0.05) {\small $1$};
\node [below] at (1.5,0.05) {\small $9$};
\node [below] at (1,0.866) {\small $8$};
\node [below] at (2,0.866) {\small $20$};
\node at (0.07,-0.533) {\tiny $(3)$};
\node [above] at (1,-0.946) {\tiny $(14)$};
\node [below] at (2,-0.2) {\tiny $(21)$};

\draw [very thick, ->] (-0.25,-0.433) -- (0,0);
\node [below] at (-0.3,0.05) {\footnotesize $\bar{a}_0$};
\draw [very thick, ->] (0,0) -- (-0.25,0.433);
\node [above] at (-0.3,-0.05) {\footnotesize $a_2$};
\draw [very thick, ->] (-0.25,0.433) -- (0,0.866);
\node [below] at (-0.3,0.916) {\footnotesize $\bar{c}_2$};

\draw [very thick, ->] (-0.75,-0.433) -- (-0.25,-0.433);
\node [below] at (-0.5,-0.383) {\scriptsize $c_0$};
\draw [very thick, ->] (0,-0.866) -- (-0.25,-0.433);
\node [above] at (-0.25,-0.966) {\scriptsize $c_1$};
\draw [very thick, ->] (0,-0.866) -- (0.5,-0.866);
\node [below] at (0.25,-0.816) {\footnotesize $\bar{b}_2$};
\draw [very thick, ->] (0.5,-0.866) -- (0.75,-0.433);
\node [above] at (0.5,-0.816) {\scriptsize $b_1$};
\draw [very thick, ->] (0.75,-0.433) -- (1.25,-0.433);
\node [above] at (1,-0.483) {\footnotesize $c_2$};
\draw [very thick, ->] (1.25,-0.433) -- (1.5,-0.866);
\node [above] at (1.5,-0.816) {\footnotesize $\bar{a}_5$};
\draw [very thick, ->] (1.5,-0.866) -- (2,-0.866);
\node [below] at (1.75,-0.816) {\footnotesize $a_3$};
\draw [very thick, ->] (2,-0.866) -- (2.25,-0.433);
\node [above] at (2.25,-0.966) {\footnotesize $\bar{c}_7$};

\end{scope}

\begin{scope}[scale=1.1, xshift=3cm]
\draw (-1,0) -- (0,0) -- (0.5,-0.866) -- (1,0) -- (0.5,0.866) -- (0,0);
\draw (1,0) -- (2,0) -- (1.5,-0.866) -- (-0.5,-0.866) -- (-1,0) -- (-0.5,0.866) -- (1.5,0.866) -- (2,0);

\fill (0,0) circle [radius=0.05];
\fill (1.5,0.866) circle [radius=0.05];
\fill (1.5,-0.866) circle [radius=0.05];

\draw [dashed] (0,0) -- (1,0);
\draw [dashed] (-0.5,0.866) -- (0,0) -- (-0.5,-0.866);
\draw [dashed] (2.5,0.866) -- (1.5,0.866) -- (1,0) -- (1.5,-0.866) -- (2.5,-0.866);

\node [below] at (-0.5,0.05) {\small $11$};
\node [below] at (0,0.866) {\small $18$};
\node [below] at (0.5,0.05) {\small $6$};
\node [below] at (1.5,0.05) {\small $21$};
\node [below] at (1,0.866) {\small $22$};
\node [below] at (2,0.866) {\small $23$};
\node at (0.07,-0.533) {\tiny $(2)$};
\node [above] at (1,-0.946) {\tiny $(5)$};
\node [below] at (2,-0.2) {\tiny $(9)$};

\draw [very thick, ->] (-0.75,-0.433) -- (-0.25,-0.433);
\node [below] at (-0.5,-0.383) {\scriptsize $c_6$};
\draw [very thick, ->] (-0.25,-0.433) -- (0,-0.866);
\node [above] at (-0.25,-0.966) {\scriptsize $\bar{c}_2$};
\draw [very thick, ->] (0,-0.866) -- (0.5,-0.866);
\node [below] at (0.25,-0.816) {\footnotesize $\bar{b}_2$};
\draw [very thick, ->] (0.5,-0.866) -- (0.75,-0.433);
\node [above] at (0.5,-0.816) {\scriptsize $b_4$};
\draw [very thick, ->] (0.75,-0.433) -- (1.25,-0.433);
\node [above] at (1,-0.483) {\footnotesize $c_4$};
\draw [very thick, ->] (1.25,-0.433) -- (1.5,-0.866);
\node [above] at (1.5,-0.816) {\footnotesize $\bar{a}_3$};
\draw [very thick, ->] (1.5,-0.866) -- (2,-0.866);
\node [below] at (1.75,-0.816) {\footnotesize $a_5$};
\draw [very thick, ->] (2,-0.866) -- (2.25,-0.433);
\node [above] at (2.25,-0.966) {\footnotesize $\bar{c}_7$};

\end{scope}

\begin{scope}[scale=1.1, xshift=6cm]
\draw (-1,0) -- (0,0) -- (0.5,-0.866) -- (1,0) -- (0.5,0.866) -- (0,0);
\draw (1,0) -- (2,0) -- (1.5,-0.866) -- (-0.5,-0.866) -- (-1,0) -- (-0.5,0.866) -- (1.5,0.866) -- (2,0);

\fill (0,0) circle [radius=0.05];
\fill (1.5,0.866) circle [radius=0.05];
\fill (1.5,-0.866) circle [radius=0.05];

\draw [dashed] (0,0) -- (1,0);
\draw [dashed] (-0.5,0.866) -- (0,0) -- (-0.5,-0.866);
\draw [dashed] (2.5,0.866) -- (1.5,0.866) -- (1,0) -- (1.5,-0.866) -- (2.5,-0.866);

\node [below] at (-0.5,0.05) {\small $7$};
\node [below] at (0,0.866) {\small $12$};
\node [below] at (0.5,0.05) {\small $3$};
\node [below] at (1.5,0.05) {\small $14$};
\node [below] at (1,0.866) {\small $13$};
\node [below] at (2,0.866) {\small $17$};
\node at (0.07,-0.533) {\tiny $(1)$};
\node [above] at (1,-0.946) {\tiny $(23)$};
\node [below] at (2,-0.2) {\tiny $(8)$};

\draw [very thick, ->] (-0.75,-0.433) -- (-0.25,-0.433);
\node [below] at (-0.5,-0.383) {\scriptsize $c_5$};
\draw [very thick, ->] (-0.25,-0.433) -- (0,-0.866);
\node [above] at (-0.25,-0.966) {\scriptsize $\bar{c}_1$};
\draw [very thick, ->] (0,-0.866) -- (0.5,-0.866);
\node [below] at (0.25,-0.816) {\footnotesize $\bar{b}_1$};
\draw [very thick, ->] (0.5,-0.866) -- (0.75,-0.433);
\node [above] at (0.5,-0.816) {\scriptsize $b_2$};
\draw [very thick, ->] (0.75,-0.433) -- (1.25,-0.433);
\node [above] at (1,-0.483) {\footnotesize $c_4$};
\draw [very thick, ->] (1.25,-0.433) -- (1.5,-0.866);
\node [above] at (1.5,-0.816) {\footnotesize $\bar{a}_5$};
\draw [very thick, ->] (1.5,-0.866) -- (2,-0.866);
\node [below] at (1.75,-0.816) {\footnotesize $a_4$};
\draw [very thick, ->] (2,-0.866) -- (2.25,-0.433);
\node [above] at (2.25,-0.966) {\footnotesize $\bar{c}_6$};

\end{scope}

\begin{scope}[scale=1.1, xshift=9cm]
\draw (-1,0) -- (0,0) -- (0.5,-0.866) -- (1,0) -- (0.5,0.866) -- (0,0);
\draw (1,0) -- (2,0) -- (1.5,-0.866) -- (-0.5,-0.866) -- (-1,0) -- (-0.5,0.866) -- (1.5,0.866) -- (2,0);

\draw (2.5,0.866) -- (2,0) -- (2.5,-0.866);

\fill (0,0) circle [radius=0.05];
\fill (1.5,0.866) circle [radius=0.05];
\fill (1.5,-0.866) circle [radius=0.05];

\draw [dashed] (0,0) -- (1,0);
\draw [dashed] (-0.5,0.866) -- (0,0) -- (-0.5,-0.866);
\draw [dashed] (2.5,0.866) -- (1.5,0.866) -- (1,0) -- (1.5,-0.866) -- (2.5,-0.866);

\node [below] at (-0.5,0.05) {\small $10$};
\node [below] at (0,0.866) {\small $4$};
\node [below] at (0.5,0.05) {\small $16$};
\node [below] at (1.5,0.05) {\small $19$};
\node [below] at (1,0.866) {\small $15$};
\node [below] at (2,0.866) {\small $5$};
\node at (0.07,-0.533) {\tiny $(11)$};
\node [above] at (1,-0.946) {\tiny $(17)$};
\node [below] at (2,-0.2) {\tiny $(20)$};

\draw [very thick, ->] (-0.75,-0.433) -- (-0.25,-0.433);
\node [below] at (-0.5,-0.383) {\scriptsize $c_7$};
\draw [very thick, ->] (-0.25,-0.433) -- (0,-0.866);
\node [above] at (-0.25,-0.966) {\scriptsize $\bar{c}_6$};
\draw [very thick, ->] (0,-0.866) -- (0.5,-0.866);
\node [below] at (0.25,-0.816) {\footnotesize $\bar{b}_3$};
\draw [very thick, ->] (0.5,-0.866) -- (0.75,-0.433);
\node [above] at (0.5,-0.816) {\scriptsize $b_0$};
\draw [very thick, ->] (0.75,-0.433) -- (1.25,-0.433);
\node [above] at (1,-0.483) {\footnotesize $c_2$};
\draw [very thick, ->] (1.25,-0.433) -- (1.5,-0.866);
\node [above] at (1.5,-0.816) {\footnotesize $\bar{a}_4$};
\draw [very thick, ->] (1.5,-0.866) -- (2,-0.866);
\node [below] at (1.75,-0.816) {\footnotesize $a_3$};
\draw [very thick, ->] (2,-0.866) -- (2.25,-0.433);
\node [above] at (2.25,-0.966) {\footnotesize $\bar{c}_6$};

\end{scope}

\end{tikzpicture}
\caption{A fundamental domain for a maximal cusp cross-section of $M_{2,1}$, tiled by diamonds and with curves representing $\mu_0$ and $\lambda_0$ in bold.}
\label{fig: M21 cusp}
\end{figure}

A fundamental domain for the maximal cusp cross section's cell decomposition resulting from these edge pairings is pictured in \Cref{fig: M21 cusp}. Each diamond in the figure is divided by a dashed line into triangles coming from the copy of $P^{333}_2$ and its mirror image $\bar{P}^{333}_2$, and labeled with the index $i$ of the doubled prism containing it. This label is placed on the $P^{333}_2$ side. By \Cref{cell decomp}, the edges on that side face edges on the $\bar{P}^{333}_2$ side of the diamonds in $P_{i.x^{-1}}$ and $P_{i.z^{-1}}$. Vertices corresponding to the one fixed by $x$ are dotted in the Figure.

The cross-section itself is obtained by identifying the top and bottom edges of the pictured fundamental domain by a vertical translation, and left to right by a horizontal translation. Because all triangles pictured in the Figure are actually equilateral in the cross-section's Euclidean metric, it follows that the cusp is rectangular and that the closed curves coming from vertical and horizontal arcs comprise a geometric basis with lengths given in the result's statement. To represent these curves in the presentation for $\pi_1 M_{2,1}$ given in \Cref{M_21}, we place a basepoint at the circled location in diamond $0$ (lower left of \Cref{fig: M21 cusp}) and homotope them to concatenated paths whose deformation-retracted images lie in the spine's one-skeleton, as pictured in bold in the Figure.

The right-hand side of \Cref{fig: diamonds} shows how to determine the labels of paths in such a concatenation. For a diamond lying in $\dpr_i$, decomposed into quadrilaterals and squares according to its image under the deformation retract as described above, the bold arrows denote oriented edges mapping to edge classes $a_{l_i}$, $c_{k_i}$, and $b_{j_i}$ belonging to the one-skeleton of the spine described in the proof of \Cref{M_21}. The indices $l_i$, $k_i$, and $j_i$ are those of the cycles containing $i$ in \ref{why}, \ref{whydub}, and \ref{whyzee}, respectively, as described there. These edges' mirror images in the diamond are also carried into the spine's one-skeleton by the deformation retract. The corresponding indices for these edges are determined by the corresponding cycles containing $i.y$, which is the index of the copy of $\dpr^{333}_2$ on the other side of face $1_+$ of $\dpr_i$. This is in parentheses on the right-hand side of \Cref{fig: diamonds}, and in relevant diamonds in \Cref{fig: M21 cusp}.

Fundamental group elements representing $\mu_0$ and $\lambda_0$ are obtained by reading off the labels in \Cref{fig: M21 cusp} in sequence, recalling from the proof of \Cref{M_21} that $a_0$, $a_3$, $b_0$, $c_0$, and $c_2$ belong to the maximal tree in the one-skeleton and hence do not themselves correspond to fundamental group generators.

From the description of $\lambda_0$ as an element of $\pi_1$, we obtain its first homology class:
\[ [\lambda_0] = -A_5 - B_2 - B_3 + B_4 - 2C_1 + 2C_4 + C_5 - 2C_6 - C_7 \]
Using \Cref{H1 REF} we find that in first homology this equals $-55\gamma$, where $\gamma = 5C_6 - C_7$ is the generator for $H_1(M_{2,1})$ from \Cref{M_21 H1}. The homology class $A_2+C_1$ can be found to be $-13\gamma$. It follows that $-17[\mu_0]+4[\lambda_0] = \gamma$, and that $55[\mu_0]-13[\lambda_0] = 0$. Therefore $\lambda \doteq \mu_0^{55}\lambda_0^{-13}$ is a homological longitude and $\mu\doteq \mu_0^{-17}\lambda_0^{4}$ a homological meridian, as claimed in the result's statement. Other homological meridians have the form $\mu\lambda^k = \mu_0^{-55k-17}\lambda_0^{4-13k}$ for $k\in\mathbb{Z}$. In general, it follows from the Pythagorean theorem that $\mu_0^a\lambda_0^b$ has length
\[ \sqrt{a^2|\mu_0|^2 + b^2|\lambda_0|^2} = (1+\sqrt{2})\sqrt{a^2 + 48\,b^2}. \]
From this and the coefficients it is easy to see that $\mu$ is the shortest homological meridian.
\end{proof}

\subsection{The two sides of $S_{2,1}$} Here we consider the closed, separating totally geodesic surface in $M_{2,1}$ described in \Cref{geodesic surface}, which we call $S_{2,1}$. Cutting $M_{2,1}$ along $S_{2,1}$ yields a compact piece $M_{2,1}^+$, which we analyze in \Cref{M_21plus}, and a non-compact piece $M_{2,1}^-$. We show that filling $M_{2,1}^-$ along the geometric meridian $\mu_0$ identified in \Cref{filling slope} yields a handlebody, and, using this description and \Cref{M_21plus}, establish the main theorem. This all uses a perspective on the spine $\Sigma$ from \Cref{M_21 spine} elaborated in the example below.

\begin{example}\label{ex: M_21 spine} \Cref{fig: spine from cusp} depicts the spine $\Sigma$ for $M_{2,1}$ from \Cref{M_21 spine}, viewed from the cusp. As in \Cref{fig: M21 cusp}, we should understand top to be identified to bottom and left to right by horizontal and vertical translations, respectively. The spine's edges are bold in the Figure, labeled as $a_i$, $b_j$, $c_k$ following the naming convention from the proof of \Cref{M_21}. 

\begin{figure}[ht]
\begin{tikzpicture}

\begin{scope}[scale=1.25]
\draw [gray] (-1,0) -- (0,0) -- (0.5,-0.866) -- (1,0) -- (0.5,0.866) -- (0,0);
\draw [gray] (1,0) -- (2,0) -- (1.5,-0.866) -- (-0.5,-0.866) -- (-1,0) -- (-0.5,0.866) -- (1.5,0.866) -- (2,0);

\fill (0,0) circle [radius=0.05];
\fill (1.5,0.866) circle [radius=0.05];
\fill (1.5,-0.866) circle [radius=0.05];

\draw [dashed] (0,0) -- (1,0);
\draw [dashed] (-0.5,0.866) -- (0,0) -- (-0.5,-0.866);
\draw [dashed] (2.5,0.866) -- (1.5,0.866) -- (1,0) -- (1.5,-0.866) -- (2.5,-0.866);

\node [below] at (-0.5,0.05) {\small $0$};
\node [above] at (-0.5,-0.05) {\tiny $(10)$};
\node [below] at (0.1,0.866) {\small $2$};
\node at (0.45,0.35) {\tiny $(6)$};
\node [below] at (0.4,0) {\small $1$};
\node [below] at (1.6,0) {\small $9$};
\node [below] at (1,0.866) {\small $8$};
\node at (1.55,0.4) {\tiny $(19)$};
\node [below] at (1.9,0.816) {\small $20$};
\node at (0.07,-0.533) {\tiny $(3)$};
\node [above] at (1,-0.886) {\tiny $(14)$};
\node at (1.95,-0.533) {\tiny $(21)$};

\draw [thick] (0,0) -- (-0.25,0.433) -- (0,0.866) -- (0.5,0.866);
\node [above] at (0,0.08) {\scriptsize $a_2$};
\node [below] at (-0.25,0.916) {\scriptsize $c_2$};
\draw [thick] (-0.25,0.433) -- (-0.75,0.433) -- (-1,0);
\node [above] at (-0.5,0.383) {\scriptsize $c_7$};
\node [above] at (-1,0.05) {\scriptsize $b_4$};
\draw [thick] (0,0) -- (-0.25,-0.433) -- (0,-0.866);
\node [below] at (0,-0.08) {\scriptsize $a_0$};
\node [above] at (-0.25,-0.916) {\scriptsize $c_1$};
\draw [thick] (-0.25,-0.433) -- (-0.75,-0.433) -- (-1,0);
\node [below] at (-0.5,-0.383) {\scriptsize $c_0$};
\node [below] at (-1,0) {\scriptsize $b_0$};

\draw [thick] (0,0) -- (0.5,0) -- (0.75,-0.433);
\node [above] at (0.3,-0.05) {\scriptsize $a_1$};
\node [below] at (0.75,0) {\scriptsize $c_1$};
\draw [thick] (0.5,0) -- (0.75,0.433) -- (0.5,0.866);
\node [above] at (0.75,0) {\scriptsize $c_0$};
\node [below] at (0.5,0.816) {\scriptsize $b_4$};
\draw [thick] (0.75,0.433) -- (1.25,0.433) -- (1.5,0.866) -- (2,0.866);
\node [below] at (1,0.483) {\scriptsize $c_6$};
\node [below] at (1.5,0.816) {\scriptsize $a_4$};
\draw [thick] (1.25,0.433) -- (1.5,0) -- (2,0);
\node [above] at (1.25,-0.05) {\scriptsize $c_4$};
\node [above] at (1.75,-0.05) {\scriptsize $b_5$};
\draw [thick] (1.5,0) -- (1.25,-0.433);
\node [below] at (1.25,0.05) {\scriptsize $c_7$};

\draw [thick] (0,-0.866) -- (0.5,-0.866);
\node [below] at (0.25,-0.816) {\footnotesize $b_2$};
\draw [thick] (0.5,-0.866) -- (0.75,-0.433);
\node [above] at (0.5,-0.816) {\scriptsize $b_1$};
\draw [thick] (0.75,-0.433) -- (1.25,-0.433);
\node [above] at (1,-0.483) {\scriptsize $c_2$};
\draw [thick] (1.25,-0.433) -- (1.5,-0.866);
\node [above] at (1.5,-0.816) {\scriptsize $a_5$};
\draw [thick] (1.5,-0.866) -- (2,-0.866);
\node [below] at (1.75,-0.816) {\footnotesize $a_3$};
\draw [thick] (2,-0.866) -- (2.25,-0.433);
\node [above] at (2.25,-0.916) {\scriptsize $c_7$};
\draw [thick] (2,0.866) -- (2.25,0.433);
\node [below] at (2.25,0.916) {\scriptsize $c_6$};

\end{scope}

\begin{scope}[scale=1.25, xshift=3cm]
\draw [gray] (-1,0) -- (0,0) -- (0.5,-0.866) -- (1,0) -- (0.5,0.866) -- (0,0);
\draw [gray] (1,0) -- (2,0) -- (1.5,-0.866) -- (-0.5,-0.866) -- (-1,0) -- (-0.5,0.866) -- (1.5,0.866) -- (2,0);

\fill (0,0) circle [radius=0.05];
\fill (1.5,0.866) circle [radius=0.05];
\fill (1.5,-0.866) circle [radius=0.05];

\draw [dashed] (0,0) -- (1,0);
\draw [dashed] (-0.5,0.866) -- (0,0) -- (-0.5,-0.866);
\draw [dashed] (2.5,0.866) -- (1.5,0.866) -- (1,0) -- (1.5,-0.866) -- (2.5,-0.866);

\node [below] at (-0.5,0.05) {\small $11$};
\node [above] at (-0.5,-0.05) {\tiny $(16)$};
\node [below] at (0.1,0.866) {\small $18$};
\node at (0.45,0.35) {\tiny $(7)$};
\node [below] at (0.4,0) {\small $6$};
\node [below] at (1.6,0) {\small $21$};
\node [below] at (1,0.866) {\small $22$};
\node at (1.55,0.4) {\tiny $(13)$};
\node [below] at (1.9,0.816) {\small $23$};
\node at (0.07,-0.533) {\tiny $(2)$};
\node [above] at (1,-0.886) {\tiny $(5)$};
\node at (1.95,-0.533) {\tiny $(9)$};

\draw [thick] (0,0) -- (-0.25,0.433) -- (0,0.866) -- (0.5,0.866);
\node [above] at (0,0.08) {\scriptsize $a_6$}; %
\node [below] at (-0.25,0.916) {\scriptsize $c_0$}; %
\draw [thick] (-0.25,0.433) -- (-0.75,0.433) -- (-1,0);
\node [above] at (-0.5,0.383) {\scriptsize $c_3$}; %
\node [above] at (-1,0.05) {\scriptsize $b_0$}; %
\draw [thick] (0,0) -- (-0.25,-0.433) -- (0,-0.866);
\node [below] at (0,-0.08) {\scriptsize $a_2$}; %
\node [above] at (-0.25,-0.916) {\scriptsize $c_2$}; %
\draw [thick] (-0.25,-0.433) -- (-0.75,-0.433) -- (-1,0);
\node [below] at (-0.5,-0.383) {\scriptsize $c_6$}; %
\node [below] at (-1,0) {\scriptsize $b_3$}; %

\draw [thick] (0,0) -- (0.5,0) -- (0.75,-0.433);
\node [above] at (0.3,-0.05) {\scriptsize $a_1$}; %
\node [below] at (0.75,0) {\scriptsize $c_0$}; %
\draw [thick] (0.5,0) -- (0.75,0.433) -- (0.5,0.866);
\node [above] at (0.75,0) {\scriptsize $c_5$}; %
\node [below] at (0.5,0.816) {\scriptsize $b_5$}; %
\draw [thick] (0.75,0.433) -- (1.25,0.433) -- (1.5,0.866) -- (2,0.866);
\node [below] at (1,0.483) {\scriptsize $c_1$}; %
\node [below] at (1.5,0.816) {\scriptsize $a_7$}; %
\draw [thick] (1.25,0.433) -- (1.5,0) -- (2,0);
\node [above] at (1.25,-0.05) {\scriptsize $c_5$}; %
\node [above] at (1.75,-0.05) {\scriptsize $b_3$}; %
\draw [thick] (1.5,0) -- (1.25,-0.433);
\node [below] at (1.25,0.05) {\scriptsize $c_7$}; %

\draw [thick] (0,-0.866) -- (0.5,-0.866);
\node [below] at (0.25,-0.816) {\footnotesize $b_2$}; %
\draw [thick] (0.5,-0.866) -- (0.75,-0.433);
\node [above] at (0.5,-0.816) {\scriptsize $b_4$}; %
\draw [thick] (0.75,-0.433) -- (1.25,-0.433);
\node [above] at (1,-0.483) {\scriptsize $c_4$}; %
\draw [thick] (1.25,-0.433) -- (1.5,-0.866);
\node [above] at (1.5,-0.816) {\scriptsize $a_3$}; %
\draw [thick] (1.5,-0.866) -- (2,-0.866);
\node [below] at (1.75,-0.816) {\footnotesize $a_5$}; %
\draw [thick] (2,-0.866) -- (2.25,-0.433);
\node [above] at (2.25,-0.916) {\scriptsize $c_7$}; %
\draw [thick] (2,0.866) -- (2.25,0.433);
\node [below] at (2.25,0.916) {\scriptsize $c_4$}; %

\end{scope}

\begin{scope}[scale=1.25, xshift=6cm]
\draw [gray] (-1,0) -- (0,0) -- (0.5,-0.866) -- (1,0) -- (0.5,0.866) -- (0,0);
\draw [gray] (1,0) -- (2,0) -- (1.5,-0.866) -- (-0.5,-0.866) -- (-1,0) -- (-0.5,0.866) -- (1.5,0.866) -- (2,0);

\fill (0,0) circle [radius=0.05];
\fill (1.5,0.866) circle [radius=0.05];
\fill (1.5,-0.866) circle [radius=0.05];

\draw [dashed] (0,0) -- (1,0);
\draw [dashed] (-0.5,0.866) -- (0,0) -- (-0.5,-0.866);
\draw [dashed] (2.5,0.866) -- (1.5,0.866) -- (1,0) -- (1.5,-0.866) -- (2.5,-0.866);

\node [below] at (-0.5,0.05) {\small $7$}; %
\node [above] at (-0.5,-0.05) {\tiny $(18)$}; %
\node [below] at (0.1,0.866) {\small $12$}; %
\node at (0.45,0.35) {\tiny $(4)$}; %
\node [below] at (0.4,0) {\small $3$}; %
\node [below] at (1.6,0) {\small $14$}; %
\node [below] at (1,0.866) {\small $13$}; %
\node at (1.55,0.4) {\tiny $(15)$}; %
\node [below] at (1.9,0.816) {\small $17$}; %
\node at (0.07,-0.533) {\tiny $(1)$}; %
\node [above] at (1,-0.886) {\tiny $(23)$}; %
\node at (1.95,-0.533) {\tiny $(8)$}; %

\draw [thick] (0,0) -- (-0.25,0.433) -- (0,0.866) -- (0.5,0.866);
\node [above] at (0,0.08) {\scriptsize $a_6$}; %
\node [below] at (-0.25,0.916) {\scriptsize $c_5$}; %
\draw [thick] (-0.25,0.433) -- (-0.75,0.433) -- (-1,0);
\node [above] at (-0.5,0.383) {\scriptsize $c_0$}; %
\node [above] at (-1,0.05) {\scriptsize $b_2$}; %
\draw [thick] (0,0) -- (-0.25,-0.433) -- (0,-0.866);
\node [below] at (0,-0.08) {\scriptsize $a_1$}; %
\node [above] at (-0.25,-0.916) {\scriptsize $c_1$}; %
\draw [thick] (-0.25,-0.433) -- (-0.75,-0.433) -- (-1,0);
\node [below] at (-0.5,-0.383) {\scriptsize $c_5$}; %
\node [below] at (-1,0) {\scriptsize $b_5$}; %

\draw [thick] (0,0) -- (0.5,0) -- (0.75,-0.433);
\node [above] at (0.3,-0.05) {\scriptsize $a_0$}; %
\node [below] at (0.75,0) {\scriptsize $c_1$}; %
\draw [thick] (0.5,0) -- (0.75,0.433) -- (0.5,0.866);
\node [above] at (0.75,0) {\scriptsize $c_3$}; %
\node [below] at (0.5,0.816) {\scriptsize $b_3$}; %
\draw [thick] (0.75,0.433) -- (1.25,0.433) -- (1.5,0.866) -- (2,0.866);
\node [below] at (1,0.483) {\scriptsize $c_5$}; %
\node [below] at (1.5,0.816) {\scriptsize $a_7$}; %
\draw [thick] (1.25,0.433) -- (1.5,0) -- (2,0);
\node [above] at (1.25,-0.05) {\scriptsize $c_3$}; %
\node [above] at (1.75,-0.05) {\scriptsize $b_1$}; %
\draw [thick] (1.5,0) -- (1.25,-0.433);
\node [below] at (1.25,0.05) {\scriptsize $c_2$}; %

\draw [thick] (0,-0.866) -- (0.5,-0.866);
\node [below] at (0.25,-0.816) {\footnotesize $b_1$}; %
\draw [thick] (0.5,-0.866) -- (0.75,-0.433);
\node [above] at (0.5,-0.816) {\scriptsize $b_2$}; %
\draw [thick] (0.75,-0.433) -- (1.25,-0.433);
\node [above] at (1,-0.483) {\scriptsize $c_4$}; %
\draw [thick] (1.25,-0.433) -- (1.5,-0.866);
\node [above] at (1.5,-0.816) {\scriptsize $a_5$}; %
\draw [thick] (1.5,-0.866) -- (2,-0.866);
\node [below] at (1.75,-0.816) {\footnotesize $a_4$}; %
\draw [thick] (2,-0.866) -- (2.25,-0.433);
\node [above] at (2.25,-0.916) {\scriptsize $c_6$}; %
\draw [thick] (2,0.866) -- (2.25,0.433);
\node [below] at (2.25,0.916) {\scriptsize $c_2$}; %

\end{scope}

\begin{scope}[scale=1.25, xshift=9cm]
\draw [gray] (-1,0) -- (0,0) -- (0.5,-0.866) -- (1,0) -- (0.5,0.866) -- (0,0);
\draw [gray] (1,0) -- (2,0) -- (1.5,-0.866) -- (-0.5,-0.866) -- (-1,0) -- (-0.5,0.866) -- (1.5,0.866) -- (2,0);
\draw [gray] (2.5,-0.866) -- (2,0) -- (2.5,0.866);

\fill (0,0) circle [radius=0.05];
\fill (1.5,0.866) circle [radius=0.05];
\fill (1.5,-0.866) circle [radius=0.05];

\draw [dashed] (0,0) -- (1,0);
\draw [dashed] (-0.5,0.866) -- (0,0) -- (-0.5,-0.866);
\draw [dashed] (2.5,0.866) -- (1.5,0.866) -- (1,0) -- (1.5,-0.866) -- (2.5,-0.866);

\node [below] at (-0.5,0.05) {\small $10$}; %
\node [above] at (-0.5,-0.05) {\tiny $(0)$}; %
\node [below] at (0.1,0.866) {\small $4$}; %
\node at (0.45,0.35) {\tiny $(12)$}; %
\node [below] at (0.4,0) {\small $16$}; %
\node [below] at (1.6,0) {\small $19$}; %
\node [below] at (1,0.866) {\small $15$}; %
\node at (1.55,0.4) {\tiny $(22)$}; %
\node [below] at (1.9,0.816) {\small $5$}; %
\node at (0.07,-0.533) {\tiny $(11)$}; %
\node [above] at (1,-0.886) {\tiny $(17)$}; %
\node at (1.95,-0.533) {\tiny $(20)$}; %

\draw [thick] (0,0) -- (-0.25,0.433) -- (0,0.866) -- (0.5,0.866);
\node [above] at (0,0.08) {\scriptsize $a_0$}; %
\node [below] at (-0.25,0.916) {\scriptsize $c_3$}; %
\draw [thick] (-0.25,0.433) -- (-0.75,0.433) -- (-1,0);
\node [above] at (-0.5,0.383) {\scriptsize $c_0$}; %
\node [above] at (-1,0.05) {\scriptsize $b_0$}; %
\draw [thick] (0,0) -- (-0.25,-0.433) -- (0,-0.866);
\node [below] at (0,-0.08) {\scriptsize $a_2$}; %
\node [above] at (-0.25,-0.916) {\scriptsize $c_6$}; %
\draw [thick] (-0.25,-0.433) -- (-0.75,-0.433) -- (-1,0);
\node [below] at (-0.5,-0.383) {\scriptsize $c_7$}; %
\node [below] at (-1,0) {\scriptsize $b_4$}; %

\draw [thick] (0,0) -- (0.5,0) -- (0.75,-0.433);
\node [above] at (0.3,-0.05) {\scriptsize $a_6$}; %
\node [below] at (0.75,0) {\scriptsize $c_3$}; %
\draw [thick] (0.5,0) -- (0.75,0.433) -- (0.5,0.866);
\node [above] at (0.75,0) {\scriptsize $c_5$}; %
\node [below] at (0.5,0.816) {\scriptsize $b_1$}; %
\draw [thick] (0.75,0.433) -- (1.25,0.433) -- (1.5,0.866) -- (2,0.866);
\node [below] at (1,0.483) {\scriptsize $c_3$}; %
\node [below] at (1.5,0.816) {\scriptsize $a_7$}; %
\draw [thick] (1.25,0.433) -- (1.5,0) -- (2,0);
\node [above] at (1.25,-0.05) {\scriptsize $c_1$}; %
\node [above] at (1.75,-0.05) {\scriptsize $b_5$}; %
\draw [thick] (1.5,0) -- (1.25,-0.433);
\node [below] at (1.25,0.05) {\scriptsize $c_4$}; %

\draw [thick] (0,-0.866) -- (0.5,-0.866);
\node [below] at (0.25,-0.816) {\footnotesize $b_3$}; %
\draw [thick] (0.5,-0.866) -- (0.75,-0.433);
\node [above] at (0.5,-0.816) {\scriptsize $b_0$}; %
\draw [thick] (0.75,-0.433) -- (1.25,-0.433);
\node [above] at (1,-0.483) {\scriptsize $c_2$}; %
\draw [thick] (1.25,-0.433) -- (1.5,-0.866);
\node [above] at (1.5,-0.816) {\scriptsize $a_4$}; %
\draw [thick] (1.5,-0.866) -- (2,-0.866);
\node [below] at (1.75,-0.816) {\footnotesize $a_3$}; %
\draw [thick] (2,-0.866) -- (2.25,-0.433) -- (2,0);
\node [above] at (2.25,-0.916) {\scriptsize $c_6$}; %
\draw [thick] (2,0.866) -- (2.25,0.433) -- (2,0);
\node [below] at (2.25,0.916) {\scriptsize $c_4$}; %

\end{scope}

\end{tikzpicture}
\caption{The spine of $M_{2,1}$, viewed from its cusp.}
\label{fig: spine from cusp}
\end{figure}
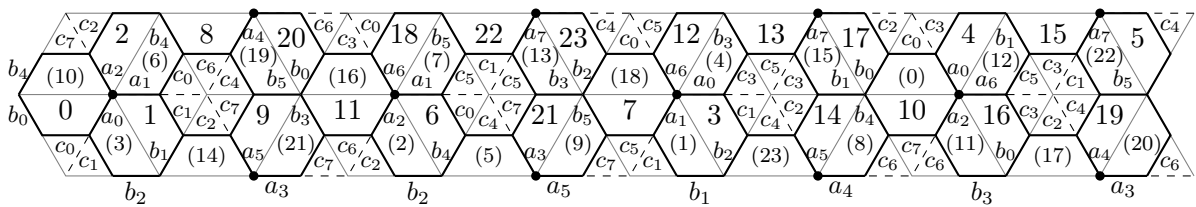

Each face of the spine $\Sigma$ appears as two hexagons in the Figure, viewed from both sides. Let $\Sigma_0$ denote this  ``visible pre-spine''. Formally, $\Sigma_0$ is the path-closure of $M_{2,1}-\Sigma$, and $M_{2,1}$ is recovered from $\Sigma_0\times [0,\infty)$ by pairing hexagons of $\Sigma_0\times\{0\}$ in label-preserving fashion. We note that \Cref{fig: spine from cusp} is not geometrically accurate: in an accurate cusp view, the $c_i$-labeled hexagons would appear smaller than the others, and no hexagon would be equi-angular. The left side of \Cref{rigeur} can be used to recover a geometrically accurate view.

In \Cref{fig: spine from cusp} each face of $\Sigma_0$ bounded by all edges of $c$ type is a union of six faces of the $\dpr_i$ labeled ``$3_-$'' in \Cref{numberings}. Each of the others is a union of two faces labeled ``$1_-$'' in \Cref{numberings}. Each of these faces has a label ``$i$'' on the side contained in the $\dpr_i$ with ideal vertex pointing into the cusp, and a label ``$j$'' on the other side, where $j = i.y^{-1}x$ as in \ref{whyex again}. Each of these thus appears once with labels $i,(j)$ and once with labels $j,(i)$. 
\end{example}

\begin{example}\label{ex: tot geod from cusp} \Cref{fig: tot geod from cusp} depicts the closed, embedded totally geodesic surface $S_{2,1}$ in $M_{2,1}$ constructed in the proof of \Cref{geodesic surface}, viewed from the cusp of $M_{2,1}$.

\begin{figure}[ht]
\begin{tikzpicture}

\begin{scope}[scale=1.25]
\draw [gray] (-1,0) -- (0,0) -- (0.5,-0.866) -- (1,0) -- (0.5,0.866) -- (0,0);
\draw [gray] (1,0) -- (2,0) -- (1.5,-0.866) -- (-0.5,-0.866) -- (-1,0) -- (-0.5,0.866) -- (1.5,0.866) -- (2,0);

\fill (0,0) circle [radius=0.05];
\fill (1.5,0.866) circle [radius=0.05];
\fill (1.5,-0.866) circle [radius=0.05];

\draw [dashed] (0,0) -- (1,0);
\draw [dashed] (-0.5,0.866) -- (0,0) -- (-0.5,-0.866);
\draw [dashed] (2.5,0.866) -- (1.5,0.866) -- (1,0) -- (1.5,-0.866) -- (2.5,-0.866);

\node [below] at (-0.5,0.05) {\small $0$};
\node [above] at (-0.5,-0.05) {\tiny $(10)$};
\node [below] at (0.1,0.866) {\small $2$};
\node at (0.45,0.35) {\tiny $(6)$};
\node [below] at (0.4,0) {\small $1$};
\node [below] at (1.6,0) {\small $9$};
\node [below] at (1,0.866) {\small $8$};
\node at (1.55,0.4) {\tiny $(19)$};
\node [below] at (1.9,0.816) {\small $20$};
\node at (0.07,-0.533) {\tiny $(3)$};
\node [above] at (1,-0.886) {\tiny $(14)$};
\node at (1.95,-0.533) {\tiny $(21)$};

\draw [thick] (0,0) -- (-0.25,0.433) -- (0,0.866) -- (0.5,0.866);
\node [above] at (0,0.08) {\scriptsize $a_2$};
\node [below] at (-0.25,0.916) {\scriptsize $c_2$};
\draw [thick] (-0.25,0.433) -- (-0.75,0.433) -- (-1,0);
\node [above] at (-0.5,0.383) {\scriptsize $c_7$};
\node [above] at (-1,0.05) {\scriptsize $b_4$};
\draw [thick] (0,0) -- (-0.25,-0.433) -- (0,-0.866);
\node [below] at (0,-0.08) {\scriptsize $a_0$};
\node [above] at (-0.25,-0.916) {\scriptsize $c_1$};
\draw [thick] (-0.25,-0.433) -- (-0.75,-0.433) -- (-1,0);
\node [below] at (-0.5,-0.383) {\scriptsize $c_0$};
\node [below] at (-1,0) {\scriptsize $b_0$};

\draw [thick] (0,0) -- (0.5,0) -- (0.75,-0.433);
\node [above] at (0.3,-0.05) {\scriptsize $a_1$};
\node [below] at (0.75,0) {\scriptsize $c_1$};
\draw [thick] (0.5,0) -- (0.75,0.433) -- (0.5,0.866);
\node [above] at (0.75,0) {\scriptsize $c_0$};
\node [below] at (0.5,0.816) {\scriptsize $b_4$};
\draw [thick] (0.75,0.433) -- (1.25,0.433) -- (1.5,0.866) -- (2,0.866);
\node [below] at (1,0.483) {\scriptsize $c_6$};
\node [below] at (1.5,0.816) {\scriptsize $a_4$};
\draw [thick] (1.25,0.433) -- (1.5,0) -- (2,0);
\node [above] at (1.25,-0.05) {\scriptsize $c_4$};
\node [above] at (1.75,-0.05) {\scriptsize $b_5$};
\draw [thick] (1.5,0) -- (1.25,-0.433);
\node [below] at (1.25,0.05) {\scriptsize $c_7$};

\draw [thick] (0,-0.866) -- (0.5,-0.866);
\node [below] at (0.25,-0.816) {\footnotesize $b_2$};
\draw [thick] (0.5,-0.866) -- (0.75,-0.433);
\node [above] at (0.5,-0.816) {\scriptsize $b_1$};
\draw [thick] (0.75,-0.433) -- (1.25,-0.433);
\node [above] at (1,-0.483) {\scriptsize $c_2$};
\draw [thick] (1.25,-0.433) -- (1.5,-0.866);
\node [above] at (1.5,-0.816) {\scriptsize $a_5$};
\draw [thick] (1.5,-0.866) -- (2,-0.866);
\node [below] at (1.75,-0.816) {\footnotesize $a_3$};
\draw [thick] (2,-0.866) -- (2.25,-0.433);
\node [above] at (2.25,-0.916) {\scriptsize $c_7$};
\draw [thick] (2,0.866) -- (2.25,0.433);
\node [below] at (2.25,0.916) {\scriptsize $c_6$};

\fill [opacity=0.1] (1,0) circle [radius=0.7];
\draw [thick] (1,0) circle [radius=0.7];
\fill [opacity=0.1] (-0.5,-0.866) -- (0.2,-0.866) -- (-0.85,-0.259);
\fill [opacity=0.1] (0.2,-0.866) arc (0:120:0.7);
\draw [thick] (0.2,-0.866) arc (0:120:0.7);
\fill [opacity=0.1] (-0.5,0.866) -- (0.2,0.866) -- (-0.85,0.259);
\fill [opacity=0.1] (0.2,0.866) arc (0:-120:0.7);
\draw [thick] (0.2,0.866) arc (0:-120:0.7);
\fill [opacity=0.1] (1.8,-0.866) -- (2.5,-0.866) -- (2.15,-0.259);
\fill [opacity=0.1] (1.8,-0.866) arc (180:120:0.7);
\draw [thick] (1.8,-0.866) arc (180:120:0.7);
\fill [opacity=0.1] (1.8,0.866) -- (2.5,0.866) -- (2.15,0.259);
\fill [opacity=0.1] (1.8,0.866) arc (180:240:0.7);
\draw [thick] (1.8,0.866) arc (180:240:0.7);

\end{scope}

\begin{scope}[scale=1.25, xshift=3cm]
\draw [gray] (-1,0) -- (0,0) -- (0.5,-0.866) -- (1,0) -- (0.5,0.866) -- (0,0);
\draw [gray] (1,0) -- (2,0) -- (1.5,-0.866) -- (-0.5,-0.866) -- (-1,0) -- (-0.5,0.866) -- (1.5,0.866) -- (2,0);

\fill (0,0) circle [radius=0.05];
\fill (1.5,0.866) circle [radius=0.05];
\fill (1.5,-0.866) circle [radius=0.05];

\draw [dashed] (0,0) -- (1,0);
\draw [dashed] (-0.5,0.866) -- (0,0) -- (-0.5,-0.866);
\draw [dashed] (2.5,0.866) -- (1.5,0.866) -- (1,0) -- (1.5,-0.866) -- (2.5,-0.866);

\node [below] at (-0.5,0.05) {\small $11$};
\node [above] at (-0.5,-0.05) {\tiny $(16)$};
\node [below] at (0.1,0.866) {\small $18$};
\node at (0.45,0.35) {\tiny $(7)$};
\node [below] at (0.4,0) {\small $6$};
\node [below] at (1.6,0) {\small $21$};
\node [below] at (1,0.866) {\small $22$};
\node at (1.55,0.4) {\tiny $(13)$};
\node [below] at (1.9,0.816) {\small $23$};
\node at (0.07,-0.533) {\tiny $(2)$};
\node [above] at (1,-0.886) {\tiny $(5)$};
\node at (1.95,-0.533) {\tiny $(9)$};

\draw [thick] (0,0) -- (-0.25,0.433) -- (0,0.866) -- (0.5,0.866);
\node [above] at (0,0.08) {\scriptsize $a_6$}; %
\node [below] at (-0.25,0.916) {\scriptsize $c_0$}; %
\draw [thick] (-0.25,0.433) -- (-0.75,0.433) -- (-1,0);
\node [above] at (-0.5,0.383) {\scriptsize $c_3$}; %
\node [above] at (-1,0.05) {\scriptsize $b_0$}; %
\draw [thick] (0,0) -- (-0.25,-0.433) -- (0,-0.866);
\node [below] at (0,-0.08) {\scriptsize $a_2$}; %
\node [above] at (-0.25,-0.916) {\scriptsize $c_2$}; %
\draw [thick] (-0.25,-0.433) -- (-0.75,-0.433) -- (-1,0);
\node [below] at (-0.5,-0.383) {\scriptsize $c_6$}; %
\node [below] at (-1,0) {\scriptsize $b_3$}; %

\draw [thick] (0,0) -- (0.5,0) -- (0.75,-0.433);
\node [above] at (0.3,-0.05) {\scriptsize $a_1$}; %
\node [below] at (0.75,0) {\scriptsize $c_0$}; %
\draw [thick] (0.5,0) -- (0.75,0.433) -- (0.5,0.866);
\node [above] at (0.75,0) {\scriptsize $c_5$}; %
\node [below] at (0.5,0.816) {\scriptsize $b_5$}; %
\draw [thick] (0.75,0.433) -- (1.25,0.433) -- (1.5,0.866) -- (2,0.866);
\node [below] at (1,0.483) {\scriptsize $c_1$}; %
\node [below] at (1.5,0.816) {\scriptsize $a_7$}; %
\draw [thick] (1.25,0.433) -- (1.5,0) -- (2,0);
\node [above] at (1.25,-0.05) {\scriptsize $c_5$}; %
\node [above] at (1.75,-0.05) {\scriptsize $b_3$}; %
\draw [thick] (1.5,0) -- (1.25,-0.433);
\node [below] at (1.25,0.05) {\scriptsize $c_7$}; %

\draw [thick] (0,-0.866) -- (0.5,-0.866);
\node [below] at (0.25,-0.816) {\footnotesize $b_2$}; %
\draw [thick] (0.5,-0.866) -- (0.75,-0.433);
\node [above] at (0.5,-0.816) {\scriptsize $b_4$}; %
\draw [thick] (0.75,-0.433) -- (1.25,-0.433);
\node [above] at (1,-0.483) {\scriptsize $c_4$}; %
\draw [thick] (1.25,-0.433) -- (1.5,-0.866);
\node [above] at (1.5,-0.816) {\scriptsize $a_3$}; %
\draw [thick] (1.5,-0.866) -- (2,-0.866);
\node [below] at (1.75,-0.816) {\footnotesize $a_5$}; %
\draw [thick] (2,-0.866) -- (2.25,-0.433);
\node [above] at (2.25,-0.916) {\scriptsize $c_7$}; %
\draw [thick] (2,0.866) -- (2.25,0.433);
\node [below] at (2.25,0.916) {\scriptsize $c_4$}; %

\fill [opacity=0.1] (1,0) circle [radius=0.7];
\draw [thick] (1,0) circle [radius=0.7];
\fill [opacity=0.1] (-0.5,-0.866) -- (0.2,-0.866) -- (-0.85,-0.259);
\fill [opacity=0.1] (0.2,-0.866) arc (0:120:0.7);
\draw [thick] (0.2,-0.866) arc (0:120:0.7);
\fill [opacity=0.1] (-0.5,0.866) -- (0.2,0.866) -- (-0.85,0.259);
\fill [opacity=0.1] (0.2,0.866) arc (0:-120:0.7);
\draw [thick] (0.2,0.866) arc (0:-120:0.7);
\fill [opacity=0.1] (1.8,-0.866) -- (2.5,-0.866) -- (2.15,-0.259);
\fill [opacity=0.1] (1.8,-0.866) arc (180:120:0.7);
\draw [thick] (1.8,-0.866) arc (180:120:0.7);
\fill [opacity=0.1] (1.8,0.866) -- (2.5,0.866) -- (2.15,0.259);
\fill [opacity=0.1] (1.8,0.866) arc (180:240:0.7);
\draw [thick] (1.8,0.866) arc (180:240:0.7);

\end{scope}

\begin{scope}[scale=1.25, xshift=6cm]
\draw [gray] (-1,0) -- (0,0) -- (0.5,-0.866) -- (1,0) -- (0.5,0.866) -- (0,0);
\draw [gray] (1,0) -- (2,0) -- (1.5,-0.866) -- (-0.5,-0.866) -- (-1,0) -- (-0.5,0.866) -- (1.5,0.866) -- (2,0);

\fill (0,0) circle [radius=0.05];
\fill (1.5,0.866) circle [radius=0.05];
\fill (1.5,-0.866) circle [radius=0.05];

\draw [dashed] (0,0) -- (1,0);
\draw [dashed] (-0.5,0.866) -- (0,0) -- (-0.5,-0.866);
\draw [dashed] (2.5,0.866) -- (1.5,0.866) -- (1,0) -- (1.5,-0.866) -- (2.5,-0.866);

\node [below] at (-0.5,0.05) {\small $7$}; %
\node [above] at (-0.5,-0.05) {\tiny $(18)$}; %
\node [below] at (0.1,0.866) {\small $12$}; %
\node at (0.45,0.35) {\tiny $(4)$}; %
\node [below] at (0.4,0) {\small $3$}; %
\node [below] at (1.6,0) {\small $14$}; %
\node [below] at (1,0.866) {\small $13$}; %
\node at (1.55,0.4) {\tiny $(15)$}; %
\node [below] at (1.9,0.816) {\small $17$}; %
\node at (0.07,-0.533) {\tiny $(1)$}; %
\node [above] at (1,-0.886) {\tiny $(23)$}; %
\node at (1.95,-0.533) {\tiny $(8)$}; %

\draw [thick] (0,0) -- (-0.25,0.433) -- (0,0.866) -- (0.5,0.866);
\node [above] at (0,0.08) {\scriptsize $a_6$}; %
\node [below] at (-0.25,0.916) {\scriptsize $c_5$}; %
\draw [thick] (-0.25,0.433) -- (-0.75,0.433) -- (-1,0);
\node [above] at (-0.5,0.383) {\scriptsize $c_0$}; %
\node [above] at (-1,0.05) {\scriptsize $b_2$}; %
\draw [thick] (0,0) -- (-0.25,-0.433) -- (0,-0.866);
\node [below] at (0,-0.08) {\scriptsize $a_1$}; %
\node [above] at (-0.25,-0.916) {\scriptsize $c_1$}; %
\draw [thick] (-0.25,-0.433) -- (-0.75,-0.433) -- (-1,0);
\node [below] at (-0.5,-0.383) {\scriptsize $c_5$}; %
\node [below] at (-1,0) {\scriptsize $b_5$}; %

\draw [thick] (0,0) -- (0.5,0) -- (0.75,-0.433);
\node [above] at (0.3,-0.05) {\scriptsize $a_0$}; %
\node [below] at (0.75,0) {\scriptsize $c_1$}; %
\draw [thick] (0.5,0) -- (0.75,0.433) -- (0.5,0.866);
\node [above] at (0.75,0) {\scriptsize $c_3$}; %
\node [below] at (0.5,0.816) {\scriptsize $b_3$}; %
\draw [thick] (0.75,0.433) -- (1.25,0.433) -- (1.5,0.866) -- (2,0.866);
\node [below] at (1,0.483) {\scriptsize $c_5$}; %
\node [below] at (1.5,0.816) {\scriptsize $a_7$}; %
\draw [thick] (1.25,0.433) -- (1.5,0) -- (2,0);
\node [above] at (1.25,-0.05) {\scriptsize $c_3$}; %
\node [above] at (1.75,-0.05) {\scriptsize $b_1$}; %
\draw [thick] (1.5,0) -- (1.25,-0.433);
\node [below] at (1.25,0.05) {\scriptsize $c_2$}; %

\draw [thick] (0,-0.866) -- (0.5,-0.866);
\node [below] at (0.25,-0.816) {\footnotesize $b_1$}; %
\draw [thick] (0.5,-0.866) -- (0.75,-0.433);
\node [above] at (0.5,-0.816) {\scriptsize $b_2$}; %
\draw [thick] (0.75,-0.433) -- (1.25,-0.433);
\node [above] at (1,-0.483) {\scriptsize $c_4$}; %
\draw [thick] (1.25,-0.433) -- (1.5,-0.866);
\node [above] at (1.5,-0.816) {\scriptsize $a_5$}; %
\draw [thick] (1.5,-0.866) -- (2,-0.866);
\node [below] at (1.75,-0.816) {\footnotesize $a_4$}; %
\draw [thick] (2,-0.866) -- (2.25,-0.433);
\node [above] at (2.25,-0.916) {\scriptsize $c_6$}; %
\draw [thick] (2,0.866) -- (2.25,0.433);
\node [below] at (2.25,0.916) {\scriptsize $c_2$}; %

\fill [opacity=0.1] (1,0) circle [radius=0.7];
\draw [thick] (1,0) circle [radius=0.7];
\fill [opacity=0.1] (-0.5,-0.866) -- (0.2,-0.866) -- (-0.85,-0.259);
\fill [opacity=0.1] (0.2,-0.866) arc (0:120:0.7);
\draw [thick] (0.2,-0.866) arc (0:120:0.7);
\fill [opacity=0.1] (-0.5,0.866) -- (0.2,0.866) -- (-0.85,0.259);
\fill [opacity=0.1] (0.2,0.866) arc (0:-120:0.7);
\draw [thick] (0.2,0.866) arc (0:-120:0.7);
\fill [opacity=0.1] (1.8,-0.866) -- (2.5,-0.866) -- (2.15,-0.259);
\fill [opacity=0.1] (1.8,-0.866) arc (180:120:0.7);
\draw [thick] (1.8,-0.866) arc (180:120:0.7);
\fill [opacity=0.1] (1.8,0.866) -- (2.5,0.866) -- (2.15,0.259);
\fill [opacity=0.1] (1.8,0.866) arc (180:240:0.7);
\draw [thick] (1.8,0.866) arc (180:240:0.7);

\end{scope}

\begin{scope}[scale=1.25, xshift=9cm]
\draw [gray] (-1,0) -- (0,0) -- (0.5,-0.866) -- (1,0) -- (0.5,0.866) -- (0,0);
\draw [gray] (1,0) -- (2,0) -- (1.5,-0.866) -- (-0.5,-0.866) -- (-1,0) -- (-0.5,0.866) -- (1.5,0.866) -- (2,0);
\draw [gray] (2.5,-0.866) -- (2,0) -- (2.5,0.866);

\fill (0,0) circle [radius=0.05];
\fill (1.5,0.866) circle [radius=0.05];
\fill (1.5,-0.866) circle [radius=0.05];

\draw [dashed] (0,0) -- (1,0);
\draw [dashed] (-0.5,0.866) -- (0,0) -- (-0.5,-0.866);
\draw [dashed] (2.5,0.866) -- (1.5,0.866) -- (1,0) -- (1.5,-0.866) -- (2.5,-0.866);

\node [below] at (-0.5,0.05) {\small $10$}; %
\node [above] at (-0.5,-0.05) {\tiny $(0)$}; %
\node [below] at (0.1,0.866) {\small $4$}; %
\node at (0.45,0.35) {\tiny $(12)$}; %
\node [below] at (0.4,0) {\small $16$}; %
\node [below] at (1.6,0) {\small $19$}; %
\node [below] at (1,0.866) {\small $15$}; %
\node at (1.55,0.4) {\tiny $(22)$}; %
\node [below] at (1.9,0.816) {\small $5$}; %
\node at (0.07,-0.533) {\tiny $(11)$}; %
\node [above] at (1,-0.886) {\tiny $(17)$}; %
\node at (1.95,-0.533) {\tiny $(20)$}; %

\draw [thick] (0,0) -- (-0.25,0.433) -- (0,0.866) -- (0.5,0.866);
\node [above] at (0,0.08) {\scriptsize $a_0$}; %
\node [below] at (-0.25,0.916) {\scriptsize $c_3$}; %
\draw [thick] (-0.25,0.433) -- (-0.75,0.433) -- (-1,0);
\node [above] at (-0.5,0.383) {\scriptsize $c_0$}; %
\node [above] at (-1,0.05) {\scriptsize $b_0$}; %
\draw [thick] (0,0) -- (-0.25,-0.433) -- (0,-0.866);
\node [below] at (0,-0.08) {\scriptsize $a_2$}; %
\node [above] at (-0.25,-0.916) {\scriptsize $c_6$}; %
\draw [thick] (-0.25,-0.433) -- (-0.75,-0.433) -- (-1,0);
\node [below] at (-0.5,-0.383) {\scriptsize $c_7$}; %
\node [below] at (-1,0) {\scriptsize $b_4$}; %

\draw [thick] (0,0) -- (0.5,0) -- (0.75,-0.433);
\node [above] at (0.3,-0.05) {\scriptsize $a_6$}; %
\node [below] at (0.75,0) {\scriptsize $c_3$}; %
\draw [thick] (0.5,0) -- (0.75,0.433) -- (0.5,0.866);
\node [above] at (0.75,0) {\scriptsize $c_5$}; %
\node [below] at (0.5,0.816) {\scriptsize $b_1$}; %
\draw [thick] (0.75,0.433) -- (1.25,0.433) -- (1.5,0.866) -- (2,0.866);
\node [below] at (1,0.483) {\scriptsize $c_3$}; %
\node [below] at (1.5,0.816) {\scriptsize $a_7$}; %
\draw [thick] (1.25,0.433) -- (1.5,0) -- (2,0);
\node [above] at (1.25,-0.05) {\scriptsize $c_1$}; %
\node [above] at (1.75,-0.05) {\scriptsize $b_5$}; %
\draw [thick] (1.5,0) -- (1.25,-0.433);
\node [below] at (1.25,0.05) {\scriptsize $c_4$}; %

\draw [thick] (0,-0.866) -- (0.5,-0.866);
\node [below] at (0.25,-0.816) {\footnotesize $b_3$}; %
\draw [thick] (0.5,-0.866) -- (0.75,-0.433);
\node [above] at (0.5,-0.816) {\scriptsize $b_0$}; %
\draw [thick] (0.75,-0.433) -- (1.25,-0.433);
\node [above] at (1,-0.483) {\scriptsize $c_2$}; %
\draw [thick] (1.25,-0.433) -- (1.5,-0.866);
\node [above] at (1.5,-0.816) {\scriptsize $a_4$}; %
\draw [thick] (1.5,-0.866) -- (2,-0.866);
\node [below] at (1.75,-0.816) {\footnotesize $a_3$}; %
\draw [thick] (2,-0.866) -- (2.25,-0.433) -- (2,0);
\node [above] at (2.25,-0.916) {\scriptsize $c_6$}; %
\draw [thick] (2,0.866) -- (2.25,0.433) -- (2,0);
\node [below] at (2.25,0.916) {\scriptsize $c_4$}; %

\fill [opacity=0.1] (1,0) circle [radius=0.7];
\draw [thick] (1,0) circle [radius=0.7];
\fill [opacity=0.1] (-0.5,-0.866) -- (0.2,-0.866) -- (-0.85,-0.259);
\fill [opacity=0.1] (0.2,-0.866) arc (0:120:0.7);
\draw [thick] (0.2,-0.866) arc (0:120:0.7);
\fill [opacity=0.1] (-0.5,0.866) -- (0.2,0.866) -- (-0.85,0.259);
\fill [opacity=0.1] (0.2,0.866) arc (0:-120:0.7);
\draw [thick] (0.2,0.866) arc (0:-120:0.7);
\fill [opacity=0.1] (1.8,-0.866) -- (2.5,-0.866) -- (2.15,-0.259);
\fill [opacity=0.1] (1.8,-0.866) arc (180:120:0.7);
\draw [thick] (1.8,-0.866) arc (180:120:0.7);
\fill [opacity=0.1] (1.8,0.866) -- (2.5,0.866) -- (2.15,0.259);
\fill [opacity=0.1] (1.8,0.866) arc (180:240:0.7);
\draw [thick] (1.8,0.866) arc (180:240:0.7);

\end{scope}

\end{tikzpicture}
\caption{The closed totally geodesic surface $S_{2,1}$ in $M_{2,1}$.}
\label{fig: tot geod from cusp}
\end{figure}
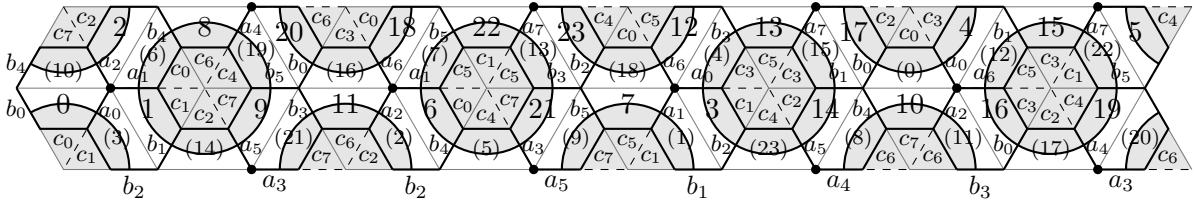

Each shaded circle in the figure represents a totally geodesic planar hexagon in $\Sigma_0\times[0,\infty)$, meeting $\Sigma_0\times\{0\}$ at right angles, that is the union of twelve copies of the triangle $T$ from \Cref{corandreev}. More precisely, each hexagon is six copies of $T\cup\overline{T}$, where $\overline{T}$ is the mirror image of $T$ in $\dpr^{333}_2$. Each hexagon has an angle of $2\pi/3$ at its vertices lying in edges labeled $a_i$ and $\pi/2$ at its vertices lying in edges labeled $b_j$. These polygons' edges are identified to each other under the pairings of hexagons of $\Sigma_0\times\{0\}$ that recover $M_{2,1}$, and an Euler characteristic or area computation shows that the polygons' union is a genus-two surface.
\end{example}

\begin{prop}\label{M_21plus} Let $\Sigma^+$ be the smallest subcomplex of the spine $\Sigma$ for $M_{2,1}$ from \Cref{M_21 spine} containing all $2$-cells that are unions of faces labeled $3_-$. For the surface $S_{2,1}\subset M_{2,1}$ of \Cref{ex: tot geod from cusp}, the compact component $M_{2,1}^+$ that results from cutting $M_{2,1}$ along $S_{2,1}$ deformation retracts to $\Sigma^+$ along geodesic arcs perpendicular to $S_{2,1}$. We thus have\begin{align*}
    \pi_1 M^+_{2,1} & \cong \langle\,C_1,C_3,C_4,C_5,C_6,C_7\,|\\
        & \ \qquad C_1^{-1}C_7^{-1}C_4C_6^{-1},\ C_1C_3^{-1}C_5C_3^{-1}C_4^{-1},\ C_3C_6^{-1}C_7C_6^{-1},\ C_5^{-1}C_1C_5^{-1}C_7C_4^{-1}\,\rangle
 \end{align*}
The fundamental group of $S_{2,1}$, presented as follows:
\[ \pi_1 S_{2,1} \cong \langle E_3, E_5, E_7, E_8\,|\, E_5E_3E_5^{-1}E_8E_3^{-1}E_7^{-1}E_8^{-1}E_7 \rangle \]
includes in $\pi_1 M^+_{2,1}$ via
\[ E_3 \mapsto C_3C_1,\ E_5\mapsto C_3C_5^{-1}C_1C_4C_6^{-1},\ E_7\mapsto C_4^{-1}C_5C_1^{-1}C_5C_3^{-1},\ E_8\mapsto C_3C_5^{-1}C_1C_6. \]
\end{prop}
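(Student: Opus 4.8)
The plan is to establish the three assertions in sequence: the deformation retraction of $M_{2,1}^+$ onto $\Sigma^+$, the resulting presentation of $\pi_1 M_{2,1}^+$, and the computation of the inclusion $\pi_1 S_{2,1}\hookrightarrow\pi_1 M_{2,1}^+$. For the retraction, recall from the proof of \Cref{corandreev} that cutting $P^{333}_2$ along its copy of $T$ yields a \emph{compact} sub-prism $P^+$ whose two triangular faces are $T$ itself and the compact face $\tau$ with edges labeled $a_7,a_8,a_9$, and recall from \Cref{geodesic surface} that $M_{2,1}^+$ is tiled by copies $\widetilde{P}^+_i$ of the double $\widetilde{P}^+$ of $P^+$. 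The essential geometric input is that $T$ meets each of the three quadrilateral faces of $P^+$ at a right angle (\Cref{corandreev}): these quadrilaterals are therefore ruled by geodesics perpendicular to the totally geodesic plane carrying $T$, which confines the normal geodesic flow off of $T$ to the slab between $T$ and $\tau$. Since the normal exponential map of a totally geodesic plane in $\mathbb{H}^3$ is a diffeomorphism and $P^+$ is compact and convex, these perpendiculars foliate $P^+$ with one endpoint on $T$ and the other on $\tau$, carrying each quadrilateral face onto its edge in $\tau$. Sliding along them deformation retracts $P^+$ onto $\tau$; doubling, and then arranging equivariance under the face-pairings exactly as in \Cref{spine}, produces a deformation retraction of $M_{2,1}^+$ onto the union of its $\tau$-faces, which is $\Sigma^+$.

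Granting the retraction, $\pi_1 M_{2,1}^+\cong\pi_1\Sigma^+$ is read off from the cell structure of $\Sigma^+$ just as $\pi_1 M_{2,1}$ was in \Cref{M_21}. The one-skeleton of $\Sigma^+$ is built from the edge classes $c_0,\dots,c_7$ (the lifts of the edge labeled $a_9$, which bound the $3_-$-hexagons), and its two-cells are the four hexagons whose boundary words appear in \ref{del three minus}. Keeping the tree edges $c_0$ and $c_2$ from the proof of \Cref{M_21}, the non-tree generators are exactly $C_1,C_3,C_4,C_5,C_6,C_7$, and the four hexagon boundaries reduce to precisely the four relators on the first line of the presentation in \Cref{M_21}. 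This is the asserted presentation of $\pi_1 M_{2,1}^+$.

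For the inclusion of $\pi_1 S_{2,1}$, I would first fix the cell structure on the genus-two surface $S_{2,1}$ displayed in \Cref{ex: tot geod from cusp}, choosing a spanning tree so that its four non-tree edges name the generators $E_3,E_5,E_7,E_8$ and its single two-cell gives the displayed surface relator. Each $E_i$ is represented by a loop on the $T$-faces that make up $\partial M_{2,1}^+=S_{2,1}$; pushing such a loop into $\Sigma^+$ along the perpendicular retraction of the first step and recording the resulting concatenation of $c$-edges (modulo the tree $\{c_0,c_2\}$) expresses its image as a word in the $C_k$, yielding the four assignments in the statement. That these assignments carry the surface relator to a consequence of the four relators of $\pi_1 M_{2,1}^+$---so that the map is a well-defined homomorphism---is then a finite check; injectivity is automatic, since $S_{2,1}$ is totally geodesic and hence incompressible in $M_{2,1}$, so that $\pi_1 S_{2,1}$ injects into the fundamental group of each piece of the cut. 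I expect this final step to be the main obstacle: it requires faithfully aligning the cell structure of $S_{2,1}$ from \Cref{ex: tot geod from cusp} with the edge labeling of $\Sigma^+$ and tracking each generator through the retraction to read off its $C_k$-word correctly, after which verifying the relator and invoking incompressibility are routine.
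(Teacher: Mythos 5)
Your proposal follows essentially the same route as the paper: the retraction of $M_{2,1}^+$ onto $\Sigma^+$ along geodesics perpendicular to $T$ (using that $T$ meets the quadrilateral faces of $P^+$ at right angles), the presentation of $\pi_1\Sigma^+$ from the four $3_-$-hexagons with tree $\{c_0,c_2\}$, and the computation of the inclusion by pushing the hexagon edges of $S_{2,1}$ into $\Sigma^+$ and recording the resulting $c$-words. The only caveat is a combinatorial imprecision in your last step: the hexagon decomposition of $S_{2,1}$ has eight $2$-cells and eleven non-tree edges, so one reaches the displayed four-generator, one-relator presentation only after Tietze-eliminating seven generators (as the paper does), and the homomorphism is automatically well defined because it is induced by the inclusion composed with the retraction, so no separate relator check is needed.
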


\begin{remark}\label{basis change} Substituting $D = E_5^{-1}E_8$ for $E_5$ as a generator of $\pi_1 S_{2,1}$, we can replace its presentation given above by $\langle E_3,D,E_7,E_8\,|\,D^{-1}E_3DE_3^{-1}E_7^{-1}E_8^{-1}E_7E_8\rangle$; and $D\mapsto C_6C_4^{-1}C_6$.\end{remark}

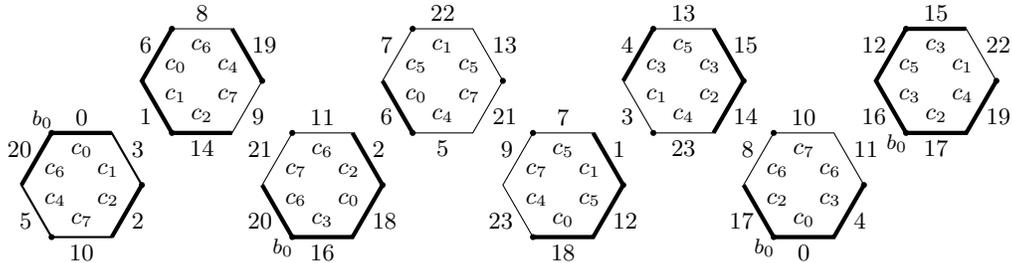
\begin{figure}[ht]
\begin{tikzpicture}

\begin{scope}[scale=0.8]
    \draw [thick] (0,0) -- (1,0) -- (1.5,0.866) -- (1,1.732) -- (0,1.732) -- (-0.5,0.866) -- cycle;
    \fill (0,0) circle [radius=0.05];
    \fill (0,1.732) circle [radius=0.05];
    \fill (1.5,0.866) circle [radius=0.05];

    \node [below right] at (1.15,0.583) {\scriptsize $2$};
    \node [above right] at (1.15,1.149) {\scriptsize $3$};
    \node [above] at (0.5,1.682) {\scriptsize $0$};
    \node [above left] at (-0.15,1.149) {\scriptsize $20$};
    \node [below left] at (-0.15,0.583) {\scriptsize $5$};
    \node [below] at (0.5,0.05) {\scriptsize $10$};

    \node [above left] at (0.2,1.632) {\tiny $b_0$};

    \node [above left] at (1.3,0.333) {\scriptsize $c_2$};
    \node [below left] at (1.3,1.399) {\scriptsize $c_1$};
    \node [below] at (0.5,1.732) {\scriptsize $c_0$};
    \node [below right] at (-0.3,1.399) {\scriptsize $c_6$};
    \node [above right] at (-0.3,0.333) {\scriptsize $c_4$};
    \node [above] at (0.5,0) {\scriptsize $c_7$};

    \draw [ultra thick] (1,0) -- (1.5,0.866);
    \draw [ultra thick] (1,1.732) -- (0,1.732) -- (-0.5,0.866);
\end{scope}
    
\begin{scope}[scale=0.8, xshift=2cm, yshift=1.732cm]
    \draw (0,0) -- (1,0) -- (1.5,0.866) -- (1,1.732) -- (0,1.732) -- (-0.5,0.866) -- cycle;
    \fill (0,0) circle [radius=0.05];
    \fill (0,1.732) circle [radius=0.05];
    \fill (1.5,0.866) circle [radius=0.05];

    \node [below right] at (1.15,0.583) {\scriptsize $9$};
    \node [above right] at (1.15,1.149) {\scriptsize $19$};
    \node [above] at (0.5,1.682) {\scriptsize $8$};
    \node [above left] at (-0.15,1.149) {\scriptsize $6$};
    \node [below left] at (-0.15,0.583) {\scriptsize $1$};
    \node [below] at (0.5,0.05) {\scriptsize $14$};

    \node [above left] at (1.3,0.333) {\scriptsize $c_7$};
    \node [below left] at (1.3,1.399) {\scriptsize $c_4$};
    \node [below] at (0.5,1.732) {\scriptsize $c_6$};
    \node [below right] at (-0.3,1.399) {\scriptsize $c_0$};
    \node [above right] at (-0.3,0.333) {\scriptsize $c_1$};
    \node [above] at (0.5,0) {\scriptsize $c_2$};

    \draw [ultra thick] (1,0) -- (0,0) -- (-0.5,0.866) -- (0,1.732);
    \draw [ultra thick] (1,1.732) -- (1.5,0.866);
\end{scope}

\begin{scope}[scale=0.8, xshift=4cm]
    \draw (0,0) -- (1,0) -- (1.5,0.866) -- (1,1.732) -- (0,1.732) -- (-0.5,0.866) -- cycle;
    \fill (0,0) circle [radius=0.05];
    \fill (0,1.732) circle [radius=0.05];
    \fill (1.5,0.866) circle [radius=0.05];

    \node [below right] at (1.15,0.583) {\scriptsize $18$};
    \node [above right] at (1.15,1.149) {\scriptsize $2$};
    \node [above] at (0.5,1.682) {\scriptsize $11$};
    \node [above left] at (-0.15,1.149) {\scriptsize $21$};
    \node [below left] at (-0.15,0.583) {\scriptsize $20$};
    \node [below] at (0.5,0.05) {\scriptsize $16$};

    \node [below left] at (0.2,0.15) {\tiny $b_0$};

    \node [above left] at (1.3,0.333) {\scriptsize $c_0$};
    \node [below left] at (1.3,1.399) {\scriptsize $c_2$};
    \node [below] at (0.5,1.732) {\scriptsize $c_6$};
    \node [below right] at (-0.3,1.399) {\scriptsize $c_7$};
    \node [above right] at (-0.3,0.333) {\scriptsize $c_6$};
    \node [above] at (0.5,0) {\scriptsize $c_3$};

    \draw [ultra thick] (-0.5,0.866) -- (0,0) -- (1,0) -- (1.5,0.866) -- (1,1.732);
\end{scope}

\begin{scope}[scale=0.8, xshift=6cm, yshift=1.732cm]
    \draw (0,0) -- (1,0) -- (1.5,0.866) -- (1,1.732) -- (0,1.732) -- (-0.5,0.866) -- cycle;
    \fill (0,0) circle [radius=0.05];
    \fill (0,1.732) circle [radius=0.05];
    \fill (1.5,0.866) circle [radius=0.05];

    \node [below right] at (1.15,0.583) {\scriptsize $21$};
    \node [above right] at (1.15,1.149) {\scriptsize $13$};
    \node [above] at (0.5,1.682) {\scriptsize $22$};
    \node [above left] at (-0.15,1.149) {\scriptsize $7$};
    \node [below left] at (-0.15,0.583) {\scriptsize $6$};
    \node [below] at (0.5,0.05) {\scriptsize $5$};

    \node [above left] at (1.3,0.333) {\scriptsize $c_7$};
    \node [below left] at (1.3,1.399) {\scriptsize $c_5$};
    \node [below] at (0.5,1.732) {\scriptsize $c_1$};
    \node [below right] at (-0.3,1.399) {\scriptsize $c_5$};
    \node [above right] at (-0.3,0.333) {\scriptsize $c_0$};
    \node [above] at (0.5,0) {\scriptsize $c_4$};

    \draw [ultra thick] (0,0) -- (-0.5,0.866);
\end{scope}

\begin{scope}[scale=0.8, xshift=8cm]
    \draw (0,0) -- (1,0) -- (1.5,0.866) -- (1,1.732) -- (0,1.732) -- (-0.5,0.866) -- cycle;
    \fill (0,0) circle [radius=0.05];
    \fill (0,1.732) circle [radius=0.05];
    \fill (1.5,0.866) circle [radius=0.05];

    \node [below right] at (1.15,0.583) {\scriptsize $12$};
    \node [above right] at (1.15,1.149) {\scriptsize $1$};
    \node [above] at (0.5,1.682) {\scriptsize $7$};
    \node [above left] at (-0.15,1.149) {\scriptsize $9$};
    \node [below left] at (-0.15,0.583) {\scriptsize $23$};
    \node [below] at (0.5,0.05) {\scriptsize $18$};

    \node [above left] at (1.3,0.333) {\scriptsize $c_5$};
    \node [below left] at (1.3,1.399) {\scriptsize $c_1$};
    \node [below] at (0.5,1.732) {\scriptsize $c_5$};
    \node [below right] at (-0.3,1.399) {\scriptsize $c_7$};
    \node [above right] at (-0.3,0.333) {\scriptsize $c_4$};
    \node [above] at (0.5,0) {\scriptsize $c_0$};

    \draw [ultra thick] (0,0) -- (1,0) -- (1.5,0.866) -- (1,1.732);
\end{scope}

\begin{scope}[scale=0.8, xshift=10cm, yshift=1.732cm]
    \draw (0,0) -- (1,0) -- (1.5,0.866) -- (1,1.732) -- (0,1.732) -- (-0.5,0.866) -- cycle;
    \fill (0,0) circle [radius=0.05];
    \fill (0,1.732) circle [radius=0.05];
    \fill (1.5,0.866) circle [radius=0.05];

    \node [below right] at (1.15,0.583) {\scriptsize $14$};
    \node [above right] at (1.15,1.149) {\scriptsize $15$};
    \node [above] at (0.5,1.682) {\scriptsize $13$};
    \node [above left] at (-0.15,1.149) {\scriptsize $4$};
    \node [below left] at (-0.15,0.583) {\scriptsize $3$};
    \node [below] at (0.5,0.05) {\scriptsize $23$};

    \node [above left] at (1.3,0.333) {\scriptsize $c_2$};
    \node [below left] at (1.3,1.399) {\scriptsize $c_3$};
    \node [below] at (0.5,1.732) {\scriptsize $c_5$};
    \node [below right] at (-0.3,1.399) {\scriptsize $c_3$};
    \node [above right] at (-0.3,0.333) {\scriptsize $c_1$};
    \node [above] at (0.5,0) {\scriptsize $c_4$};

    \draw [ultra thick] (1,0) -- (1.5,0.866) -- (1,1.732);
    \draw [ultra thick] (-0.5,0.866) -- (0,1.732);
\end{scope}

\begin{scope}[scale=0.8, xshift=12cm]
    \draw (0,0) -- (1,0) -- (1.5,0.866) -- (1,1.732) -- (0,1.732) -- (-0.5,0.866) -- cycle;
    \fill (0,0) circle [radius=0.05];
    \fill (0,1.732) circle [radius=0.05];
    \fill (1.5,0.866) circle [radius=0.05];

    \node [below right] at (1.15,0.583) {\scriptsize $4$};
    \node [above right] at (1.15,1.149) {\scriptsize $11$};
    \node [above] at (0.5,1.682) {\scriptsize $10$};
    \node [above left] at (-0.15,1.149) {\scriptsize $8$};
    \node [below left] at (-0.15,0.583) {\scriptsize $17$};
    \node [below] at (0.5,0.05) {\scriptsize $0$};

    \node [below left] at (0.2,0.15) {\tiny $b_0$};

    \node [above left] at (1.3,0.333) {\scriptsize $c_3$};
    \node [below left] at (1.3,1.399) {\scriptsize $c_6$};
    \node [below] at (0.5,1.732) {\scriptsize $c_7$};
    \node [below right] at (-0.3,1.399) {\scriptsize $c_6$};
    \node [above right] at (-0.3,0.333) {\scriptsize $c_2$};
    \node [above] at (0.5,0) {\scriptsize $c_0$};

    \draw [ultra thick] (-0.5,0.866) -- (0,0) -- (1,0) -- (1.5,0.866);
\end{scope}

\begin{scope}[scale=0.8, xshift=14.2cm, yshift=1.732cm]
    \draw (0,0) -- (1,0) -- (1.5,0.866) -- (1,1.732) -- (0,1.732) -- (-0.5,0.866) -- cycle;
    \fill (0,0) circle [radius=0.05];
    \fill (0,1.732) circle [radius=0.05];
    \fill (1.5,0.866) circle [radius=0.05];

    \node [below right] at (1.15,0.583) {\scriptsize $19$};
    \node [above right] at (1.15,1.149) {\scriptsize $22$};
    \node [above] at (0.5,1.682) {\scriptsize $15$};
    \node [above left] at (-0.15,1.149) {\scriptsize $12$};
    \node [below left] at (-0.15,0.583) {\scriptsize $16$};
    \node [below] at (0.5,0.05) {\scriptsize $17$};

    \node [below left] at (0.2,0.15) {\tiny $b_0$};

    \node [above left] at (1.3,0.333) {\scriptsize $c_4$};
    \node [below left] at (1.3,1.399) {\scriptsize $c_1$};
    \node [below] at (0.5,1.732) {\scriptsize $c_3$};
    \node [below right] at (-0.3,1.399) {\scriptsize $c_5$};
    \node [above right] at (-0.3,0.333) {\scriptsize $c_3$};
    \node [above] at (0.5,0) {\scriptsize $c_2$};

    \draw [ultra thick] (0,1.732) -- (-0.5,0.866) -- (0,0) -- (1,0) -- (1.5,0.866);
    \draw [ultra thick] (0,1.732) -- (1,1.732);
\end{scope}

\end{tikzpicture}
\caption{The hexagons decomposing $S_{2,1}$.}
\label{fig: S_21}
\end{figure}

\begin{proof}[Proof of \Cref{M_21plus}] Informally, we can view the deformation retract $M_{2,1}^+\to \Sigma^+$ as shrinking each shaded disk in \Cref{fig: tot geod from cusp} down to the hexagon pictured inside of it. More formally, the map is built cell-by-cell using the polyhedral decomposition of $M_{2,1}$ from \Cref{cell decomp}. Any cell $\dpr_k$ of that decomposition intersects $M_{2,1}^+$ in the union of a copy of the truncated prism $P^+$ from the proof of \Cref{corandreev} and its mirror image across a quadrilateral face; $S_{2,1}$ intersects $\dpr_k$ in the union of the triangle $T$ of \Cref{corandreev} and its mirror image across the corresponding edge. Because $T$ intersects each quadrilateral face of $P^+$ at right angles, for any $x\in T$, the perpendicular geodesic ray to $T$ based at $x$ that points into $P^+$ exits $P^+$ in the opposite triangular face: the face of $P^+\subset P\subset \dpr$ labeled $3_-$ in \Cref{numberings}. A standard construction thus produces a deformation retract from $P^+$ to the face labeled $3_-$ along geodesic arcs perpendicular to $T$, and applying this to all copies of $P^+$ and its mirror image in the $\dpr_k$ determines the desired map on $M_{2,1}^+$.

The shaded disks of \Cref{fig: tot geod from cusp} are re-pictured as hexagons in \Cref{fig: S_21}, with their edges labeled on the outside by numbers of the quadrilateral faces of the $\dpr_k$ that they lie in. Precisely: for each $k\in\{0,\hdots,23\}$ the edge labeled ``$k$'' is the intersection of $S_{2,1}$ with the face of $\dpr_k$ labeled $1_-$. Each such edge has one endpoint in an edge of the spine $\Sigma$ for $M_{2,1}$ that is labeled $a_i$, and the other in one labeled $b_j$, for some $i$ and $j$ using the edge labeling convention from the proof of \Cref{M_21}. We orient each hexagon edge pointing away from its $b_j$-endpoint (these are circled in bold in the Figure).

The bold edges of Figure \ref{fig: S_21} belong to a tree in the one-skeleton of the decomposition of $S_{2,1}$ determined by the hexagons, with combinatorics pictured schematically below.

\begin{figure}[ht]
\begin{tikzpicture}
    \fill (0,0) circle [radius=0.06];
    \fill (2,0) circle [radius=0.1];
    \fill (4,0) circle [radius=0.06];
    \fill (6,0) circle [radius=0.1];
    \fill (8,0) circle [radius=0.06];
    \fill (10,0) circle [radius=0.1];

    \draw [ultra thick] (0,0) -- (10,0);

    \fill (2,-0.8) circle [radius=0.05];
    \fill (6,-0.8) circle [radius=0.05];
    \fill (2,0.8) circle [radius=0.05];
    \fill (4,0.8) circle [radius=0.08];
    \fill (6,0.8) circle [radius=0.05];
    \fill (2.8,0.8) circle [radius=0.08];
    \fill (4.8,0.8) circle [radius=0.05];
    \fill (6.8,0.8) circle [radius=0.08];

    \draw [very thick] (2,-0.8) -- (2,0.8) -- (2.8,0.8);
    \draw [very thick] (4,0) -- (4,0.8) -- (4.8,0.8);
    \draw [very thick] (6,-0.8) -- (6,0.8) -- (6.8,0.8);

    \draw (-0.1,-0.17) -- (0,0) -- (-0.1,0.17);
    \draw (1.83,-0.9) -- (2,-0.8) -- (2.17,-0.9);
    \draw (1.86,0.94) -- (2,0.8);
    \draw (2.8,0.6) -- (2.8,1);
    \draw (2.8,0.8) -- (3,0.8);
    \draw (3.8,0.8) -- (4,0.8) -- (4,1);
    \draw (4.9,0.97) -- (4.8,0.8) -- (4.9,0.63);
    \draw (5.83,-0.9) -- (6,-0.8) -- (6.17,-0.9);
    \draw (5.86,0.94) -- (6,0.8);
    \draw (6.8,0.6) -- (6.8,1);
    \draw (6.8,0.8) -- (7,0.8);
    \draw (8,0) -- (8,0.2);
    \draw (10,-0.2) -- (10,0.2);
    \draw (10,0) -- (10.2,0);

    \node [below right] at (6,0) {\tiny $b_0$};
    \node [below] at (7,0) {\small $0$};
    \node [below] at (8,0) {\tiny $a_0$};
    \node [left] at (6,0.4) {\small $17$};
    \node [below right] at (6,0.8) {\tiny $a_4$};
    \node [above] at (6.4,0.8) {\small $19$};
    \node [below right] at (6.8,0.8) {\tiny $b_5$};
    \node [below] at (5,0) {\small $16$};
    \node [below] at (4,0) {\tiny $a_6$};
    \node [left] at (6,-0.4) {\small $20$};
    \node [right] at (6,-0.8) {\tiny $a_3$};
    \node [below] at (3,0) {\small $12$};
    \node [below right] at (2,0) {\tiny $b_1$};
    \node [left] at (4,0.4) {\small $18$};
    \node [below right] at (4,0.8) {\tiny $b_2$};
    \node [above] at (4.4,0.8) {\small $2$};
    \node [right] at (4.8,0.8) {\tiny $a_2$};
    \node [below] at (9,0) {$4$};
    \node [below right] at (10,0) {\tiny $b_3$};
    \node [left] at (2,0.4) {\small $1$};
    \node [below right] at (2,0.8) {\tiny $a_1$};
    \node [above] at (2.4,0.8) {\small $6$};
    \node [below right] at (2.8,0.8) {\tiny $b_4$};
    \node [below] at (1,0) {\small $14$};
    \node [below right] at (0,0) {\tiny $a_5$};
    \node [left] at (2,-0.4) {\small $15$};
    \node [right] at (2,-0.8) {\tiny $a_7$};

\end{tikzpicture}
\end{figure}

Locating a basepoint for $\pi_1 S_{2,1}$ at its intersection with the edge $b_0$, a standard presentation has a generator for each edge with label ``$k$'' that lies outside the tree above: the loop $E_k$ which runs within the tree from $b_0$ to the edge's initial point, traverses the edge, and returns to $b_0$ within the tree. Each hexagon of \Cref{fig: S_21} gives rise to a relation:\begin{align*}
    & E_9 = E_8^{-1} && E_5E_{21}^{-1}E_{13}E_{22}^{-1}E_7 = 1 && E_{13} = E_3^{-1}E_{23} && E_{22} = 1 \\
    & E_{10} = E_5E_3^{-1} && E_{21}=E_{11} && E_{23}=E_7^{-1}E_9 && E_{11} = E_{8}^{-1}E_{10}
\end{align*}
Above, all relations but the second of the top row have been solved for a different generator. After removing $E_{22}$, which is trivial, iterated substitutions in the others yield the following descriptions of the other generators in terms of $E_3$, $E_5$, $E_7$, and $E_8$:
\[ E_9 = E_8^{-1},\ E_{10} = E_5E_3^{-1},\ E_{21}=E_{11}=E_{8}^{-1}E_5E_3^{-1},\ E_{13} = E_3^{-1}E_7^{-1}E_8^{-1},\ E_{23}=E_7^{-1}E_8^{-1}. \]
Plugging into the remaining relation yields the presentation from this result's statement.

The inclusion-induced map from $\pi_1 S_{2,1}$ to $\pi_1 M_{2,1}^+$ can now be read off from the labels inside the hexagons. Each edge's inner label records the label of its image edge of $\Sigma^+$ under the deformation retract, according to the numbering scheme from the proof of \Cref{M_21}. This yields the images described in the statement, bearing in mind that each generator is a concatenation of paths in the tree above with an edge transversal. For instance, the edge labeled $3$ has its initial point on $b_2$ and terminal point on $a_0$, so $E_3 = e_{16}\bar{e}_{18}e_3\bar{e}_0$ as a concatenation of edge paths. (Here $e_i$ is the path transversing the edge labeled $i$ in the orientation direction.)
\end{proof}

\begin{corollary}\label{M_21plus H1} For $M_{2,1}^+$ as in \Cref{M_21plus}, $H_1(M_{2,1}^+)\cong\mathbb{Z}^2$ is generated by  $C_4$ and $\gamma \doteq 5C_6-C_7 = C_5$. (Here ``$C_i$'' refers to the homology class of the corresponding generator from the presentation for $\pi_1 M_{2,1}^+$ given in \Cref{M_21plus}.) The inclusion-induced map $H_1 (S_{2,1})\to H_1 (M_{2,1}^+)$ is surjective, with kernel generated by $E_7-E_3+2E_8-D$ and $13E_3-3D-16E_8$.\end{corollary}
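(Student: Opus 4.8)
The plan is to derive both assertions from the presentations recorded in \Cref{M_21plus} (together with the basis change of \Cref{basis change}) by abelianizing, in the same spirit as the proof of \Cref{M_21 H1}. Indeed, the four relators of $\pi_1 M_{2,1}^+$ are exactly the four ``$3_-$''-type relators of $\pi_1 M_{2,1}$, so the first part is really a sub-computation of the matrix in \Cref{H1 REF}.

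First I would compute $H_1(M_{2,1}^+)$ directly. Writing the four relators of \Cref{M_21plus} additively in the generators $C_1,C_3,C_4,C_5,C_6,C_7$, the third relation gives $C_3 = 2C_6 - C_7$ and the first gives $C_1 = C_4 - C_6 - C_7$; substituting both into the second relation collapses it to $C_5 = 5C_6 - C_7 = \gamma$, which establishes the identity $\gamma = C_5$ claimed in the statement. Feeding these three expressions into the fourth relation leaves the single relation $11C_6 - 2C_7 = 0$ among $C_4, C_6, C_7$. Since $(11,-2)$ is a primitive vector, the subgroup $\langle C_6, C_7 \mid 11C_6 = 2C_7\rangle$ is infinite cyclic, and $\gamma = 5C_6 - C_7$ generates it: from $11\cdot 1 + 2\cdot(-5) = 1$ one obtains $C_6 = -2\gamma$ and $C_7 = -11\gamma$ in $H_1(M_{2,1}^+)$. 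With $C_4$ as a second free generator this gives $H_1(M_{2,1}^+)\cong \mathbb{Z}\langle C_4\rangle \oplus \mathbb{Z}\langle\gamma\rangle$, as asserted.

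Next I would treat the inclusion map. After the basis change of \Cref{basis change}, the single defining relator of $\pi_1 S_{2,1}$ is a product of commutators, so it vanishes under abelianization and $H_1(S_{2,1})\cong\mathbb{Z}^4$ is freely generated by $E_3, D, E_7, E_8$. I would then push each generator forward via the homomorphism of \Cref{M_21plus} (with $D\mapsto C_6 C_4^{-1}C_6$), rewriting each image using the relations from the previous step, namely $C_3 = 7\gamma$, $C_1 = C_4 + 13\gamma$, $C_5 = \gamma$, and $C_6 = -2\gamma$. In the basis $(C_4,\gamma)$ this yields coordinate images $E_3\mapsto(1,20)$, $D\mapsto(-1,-4)$, $E_7\mapsto(-2,-18)$, and $E_8\mapsto(1,17)$. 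Surjectivity is then immediate: $E_3 + D \mapsto (0,16)$ and $E_8 + D \mapsto (0,13)$ combine (via $\gcd(16,13)=1$) to produce $(0,1)=\gamma$, after which $(1,0)=C_4$ is recovered from $E_3$.

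Finally, for the kernel I would first verify by direct substitution that the two claimed classes $E_7 - E_3 + 2E_8 - D$ and $13E_3 - 3D - 16E_8$ map to $0$. Because the map $H_1(S_{2,1})\to H_1(M_{2,1}^+)$ is onto the free group $\mathbb{Z}^2$, its kernel is a rank-two direct summand of $\mathbb{Z}^4$, hence is itself saturated. The one point needing care beyond routine arithmetic is to confirm that these two explicit classes generate the entire kernel, rather than a proper finite-index subgroup. I would settle this by arranging the two classes as the columns of a $4\times 2$ integer matrix and checking that the greatest common divisor of its six $2\times 2$ minors is $1$ (for instance, two of the minors are $3$ and $-13$). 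This certifies that the two classes span a saturated rank-two sublattice; being contained in the equally saturated rank-two kernel, it must coincide with it, completing the proof.
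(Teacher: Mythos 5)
Your proposal is correct and follows essentially the same route as the paper: abelianize the four relators of $\pi_1 M_{2,1}^+$, solve to get $C_1 = C_4 - C_6 - C_7$, $C_3 = 2C_6-C_7$, $C_5 = 5C_6-C_7$, and $-11C_6+2C_7=0$, identify $C_4$ and $\gamma$ as free generators via $\det\left(\begin{smallmatrix}-11&2\\5&-1\end{smallmatrix}\right)=1$, and then push the generators $E_3, D, E_7, E_8$ forward using $C_6=-2\gamma$, $C_7=-11\gamma$. The only (minor) difference is at the last step, where the paper certifies the kernel by exhibiting a full basis of $H_1(S_{2,1})$ adapted to the map, whereas you use the gcd-of-$2\times 2$-minors saturation criterion; both are valid, and your arithmetic (minors $3$ and $-13$ being coprime) checks out.
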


\begin{proof}
    Turning the abelianized relations of $\pi_1 M_{2,1}$ from \Cref{M_21plus} into equations and row-reducing as in \Cref{M_21 H1}, we obtain the following:
    \[ C_1 = C_4 - C_6 - C_7,\ C_3 = 2C_6 - C_7,\ C_5 = 5C_6 - C_7,\ -11C_6 + 2C_7 = 0\]
    It follows that $H_1 (M_{2,1}^+)\cong\mathbb{Z}^2$ is freely generated by $C_4$ and $\gamma = 5C_6-C_7$; the latter because $\mathrm{Det}\left(\begin{smallmatrix} -11 & 2 \\ 5 & -1 \end{smallmatrix}\right) = 1$. From this we obtain in addition that $C_6 = -2\gamma$ and $C_7 = -11\gamma$ in $H_1(M_{2,1}^+)$. We use this to compute the inclusion-induced images of the generators of $\pi_1 S_{2,1}$ from \Cref{basis change}:\begin{align*}
    & E_3 \mapsto C_1 + C_3 = C_4 + 20\gamma &&  E_7\mapsto -C_1-C_3-C_4+2C_5 =  -2C_4-18\gamma\\
    & D\mapsto 2C_6-C_4 = -C_4 - 4\gamma && E_8\mapsto C_1 + C_3 - C_5 + C_6 = C_4 + 17\gamma \end{align*}
    A row-reduction process produces the generators recorded above for the kernel of the inclusion-induced map. One can check directly that each maps to $0$, and that 
    \[ \{D,D-4E_3+5E_8,-3D+13E_3-16E_8,E_7-D-E_3+2E_8\} \]
    is a generating set for $H_1(S_{2,1})$ such that $D-4E_3 + 5E_8\mapsto \gamma$.
\end{proof}

\begin{prop}\label{M_21 minus}
    For the non-compact component $M_{2,1}^-$ that results from cutting $M_{2,1}$ along $S_{2,1}$, where the surface $S_{2,1}\subset M_{2,1}$ is as in \Cref{ex: tot geod from cusp}, the result $M_{2,1}^-(\mu_0)$ of filling $M_{2,1}^-$ along the slope $\mu_0$ from \Cref{filling slope} is a handlebody, with a full set of compressing disks whose boundaries are represented in $\pi_1 S_{2,1}$ by $D$ and $E_8$.
\end{prop}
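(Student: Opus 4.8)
The plan is to build $M_{2,1}^-(\mu_0)$ explicitly from the prism decomposition, identify a genus-two handlebody structure on it, and read off $D$ and $E_8$ as a meridian system. First I would record the cut-open cell structure of $M_{2,1}^-$. As in the proof of \Cref{geodesic surface}, each polyhedron $\dpr_k$ from \Cref{cell decomp} meets $M_{2,1}^-$ in a copy of the double $\dpr^{333}_1$ of the non-compact sub-prism $P^{333}_1$, and the two triangular faces of that double (the copies of the triangle $T$ of \Cref{corandreev}, labeled $3$ and $3_-$ in \Cref{numberings}) lie in $S_{2,1}$ rather than being glued. Thus $M_{2,1}^-$ is obtained from $24$ copies $\dpr_0^-,\dots,\dpr_{23}^-$ by performing \emph{only} the $x$-, $y$-, and $z$-identifications of $\sigma_{2,1}$, leaving the $w$-faces free to form $S_{2,1}$; its single cusp is the common ideal vertex, connectivity and one-cuspedness following from the transitivity of $\langle\sigma_{2,1}(x),\sigma_{2,1}(z)\rangle$ used in \Cref{onecusp}. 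Inverting the change of basis in \Cref{filling slope} shows $[\mu_0]=\pm 13\gamma$ in $H_1(M_{2,1})$ for $\gamma$ as in \Cref{M_21 H1}, so filling $\mu_0$ kills $13$ times the generator, consistent with this being the meridian filling producing $L(13,3)$.

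Next I would compute $\pi_1 M_{2,1}^-(\mu_0)$ and locate the boundary curves in it. Deformation-retracting each $\dpr_k^-$ away from its free $w$-faces, exactly as in \Cref{spine}, produces a spine of $M_{2,1}^-$ assembled from the type-$1$ quadrilateral faces; its fundamental group is then presented by the same tree-and-relator bookkeeping carried out in \Cref{M_21}, now with the $w$-relations and all type-$3$ two-cells deleted, and with $\mu_0$ read off from \Cref{fig: M21 cusp} as in \Cref{filling slope}. Expressing the basis curves $E_3,D,E_7,E_8$ of $\pi_1 S_{2,1}$ (via \Cref{M_21plus} and \Cref{basis change}) in this presentation, I expect to find that in the filled group $\pi_1 M_{2,1}^-(\mu_0)=\pi_1 M_{2,1}^-/\langle\langle \mu_0\rangle\rangle$ the classes of $D$ and $E_8$ are trivial while $E_3$ and $E_7$ generate a free group of rank two. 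This is precisely the algebra of attaching a handlebody to $S_{2,1}$ with meridians $D,E_8$: in the presentation $\langle E_3,D,E_7,E_8 \mid [D^{-1},E_3][E_7^{-1},E_8^{-1}]\rangle$ of \Cref{basis change}, setting $D=E_8=1$ collapses the relator and leaves $\langle E_3,E_7\rangle\cong F_2$.

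To turn this into a genuine handlebody I would produce the compressing disks concretely rather than rely on irreducibility after filling. Filling $\mu_0$ caps the cusp with a solid torus $V$ whose meridian disk has boundary $\mu_0$ on the cusp torus. Using the cusp-view pictures of the spine and of $S_{2,1}$ (\Cref{fig: spine from cusp}, \Cref{fig: tot geod from cusp}, cf.~\Cref{ex: tot geod from cusp}), I would exhibit disjoint embedded annuli in $M_{2,1}^-$ running from $D$ and from $E_8$ on $S_{2,1}$ to parallel copies of $\mu_0$ on the cusp torus; capping each annulus with a meridian disk of $V$ yields embedded disks $\Delta_D,\Delta_{E_8}\subset M_{2,1}^-(\mu_0)$ with the prescribed boundaries. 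Since $D$ and $E_8$ are disjoint on $S_{2,1}$ and cut it into a four-holed sphere, cutting $M_{2,1}^-(\mu_0)$ along $\Delta_D$ and $\Delta_{E_8}$ leaves a piece with simply connected, sphere boundary and (by the $\pi_1$ computation) trivial fundamental group, hence a ball; this identifies $M_{2,1}^-(\mu_0)$ as a genus-two handlebody with complete meridian system $\{D,E_8\}$. The homology kernel computation of \Cref{M_21plus H1} serves as a consistency check that $D$ and $E_8$ are the correct curves.

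The hard part will be the third step: producing the annuli, equivalently showing that $D$ and $E_8$ are peripheral in $M_{2,1}^-$ and freely homotopic to $\mu_0^{\pm1}$ there, with the two annuli disjoint and hitting disjoint $\mu_0$-curves. Filling a cusp can create essential annuli and compressions in uncontrolled ways, so merely knowing $D,E_8$ die in $\pi_1 M_{2,1}^-(\mu_0)$ (which only yields \emph{some} compressing disk by Dehn's lemma) is not enough to guarantee embedded disks with exactly these boundaries forming a cut system. Pushing the detailed cusp-view combinatorics through to pin down the peripheral classes of $D$ and $E_8$, and to verify disjointness and the cut-to-a-ball conclusion, is where the real work lies.
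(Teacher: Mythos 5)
Your outline identifies the right overall strategy for the crucial step---produce properly embedded surfaces in $M_{2,1}^-$ interpolating between $D$ and $E_8$ on $S_{2,1}$ and curves parallel to $\mu_0$ on the cusp cross-section, then cap them off with meridian disks of the filling solid torus---but the proposal has a genuine gap, one you flag yourself: those surfaces are never constructed, and their construction \emph{is} the content of the proposition. Moreover, your guess at their topology is off. You posit annuli, i.e.\ that $D$ and $E_8$ are each freely homotopic to $\mu_0^{\pm1}$ in $M_{2,1}^-$. In the paper's proof the surface bounded by $D$ is a \emph{twice}-punctured disk (the ``caplet'') and the one bounded by $E_8$ a \emph{thrice}-punctured disk (the ``lozenge''), each assembled by gluing several once-punctured vertical disks of the cut-open manifold $\Sigma_0\times[0,\infty)$ along the face-pairings of $\Sigma_0\times\{0\}$ (see \Cref{fig: comp disks from cusp} and \Cref{fig: caploz}); every puncture is parallel to $\mu_0$ and is capped separately after filling. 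So what the paper actually exhibits is that $D$ (resp.\ $E_8$) is a product of two (resp.\ three) conjugates of $\mu_0^{\pm1}$, not a single peripheral curve, and an argument organized around ``$D$ and $E_8$ are peripheral and cobound annuli with $\mu_0$'' would stall. The $\pi_1$ computation of $M_{2,1}^-(\mu_0)$ that you propose as step two is, as you concede, only a consistency check: Dehn's lemma would give \emph{some} compressing disks, not embedded ones with these prescribed boundaries, so it carries no weight toward the statement; the paper omits it entirely.

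Two smaller points. First, the non-separation of $D\cup E_8$ in $S_{2,1}$, which you invoke to cut the genus-two surface into a four-holed sphere, is not automatic and the paper verifies it by tracking the hexagon edge-pairings in \Cref{fig: S_21 with curves}. Second, a factual slip in your cell-structure description: since $a_9=3$ for $O^{333}_2$, the triangle $T$ of \Cref{corandreev} is interior to the prism, so the faces freed by cutting along $S_{2,1}$ are the copies of $T\cup\overline{T}$, not the $w$-paired faces $3_\pm$ (those compact triangular faces lie in $M_{2,1}^+$). The structural conclusion you draw---that $M_{2,1}^-$ is built from the doubled sub-prisms using only the $x$-, $y$-, and $z$-identifications---is nonetheless correct.
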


\begin{remark}\label{good stuff} \Cref{M_21 minus} is stronger than what we can conclude about the $M_{i,j}^-$ directly from \Cref{lem:lifts}. Indeed, there exist surfaces in $\mathbb{S}^3$ that are incompressible to one side and do not bound a handlebody to the other: see \cite[Ex.~2.4]{Ozawa}, cf.~the Remark on \cite[p.~133]{Homma}.
\end{remark}

\begin{proof} To prove the result, we will exhibit a pair of disjoint punctured disks properly embedded in $M_{2,1}^-$ with the following properties:\begin{itemize}
    \item The union of their boundary circles in $S_{2,1}$ is non-separating.
    \item Their components of intersection with a cusp cross-section for $M_{2,1}$ are each parallel to $\mu_0$ from \Cref{filling slope}.
\end{itemize}
It follows from these properties that in the Dehn-filled manifold $M_{2,1}^-(\mu_0)$, the closures of the two punctured disks form a full set of compressing disks for $S_{2,1}$. We will then compute their representations in $\pi_1 S_{2,1}$.

\begin{figure}[ht]
\begin{tikzpicture}

\begin{scope}[scale=1.25]
\draw [gray] (-1,0) -- (0,0) -- (0.5,-0.866) -- (1,0) -- (0.5,0.866) -- (0,0);
\draw [gray] (1,0) -- (2,0) -- (1.5,-0.866) -- (-0.5,-0.866) -- (-1,0) -- (-0.5,0.866) -- (1.5,0.866) -- (2,0);

\fill (0,0) circle [radius=0.05];
\fill (1.5,0.866) circle [radius=0.05];
\fill (1.5,-0.866) circle [radius=0.05];

\draw [dashed] (0,0) -- (1,0);
\draw [dashed] (-0.5,0.866) -- (0,0) -- (-0.5,-0.866);
\draw [dashed] (2.5,0.866) -- (1.5,0.866) -- (1,0) -- (1.5,-0.866) -- (2.5,-0.866);

\node [below] at (-0.5,0.05) {\small $0$};
\node [above] at (-0.5,-0.05) {\tiny $(10)$};
\node [below] at (0.1,0.866) {\small $2$};
\node at (0.45,0.35) {\tiny $(6)$};
\node [below] at (0.4,0) {\small $1$};
\node [below] at (1.6,0) {\small $9$};
\node [below] at (1,0.866) {\small $8$};
\node at (1.55,0.4) {\tiny $(19)$};
\node [below] at (1.9,0.816) {\small $20$};
\node at (0.07,-0.533) {\tiny $(3)$};
\node [above] at (1,-0.886) {\tiny $(14)$};
\node at (1.95,-0.533) {\tiny $(21)$};

\draw [thick] (0,0) -- (-0.25,0.433) -- (0,0.866) -- (0.5,0.866);
\node [above] at (0,0.08) {\scriptsize $a_2$};
\node [below] at (-0.25,0.916) {\scriptsize $c_2$};
\draw [thick] (-0.25,0.433) -- (-0.75,0.433) -- (-1,0);
\node [above] at (-0.5,0.383) {\scriptsize $c_7$};
\node [above] at (-1,0.05) {\scriptsize $b_4$};
\draw [thick] (0,0) -- (-0.25,-0.433) -- (0,-0.866);
\node [below] at (0,-0.08) {\scriptsize $a_0$};
\node [above] at (-0.25,-0.916) {\scriptsize $c_1$};
\draw [thick] (-0.25,-0.433) -- (-0.75,-0.433) -- (-1,0);
\node [below] at (-0.5,-0.383) {\scriptsize $c_0$};
\node [below] at (-1,0) {\scriptsize $b_0$};

\draw [thick] (0,0) -- (0.5,0) -- (0.75,-0.433);
\node [above] at (0.3,-0.05) {\scriptsize $a_1$};
\node [below] at (0.75,0) {\scriptsize $c_1$};
\draw [thick] (0.5,0) -- (0.75,0.433) -- (0.5,0.866);
\node [above] at (0.75,0) {\scriptsize $c_0$};
\node [below] at (0.5,0.816) {\scriptsize $b_4$};
\draw [thick] (0.75,0.433) -- (1.25,0.433) -- (1.5,0.866) -- (2,0.866);
\node [below] at (1,0.483) {\scriptsize $c_6$};
\node [below] at (1.5,0.816) {\scriptsize $a_4$};
\draw [thick] (1.25,0.433) -- (1.5,0) -- (2,0);
\node [above] at (1.25,-0.05) {\scriptsize $c_4$};
\node [above] at (1.75,-0.05) {\scriptsize $b_5$};
\draw [thick] (1.5,0) -- (1.25,-0.433);
\node [below] at (1.25,0.05) {\scriptsize $c_7$};

\draw [thick] (0,-0.866) -- (0.5,-0.866);
\node [below] at (0.25,-0.816) {\footnotesize $b_2$};
\draw [thick] (0.5,-0.866) -- (0.75,-0.433);
\node [above] at (0.5,-0.816) {\scriptsize $b_1$};
\draw [thick] (0.75,-0.433) -- (1.25,-0.433);
\node [above] at (1,-0.483) {\scriptsize $c_2$};
\draw [thick] (1.25,-0.433) -- (1.5,-0.866);
\node [above] at (1.5,-0.816) {\scriptsize $a_5$};
\draw [thick] (1.5,-0.866) -- (2,-0.866);
\node [below] at (1.75,-0.816) {\footnotesize $a_3$};
\draw [thick] (2,-0.866) -- (2.25,-0.433);
\node [above] at (2.25,-0.916) {\scriptsize $c_7$};
\draw [thick] (2,0.866) -- (2.25,0.433);
\node [below] at (2.25,0.916) {\scriptsize $c_6$};

\fill [opacity=0.1] (1,0) circle [radius=0.7];
\draw [thick] (1,0) circle [radius=0.7];
\fill [opacity=0.1] (-0.5,-0.866) -- (0.2,-0.866) -- (-0.85,-0.259);
\fill [opacity=0.1] (0.2,-0.866) arc (0:120:0.7);
\draw [thick] (0.2,-0.866) arc (0:120:0.7);
\fill [opacity=0.1] (-0.5,0.866) -- (0.2,0.866) -- (-0.85,0.259);
\fill [opacity=0.1] (0.2,0.866) arc (0:-120:0.7);
\draw [thick] (0.2,0.866) arc (0:-120:0.7);
\fill [opacity=0.1] (1.8,-0.866) -- (2.5,-0.866) -- (2.15,-0.259);
\fill [opacity=0.1] (1.8,-0.866) arc (180:120:0.7);
\draw [thick] (1.8,-0.866) arc (180:120:0.7);
\fill [opacity=0.1] (1.8,0.866) -- (2.5,0.866) -- (2.15,0.259);
\fill [opacity=0.1] (1.8,0.866) arc (180:240:0.7);
\draw [thick] (1.8,0.866) arc (180:240:0.7);

\draw [very thick, dash pattern=on 1pt off 1pt, color=red] (-0.5,-0.866) -- (-0.5,0.866);
\draw [very thick, color=red] (-0.5,-0.866) -- (-0.5,-0.166);
\draw [very thick, color=red] (-0.5,0.866) -- (-0.5,0.166);

\draw [very thick, dash pattern=on 1pt off 1pt, color=blue] (1,-0.866) -- (1,0.866);
\draw [very thick, color=blue] (1,-0.7) -- (1,0.7);

\end{scope}

\begin{scope}[scale=1.25, xshift=3cm]
\draw [gray] (-1,0) -- (0,0) -- (0.5,-0.866) -- (1,0) -- (0.5,0.866) -- (0,0);
\draw [gray] (1,0) -- (2,0) -- (1.5,-0.866) -- (-0.5,-0.866) -- (-1,0) -- (-0.5,0.866) -- (1.5,0.866) -- (2,0);

\fill (0,0) circle [radius=0.05];
\fill (1.5,0.866) circle [radius=0.05];
\fill (1.5,-0.866) circle [radius=0.05];

\draw [dashed] (0,0) -- (1,0);
\draw [dashed] (-0.5,0.866) -- (0,0) -- (-0.5,-0.866);
\draw [dashed] (2.5,0.866) -- (1.5,0.866) -- (1,0) -- (1.5,-0.866) -- (2.5,-0.866);

\node [below] at (-0.5,0.05) {\small $11$};
\node [above] at (-0.5,-0.05) {\tiny $(16)$};
\node [below] at (0.1,0.866) {\small $18$};
\node at (0.45,0.35) {\tiny $(7)$};
\node [below] at (0.4,0) {\small $6$};
\node [below] at (1.6,0) {\small $21$};
\node [below] at (1,0.866) {\small $22$};
\node at (1.55,0.4) {\tiny $(13)$};
\node [below] at (1.9,0.816) {\small $23$};
\node at (0.07,-0.533) {\tiny $(2)$};
\node [above] at (1,-0.886) {\tiny $(5)$};
\node at (1.95,-0.533) {\tiny $(9)$};

\draw [thick] (0,0) -- (-0.25,0.433) -- (0,0.866) -- (0.5,0.866);
\node [above] at (0,0.08) {\scriptsize $a_6$}; %
\node [below] at (-0.25,0.916) {\scriptsize $c_0$}; %
\draw [thick] (-0.25,0.433) -- (-0.75,0.433) -- (-1,0);
\node [above] at (-0.5,0.383) {\scriptsize $c_3$}; %
\node [above] at (-1,0.05) {\scriptsize $b_0$}; %
\draw [thick] (0,0) -- (-0.25,-0.433) -- (0,-0.866);
\node [below] at (0,-0.08) {\scriptsize $a_2$}; %
\node [above] at (-0.25,-0.916) {\scriptsize $c_2$}; %
\draw [thick] (-0.25,-0.433) -- (-0.75,-0.433) -- (-1,0);
\node [below] at (-0.5,-0.383) {\scriptsize $c_6$}; %
\node [below] at (-1,0) {\scriptsize $b_3$}; %

\draw [thick] (0,0) -- (0.5,0) -- (0.75,-0.433);
\node [above] at (0.3,-0.05) {\scriptsize $a_1$}; %
\node [below] at (0.75,0) {\scriptsize $c_0$}; %
\draw [thick] (0.5,0) -- (0.75,0.433) -- (0.5,0.866);
\node [above] at (0.75,0) {\scriptsize $c_5$}; %
\node [below] at (0.5,0.816) {\scriptsize $b_5$}; %
\draw [thick] (0.75,0.433) -- (1.25,0.433) -- (1.5,0.866) -- (2,0.866);
\node [below] at (1,0.483) {\scriptsize $c_1$}; %
\node [below] at (1.5,0.816) {\scriptsize $a_7$}; %
\draw [thick] (1.25,0.433) -- (1.5,0) -- (2,0);
\node [above] at (1.25,-0.05) {\scriptsize $c_5$}; %
\node [above] at (1.75,-0.05) {\scriptsize $b_3$}; %
\draw [thick] (1.5,0) -- (1.25,-0.433);
\node [below] at (1.25,0.05) {\scriptsize $c_7$}; %

\draw [thick] (0,-0.866) -- (0.5,-0.866);
\node [below] at (0.25,-0.816) {\footnotesize $b_2$}; %
\draw [thick] (0.5,-0.866) -- (0.75,-0.433);
\node [above] at (0.5,-0.816) {\scriptsize $b_4$}; %
\draw [thick] (0.75,-0.433) -- (1.25,-0.433);
\node [above] at (1,-0.483) {\scriptsize $c_4$}; %
\draw [thick] (1.25,-0.433) -- (1.5,-0.866);
\node [above] at (1.5,-0.816) {\scriptsize $a_3$}; %
\draw [thick] (1.5,-0.866) -- (2,-0.866);
\node [below] at (1.75,-0.816) {\footnotesize $a_5$}; %
\draw [thick] (2,-0.866) -- (2.25,-0.433);
\node [above] at (2.25,-0.916) {\scriptsize $c_7$}; %
\draw [thick] (2,0.866) -- (2.25,0.433);
\node [below] at (2.25,0.916) {\scriptsize $c_4$}; %

\fill [opacity=0.1] (1,0) circle [radius=0.7];
\draw [thick] (1,0) circle [radius=0.7];
\fill [opacity=0.1] (-0.5,-0.866) -- (0.2,-0.866) -- (-0.85,-0.259);
\fill [opacity=0.1] (0.2,-0.866) arc (0:120:0.7);
\draw [thick] (0.2,-0.866) arc (0:120:0.7);
\fill [opacity=0.1] (-0.5,0.866) -- (0.2,0.866) -- (-0.85,0.259);
\fill [opacity=0.1] (0.2,0.866) arc (0:-120:0.7);
\draw [thick] (0.2,0.866) arc (0:-120:0.7);
\fill [opacity=0.1] (1.8,-0.866) -- (2.5,-0.866) -- (2.15,-0.259);
\fill [opacity=0.1] (1.8,-0.866) arc (180:120:0.7);
\draw [thick] (1.8,-0.866) arc (180:120:0.7);
\fill [opacity=0.1] (1.8,0.866) -- (2.5,0.866) -- (2.15,0.259);
\fill [opacity=0.1] (1.8,0.866) arc (180:240:0.7);
\draw [thick] (1.8,0.866) arc (180:240:0.7);

\end{scope}

\begin{scope}[scale=1.25, xshift=6cm]
\draw [gray] (-1,0) -- (0,0) -- (0.5,-0.866) -- (1,0) -- (0.5,0.866) -- (0,0);
\draw [gray] (1,0) -- (2,0) -- (1.5,-0.866) -- (-0.5,-0.866) -- (-1,0) -- (-0.5,0.866) -- (1.5,0.866) -- (2,0);

\fill (0,0) circle [radius=0.05];
\fill (1.5,0.866) circle [radius=0.05];
\fill (1.5,-0.866) circle [radius=0.05];

\draw [dashed] (0,0) -- (1,0);
\draw [dashed] (-0.5,0.866) -- (0,0) -- (-0.5,-0.866);
\draw [dashed] (2.5,0.866) -- (1.5,0.866) -- (1,0) -- (1.5,-0.866) -- (2.5,-0.866);

\node [below] at (-0.5,0.05) {\small $7$}; %
\node [above] at (-0.5,-0.05) {\tiny $(18)$}; %
\node [below] at (0.1,0.866) {\small $12$}; %
\node at (0.45,0.35) {\tiny $(4)$}; %
\node [below] at (0.4,0) {\small $3$}; %
\node [below] at (1.6,0) {\small $14$}; %
\node [below] at (1,0.866) {\small $13$}; %
\node at (1.55,0.4) {\tiny $(15)$}; %
\node [below] at (1.9,0.816) {\small $17$}; %
\node at (0.07,-0.533) {\tiny $(1)$}; %
\node [above] at (1,-0.886) {\tiny $(23)$}; %
\node at (1.95,-0.533) {\tiny $(8)$}; %

\draw [thick] (0,0) -- (-0.25,0.433) -- (0,0.866) -- (0.5,0.866);
\node [above] at (0,0.08) {\scriptsize $a_6$}; %
\node [below] at (-0.25,0.916) {\scriptsize $c_5$}; %
\draw [thick] (-0.25,0.433) -- (-0.75,0.433) -- (-1,0);
\node [above] at (-0.5,0.383) {\scriptsize $c_0$}; %
\node [above] at (-1,0.05) {\scriptsize $b_2$}; %
\draw [thick] (0,0) -- (-0.25,-0.433) -- (0,-0.866);
\node [below] at (0,-0.08) {\scriptsize $a_1$}; %
\node [above] at (-0.25,-0.916) {\scriptsize $c_1$}; %
\draw [thick] (-0.25,-0.433) -- (-0.75,-0.433) -- (-1,0);
\node [below] at (-0.5,-0.383) {\scriptsize $c_5$}; %
\node [below] at (-1,0) {\scriptsize $b_5$}; %

\draw [thick] (0,0) -- (0.5,0) -- (0.75,-0.433);
\node [above] at (0.3,-0.05) {\scriptsize $a_0$}; %
\node [below] at (0.75,0) {\scriptsize $c_1$}; %
\draw [thick] (0.5,0) -- (0.75,0.433) -- (0.5,0.866);
\node [above] at (0.75,0) {\scriptsize $c_3$}; %
\node [below] at (0.5,0.816) {\scriptsize $b_3$}; %
\draw [thick] (0.75,0.433) -- (1.25,0.433) -- (1.5,0.866) -- (2,0.866);
\node [below] at (1,0.483) {\scriptsize $c_5$}; %
\node [below] at (1.5,0.816) {\scriptsize $a_7$}; %
\draw [thick] (1.25,0.433) -- (1.5,0) -- (2,0);
\node [above] at (1.25,-0.05) {\scriptsize $c_3$}; %
\node [above] at (1.75,-0.05) {\scriptsize $b_1$}; %
\draw [thick] (1.5,0) -- (1.25,-0.433);
\node [below] at (1.25,0.05) {\scriptsize $c_2$}; %

\draw [thick] (0,-0.866) -- (0.5,-0.866);
\node [below] at (0.25,-0.816) {\footnotesize $b_1$}; %
\draw [thick] (0.5,-0.866) -- (0.75,-0.433);
\node [above] at (0.5,-0.816) {\scriptsize $b_2$}; %
\draw [thick] (0.75,-0.433) -- (1.25,-0.433);
\node [above] at (1,-0.483) {\scriptsize $c_4$}; %
\draw [thick] (1.25,-0.433) -- (1.5,-0.866);
\node [above] at (1.5,-0.816) {\scriptsize $a_5$}; %
\draw [thick] (1.5,-0.866) -- (2,-0.866);
\node [below] at (1.75,-0.816) {\footnotesize $a_4$}; %
\draw [thick] (2,-0.866) -- (2.25,-0.433);
\node [above] at (2.25,-0.916) {\scriptsize $c_6$}; %
\draw [thick] (2,0.866) -- (2.25,0.433);
\node [below] at (2.25,0.916) {\scriptsize $c_2$}; %

\fill [opacity=0.1] (1,0) circle [radius=0.7];
\draw [thick] (1,0) circle [radius=0.7];
\fill [opacity=0.1] (-0.5,-0.866) -- (0.2,-0.866) -- (-0.85,-0.259);
\fill [opacity=0.1] (0.2,-0.866) arc (0:120:0.7);
\draw [thick] (0.2,-0.866) arc (0:120:0.7);
\fill [opacity=0.1] (-0.5,0.866) -- (0.2,0.866) -- (-0.85,0.259);
\fill [opacity=0.1] (0.2,0.866) arc (0:-120:0.7);
\draw [thick] (0.2,0.866) arc (0:-120:0.7);
\fill [opacity=0.1] (1.8,-0.866) -- (2.5,-0.866) -- (2.15,-0.259);
\fill [opacity=0.1] (1.8,-0.866) arc (180:120:0.7);
\draw [thick] (1.8,-0.866) arc (180:120:0.7);
\fill [opacity=0.1] (1.8,0.866) -- (2.5,0.866) -- (2.15,0.259);
\fill [opacity=0.1] (1.8,0.866) arc (180:240:0.7);
\draw [thick] (1.8,0.866) arc (180:240:0.7);

\draw [very thick, color=blue] (2,-0.866) -- (1.894,-0.516);
\draw [very thick, color=blue, dash pattern=on 1pt off 1pt] (1.894,-0.516) -- (1.606,-0.35);
\draw [very thick, color=blue] (1.606,-0.35) -- (1.5,0) -- (1.606,0.35);
\draw [very thick, color=blue, dash pattern=on 1pt off 1pt] (1.894,0.516) -- (1.606,0.35);
\draw [very thick, color=blue] (2,0.866) -- (1.894,0.516);

\end{scope}

\begin{scope}[scale=1.25, xshift=9cm]
\draw [gray] (-1,0) -- (0,0) -- (0.5,-0.866) -- (1,0) -- (0.5,0.866) -- (0,0);
\draw [gray] (1,0) -- (2,0) -- (1.5,-0.866) -- (-0.5,-0.866) -- (-1,0) -- (-0.5,0.866) -- (1.5,0.866) -- (2,0);
\draw [gray] (2.5,-0.866) -- (2,0) -- (2.5,0.866);

\fill (0,0) circle [radius=0.05];
\fill (1.5,0.866) circle [radius=0.05];
\fill (1.5,-0.866) circle [radius=0.05];

\draw [dashed] (0,0) -- (1,0);
\draw [dashed] (-0.5,0.866) -- (0,0) -- (-0.5,-0.866);
\draw [dashed] (2.5,0.866) -- (1.5,0.866) -- (1,0) -- (1.5,-0.866) -- (2.5,-0.866);

\node [below] at (-0.5,0.05) {\small $10$}; %
\node [above] at (-0.5,-0.05) {\tiny $(0)$}; %
\node [below] at (0.1,0.866) {\small $4$}; %
\node at (0.45,0.35) {\tiny $(12)$}; %
\node [below] at (0.4,0) {\small $16$}; %
\node [below] at (1.6,0) {\small $19$}; %
\node [below] at (1,0.866) {\small $15$}; %
\node at (1.55,0.4) {\tiny $(22)$}; %
\node [below] at (1.9,0.816) {\small $5$}; %
\node at (0.07,-0.533) {\tiny $(11)$}; %
\node [above] at (1,-0.886) {\tiny $(17)$}; %
\node at (1.95,-0.533) {\tiny $(20)$}; %

\draw [thick] (0,0) -- (-0.25,0.433) -- (0,0.866) -- (0.5,0.866);
\node [above] at (0,0.08) {\scriptsize $a_0$}; %
\node [below] at (-0.25,0.916) {\scriptsize $c_3$}; %
\draw [thick] (-0.25,0.433) -- (-0.75,0.433) -- (-1,0);
\node [above] at (-0.5,0.383) {\scriptsize $c_0$}; %
\node [above] at (-1,0.05) {\scriptsize $b_0$}; %
\draw [thick] (0,0) -- (-0.25,-0.433) -- (0,-0.866);
\node [below] at (0,-0.08) {\scriptsize $a_2$}; %
\node [above] at (-0.25,-0.916) {\scriptsize $c_6$}; %
\draw [thick] (-0.25,-0.433) -- (-0.75,-0.433) -- (-1,0);
\node [below] at (-0.5,-0.383) {\scriptsize $c_7$}; %
\node [below] at (-1,0) {\scriptsize $b_4$}; %

\draw [thick] (0,0) -- (0.5,0) -- (0.75,-0.433);
\node [above] at (0.3,-0.05) {\scriptsize $a_6$}; %
\node [below] at (0.75,0) {\scriptsize $c_3$}; %
\draw [thick] (0.5,0) -- (0.75,0.433) -- (0.5,0.866);
\node [above] at (0.75,0) {\scriptsize $c_5$}; %
\node [below] at (0.5,0.816) {\scriptsize $b_1$}; %
\draw [thick] (0.75,0.433) -- (1.25,0.433) -- (1.5,0.866) -- (2,0.866);
\node [below] at (1,0.483) {\scriptsize $c_3$}; %
\node [below] at (1.5,0.816) {\scriptsize $a_7$}; %
\draw [thick] (1.25,0.433) -- (1.5,0) -- (2,0);
\node [above] at (1.25,-0.05) {\scriptsize $c_1$}; %
\node [above] at (1.75,-0.05) {\scriptsize $b_5$}; %
\draw [thick] (1.5,0) -- (1.25,-0.433);
\node [below] at (1.25,0.05) {\scriptsize $c_4$}; %

\draw [thick] (0,-0.866) -- (0.5,-0.866);
\node [below] at (0.25,-0.816) {\footnotesize $b_3$}; %
\draw [thick] (0.5,-0.866) -- (0.75,-0.433);
\node [above] at (0.5,-0.816) {\scriptsize $b_0$}; %
\draw [thick] (0.75,-0.433) -- (1.25,-0.433);
\node [above] at (1,-0.483) {\scriptsize $c_2$}; %
\draw [thick] (1.25,-0.433) -- (1.5,-0.866);
\node [above] at (1.5,-0.816) {\scriptsize $a_4$}; %
\draw [thick] (1.5,-0.866) -- (2,-0.866);
\node [below] at (1.75,-0.816) {\footnotesize $a_3$}; %
\draw [thick] (2,-0.866) -- (2.25,-0.433) -- (2,0);
\node [above] at (2.25,-0.916) {\scriptsize $c_6$}; %
\draw [thick] (2,0.866) -- (2.25,0.433) -- (2,0);
\node [below] at (2.25,0.916) {\scriptsize $c_4$}; %

\fill [opacity=0.1] (1,0) circle [radius=0.7];
\draw [thick] (1,0) circle [radius=0.7];
\fill [opacity=0.1] (-0.5,-0.866) -- (0.2,-0.866) -- (-0.85,-0.259);
\fill [opacity=0.1] (0.2,-0.866) arc (0:120:0.7);
\draw [thick] (0.2,-0.866) arc (0:120:0.7);
\fill [opacity=0.1] (-0.5,0.866) -- (0.2,0.866) -- (-0.85,0.259);
\fill [opacity=0.1] (0.2,0.866) arc (0:-120:0.7);
\draw [thick] (0.2,0.866) arc (0:-120:0.7);
\fill [opacity=0.1] (1.8,-0.866) -- (2.5,-0.866) -- (2.15,-0.259);
\fill [opacity=0.1] (1.8,-0.866) arc (180:120:0.7);
\draw [thick] (1.8,-0.866) arc (180:120:0.7);
\fill [opacity=0.1] (1.8,0.866) -- (2.5,0.866) -- (2.15,0.259);
\fill [opacity=0.1] (1.8,0.866) arc (180:240:0.7);
\draw [thick] (1.8,0.866) arc (180:240:0.7);

\draw [very thick, dash pattern=on 1pt off 1pt, color=red] (-0.5,-0.866) -- (-0.5,0.866);
\draw [very thick, color=red] (-0.5,-0.866) -- (-0.5,-0.166);
\draw [very thick, color=red] (-0.5,0.866) -- (-0.5,0.166);

\draw [very thick, dash pattern=on 1pt off 1pt, color=blue] (1,-0.866) -- (1,0.866);
\draw [very thick, color=blue] (1,-0.7) -- (1,0.7);

\end{scope}

\end{tikzpicture}
\caption{Compressing disk boundaries for $S_{2,1}$ in $M_{2,1}(\mu_0)$.}
\label{fig: comp disks from cusp}
\end{figure}
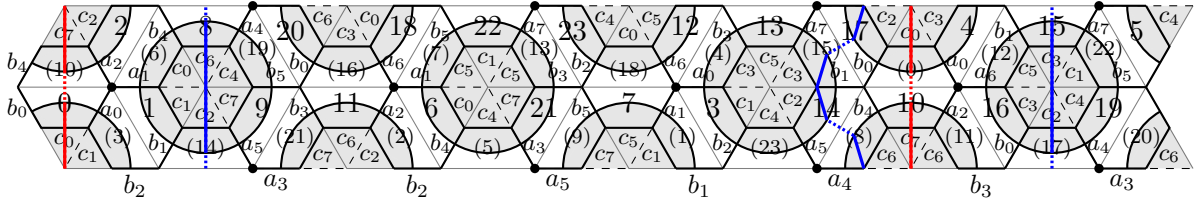

Let $\widetilde{M}_{2,1}^-$ refer to the preimage in $\Sigma_0\times[0,\infty)$ of $M_{2,1}^-$. Each red or blue arc of \Cref{fig: comp disks from cusp} indicates a closed curve on the boundary of $\widetilde{M}_{2,1}^-$ that bounds a punctured disk in $\widetilde{M}_{2,1}^-$ (after straightening the crooked blue arc at finite height). The solid sub-arcs represent these curves' intersection with $S_{2,1}$, and the dotted sub-arcs their intersection with $\Sigma_0\times\{0\}$. $M_{2,1}^-$ is recovered from $\widetilde{M}_{2,1}^-$ by applying the face-pairings of $\Sigma_0\times\{0\}$ to its intersection with $\partial \widetilde{M}_{2,1}^-$, which also pairs off the dotted sub-arcs of the red and blue curves. This joins the punctured disks together to form multiply-punctured disks, as pictured schematically in \Cref{fig: caploz}.

\begin{figure}[ht]
\begin{tikzpicture}

\begin{scope}[scale=0.9]
    \fill [color=red, opacity=0.1] (0,0) circle [radius=1.5];
    \draw [thick, color=red] (0,0) circle [radius=1.5];
    \draw [thick, dotted, color=red] (0,-1.5) -- (0,1.5);
    \draw (-0.1,0) -- (0.1,0);
    \node [right] at (0,-0.75) {$0$};
    \node [left] at (0,0.75) {$10$};
    \fill [color=white] (0.75,0) circle [radius=0.08];
    \draw [dotted] (0.75,0) circle [radius=0.1];
    \fill [color=white] (-0.75,0) circle [radius=0.08];
    \draw [dotted] (-0.75,0) circle [radius=0.1];
\end{scope}

\begin{scope}[scale=0.9, xshift=4.5cm]
    \fill [color=blue, opacity=0.1] (0,-1.5) -- (3,-1.5) -- (3,1.5) -- (0,1.5);
    \draw [thick, color=blue] (0,-1.5) -- (3,-1.5);
    \draw [thick, color=blue] (0,1.5) -- (3,1.5);
    \draw [thick, dotted, color=blue] (0,-1.5) -- (0,1.5);
    \draw [thick, dotted, color=blue] (3,-1.5) -- (3,1.5);
    \fill [color=blue, opacity=0.1] (3,-1.5) arc (-90:90:1.5);
    \draw [thick, color=blue] (3,-1.5) arc (-90:90:1.5);
    \fill [color=blue, opacity=0.1] (0,1.5) arc (90:270:1.5);
    \draw [thick, color=blue] (0,1.5) arc (90:270:1.5);
    \draw (-0.1,0) -- (0.1,0);
    \draw (2.9,0) -- (3.1,0);
    \node [left] at (0,0.75) {$15$};
    \node [right] at (0,-0.75) {$17$};
    \node [left] at (3,0.75) {$14$};
    \node [right] at (3,-0.75) {$8$};

    \draw [dotted] (1.5,0) circle [radius=0.75];
    \draw [dotted] (1.5,0.75) -- (1.5,1.5);
    \draw [dotted] (1.5,-0.75) -- (1.5,-1.5);
    \draw [dotted] (2.03,0.53) -- (3,1.5);
    \draw [dotted] (2.03,-0.53) -- (3,-1.5);
    \draw [dotted] (0.97,0.53) -- (0,1.5);
    \draw [dotted] (0.97,-0.53) -- (0,-1.5);

    \fill [color=white] (-0.75,0) circle [radius=0.08];
    \draw [dotted] (-0.75,0) circle [radius=0.1];
    \fill [color=white] (1.5,0) circle [radius=0.08];
    \draw [dotted] (1.5,0) circle [radius=0.1];
    \fill [color=white] (3.75,0) circle [radius=0.08];
    \draw [dotted] (3.75,0) circle [radius=0.1];

\end{scope}

\end{tikzpicture}
\caption{Two-punctured caplet; three-punctured lozenge.}
\label{fig: caploz}
\end{figure}
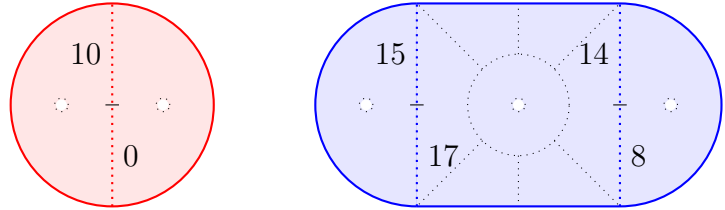

On the left in \Cref{fig: caploz}, the two punctured disks in $\widetilde{M}_{2,1}^-$ are joined to form a two-punctured disk in $M_{2,1}$ with its boundary on $S_{2,1}$. On the right, the three punctured disks in $\widetilde{M}_{2,1}^-$ join to form a three-punctured disk in $M_{2,1}$. Figure \ref{fig: S_21 with curves} depicts the disk boundaries on $S_{2,1}$ as solid curves. These are respectively isotopic to the same-colored dashed curves that lie in the one-skeleton of the decomposition into hexagons.

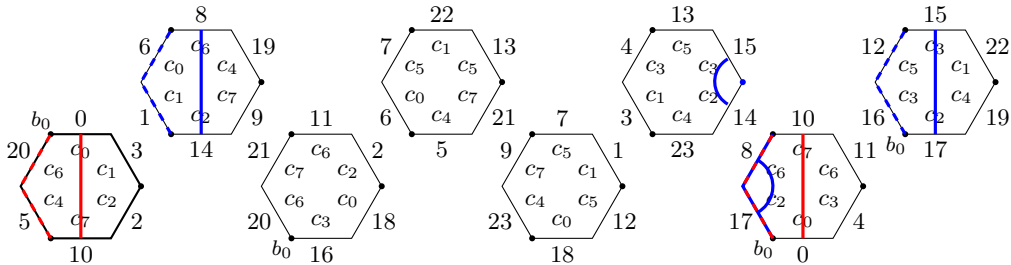
\begin{figure}[ht]
\begin{tikzpicture}

\begin{scope}[scale=0.8]
    \draw [thick] (0,0) -- (1,0) -- (1.5,0.866) -- (1,1.732) -- (0,1.732) -- (-0.5,0.866) -- cycle;
    \fill (0,0) circle [radius=0.05];
    \fill (0,1.732) circle [radius=0.05];
    \fill (1.5,0.866) circle [radius=0.05];

    \node [below right] at (1.15,0.583) {\scriptsize $2$};
    \node [above right] at (1.15,1.149) {\scriptsize $3$};
    \node [above] at (0.5,1.682) {\scriptsize $0$};
    \node [above left] at (-0.15,1.149) {\scriptsize $20$};
    \node [below left] at (-0.15,0.583) {\scriptsize $5$};
    \node [below] at (0.5,0.05) {\scriptsize $10$};

    \node [above left] at (0.2,1.632) {\tiny $b_0$};

    \node [above left] at (1.3,0.333) {\scriptsize $c_2$};
    \node [below left] at (1.3,1.399) {\scriptsize $c_1$};
    \node [below] at (0.5,1.732) {\scriptsize $c_0$};
    \node [below right] at (-0.3,1.399) {\scriptsize $c_6$};
    \node [above right] at (-0.3,0.333) {\scriptsize $c_4$};
    \node [above] at (0.5,0) {\scriptsize $c_7$};


    \draw [very thick, color=red] (0.5,0) -- (0.5,1.732);
    \draw [very thick, color=red, dashed] (0,0) -- (-0.5,0.866) -- (0,1.732);
\end{scope}
    
\begin{scope}[scale=0.8, xshift=2cm, yshift=1.732cm]
    \draw (0,0) -- (1,0) -- (1.5,0.866) -- (1,1.732) -- (0,1.732) -- (-0.5,0.866) -- cycle;
    \fill (0,0) circle [radius=0.05];
    \fill (0,1.732) circle [radius=0.05];
    \fill (1.5,0.866) circle [radius=0.05];

    \node [below right] at (1.15,0.583) {\scriptsize $9$};
    \node [above right] at (1.15,1.149) {\scriptsize $19$};
    \node [above] at (0.5,1.682) {\scriptsize $8$};
    \node [above left] at (-0.15,1.149) {\scriptsize $6$};
    \node [below left] at (-0.15,0.583) {\scriptsize $1$};
    \node [below] at (0.5,0.05) {\scriptsize $14$};

    \node [above left] at (1.3,0.333) {\scriptsize $c_7$};
    \node [below left] at (1.3,1.399) {\scriptsize $c_4$};
    \node [below] at (0.5,1.732) {\scriptsize $c_6$};
    \node [below right] at (-0.3,1.399) {\scriptsize $c_0$};
    \node [above right] at (-0.3,0.333) {\scriptsize $c_1$};
    \node [above] at (0.5,0) {\scriptsize $c_2$};

    \draw [very thick, color=blue] (0.5,0) -- (0.5,1.732);
    \draw [very thick, color=blue, dashed] (0,0) -- (-0.5,0.866) -- (0,1.732);
\end{scope}

\begin{scope}[scale=0.8, xshift=4cm]
    \draw (0,0) -- (1,0) -- (1.5,0.866) -- (1,1.732) -- (0,1.732) -- (-0.5,0.866) -- cycle;
    \fill (0,0) circle [radius=0.05];
    \fill (0,1.732) circle [radius=0.05];
    \fill (1.5,0.866) circle [radius=0.05];

    \node [below right] at (1.15,0.583) {\scriptsize $18$};
    \node [above right] at (1.15,1.149) {\scriptsize $2$};
    \node [above] at (0.5,1.682) {\scriptsize $11$};
    \node [above left] at (-0.15,1.149) {\scriptsize $21$};
    \node [below left] at (-0.15,0.583) {\scriptsize $20$};
    \node [below] at (0.5,0.05) {\scriptsize $16$};

    \node [below left] at (0.2,0.15) {\tiny $b_0$};

    \node [above left] at (1.3,0.333) {\scriptsize $c_0$};
    \node [below left] at (1.3,1.399) {\scriptsize $c_2$};
    \node [below] at (0.5,1.732) {\scriptsize $c_6$};
    \node [below right] at (-0.3,1.399) {\scriptsize $c_7$};
    \node [above right] at (-0.3,0.333) {\scriptsize $c_6$};
    \node [above] at (0.5,0) {\scriptsize $c_3$};

\end{scope}

\begin{scope}[scale=0.8, xshift=6cm, yshift=1.732cm]
    \draw (0,0) -- (1,0) -- (1.5,0.866) -- (1,1.732) -- (0,1.732) -- (-0.5,0.866) -- cycle;
    \fill (0,0) circle [radius=0.05];
    \fill (0,1.732) circle [radius=0.05];
    \fill (1.5,0.866) circle [radius=0.05];

    \node [below right] at (1.15,0.583) {\scriptsize $21$};
    \node [above right] at (1.15,1.149) {\scriptsize $13$};
    \node [above] at (0.5,1.682) {\scriptsize $22$};
    \node [above left] at (-0.15,1.149) {\scriptsize $7$};
    \node [below left] at (-0.15,0.583) {\scriptsize $6$};
    \node [below] at (0.5,0.05) {\scriptsize $5$};

    \node [above left] at (1.3,0.333) {\scriptsize $c_7$};
    \node [below left] at (1.3,1.399) {\scriptsize $c_5$};
    \node [below] at (0.5,1.732) {\scriptsize $c_1$};
    \node [below right] at (-0.3,1.399) {\scriptsize $c_5$};
    \node [above right] at (-0.3,0.333) {\scriptsize $c_0$};
    \node [above] at (0.5,0) {\scriptsize $c_4$};

\end{scope}

\begin{scope}[scale=0.8, xshift=8cm]
    \draw (0,0) -- (1,0) -- (1.5,0.866) -- (1,1.732) -- (0,1.732) -- (-0.5,0.866) -- cycle;
    \fill (0,0) circle [radius=0.05];
    \fill (0,1.732) circle [radius=0.05];
    \fill (1.5,0.866) circle [radius=0.05];

    \node [below right] at (1.15,0.583) {\scriptsize $12$};
    \node [above right] at (1.15,1.149) {\scriptsize $1$};
    \node [above] at (0.5,1.682) {\scriptsize $7$};
    \node [above left] at (-0.15,1.149) {\scriptsize $9$};
    \node [below left] at (-0.15,0.583) {\scriptsize $23$};
    \node [below] at (0.5,0.05) {\scriptsize $18$};

    \node [above left] at (1.3,0.333) {\scriptsize $c_5$};
    \node [below left] at (1.3,1.399) {\scriptsize $c_1$};
    \node [below] at (0.5,1.732) {\scriptsize $c_5$};
    \node [below right] at (-0.3,1.399) {\scriptsize $c_7$};
    \node [above right] at (-0.3,0.333) {\scriptsize $c_4$};
    \node [above] at (0.5,0) {\scriptsize $c_0$};

\end{scope}

\begin{scope}[scale=0.8, xshift=10cm, yshift=1.732cm]
    \draw (0,0) -- (1,0) -- (1.5,0.866) -- (1,1.732) -- (0,1.732) -- (-0.5,0.866) -- cycle;
    \fill (0,0) circle [radius=0.05];
    \fill (0,1.732) circle [radius=0.05];
    \fill [color=blue] (1.5,0.866) circle [radius=0.05];

    \node [below right] at (1.15,0.583) {\scriptsize $14$};
    \node [above right] at (1.15,1.149) {\scriptsize $15$};
    \node [above] at (0.5,1.682) {\scriptsize $13$};
    \node [above left] at (-0.15,1.149) {\scriptsize $4$};
    \node [below left] at (-0.15,0.583) {\scriptsize $3$};
    \node [below] at (0.5,0.05) {\scriptsize $23$};

    \node [above left] at (1.3,0.333) {\scriptsize $c_2$};
    \node [below left] at (1.3,1.399) {\scriptsize $c_3$};
    \node [below] at (0.5,1.732) {\scriptsize $c_5$};
    \node [below right] at (-0.3,1.399) {\scriptsize $c_3$};
    \node [above right] at (-0.3,0.333) {\scriptsize $c_1$};
    \node [above] at (0.5,0) {\scriptsize $c_4$};

    \draw [very thick, color=blue] (1.25,0.5) arc (240:120:0.433);
\end{scope}

\begin{scope}[scale=0.8, xshift=12cm]
    \draw (0,0) -- (1,0) -- (1.5,0.866) -- (1,1.732) -- (0,1.732) -- (-0.5,0.866) -- cycle;
    \fill (0,0) circle [radius=0.05];
    \fill (0,1.732) circle [radius=0.05];
    \fill (1.5,0.866) circle [radius=0.05];

    \node [below right] at (1.15,0.583) {\scriptsize $4$};
    \node [above right] at (1.15,1.149) {\scriptsize $11$};
    \node [above] at (0.5,1.682) {\scriptsize $10$};
    \node [above left] at (-0.15,1.149) {\scriptsize $8$};
    \node [below left] at (-0.15,0.583) {\scriptsize $17$};
    \node [below] at (0.5,0.05) {\scriptsize $0$};

    \node [below left] at (0.2,0.15) {\tiny $b_0$};

    \node [above left] at (1.3,0.333) {\scriptsize $c_3$};
    \node [below left] at (1.3,1.399) {\scriptsize $c_6$};
    \node [below] at (0.5,1.732) {\scriptsize $c_7$};
    \node [below right] at (-0.3,1.399) {\scriptsize $c_6$};
    \node [above right] at (-0.3,0.333) {\scriptsize $c_2$};
    \node [above] at (0.5,0) {\scriptsize $c_0$};

    \draw [very thick, color=red] (0.5,0) -- (0.5,1.732);
    \draw [very thick, color=blue] (0,0) -- (-0.5,0.866) -- (0,1.732);
    \draw [very thick, color=red, dashed] (0,0) -- (-0.5,0.866) -- (0,1.732);

    \draw [very thick, color=blue] (-0.25,0.433) arc (-60:60:0.5);
\end{scope}

\begin{scope}[scale=0.8, xshift=14.2cm, yshift=1.732cm]
    \draw (0,0) -- (1,0) -- (1.5,0.866) -- (1,1.732) -- (0,1.732) -- (-0.5,0.866) -- cycle;
    \fill (0,0) circle [radius=0.05];
    \fill (0,1.732) circle [radius=0.05];
    \fill (1.5,0.866) circle [radius=0.05];

    \node [below right] at (1.15,0.583) {\scriptsize $19$};
    \node [above right] at (1.15,1.149) {\scriptsize $22$};
    \node [above] at (0.5,1.682) {\scriptsize $15$};
    \node [above left] at (-0.15,1.149) {\scriptsize $12$};
    \node [below left] at (-0.15,0.583) {\scriptsize $16$};
    \node [below] at (0.5,0.05) {\scriptsize $17$};

    \node [below left] at (0.2,0.15) {\tiny $b_0$};

    \node [above left] at (1.3,0.333) {\scriptsize $c_4$};
    \node [below left] at (1.3,1.399) {\scriptsize $c_1$};
    \node [below] at (0.5,1.732) {\scriptsize $c_3$};
    \node [below right] at (-0.3,1.399) {\scriptsize $c_5$};
    \node [above right] at (-0.3,0.333) {\scriptsize $c_3$};
    \node [above] at (0.5,0) {\scriptsize $c_2$};

    \draw [very thick, color=blue] (0.5,0) -- (0.5,1.732);
    \draw [very thick, color=blue, dashed] (0,0) -- (-0.5,0.866) -- (0,1.732);
\end{scope}

\end{tikzpicture}
\caption{The punctured disk boundaries on $S_{2,1}$.}
\label{fig: S_21 with curves}
\end{figure}

By tracking hexagon edge-pairings, it is straightforward to check that the union of the red and blue curves does not separate $S_{2,1}$. Reading the dashed curves off as a concatenation of edge paths we obtain $e_{17}\bar{e}_8e_5\bar{e}_{20} = D^{-1}$ for the red curve, for $D$ as in \Cref{basis change}, and $e_{16}\bar{e}_{12}e_1\bar{e}_6e_8\bar{e}_{17} = E_8$ for the blue curve.\end{proof}

\begin{theorem}\label{main by hand} For the cover $M_{2,1}\to \doh^{333}_2$ prescribed by the right-permutation representation $\sigma_{2,1}\co\dprgr^{333}_2\to S_{24}$ given in \Cref{table_pr}, and the meridian $\mu_0$ for the cusp of $M_{2,1}$ identified in \Cref{filling slope}, the Dehn filled manifold $M_{2,1}(\mu_0)$ satisfies $\pi_1 M_{2,1}(\mu_0)\cong\mathbb{Z}/13\mathbb{Z}$. The preimage $\widetilde{M}_{2,1}$ of $M_{2,1}$ in the universal cover of $M_{2,1}(\mu_0)$ has one cusp.
\end{theorem}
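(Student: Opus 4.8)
The plan is to compute $\pi_1 M_{2,1}(\mu_0)$ from the decomposition of $M_{2,1}$ along the totally geodesic surface $S_{2,1}$ developed in this section, and then to read off the cusp count homologically. Since the cusp of $M_{2,1}$ lies in the non-compact piece $M_{2,1}^-$, the filling along $\mu_0$ is supported there, so $M_{2,1}(\mu_0) = M_{2,1}^+ \cup_{S_{2,1}} M_{2,1}^-(\mu_0)$. By \Cref{M_21 minus}, $M_{2,1}^-(\mu_0)$ is a genus-two handlebody whose meridian disks have boundaries represented in $\pi_1 S_{2,1}$ by $D$ and $E_8$. Attaching a handlebody along $\partial M_{2,1}^+ = S_{2,1}$ is the same as attaching two $2$-handles along these compressing curves and capping off with a $3$-ball, so van Kampen's theorem gives $\pi_1 M_{2,1}(\mu_0) \cong \pi_1 M_{2,1}^+ / \langle\langle \iota_*(D), \iota_*(E_8)\rangle\rangle$, where $\iota\colon S_{2,1}\hookrightarrow M_{2,1}^+$ is the inclusion. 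The relevant images $\iota_*(D) = C_6 C_4^{-1} C_6$ and $\iota_*(E_8) = C_3 C_5^{-1} C_1 C_6$ are recorded in \Cref{basis change} and \Cref{M_21plus}; thus $\pi_1 M_{2,1}(\mu_0)$ is obtained from the presentation of $\pi_1 M_{2,1}^+$ in \Cref{M_21plus} by adjoining the two relators $C_6 C_4^{-1} C_6$ and $C_3 C_5^{-1} C_1 C_6$.

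The crux is then a Tietze simplification of this six-generator, six-relator presentation, which I expect to collapse all the way to a cyclic group. Concretely, the third relation of \Cref{M_21plus} gives $C_7 = C_6 C_3^{-1} C_6$; the new relator $C_6 C_4^{-1} C_6$ gives $C_4 = C_6^2$; and $C_3 C_5^{-1} C_1 C_6$ gives $C_5 = C_1 C_6 C_3$. Substituting these into the first relation $C_1^{-1}C_7^{-1}C_4 C_6^{-1}$ forces $C_3 = C_6 C_1$, and feeding everything into the second relation $C_1 C_3^{-1}C_5 C_3^{-1}C_4^{-1}$ then forces $C_1 = C_6^2$. At this stage every generator is a power of $C_6$, namely $C_1 = C_4 = C_6^2$, $C_3 = C_6^3$, $C_5 = C_6^6$, and $C_7 = C_6^{-1}$, so the group is cyclic; the single remaining relation $C_5^{-1}C_1 C_5^{-1}C_7 C_4^{-1}$ reads $C_6^{-11} = C_6^2$, i.e.~$C_6^{13} = 1$. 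This yields $\pi_1 M_{2,1}(\mu_0) \cong \langle C_6 \mid C_6^{13}\rangle \cong \mathbb{Z}/13\mathbb{Z}$. I expect this simplification to be the main obstacle: it is mechanical but noncommutative, so the substitutions must be performed in the right order for the cyclic collapse to become visible, and one must verify that the last relation produces exactly order $13$ rather than a proper divisor or an unexpected larger quotient.

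For the cusp count I would argue homologically, exploiting that $\pi_1 M_{2,1}(\mu_0)\cong\mathbb{Z}/13\mathbb{Z}$ is abelian of prime order. Writing $\hat X \to M_{2,1}(\mu_0)$ for the universal (degree-$13$) cover, the cusps of $\widetilde{M}_{2,1}$ are the components of the preimage of the cusp torus $T$ of $M_{2,1}$, and their number equals the index $[\,\mathbb{Z}/13\mathbb{Z} : \operatorname{im}(\pi_1 T \to \pi_1 M_{2,1}(\mu_0))\,]$. Since the target is abelian, this image is generated by the $H_1$-classes of $\mu_0$ and $\lambda_0$; but $\mu_0$ bounds the filling disk and so dies, while by \Cref{filling slope} and \Cref{M_21 H1} the class $[\lambda_0] = -62\gamma$ reduces modulo the filling relation $[\mu_0] = -13\gamma$ to $3\gamma$, which generates $\mathbb{Z}/13\mathbb{Z}$ because $\gcd(3,13)=1$. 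Hence the image is the whole group, the index is $1$, and $\widetilde{M}_{2,1}$ has a single cusp. (Alternatively, once $\pi_1 M_{2,1}(\mu_0)$ is known to be finite cyclic one could invoke elliptization to identify $M_{2,1}(\mu_0)$ as a lens space and apply \Cref{prop:GonalezAcunaWhitten} directly; I prefer the homological route, as it is self-contained and reuses the data already assembled in \Cref{filling slope}.)
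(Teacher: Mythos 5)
Your proposal is correct and follows essentially the same route as the paper: both realize $M_{2,1}(\mu_0)$ as $M_{2,1}^+$ with two-handles attached along $D$ and $E_8$ via \Cref{M_21 minus}, kill their images $C_6C_4^{-1}C_6$ and $C_3C_5^{-1}C_1C_6$ in the presentation of \Cref{M_21plus}, and Tietze-reduce to $\mathbb{Z}/13\mathbb{Z}$ (your elimination order ends at the generator $C_6$ where the paper's ends at $C_1$, but the computations agree), and both deduce one-cuspedness from the fact that the peripheral subgroup surjects the abelian quotient. Your explicit check that $[\lambda_0]\mapsto 3\in\mathbb{Z}/13\mathbb{Z}$ is a slightly more computational version of the paper's appeal to $H_1$-surjectivity of the cusp from \Cref{filling slope}.
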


\begin{remark} Geometrization implies that the universal cover of $M_{2,1}(\mu_0)$ is $\mathbb{S}^3$, hence that $\widetilde{M}_{2,1}$ is a knot complement in $\mathbb{S}^3$.\end{remark}

\begin{proof} Applying \Cref{M_21 minus}, we regard $M_{2,1}(\mu_0)$ as having been obtained from $M_{2,1}^+$ by attaching two-handles along a pair of disjoint simple closed curves in $S_{2,1}$---the red and blue curves of Figures \ref{fig: comp disks from cusp} and \ref{fig: S_21 with curves}---then capping off the resulting two-sphere boundary component with a ball. By \Cref{M_21 minus}, the two-handles' attaching curves are represented in $\pi_1 S_{2,1}$ by $D$ and $E_8$, which respectively include to $C_6C_4^{-1}C_6$ and $C_3C_5^{-1}C_1C_6$ in $\pi_1 M_{2,1}^+$ by \Cref{basis change} and \Cref{M_21plus}. We therefore have that
\[ \pi_1 M_{2,1}(\mu_0) \cong \pi_1 M_{2,1}^+/\langle\langle C_6C_4^{-1}C_6,C_3C_5^{-1}C_1C_6\rangle\rangle, \]
In the quotient, $C_4 = C_6^2$ and $C_6 = C_1^{-1}C_5C_3^{-1}$. We may thus produce a presentation for $\pi_1 M_{2,1}(\mu_0)$ by eliminating the generators $C_4$ and $C_6$ from the presentation for $\pi_1 M_{2,1}^+$ given in \Cref{M_21plus}, and substituting for them in the original four relations, yielding:\begin{align*}
    & C_1^{-1}C_7^{-1}(C_1^{-1}C_5C_3^{-1}) && C_1C_3^{-1}C_5C_3^{-1}(C_3C_5^{-1}C_1)(C_3C_5^{-1}C_1) = C_1C_3^{-1}C_1C_3C_5^{-1}C_1 \\ 
    & C_3(C_3C_5^{-1}C_1)C_7(C_3C_5^{-1}C_1) && C_5^{-1}C_1C_5^{-1}C_7(C_3C_5^{-1}C_1)(C_3C_5^{-1}C_1) \end{align*}
The top left relation gives $C_7 = C_1^{-1}C_5C_3^{-1}C_1^{-1}$. We may thus eliminate that relation and the generator $C_7$ by substituting for it in the other relations. The bottom two above become:\begin{align*}
    & C_3(C_3C_5^{-1}C_1)(C_1^{-1}C_5C_3^{-1}C_1^{-1})(C_3C_5^{-1}C_1) = C_3C_1^{-1}C_3C_5^{-1}C_1,\ \mbox{and} \\
    & C_5^{-1}C_1C_5^{-1}(C_1^{-1}C_5C_3^{-1}C_1^{-1})(C_3C_5^{-1}C_1)(C_3C_5^{-1}C_1)
\end{align*}
Solving the first of these two relations gives $C_5 = C_1C_3C_1^{-1}C_3$, which we use to eliminate that relation and the generator $C_5$. Substituting into the second relation above, and noting that $C_3C_5^{-1}C_1 = C_1C_3^{-1}$, yields:\begin{align}
    & (C_3^{-1}C_1C_3^{-1}C_1^{-1})C_1(C_3^{-1}C_1C_3^{-1}C_1^{-1})C_1^{-1}(C_1C_3C_1^{-1}C_3)C_3^{-1}C_1^{-1}(C_1C_3^{-1})(C_1C_3^{-1}) = \nonumber\\
    & (C_3^{-1}C_1C_3^{-1})(C_3^{-1}C_1C_3^{-1})C_1^{-1}C_3C_1^{-1}(C_3^{-1}C_1C_3^{-1}) \label{cable}
\end{align}
Upon substituting for $C_5$ in the final remaining relation, which was top right of the original four recorded above, we obtain:
\begin{align}\label{fable} C_1C_3^{-1}C_1C_3(C_3^{-1}C_1C_3^{-1}C_1^{-1})C_1 = C_1C_3^{-1}C_1^2C_3^{-1} \end{align}
This can be rewritten as $C_3^{-1}C_1C_3^{-1} = C_1^{-2}$. Using this to simplify (\ref{cable}) yields:
\[ (C_1^{-2})(C_1^{-2})C_1^{-1}C_3C_1^{-1}(C_1^{-2}). \]
This gives $C_3 = C_1^{8}$, so we can eliminate the generator $C_3$ and the relation (\ref{cable}). The final relation (\ref{fable}) becomes $C_1^{13}$ after substituting for $C_3$. Thus $\pi_1 M_{2,1}(\mu_0)\cong\mathbb{Z}/13\mathbb{Z}$.

The inclusion-induced map $\pi_1 M_{2,1}\to \pi_1 M_{2,1}(\mu_0)$ therefore factors through the abelianization $H_1 M_{2,1}$. Since the cusp of $M_{2,1}$ is $H_1$-surjective by \Cref{filling slope}, it thus has connected preimage in the universal cover of $M_{2,1}(\mu_0)$; hence $\widetilde{M}_{2,1}$ is one-cusped as claimed.
\end{proof}

\begin{remark} Tracing through the substitutions in the proof of \Cref{main by hand}, we obtain the following values for the generators of $\pi_1 M_{2,1}^+$ in $\pi_1 M_{2,1}(\mu_0) = \langle C_1\,|\,C_1^{13}\,\rangle$:
\[ C_3 = C_1^8,\quad C_4 = C_1,\quad C_5 = C_1^3,\quad C_6 = C_1^7,\quad C_7 = C_1^6. \]
\end{remark}

\section{Maps, surfaces, and mutation}\label{sec: mutants}
This section focuses on understanding isometries of the four $M_{i,j}$ manifolds,  their covers $\widetilde{M}_{i,j}$, and of the submanifolds obtained by cutting along the separating totally geodesic surfaces that \Cref{geodesic surface} implies they possess. We first give a simple combinatorial criterion describing isometries between covers of the $\doh^{333}_i$, $i=2$ or $3$.


\begin{lemma}\label{comb isom}
    For $i=2$ or $3$, suppose $M$ and $M'$ are manifolds covering $\doh^{333}_i$ with identical degree $n$, respectively prescribed by right-permutation representations $\sigma$ and $\sigma'$ as in \Cref{permrep}, each equipped with the orientation lifted from $\doh^{333}_i$. The set of isometries $M\to M'$ corresponds bijectively to the set of permutations $\phi\in S_n$ with the following properties:\begin{itemize}
        \item For $\phi$ corresponding to an orientation-preserving isometry, $\phi~\sigma(g) = \sigma'(g)\phi\in S_n$ for each generator $g\in\{x,y,z,w\}$ from (\ref{able}).
        \item For $\phi$ corresponding to an orientation-reversing isometry, $\phi~\sigma(g) = \sigma'(g)^{-1}\phi\in S_n$ for each generator $g\in\{x,y,z,w\}$ from (\ref{able}).
    \end{itemize}
Each $\phi\in S_n$ satisfying a criterion above determines an isometry $f\co M\to M'$ such that $f(\dpr_k) = \dpr_{\phi(k)}$ for each $k\in\{0,\hdots,n-1\}$, where the $\dpr_k$ belong to the respective decompositions of $M$ and $M'$ described in \Cref{cell decomp}.
\end{lemma}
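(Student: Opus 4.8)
The plan is to identify isometries between these manifold covers with elements of the commensurator of the orbifold group, and then to translate that group-theoretic description into the stated combinatorial condition on permutations. Write $\Gamma = \dprgr^{333}_i$ and $\hat\Gamma = \prgr^{333}_i$, and let $G, G' < \Gamma$ be the index-$n$ subgroups with $M = \Hthree/G$ and $M' = \Hthree/G'$ arising from $\sigma$ and $\sigma'$ as in \Cref{cell decomp}, with coset representatives $g_0 = e, g_1, \dots, g_{n-1}$ and $g'_0 = e, \dots, g'_{n-1}$. Since $M$ and $M'$ are finite-volume hyperbolic manifolds, every isometry $f \co M \to M'$ lifts to an isometry $\tilde f$ of $\Hthree$ satisfying $\tilde f G \tilde f^{-1} = G'$, and $\tilde f$ is determined up to left multiplication by the deck group $G'$ of $M'$. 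Because $G$ and $G'$ have finite index in $\Gamma$, this $\tilde f$ commensurates $\Gamma$; and since $[\hat\Gamma : \Gamma] = 2$, \Cref{commensurator} identifies the commensurator of $\Gamma$ as exactly $\hat\Gamma$. The first key step is thus to conclude that \emph{every} isometry $M \to M'$ lifts to an element of $\hat\Gamma$ conjugating $G$ to $G'$.

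The second step is to record the orientation structure of $\hat\Gamma$. I would take $r$ to be the reflection in the quadrilateral face of $P^{333}_i$ across which $\dpr^{333}_i$ is doubled: this $r$ lies in $\hat\Gamma \setminus \Gamma$, is orientation-reversing, and preserves $\dpr^{333}_i$ while interchanging its two half-prisms, so that $\hat\Gamma = \Gamma \sqcup \Gamma r$ with $\Gamma$ the orientation-preserving subgroup. Since the axes of the four face-pairing rotations $x, y, z, w$ all lie in this reflection plane, conjugation by $r$ is the involutive automorphism with $r g r^{-1} = g^{-1}$ for each generator $g \in \{x,y,z,w\}$. An isometry is then orientation-preserving precisely when its lift lies in $\Gamma$, and orientation-reversing precisely when its lift lies in $\Gamma r$.

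The third step is the explicit computation of the induced permutation. A lift $\tilde f$ carries the tile $\dpr_k$ (the projection of $g_k \dpr^{333}_i$) to the tile $\dpr_{\phi(k)}$ of $M'$, where $\phi(k)$ is read off from the right coset $G' \tilde f g_k$. When $\tilde f = \gamma \in \Gamma$ one checks that $G' g'_{\phi(k)} = G' \gamma g_k$ and, using $\gamma G \gamma^{-1} = G'$ together with the right-coset description of $\sigma$ and $\sigma'$, that $\phi\,\sigma(g) = \sigma'(g)\,\phi$ for every generator $g$. When $\tilde f = \gamma r$ the same computation, now invoking $r g_k r^{-1} \in \Gamma$ and the relation $r g r^{-1} = g^{-1}$, yields $\phi\,\sigma(g) = \sigma'(g)^{-1}\,\phi$. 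Conversely, I would extend these generator-level identities to all of $\Gamma$ and recognize any qualifying $\phi$ as an isomorphism of transitive right $\Gamma$-sets, from the $\sigma$-action either to the $\sigma'$-action or to its $r$-twist; the standard classification of such isomorphisms then produces a conjugating element $\gamma \in \Gamma$ (orientation-preserving case) or $\gamma r \in \Gamma r$ (orientation-reversing case), unique modulo left multiplication by $G'$, and by construction the associated isometry sends $\dpr_k \mapsto \dpr_{\phi(k)}$.

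Finally I would assemble the correspondence into a bijection: the map sending an isometry to its permutation $\phi$ is well-defined (left multiplication of the lift by $G'$ does not alter the cosets $G' \tilde f g_k$), injective (evaluating at $g_0 = e$ recovers the left $G'$-coset of $\tilde f$, hence the isometry itself, and the orientation type is recorded by which of the two conditions $\phi$ satisfies), and surjective by the realization step just described. I expect the main obstacle to be the bookkeeping in the orientation-reversing case: one must track carefully that conjugation by $r$ is an \emph{automorphism} carrying each generator to its inverse rather than the global inversion anti-automorphism, and reconcile this with the anti-homomorphism property of the right actions $\sigma, \sigma'$, in order to verify that the identities $\phi\,\sigma(g) = \sigma'(g)^{-1}\,\phi$ genuinely extend to an intertwiner of $\Gamma$-sets and so correspond to an honest orientation-reversing isometry. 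Keeping the variances (left versus right cosets, homomorphism versus anti-homomorphism) consistent throughout is the delicate part; the rest is routine once the reduction to $\hat\Gamma$ and the behavior of $r$ on generators are in hand.
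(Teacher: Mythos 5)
Your proposal is correct, and it reaches the same two pillars the paper rests on---\Cref{commensurator} to force every lift of an isometry into $\prgr^{333}_i$, and the reflection across the doubling quadrilateral face (your $r$, the paper's $\iota$) as the unique source of orientation reversal---but the execution is genuinely different. The paper treats the problem combinatorially: it observes that a decomposition-preserving isometry must respect the face pairings of \Cref{cell decomp}, reads the two intertwining conditions off from that requirement, and then proves sufficiency by a direct geometric gluing argument, defining $f$ on each $\dpr_k$ as marking-conjugation (with $\iota$ inserted in the orientation-reversing case) and checking well-definedness on the two-skeleton; the commensurator result enters only to rule out isometries that fail to preserve the tiling. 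You instead stay entirely on the algebraic side, deriving the conditions from the right-coset bookkeeping ($\phi(k)$ determined by $G'\tilde f g_k$, with the trailing $r$ stripped off in the reversing case via $rgr^{-1}=g^{-1}$) and obtaining surjectivity from the classification of isomorphisms of transitive right $\Gamma$-sets rather than from an explicit construction. Your route makes the bijectivity and the well-definedness under left multiplication by $G'$ essentially automatic, which is a real economy; the paper's route produces the concrete cell-by-cell description $f(\dpr_k)=\dpr_{\phi(k)}$ without any coset gymnastics and feeds directly into the machine check of \Cref{isoms}. The one place you rightly flag as delicate---reconciling the anti-homomorphism property of the right action with the automorphism $g\mapsto g^{-1}$ induced by $r$---does work out exactly as you predict: writing $g_{\sigma(g)(k)}=hg_kg$ with $h\in G$ and conjugating through by $r$ lands you on $G'g'_{\phi(k)}g^{-1}$, giving $\phi\,\sigma(g)=\sigma'(g)^{-1}\phi$ as required, so there is no gap.
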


\begin{proof} We note first that it follows from \Cref{no cover}, and can also be established directly, that each of the doubled prisms $\dpr^{333}_2$ and $\dpr^{333}_3$ has exactly one non-trivial self-isometry: the orientation-reversing reflection $\iota$ across the prism's doubling quadrilateral face. It follows that for covers $M$ and $M'$ as in the statement above, a polyhedral decomposition-preserving isometry $f\co M\to M'$ takes the faces of $\dpr_k$ corresponding to those of $\dpr^{333}_i$ exchanged by $x^{\pm 1}$ to the same pair of faces of $f(\dpr_k)$; and likewise for the pairs of faces exchanged by the other generators $y,z,w$ for $\dprgr^{333}_i$. The combinatorial conditions recorded above are the necessary ones for such a map $f$ (orientation-preserving or -reversing, respectively) to be well-defined on faces, given the face-pairings described in \Cref{cell decomp} that produce $M$ and $M'$.

These conditions are also \emph{sufficient} to determine a well-defined map $f$ from a given permutation $\phi$ satisfying one of them: for each $k$, take $f\vert_{\dpr_k}$ to be the inverse of the marking $\dpr^{333}_i\to\dpr_k$, followed by the marking to $\dpr_{\phi(k)}$, with $\iota$ in the middle if $\phi$ satisfies the second criterion. Then $f$ takes faces of distinct $\dpr_k$, $\dpr_{k'}$ that are paired in $M$ to image faces that are paired in $M'$; hence it is well-defined since the equivalence relations defining $M$ and $M'$ from the $\dpr_k$ are generated by the face-pairings. Since $\phi$ is a bijection, $f^{-1}$ is the map associated to $\phi^{-1}$; hence is a homeomorphism. That it is an isometry can thus be seen from the fact that it is a local isometry. This in turn is clear on the interior of each $\dpr_k$ by construction, and for points in the two-skeleton by the fact that $\phi$ is a bijection.

We finally note that the only isometries $M\to M'$ are polyhedral decomposition-preserving. This follows from the fact that the prism orbifold $O^{333}_i$ is minimal in its commensurability class, by \Cref{commensurator}. For if $f\co M\to M'$ is an isometry, taking $M = \mathbb{H}^3/\Gamma$ and $M' = \mathbb{H}^3/\Gamma'$ for subgroups $\Gamma$ and $\Gamma'$ of $\dpr^{333}_i$ we have that a lift $f_*$ of $f$ to the universal cover $\mathbb{H}^3$ conjugates $\Gamma$ to $\Gamma'$. Thus $f_*$ belongs to the commensurator of $\dpr^{333}_i$, and hence, by \Cref{commensurator} to the full prism orbifold group $\Pi^{333}_i$. This leaves invariant the tiling of $\mathbb{H}^3$ by lifts of $\dpr^{333}_i$.
\end{proof}

Applying \Cref{comb isom} to the $M_{i,j}$, we obtain the following result.

\begin{corollary}\label{isoms}
    For each $i\in\{2,3\}$ and $j\in\{1,2\}$, $M_{i,j}$ from \Cref{main_tech_thm} has no non-trivial isometries, and $\widetilde{M}_{i,j}$ is a chiral knot complement in $\mathbb{S}^3$.
\end{corollary}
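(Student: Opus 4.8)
The plan is to apply \Cref{comb isom} directly, reducing the assertions about isometries to a finite combinatorial search among permutations $\phi\in S_{24}$. Recall that by \Cref{comb isom}, for fixed $i$ and $j$ the set of orientation-preserving self-isometries of $M_{i,j}$ corresponds to permutations $\phi\in S_{24}$ satisfying $\phi\,\sigma_{i,j}(g) = \sigma_{i,j}(g)\,\phi$ for all four generators $g\in\{x,y,z,w\}$, i.e.~to the centralizer of the image $\langle \sigma_{i,j}(x),\sigma_{i,j}(y),\sigma_{i,j}(z),\sigma_{i,j}(w)\rangle$ in $S_{24}$; and the orientation-reversing self-isometries correspond to $\phi$ with $\phi\,\sigma_{i,j}(g) = \sigma_{i,j}(g)^{-1}\phi$ for all four generators. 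I would carry out the argument in three steps.

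First I would show $M_{i,j}$ has no non-trivial \emph{orientation-preserving} isometry. Since $\langle\sigma_{i,j}(x),\sigma_{i,j}(z)\rangle$ already acts transitively on $\{0,\hdots,23\}$ (\Cref{onecusp}), the full image group $G=\langle\sigma_{i,j}(x),\sigma_{i,j}(y),\sigma_{i,j}(z),\sigma_{i,j}(w)\rangle$ acts transitively, so any $\phi$ centralizing $G$ is determined by the image $\phi(0)$ of a single point; the number of such $\phi$ divides $24$ and equals $1$ exactly when the point-stabilizer $G_{i,j}$ is self-normalizing in $G$ (equivalently, when $G$ acts as a transitive group whose centralizer in $S_{24}$ is trivial). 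I would verify directly from the cycle data—for instance \Cref{M21 permreps} in the case $(i,j)=(2,1)$—that only $\phi=\mathrm{id}$ commutes with all four generators, since commuting with $\sigma_{i,j}(x)$ and $\sigma_{i,j}(z)$ already forces $\phi$ to permute their cycles compatibly, and transitivity then pins $\phi$ down once $\phi(0)$ is fixed.

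Second, I would rule out an orientation-\emph{reversing} self-isometry, which would make $M_{i,j}$ amphichiral. This is where I expect the main obstacle, though \Cref{comb isom} again converts it to a finite check: I must show no $\phi\in S_{24}$ satisfies $\phi\,\sigma_{i,j}(g)=\sigma_{i,j}(g)^{-1}\phi$ for all $g$. A clean way to see this is homological: an orientation-reversing self-homeomorphism of the one-cusped manifold $M_{i,j}$ would act on $H_1(M_{i,j})\cong\ZZ$ and send the homological longitude to itself up to sign while reversing orientation on the cusp torus, which forces the induced map on $H_1$ of the cusp to have determinant $-1$; combined with the rigidity of the maximal cusp cross-section computed in \Cref{filling slope} (a rectangular torus with the explicit $\mu_0,\lambda_0$ basis and lengths $1+\sqrt2$ and $4\sqrt3(1+\sqrt2)$), I can check whether such an action is realized by any admissible $\phi$. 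Failing a slick argument, the combinatorial system $\phi\,\sigma_{i,j}(g)=\sigma_{i,j}(g)^{-1}\phi$ is a finite constraint satisfaction problem solvable by the same transitivity-plus-single-orbit-point reasoning as in the first step, and I would record that it has no solution for each of the four $(i,j)$.

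Finally, the chirality of $\widetilde{M}_{i,j}$ as a knot complement in $\Sthree$ follows by transporting the previous step upward. By \Cref{main_tech_thm} and \Cref{prop:GonalezAcunaWhitten}, $\widetilde{M}_{i,j}$ is a hyperbolic knot complement in $\Sthree$ covering $M_{i,j}$, and any orientation-reversing self-isometry of $\widetilde{M}_{i,j}$ would descend—since $O^{333}_i$ is minimal in its commensurability class by \Cref{commensurator}, so every isometry of a cover is polyhedral-decomposition-preserving and lies in $\Pi^{333}_i$—to an orientation-reversing isometry of $M_{i,j}$, contradicting the second step. Because $M_{i,j}$ admits no orientation-reversing isometry, neither does $\widetilde{M}_{i,j}$, so the corresponding knot is chiral; by Mostow rigidity a knot complement in $\Sthree$ is amphichiral precisely when it admits an orientation-reversing self-isometry, completing the proof.
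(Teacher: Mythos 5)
Your treatment of the self-isometries of $M_{i,j}$ is essentially the paper's: both arguments reduce the question to the finite conjugacy check of \Cref{comb isom} (the paper automates it via the methods \texttt{OPisoms} and \texttt{ORisoms}; you describe the same verification by hand via the centralizer of the transitive image group in $S_{24}$). Your homological detour for excluding orientation-reversing self-isometries does not by itself produce a contradiction---an orientation-reversing map acting with determinant $-1$ on the cusp and by $\pm 1$ on $H_1\cong\ZZ$ is perfectly consistent---but you correctly fall back on the combinatorial check, so that portion is sound.

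The gap is in your final step. From the fact that any isometry of a manifold cover of $O^{333}_i$ is induced by an element $f$ of $\Pi^{333}_i$ (which does follow from \Cref{commensurator}), you conclude that an orientation-reversing self-isometry of $\widetilde{M}_{i,j}$ ``descends'' to one of $M_{i,j}$. But such an $f$ normalizes $\pi_1\widetilde{M}_{i,j}$, not necessarily $\pi_1 M_{i,j}=G_{i,j}$: conjugation by an orientation-reversing element of $\Pi^{333}_i$ can carry $G_{i,j}$ to a \emph{different} index-$24$ subgroup containing $\pi_1\widetilde{M}_{i,j}$, and \Cref{OR pairs} shows this actually happens---the orientation-reversing symmetry of $\doh^{333}_i$ carries $M_{i,j}$ to a distinct cover $M_{i,j}'$. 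So preservation of the prism tiling does not give descent to the intermediate cover. The paper closes this differently: $\widetilde{M}_{i,j}\to M_{i,j}$ is the restriction of the universal cover $\mathbb{S}^3\to L(p,q)$, hence a cyclic cover with deck group $\mathbb{Z}/p\mathbb{Z}$, and since $H_1(M_{i,j})\cong\ZZ$ the manifold $M_{i,j}$ has a unique cyclic cover of each degree; a putative orientation-reversing self-isometry would conjugate the deck group to that of a second such cover, which is impossible---equivalently, it must normalize the deck group, descend to $M_{i,j}$, and contradict the first part of the corollary. You need this cyclic-cover input (or some substitute, e.g.\ that the isometry group of a hyperbolic knot complement in $\mathbb{S}^3$ is cyclic or dihedral, so the $\mathbb{Z}/p\mathbb{Z}$ deck group is normal in it) to complete the chirality claim.
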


\begin{proof} The combinatorial criteria of \Cref{comb isom} translate to algorithms that take as input two permutation representations $\sigma$ and $\sigma'$ to $S_n$, and test all possibilities for an element $\phi\in S_n$ conjugating $\sigma$ to $\sigma'$ or $(\sigma')^{-1}$. These are implemented as the methods \texttt{OPisoms} and \texttt{ORisoms}  included in the ancillary files (see \path{anc/Analysis_of_covers/code/IsomTests.py}). For each $i$ and $j$, \texttt{OPisoms($\sigma_{i,j},\sigma_{i,j}$)} returns only the identity permutation, and \texttt{ORisoms($\sigma_{i,j},\sigma_{i,j}$)} returns an empty list.

Recall from the proof of \Cref{main_tech_thm} that $\widetilde{M}_{i,j}$ is the preimage of $M_{i,j}$ in the universal cover $\mathbb{S}^3$ of a lens space $L(p,q)$ obtained by filling $M_{i,j}$ (where $(p,q) = (13,3)$ or $(22,5)$ for $j=1$ or $2$, respectively). In particular the action of the cyclic covering transformation group of $\mathbb{S}^3\to L(p,q)$ restricts to one on $\widetilde{M}_{i,j}$, and as a result, the covering map's restriction to $p\co\widetilde{M}_{i,j}\to M_{i,j}$ is also cyclic, with covering transformation group $\mathbb{Z}/p\mathbb{Z}$. If $\widetilde{M}_{i,j}$ was an achiral knot complement for some $i,j$, then there would be an orientation-reversing self-isometry $\rho$ of $\widetilde{M}_{i,j}$, so $p\circ\rho$ would be another covering map to $\widetilde{M}_{i,j}\to M_{i,j}$ with cyclic covering transformation group generated by $\rho\tau\rho$, where $\tau$ generates the covering transformation group of $p$. But since $H_1(M_{i,j})\cong\mathbb{Z}$, $M_{i,j}$ has a unique cyclic cover of any fixed degree, so this is not possible.
\end{proof}

\begin{table}
\centering
\centering
\begin{tabular}{ccrl}
\toprule
\textbf{i} & name & & permutation representation \\
\midrule
$\mathbf{2}$ & $\sigma_{2,1}'$ & $x\mapsto$ & $[1, 2, 0, 12, 10, 14, 20, 3, 21, 8, 16, 6, 7, 5, 13, 23, 4, 19, 17, 18, 11, 9, 15, 22]$ \\ 
    & & $y\mapsto$ & $[3, 6, 10, 4, 0, 19, 7, 1, 22, 5, 11, 2, 20, 8, 15, 17, 12, 14, 23, 9, 16, 18, 13, 21]$ \\ 
    & & $z\mapsto$ & $[2, 8, 5, 13, 17, 0, 11, 12, 9, 1, 22, 21, 19, 14, 3, 20, 10, 18, 4, 7, 23, 6, 16, 15]$ \\ 
    & & $w\mapsto$ & $[1, 0, 9, 15, 12, 8, 16, 17, 5, 2, 21, 22, 4, 23, 20, 3, 6, 7, 19, 18, 14, 10, 11, 13]$ \\
\midrule
$\mathbf{2}$ & $\sigma_{2,2}'$ & $x\mapsto$ & $[1, 2, 0, 10, 11, 17, 3, 16, 22, 8, 6, 14, 7, 20, 4, 19, 12, 18, 5, 23, 21, 13, 9, 15]$ \\ 
    & & $y\mapsto$ & $[3, 7, 11, 4, 0, 19, 1, 6, 15, 18, 16, 12, 2, 23, 10, 17, 14, 8, 21, 20, 5, 9, 13, 22]$\\ 
    & & $z\mapsto$ & $[2, 8, 5, 6, 14, 0, 13, 21, 9, 1, 23, 18, 16, 3, 15, 4, 22, 11, 17, 10, 7, 20, 12, 19]$ \\ 
    & & $w\mapsto$ & $[6, 10, 3, 2, 16, 13, 0, 11, 19, 23, 1, 7, 14, 5, 12, 22, 4, 21, 20, 8, 18, 17, 15, 9]$ \\
\midrule
$\mathbf{3}$ & $\sigma_{3,1}'$ & $x\mapsto$ & $[1, 2, 0, 13, 10, 17, 20, 3, 14, 8, 15, 6, 16, 7, 9, 4, 23, 18, 5, 21, 11, 22, 19, 12]$ \\ 
    & & $y\mapsto$ & $[3, 6, 10, 4, 0, 9, 7, 1, 12, 19, 11, 2, 22, 20, 17, 13, 18, 23, 21, 5, 15, 16, 8, 14]$ \\ 
    & & $z\mapsto$ & $[2, 8, 5, 7, 15, 0, 11, 14, 9, 1, 18, 21, 23, 19, 3, 16, 4, 10, 17, 22, 12, 6, 13, 20]$ \\ 
    & & $w\mapsto$ & $[6, 3, 12, 1, 7, 13, 0, 4, 10, 15, 8, 22, 2, 5, 18, 9, 17, 16, 14, 20, 19, 23, 11, 21]$ \\
\midrule
$\mathbf{3}$ & $\sigma_{3,2}'$ & $x\mapsto$ & $[1, 2, 0, 13, 10, 17, 3, 19, 20, 8, 15, 7, 14, 6, 22, 4, 23, 18, 5, 11, 9, 16, 12, 21]$ \\ 
    & & $y\mapsto$ & $[3, 7, 10, 4, 0, 9, 1, 6, 21, 14, 11, 2, 8, 19, 5, 13, 18, 23, 22, 15, 17, 12, 16, 20]$ \\ 
    & & $z\mapsto$ & $[2, 8, 5, 14, 15, 0, 18, 11, 9, 1, 12, 20, 22, 3, 13, 16, 4, 6, 17, 21, 7, 23, 10, 19]$ \\ 
    & & $w\mapsto$ & $[6, 4, 12, 7, 1, 19, 0, 3, 11, 13, 21, 8, 2, 9, 15, 14, 20, 22, 23, 5, 16, 10, 17, 18]$
\end{tabular}
\caption{Permutation representations for $\dprgr^{333}_i$, $i = 2$ or $3$.}
\label{table_pr prime}
\end{table}

Recall that each $\doh^{333}_i$ is amphichiral in that it has an orientation-reversing isometry coming from a lift of a reflection. One takeaway from the corollary above, since each $M_{i,j}$ is chiral, is that it should have a ``chiral twin''. The following remark identifies these.

\begin{remark}\label{OR pairs} In light of \Cref{isoms}, for each $i$ and $j$ one should expect the orientation-reversing self-isometry of $\doh^{333}_i$ ($i=2$ or $3$) to lift to an orientation-reversing isometry $M_{i,j}\to M_{i,j}'$ for another degree-$24$ cover $M_{i,j}'$ of $\doh^{333}_i$ that does not have an orientation-preserving isometry to $M_{i,j}$; that is, $M_{ij}'$ has a different permutation representation than $M_{i,j}$. 
This is indeed the case. The right-permutations $\sigma_{i,j}'$ corresponding to the $M_{i,j}'$ are in \Cref{table_pr prime}, and below we record the permutations $\phi_{i,j}$, as in \Cref{comb isom}, corresponding to the isometries $M_{i,j}\to M_{i,j}'$.\begin{align*}
    \phi_{2,1} & = [21, 23, 22, 20, 5, 6, 14, 9, 12, 7, 15, 13, 8, 18, 11, 10, 19, 2, 0, 1, 17, 3, 16, 4] \\
    \phi_{2,2} & = [17, 5, 18, 21, 23, 15, 19, 14, 2, 0, 20, 16, 9, 7, 22, 10, 8, 11, 4, 12, 13, 3, 1, 6] \\
    \phi_{3,1} & = [22, 23, 12, 17, 9, 13, 18, 16, 10, 15, 14, 19, 2, 20, 4, 8, 1, 3, 7, 21, 5, 6, 11, 0] \\
    \phi_{3,2} & = [3, 7, 13, 0, 4, 14, 1, 6, 21, 12, 15, 19, 17, 2, 5, 10, 16, 8, 9, 11, 20, 23, 18, 22]
\end{align*}\end{remark}

We turn attention to the surfaces $S_{i,j}$ and the complementary submanifolds $M_{i,j}^{\pm}$ to them.

\begin{prop}\label{volsides} For $i\in\{2,3\}$ and $j=\{1,2\}$, with $M_{i,j}$ as in \Cref{main_tech_thm}, the closed, embedded, totally geodesic surface $S_{i,j}\subset M_{i,j}$ supplied by \Cref{geodesic surface} is orientable, connected, and of genus two.
\end{prop}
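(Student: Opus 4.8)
The plan is to establish orientability directly, and then to pin down both connectedness and the genus simultaneously via a total-area computation combined with the fact that a closed totally geodesic surface in a hyperbolic $3$-manifold carries a metric of constant curvature $-1$.

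First I would dispatch orientability. Each $M_{i,j}$ is orientable, being a manifold cover of the orientable orbifold $\doh^{333}_i$, and $S_{i,j}$ is separating by \Cref{geodesic surface}: for $i=2$ we have $a_9 = 3$ and for $i=3$ we have $a_8 = 3$, so in neither case are $a_7$, $a_8$, $a_9$ all equal to $2$. A separating surface is two-sided, and a two-sided surface in an orientable manifold is orientable, so $S_{i,j}$ is orientable.

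Next I would compute the total area. By \Cref{corandreev} the triangle $T\subset P^{333}_i$ has vertex angles $\tang{4} = \tang{5} = \pi/3$ and $\tang{6} = \pi/4$ in both cases, so by Gauss--Bonnet its area is $\pi - (\pi/3 + \pi/3 + \pi/4) = \pi/12$. By \Cref{geodesic surface}, $S_{i,j}$ is tiled by one copy of $T$ for each prism in the polyhedral decomposition of $M_{i,j}$. Since $M_{i,j}\to\doh^{333}_i$ has degree $24$ (\Cref{main_tech_thm}), $M_{i,j}$ is assembled from $24$ copies of the doubled prism $\dpr^{333}_i$, each of which contains two copies of the prism $P^{333}_i$ and hence two copies of $T$. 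Thus $S_{i,j}$ is a union of $48$ copies of $T$, so $\mathrm{area}(S_{i,j}) = 48\cdot(\pi/12) = 4\pi$. Equivalently, $S_{i,j}$ is a degree-$24$ manifold cover of the orientable $(3,3,4)$-turnover suborbifold of $\doh^{333}_i$ coming from \Cref{triangle group}, whose area is $\pi/6$.

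Finally I would deduce connectedness and genus in one stroke. Because $S_{i,j}$ is totally geodesic, each of its components inherits a metric of constant curvature $-1$; being closed (every $T_k$ is compact) and orientable, each component is a closed orientable hyperbolic surface, and hence has area at least $4\pi$, with equality precisely when it has genus $2$. Since the total area is $4\pi$, there can be only one component, and it has genus $2$. The main obstacle is connectedness, which a direct combinatorial attack---counting orbits of $G_{i,j}$ on cosets of the triangle group inside $\dprgr^{333}_i$---would render case-by-case and tedious. The per-component area lower bound sidesteps this entirely, reducing connectedness to the single global area computation, which is moreover uniform across all four cases since each shares the same $(3,3,4)$ triangle and the same covering degree $24$.
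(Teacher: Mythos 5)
Your proposal is correct and follows essentially the same route as the paper: orientability from being separating (hence two-sided) in an orientable manifold, the area count $48\cdot(\pi/12)=4\pi$ from the tiling by copies of the $(\pi/3,\pi/3,\pi/4)$-triangle $T$, and then connectedness and genus forced by the fact that a closed orientable hyperbolic surface has area at least $4\pi$. Your write-up merely makes explicit the per-component area bound that the paper leaves implicit in its final sentence.
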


\begin{proof}
    For each $i,j$, by \Cref{geodesic surface}, $S_{i,j}$ is separating since $a_9 = 3$ (for $i=2$) or $a_8 =3$ ($i=3$). As a two-sided surface in the orientable manifold $M_{i,j}$, $S_{i,j}$ is itself orientable. For such a surface $S_{i,j}$ and a lift $\dpr_k$ of the doubled prism $\dpr^{333}_i$ to $M_{i,j}$, $S_{i,j}\cap \dpr_k$ is isometric to $T\cup\bar{T}$, where $T\subset P^{333}_i$ is the embedded triangle of \Cref{corandreev} and $\bar{T}$ is its mirror image in the reflected copy of $P^{333}_i$ in $\dpr^{333}_i$. As $T$ has angles $\pi/3$, $\pi/3$, and $\pi/4$, it has area $\pi-\pi/3-\pi/3-\pi/4 = \pi/12$, and $T\cup\bar{T}$ has area $\pi/6$. Since $S_{i,j}$ is tiled by $24$ copies of $T\cup\bar{T}$ it has area $4\pi$, so as an orientable surface it is the connected genus-two surface.
\end{proof}

\begin{theorem}\label{mutants}
    For $i=2$ or $3$, $j=1$ or $2$, let $M_{i,j}$, $S_{i,j}$, $M_{i,j}^+$ and $M_{i,j}^-$ be as in \Cref{volumes}. For each $j\in\{1,2\}$, $M_{2,j}^{\pm}$ is isometric to $M_{3,j}^{\pm}$; hence $M_{2,j}$ is isometric to a manifold obtained from $M_{3,j}$ by mutation along $S_{i,j}$, for $j=1,2$.
\end{theorem}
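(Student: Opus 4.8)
The plan is to refine the decomposition of each $M_{i,j}$ into copies of $\dpr^{333}_i$ from \Cref{cell decomp} by cutting along $S_{i,j}$, and then to match the resulting pieces combinatorially with a variant of \Cref{comb isom}. Recall from the proofs of \Cref{corandreev} and \Cref{volumes} that the triangle $T$ cuts $P^{333}_i$ into the non-compact sub-prism $P^- = P^{333}_1$, which carries the common tuple $\be^- = (3,3,2,3,3,4,2,2,2)$ for both $i=2$ and $i=3$, together with a compact sub-prism $P^+$; the two $P^+$ are mirror images of each other. Cutting $M_{i,j}$ along $S_{i,j}$ splits each cell $\dpr_k$ into copies $\dpr_k^-$ and $\dpr_k^+$ of the doubles $\dpr^-$ and $\dpr^+$ of $P^-$ and $P^+$, so that $M_{i,j}^{\pm} = \bigcup_k \dpr_k^{\pm}$, with the copies of $T\cup\overline{T}$ comprising the separating surface.

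First I would pin down which face-pairings assemble each piece. The generators $x,y,z,w$ of $\dprgr^{333}_i$ are rotations about the edges labeled $a_1,a_4,a_5,a_7$. Since $a_1$ lies at the ideal vertex and $a_7$ bounds the compact triangular face, the faces paired by $x$ lie entirely in $P^-$ and those paired by $w$ lie entirely in $P^+$, while $y$ and $z$ (rotations about the edges $a_4,a_5$ that $T$ cuts) pair faces straddling both pieces and respecting $T\cup\overline{T}$. Hence the pairings producing $M_{i,j}^-$ from the $\dpr_k^-$ are governed exactly by $\sigma_{i,j}(x),\sigma_{i,j}(y),\sigma_{i,j}(z)$, and those producing $M_{i,j}^+$ by $\sigma_{i,j}(y),\sigma_{i,j}(z),\sigma_{i,j}(w)$.

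The heart of the argument is then a piece-level analog of \Cref{comb isom}. Because $\dpr^-$ is literally the same polyhedron for $i=2,3$, a permutation $\phi^-_j\in S_{24}$ with $\phi^-_j\,\sigma_{2,j}(g)=\sigma_{3,j}(g)^{\pm1}\phi^-_j$ for every $g\in\{x,y,z\}$ determines an isometry $M_{2,j}^-\to M_{3,j}^-$ carrying $\dpr_k^-$ to $\dpr_{\phi^-_j(k)}^-$, exactly as in the proof of \Cref{comb isom}. For the compact pieces the mirror-image relationship between the two $P^+$ is realized by the reflection swapping $a_4\leftrightarrow a_5$ and $a_8\leftrightarrow a_9$; this reflection conjugates $y\mapsto z^{-1}$, $z\mapsto y^{-1}$, $w\mapsto w^{-1}$, so the relevant orientation-reversing criterion for a permutation $\phi^+_j$ is $\phi^+_j\,\sigma_{2,j}(y)=\sigma_{3,j}(z)^{-1}\phi^+_j$, $\phi^+_j\,\sigma_{2,j}(z)=\sigma_{3,j}(y)^{-1}\phi^+_j$, and $\phi^+_j\,\sigma_{2,j}(w)=\sigma_{3,j}(w)^{-1}\phi^+_j$. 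Each such $\phi^{\pm}_j$ yields a well-defined isometry because it respects all face-pairings, hence also the boundary surfaces and the cusps. I would exhibit the four permutations $\phi^-_1,\phi^+_1,\phi^-_2,\phi^+_2$ by a finite search implemented like the \texttt{OPisoms}/\texttt{ORisoms} routines used for \Cref{isoms}, giving $M_{2,j}^{\pm}\cong M_{3,j}^{\pm}$ for $j=1,2$.

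Finally I would assemble the mutation statement: writing $\alpha_j\co M_{2,j}^-\to M_{3,j}^-$ and $\beta_j\co M_{2,j}^+\to M_{3,j}^+$ for the isometries produced, their restrictions to $S_{2,j}$ give isometries onto $S_{3,j}$, and $\tau_j\doteq(\beta_j|_{S_{2,j}})\circ(\alpha_j|_{S_{2,j}})^{-1}$ is a self-isometry of $S_{3,j}$. Transporting the gluing $M_{2,j}=M_{2,j}^-\cup_{\mathrm{id}}M_{2,j}^+$ through $\alpha_j,\beta_j$ exhibits $M_{2,j}$ as $M_{3,j}^-\cup_{\tau_j}M_{3,j}^+$, i.e.\ as the result of cutting $M_{3,j}$ along the totally geodesic $S_{3,j}$ and regluing by $\tau_j$, which is a hyperbolic mutation in the sense of the introduction. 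I expect the main obstacle to be the piece-level analog of \Cref{comb isom}: verifying rigorously that $M_{i,j}^{\pm}$ is determined by the asserted restricted gluing data (in particular that no pairing of $\dpr_k^-$ to $\dpr_{k'}^-$ uses $w$, and symmetrically for $+$) and correctly encoding the orientation-reversing mirror correspondence $y\leftrightarrow z$ on the compact pieces; once this bookkeeping is settled, existence of the $\phi^{\pm}_j$ is a routine finite computation of the kind already used in the paper.
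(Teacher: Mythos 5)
Your overall strategy is the same as the paper's: cut each cell $\dpr_k$ into $\dpr_k^{\pm}$, observe that $M_{i,j}^-$ is assembled from the $\dpr_k^-$ using only the $x,y,z$ pairings and $M_{i,j}^+$ from the $\dpr_k^+$ using only $y,z,w$, find permutations in $S_{24}$ realizing piece-level isometries via a \Cref{comb isom}-style conjugacy criterion, and then compose the restrictions to the boundary to exhibit the mutation. Your criterion for $\phi_j^-$ (allowing both $\sigma_{3,j}(g)^{+1}$ and $\sigma_{3,j}(g)^{-1}$) covers both of the paper's maps $\rho_1^-$ and $\rho_2^-$, and your final assembly of the mutation is correct.

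There is, however, a concrete gap on the compact side. The mirror isometry between the two prisms $P^+$ extends to the doubled sub-prisms $\dpr^+$ in \emph{two} ways: one preserving the two halves $P^+$ and $\overline{P^+}$ of each double (orientation-reversing on $\dpr^+$), and one swapping them (orientation-preserving). These yield different conjugacy criteria. The half-preserving extension sends face $1_-$ to $2_-$ and $1$ to $2$, so the conjugated rotation about the image of the $a_4$-edge still carries $2_-$ to $2$ and therefore equals $z$, \emph{not} $z^{-1}$; the resulting criterion is $\phi^+\,\sigma_{2,j}(y)=\sigma_{3,j}(z)\,\phi^+$, etc., with no inverses. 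Only the half-swapping extension produces your inverted criterion $\phi^+\,\sigma_{2,j}(y)=\sigma_{3,j}(z)^{-1}\phi^+$. You test only the inverted version for both $j$. For $j=2$ this is the right one (the paper's $\rho_2^+$ satisfies it), but for $j=1$ the isometry the paper exhibits, $\rho_1^+$, satisfies the \emph{non}-inverted system and fails the inverted one (e.g.\ $\rho_1^+(\sigma_{2,1}(y)(0))=23=\sigma_{3,1}(z)(\rho_1^+(0))$ while $\sigma_{3,1}(z)^{-1}(\rho_1^+(0))=15$). A solution of your system for $j=1$ would differ from $\rho_1^+$ by an orientation-reversing self-isometry of $M_{3,1}^+$, which there is no reason to expect (cf.\ \Cref{isoms} and \Cref{OR pairs}); so as written your finite search would very likely return nothing for $\phi_1^+$ and the proof would fail in that case. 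The fix is routine: run the search against both conjugacy systems on the compact pieces, keeping track of which face-relabeling (half-preserving versus half-swapping) each solution encodes, exactly as the paper does by using maps of different types for $j=1$ and $j=2$.
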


In \Cref{mutants}, what we mean by \emph{mutation along $S_{i,j}$} is to cut $M_{i,j}$ open along the surface and re-identify the resulting pieces along their totally geodesic boundaries by a non-trivial self-isometry of $S_{i,j}$.

\begin{proof}
    As recalled in the proof of \Cref{volumes}, $S_{i,j}$ is the union of copies of $T\cup\bar{T}$ in the copies of the doubled prism $\dpr^{333}_i$ decomposing $M_{i,j}$, where $T\subset P^{333}_i$ is the embedded triangle of \Cref{corandreev} and $\bar{T}$ is its mirror image in the reflected copy of $P^{333}_i$ in $\dpr^{333}_i$. 
    Again as in the proof of \Cref{volumes},  the compact and non-compact submanifolds $M_{i,j}^+$ and $M_{i,j}^-$ of $M_{i,j}$, respectively, that are bounded by $S_{i,j}$ divide into copies of the respective doubles $\dpr^+$ and $\dpr^-$ of $P^+$ and $P^-$, where $P^+$ and $P^-$ are the sub-prisms of $P^{333}_i$ bounded by $T$. 
    
    For a fixed $i\in\{2,3\}$ and $j\in\{1,2\}$, and any $k\in\{0,1,\hdots,23\}$, we denote by $\dpr_k^{\pm}$ the intersection of $M_{i,j}^{\pm}$ with the part of $\dpr_k$ isometric to $\dpr^{\pm}$, where $\dpr_k$ is the $k$th tile of the cell decomposition of $M_{i,j}$ from \Cref{cell decomp}. We also give each face of $\dpr_k^{\pm}$ that is contained in a face of $\dpr_k$ the same number, corresponding to those prescribed in \Cref{numberings}.

    Having numbered the three-cells of the polyhedral decomposition of each $M_{i.j}^{\pm}$ from $0$ to $23$, we specify each map $\rho_j^{\pm}\co M_{2,j}^{\pm}\to M_{3,j}^{\pm}$, for each $j\in\{1,2\}$, as a permutation in $S_{24}$. The isometry so-determined should be understood to take each cell $\dpr_k^{\pm}$ in the decomposition of $M_{2,j}^{\pm}$ to the cell $\dpr_{\rho_j(k)}^{\pm}$ in the decomposition of $M_{3,j}^{\pm}$. We have:
    \begin{para}\label{rho plus} $\rho_1^+ = [10, 14, 11, 23, 15, 2, 3, 7, 12, 19, 21, 20, 1, 16, 18, 17, 9, 13, 8, 22, 5, 0, 4, 6]$ \end{para}
    On any $\dpr_k^+$ in the decomposition of $M_{2,1}^+$ this map takes the copy of $P^+$ coming from $P^{333}_2$ to the corresponding $P^+$ from $P^{333}_3$ via the orientation-reversing isometry described in \Cref{scis cong}, and likewise for the mirror images. Thus it takes the faces $1_+$ and $1_-$ of $\dpr_k^+$ to $2_+$ and $2_-$, respectively, in $\dpr_{\rho_1^+(k)}$. It takes the faces $2_+$ and $2_-$ to $1_+$ and $1_-$, respectively, and preserves the labeling of the faces $3_+$ and $3_-$.
    \begin{para}\label{rho minus} $\rho_1^- = [21, 6, 11, 13, 15, 20, 1, 7, 18, 19, 10, 2, 3, 22, 12, 17, 4, 23, 0, 16, 5, 8, 9, 14]$\end{para}
    As in the plus case, this reverses the orientation of each $\dpr_k^-$, but this time by taking the copy of $P_-$ in $\dpr_k$ to the copy of the mirror image of $P_-$ in $\dpr_{\rho_1^-(k)}$. Thus it swaps the faces labeled $1_+$ and $1_-$, those labeled $2_+$ and $2_-$, and those labeled $3_+$ and $3_-$.
    \[ \rho_2^+ = [0, 20, 4, 5, 2, 3, 6, 19, 8, 22, 16, 10, 7, 12, 14, 21, 11, 17, 23, 13, 1, 15, 9, 18] \]
    This map preserves orientation of each $\dpr_k^+$, by composing the orientation-reversing map described for $\rho_1^+$ with the one described for $\rho_1^-$. Thus it takes face $1_+$ of $\dpr_k^+$ to face $2_-$ of $\dpr_{\rho_2^+(k)}$; face $1_-$ to $2_+$, faces $2_+$ and $2_-$ to $1_-$ and $1_+$, respectively; and swaps faces $3_+$ and $3_-$.
    \[ \rho_2^- = [0, 1, 2, 3, 4, 5, 6, 7, 8, 9, 11, 10, 19, 13, 14, 15, 16, 17, 18, 12, 20, 21, 22, 23] \]
    This map preserves orientation of each $\dpr_k^-$ in the most vanilla way possible, taking each face to the face of $\dpr_{\rho_2^-(k)}$ with the same label.
    
    As each $\rho_j^{\pm}$ is visibly a bijection, the necessary and sufficient condition for it to define an isometry is that it preserve the face-pairing: that is, if cells $\dpr_k$ and $\dpr_{k'}$ of the domain intersect along a face, then their images in the target intersect along the image face(s). This translates to conjugacy conditions in $S_{24}$ that depend on the descriptions above of the isometries' restrictions to individual cells. Those listed below are enough to ensure well-definedness:\begin{align*}
        & \rho_1^+ \circ \sigma_{2,1}(y) = \sigma_{3,1}(z) \circ \rho_1^+ && \rho_1^+ \circ \sigma_{2,1}(z) = \sigma_{3,1}(y) \circ \rho_1^+ && \rho_1^+ \circ \sigma_{2,1}(w) = \sigma_{3,1}(w) \circ \rho_1^+ \\
        & \rho_1^- \circ \sigma_{2,1}(y) = \sigma_{3,1}(y)^{-1} \circ \rho_1^- && \rho_1^- \circ \sigma_{2,1}(z) = \sigma_{3,1}(z)^{-1} \circ \rho_1^- && \rho_1^- \circ \sigma_{2,1}(x) = \sigma_{3,1}(x)^{-1} \circ \rho_1^- \\
        & \rho_2^+ \circ \sigma_{2,2}(y) = \sigma_{3,2}(z)^{-1} \circ \rho_2^+ && \rho_2^+ \circ \sigma_{2,2}(z) = \sigma_{3,2}(y)^{-1} \circ \rho_2^+ && \rho_2^+ \circ \sigma_{2,2}(w) = \sigma_{3,2}(w)^{-1} \circ \rho_2^+ \\
        & \rho_2^- \circ \sigma_{2,2}(y) = \sigma_{3,2}(y) \circ \rho_2^- && \rho_2^- \circ \sigma_{2,2}(z) = \sigma_{3,2}(z) \circ \rho_2^- && \rho_2^- \circ \sigma_{2,2}(x) = \sigma_{3,2}(x) \circ \rho_2^-
        \end{align*}
    These can be verified directly.

    Identifying the boundaries of each of $M_{3,1}^{\pm}$ with $S_{3,1}$ by their inclusions into $M_{3,1}$, we define 
    \[ \mu_1 = (\rho_1^-)^{-1}\circ\rho_1^+ \co \partial M^+_{2,1}\to \partial M^-_{2,1},\mbox{ and }
    M_{2,1}^{\mu_1} = M_{2,1}^- \sqcup M_{2,1}^+/x\sim\mu_1(x)\ \forall x\in\partial M_{2,1}^+. \]
    That is, $M_{2,1}^{\mu_1}$ is obtained from $M_{2,1}$ by the mutation $\mu_1$ along $S_{2,1}$. We now simply observe that there is a well-defined isometry from $M_{2,1}^{\mu_1}$ to $M_{3,1}$ that restricts on $M_{2,1}^-$ to $\rho_1^-$ and on $M_{2,1}^+$ to $\rho_1^+$. It similarly follows that $M_{3,2}$ is isometric to a mutant of $M_{2,2}$ along $S_{2,2}$.
\end{proof}

\begin{example}\label{mutation}
    Using the decomposition into quadrilaterals that $S_{2,1}$ inherits from $M_{2,1}$, we obtain the following representation of $\mu_1 = (\rho_1^-)^{-1}\circ\rho_1^+$ from \ref{rho plus} and \ref{rho minus}:\begin{align*}
    \mu_1 & = [10,23,2,17,4,11,12,7,14,9,0,5,6,19,8,15,22,3,21,13,20,18,16,1] \\
        & = (0,10)(1,23)(2)(3,17)(4)(5,11)(6,12)(7)(8,14)(9)(13,19)(15)(16,22)(18,21)(20) \end{align*}
The top line above represents $\mu_1$ as a ``destination sequence'' following \ref{rho plus} and \ref{rho minus}; the second line gives the cycle decomposition of this permutation.

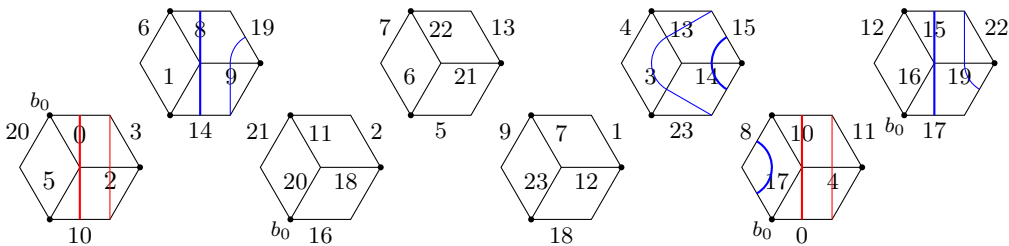
\begin{figure}[ht]
\begin{tikzpicture}

\begin{scope}[scale=0.8]
    \draw (0,0) -- (1,0) -- (1.5,0.866) -- (1,1.732) -- (0,1.732) -- (-0.5,0.866) -- cycle;
    \fill (0,0) circle [radius=0.05];
    \fill (0,1.732) circle [radius=0.05];
    \fill (1.5,0.866) circle [radius=0.05];

    \node [above right] at (1.15,1.149) {\scriptsize $3$};
    \node [above left] at (-0.15,1.149) {\scriptsize $20$};
    \node [below] at (0.5,0.05) {\scriptsize $10$};

    \node [above left] at (0.2,1.632) {\tiny $b_0$};

    \draw (0,0) -- (0.5,0.866) -- (1.5,0.866);
    \draw (0.5,0.866) -- (0,1.732);
    \node [above left] at (1.3,0.333) {\footnotesize $2$};
    \node [below] at (0.5,1.732) {\footnotesize $0$};
    \node [above right] at (-0.3,0.333) {\footnotesize $5$};

    \draw [thick, color=red] (0.5,0) -- (0.5,1.732);
    \draw [color=red] (1,0) -- (1,1.732);
\end{scope}
    
\begin{scope}[scale=0.8, xshift=2cm, yshift=1.732cm]
    \draw (0,0) -- (1,0) -- (1.5,0.866) -- (1,1.732) -- (0,1.732) -- (-0.5,0.866) -- cycle;
    \fill (0,0) circle [radius=0.05];
    \fill (0,1.732) circle [radius=0.05];
    \fill (1.5,0.866) circle [radius=0.05];

    \node [above right] at (1.15,1.149) {\scriptsize $19$};
    \node [above left] at (-0.15,1.149) {\scriptsize $6$};
    \node [below] at (0.5,0.05) {\scriptsize $14$};

    \draw (0,0) -- (0.5,0.866) -- (1.5,0.866);
    \draw (0.5,0.866) -- (0,1.732);
    \node [above left] at (1.3,0.333) {\scriptsize $9$};
    \node [below] at (0.5,1.732) {\scriptsize $8$};
    \node [above right] at (-0.3,0.333) {\scriptsize $1$};

    \draw [thick, color=blue] (0.5,0) -- (0.5,1.732);
    \draw [color=blue] (1,0) -- (1,0.866);
    \draw [color=blue] (1,0.866) arc (180:120:0.5);
\end{scope}

\begin{scope}[scale=0.8, xshift=4cm]
    \draw (0,0) -- (1,0) -- (1.5,0.866) -- (1,1.732) -- (0,1.732) -- (-0.5,0.866) -- cycle;
    \fill (0,0) circle [radius=0.05];
    \fill (0,1.732) circle [radius=0.05];
    \fill (1.5,0.866) circle [radius=0.05];

    \node [above right] at (1.15,1.149) {\scriptsize $2$};
    \node [above left] at (-0.15,1.149) {\scriptsize $21$};
    \node [below] at (0.5,0.05) {\scriptsize $16$};

    \node [below left] at (0.2,0.15) {\tiny $b_0$};

    \draw (0,0) -- (0.5,0.866) -- (1.5,0.866);
    \draw (0.5,0.866) -- (0,1.732);
    \node [above left] at (1.3,0.333) {\scriptsize $18$};
    \node [below] at (0.5,1.732) {\scriptsize $11$};
    \node [above right] at (-0.3,0.333) {\scriptsize $20$};

\end{scope}

\begin{scope}[scale=0.8, xshift=6cm, yshift=1.732cm]
    \draw (0,0) -- (1,0) -- (1.5,0.866) -- (1,1.732) -- (0,1.732) -- (-0.5,0.866) -- cycle;
    \fill (0,0) circle [radius=0.05];
    \fill (0,1.732) circle [radius=0.05];
    \fill (1.5,0.866) circle [radius=0.05];

    \node [above right] at (1.15,1.149) {\scriptsize $13$};
    \node [above left] at (-0.15,1.149) {\scriptsize $7$};
    \node [below] at (0.5,0.05) {\scriptsize $5$};

    \draw (0,0) -- (0.5,0.866) -- (1.5,0.866);
    \draw (0.5,0.866) -- (0,1.732);
    \node [above left] at (1.3,0.333) {\scriptsize $21$};
    \node [below] at (0.5,1.732) {\scriptsize $22$};
    \node [above right] at (-0.3,0.333) {\scriptsize $6$};

\end{scope}

\begin{scope}[scale=0.8, xshift=8cm]
    \draw (0,0) -- (1,0) -- (1.5,0.866) -- (1,1.732) -- (0,1.732) -- (-0.5,0.866) -- cycle;
    \fill (0,0) circle [radius=0.05];
    \fill (0,1.732) circle [radius=0.05];
    \fill (1.5,0.866) circle [radius=0.05];

    \node [above right] at (1.15,1.149) {\scriptsize $1$};
    \node [above left] at (-0.15,1.149) {\scriptsize $9$};
    \node [below] at (0.5,0.05) {\scriptsize $18$};

    \draw (0,0) -- (0.5,0.866) -- (1.5,0.866);
    \draw (0.5,0.866) -- (0,1.732);
    \node [above left] at (1.3,0.333) {\scriptsize $12$};
    \node [below] at (0.5,1.732) {\scriptsize $7$};
    \node [above right] at (-0.3,0.333) {\scriptsize $23$};

\end{scope}

\begin{scope}[scale=0.8, xshift=10cm, yshift=1.732cm]
    \draw (0,0) -- (1,0) -- (1.5,0.866) -- (1,1.732) -- (0,1.732) -- (-0.5,0.866) -- cycle;
    \fill (0,0) circle [radius=0.05];
    \fill (0,1.732) circle [radius=0.05];
    \fill (1.5,0.866) circle [radius=0.05];

    \node [above right] at (1.15,1.149) {\scriptsize $15$};
    \node [above left] at (-0.15,1.149) {\scriptsize $4$};
    \node [below] at (0.5,0.05) {\scriptsize $23$};

    \draw (0,0) -- (0.5,0.866) -- (1.5,0.866);
    \draw (0.5,0.866) -- (0,1.732);
    \node [above left] at (1.3,0.333) {\scriptsize $14$};
    \node [below] at (0.5,1.732) {\scriptsize $13$};
    \node [above right] at (-0.3,0.333) {\scriptsize $3$};

    \draw [thick, color=blue] (1.25,0.433) arc (240:120:0.5);
    \draw [color=blue] (1,0) -- (0.25,0.433);
    \draw [color=blue] (1,1.732) -- (0.25,1.299);
    \draw [color=blue] (0.25,0.433) arc (240:120:0.5);
\end{scope}

\begin{scope}[scale=0.8, xshift=12cm]
    \draw (0,0) -- (1,0) -- (1.5,0.866) -- (1,1.732) -- (0,1.732) -- (-0.5,0.866) -- cycle;
    \fill (0,0) circle [radius=0.05];
    \fill (0,1.732) circle [radius=0.05];
    \fill (1.5,0.866) circle [radius=0.05];

    \node [above right] at (1.15,1.149) {\scriptsize $11$};
    \node [above left] at (-0.15,1.149) {\scriptsize $8$};
    \node [below] at (0.5,0.05) {\scriptsize $0$};

    \node [below left] at (0.2,0.15) {\tiny $b_0$};

    \draw (0,0) -- (0.5,0.866) -- (1.5,0.866);
    \draw (0.5,0.866) -- (0,1.732);
    \node [above left] at (1.3,0.333) {\scriptsize $4$};
    \node [below] at (0.5,1.732) {\scriptsize $10$};
    \node [above right] at (-0.3,0.333) {\scriptsize $17$};

    \draw [thick, color=red] (0.5,0) -- (0.5,1.732);
    \draw [thick, color=blue] (-0.25,0.433) arc (-60:60:0.5);

    \draw [color=red] (1,0) -- (1,1.732);
\end{scope}

\begin{scope}[scale=0.8, xshift=14.2cm, yshift=1.732cm]
    \draw (0,0) -- (1,0) -- (1.5,0.866) -- (1,1.732) -- (0,1.732) -- (-0.5,0.866) -- cycle;
    \fill (0,0) circle [radius=0.05];
    \fill (0,1.732) circle [radius=0.05];
    \fill (1.5,0.866) circle [radius=0.05];

    \node [above right] at (1.15,1.149) {\scriptsize $22$};
    \node [above left] at (-0.15,1.149) {\scriptsize $12$};
    \node [below] at (0.5,0.05) {\scriptsize $17$};

    \node [below left] at (0.2,0.15) {\tiny $b_0$};

    \draw (0,0) -- (0.5,0.866) -- (1.5,0.866);
    \draw (0.5,0.866) -- (0,1.732);
    \node [above left] at (1.3,0.333) {\scriptsize $19$};
    \node [below] at (0.5,1.732) {\scriptsize $15$};
    \node [above right] at (-0.3,0.333) {\scriptsize $16$};

    \draw [thick, color=blue] (0.5,0) -- (0.5,1.732);
    \draw [color=blue] (1,1.732) -- (1,0.866);
    \draw [color=blue] (1,0.866) arc (180:240:0.5);
\end{scope}

\end{tikzpicture}
\caption{The decomposition that $S_{2,1}$ inherits from $M_{2,1}$, with compressing disk boundaries for $M_{2,1}^-$ (thick) and their $\mu_1$-images (thin).}
\label{fig: S_21 with diamonds}
\end{figure}

The decomposition into quadrilaterals that $S_{2,1}$ inherits from the cell decomposition of $M_{2,1}$ from \Cref{cell decomp} is pictured in \Cref{fig: S_21 with diamonds} as a subdivision of the hexagon decomposition from \Cref{fig: S_21 with curves}. In the Figure, the label on the inside of each quadrilateral refers to the index $k$ of the polyhedron $\dpr_k$ that it lies in. (This is the same as the label on its adjacent exterior edge in \Cref{fig: S_21 with curves}, due to our labeling convention for those edges and for the faces of the visible pre-spine $\Sigma_0$; cf.~\Cref{ex: M_21 spine}.) The labels outside edges refer to the index of the quadrilateral attached to that edge.

The action of the mutation $\mu_1$ can be seen in this figure as taking the quadrilateral labeled ``$k$'' to the one labeled by the image of $k$ under the permutation above, for each $k\in\{0,\hdots,23\}$. We claim that it ``rotates by $180$ degrees'', in the sense that it takes the sub-triangle of each that contains its label to the one in the image that does not. The ``sub-triangles'' to which we refer here are the diamond's intersection with the copies of $P^{333}_2$ and $\bar{P}^{333}_2$ in $\widetilde{P}_k$, which is isometric to their union $\dpr^{333}_2$. The sub-triangle in $P^{333}_2$ is parallel to its face labeled ``$3_-$'' in \Cref{numberings}, and the one in $\bar{P}^{333}_2$ to its face $3_+$ (compare Figure \ref{compact triangle}). Each quadrilateral's label $k$ lies in the sub-triangle in parallel to $3_+$.

The sub-triangles of each quadrilateral meet along its short edge, with its vertices in $a$-labeled edges. 
The claim follows from the natures of $\rho_1^{\pm}$ laid out in \Cref{mutants}: $\rho_1^+$ acts on quadrilaterals preserving the signs of their decompositions into sub-triangles, thus reflecting in the diagonal joining the $b$-labeled vertices; whereas $\rho_1^-$ reflects in the short diagonal, taking each face parallel to $3_+$ to one parallel to $3_-$ and vice-versa. In particular, $\mu_1$ takes the $a$-labeled vertex of any quadrilateral that lies at the center of the hexagon containing it to the $a$-labeled vertex of its image that lies on the boundary of the hexagon containing it.

The compressing disk boundaries for $S_{2,1}$ in the handlebody $M_{2,1}^-(\mu_0)$ are also pictured in \Cref{fig: S_21 with diamonds} (cf.~\Cref{M_21 minus}), along with their $\mu_1$-images. It can be easily checked that each curve bounds an annulus with its $\mu_1$-image, hence that they are isotopic although with orientations reversed. One can also check that closed curves representing the other generators $E_3$ and $E_7$ for $\pi_1 S_{2,1}$ from \Cref{basis change} are also taken to curves freely homotopic to their inverses. Thus at the level of $H_1$, $\mu_1$ acts as multiplication by $-1$.
\end{example}

\begin{theorem}\label{cover mutants} For $i=2$ or $3$, let $\widetilde{S}_{i,1}$ be the preimage in $\widetilde{M}_{i,1}$ of $S_{i,1}$ from \Cref{volsides}, where $\widetilde{M}_{i,1}\to M_{i,1}$ is the cover from \Cref{main_tech_thm}. $\widetilde{M}_{3,1}$ is isometric to a manifold obtained from $\widetilde{M}_{2,1}$ by mutation along $\widetilde{S}_{2,1}$.\end{theorem}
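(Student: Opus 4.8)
The plan is to lift the piecewise isometries of \Cref{mutants} to the degree-$13$ cyclic covers $\widetilde{M}_{i,1}\to M_{i,1}$, using the fact (from the proof of \Cref{isoms}) that this covering is regular with deck group $\mathbb{Z}/13\mathbb{Z}$, classified by the surjection $\varphi_i\co\pi_1 M_{i,1}\twoheadrightarrow H_1(M_{i,1})\cong\mathbb{Z}\to\mathbb{Z}/13\mathbb{Z}$. First I would record that because $S_{i,1}$ separates $M_{i,1}$ into $M_{i,1}^{+}$ and $M_{i,1}^{-}$, its preimage $\widetilde{S}_{i,1}$ separates $\widetilde{M}_{i,1}$ into $\widetilde{M}_{i,1}^{\pm}=p^{-1}(M_{i,1}^{\pm})$, and each restricted covering $\widetilde{M}_{i,1}^{\pm}\to M_{i,1}^{\pm}$ is the $\mathbb{Z}/13\mathbb{Z}$-cover classified by the restriction $\varphi_i^{\pm}$ of $\varphi_i$ to $\pi_1 M_{i,1}^{\pm}$ (which factors through $H_1$ since the target is abelian). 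The goal is then to lift $\rho_1^{\pm}\co M_{2,1}^{\pm}\to M_{3,1}^{\pm}$ to isometries $\widetilde{\rho}_1^{\pm}\co\widetilde{M}_{2,1}^{\pm}\to\widetilde{M}_{3,1}^{\pm}$ and reassemble them.

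The heart of the argument is that $\rho_1^{\pm}$ preserve the covering data, so the lifts exist. By the covering-space lifting criterion this holds exactly when $\varphi_3^{\pm}\circ(\rho_1^{\pm})_*$ and $\varphi_2^{\pm}$ have equal kernel as maps $\pi_1 M_{2,1}^{\pm}\to\mathbb{Z}/13\mathbb{Z}$, equivalently (as $\mathbb{Z}/13\mathbb{Z}$ is a field) when they differ by a unit. To verify this I would compare how $H_1$ is assembled from the pieces. Let $g\co M_{2,1}^{\mu_1}\to M_{3,1}$ be the isometry from the proof of \Cref{mutants}, which restricts to $\rho_1^{\pm}$ on the two pieces, and set $\psi=\varphi_3\circ g_*$, so that $\varphi_3^{\pm}\circ(\rho_1^{\pm})_*$ equals the restriction $\psi^{\pm}$ of $\psi$ to $\pi_1 M_{2,1}^{\pm}$. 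Writing $A=H_1(M^{+}_{2,1})$, $B=H_1(M^{-}_{2,1})$, $C=H_1(S_{2,1})$, the homology $H_1(M_{2,1})$ is the pushout of $A\oplus B$ by the relation $\gamma\mapsto(i^{+}_*\gamma,i^{-}_*\gamma)$, while $H_1(M_{2,1}^{\mu_1})$ is the pushout by the twisted relation $\gamma\mapsto(i^{+}_*\gamma,i^{-}_*(\mu_1)_*\gamma)$. Here \Cref{mutation} provides the crucial input $(\mu_1)_*=-\mathrm{id}$ on $C$, so the twisted relation is $\gamma\mapsto(i^{+}_*\gamma,-i^{-}_*\gamma)$, whence the automorphism $(\mathrm{id}_A,-\mathrm{id}_B)$ of $A\oplus B$ carries the first relation subgroup to the second and descends to an isomorphism $\Theta\co H_1(M_{2,1})\xrightarrow{\sim}H_1(M_{2,1}^{\mu_1})$. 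Chasing the defining surjections through $\Theta$ yields $\psi^{+}=c\,\varphi_2^{+}$ and $\psi^{-}=-c\,\varphi_2^{-}$ for a single unit $c$; in both cases the kernels agree with those of $\varphi_2^{\pm}$, so the lifts $\widetilde{\rho}_1^{\pm}$ exist.

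Finally I would reassemble. Each $\widetilde{\rho}_1^{\pm}$ restricts on $\widetilde{S}_{2,1}$ to a lift of $\rho_1^{\pm}|_{S_{2,1}}\co S_{2,1}\to S_{3,1}$, so $\widetilde{\mu}_1\doteq(\widetilde{\rho}_1^{-})^{-1}\circ\widetilde{\rho}_1^{+}$ is a self-isometry of $\widetilde{S}_{2,1}$ covering the mutation $\mu_1=(\rho_1^{-})^{-1}\circ\rho_1^{+}$, hence nontrivial. Transporting the splitting $\widetilde{M}_{3,1}=\widetilde{M}_{3,1}^{+}\cup_{\widetilde{S}_{3,1}}\widetilde{M}_{3,1}^{-}$ back through $\widetilde{\rho}_1^{\pm}$ then exhibits $\widetilde{M}_{3,1}$ as $\widetilde{M}_{2,1}^{+}\cup_{\widetilde{\mu}_1}\widetilde{M}_{2,1}^{-}$, i.e.\ as the result of cutting $\widetilde{M}_{2,1}$ along $\widetilde{S}_{2,1}$ and regluing by $\widetilde{\mu}_1$ --- exactly mutation along $\widetilde{S}_{2,1}$. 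I expect the main obstacle to lie in the bookkeeping of the middle step: making the restricted homomorphisms $\varphi_i^{\pm}$ and the chase through $\Theta$ fully precise (the sign computation of \Cref{mutation} is what ties the two pieces' units together consistently), and confirming that the subsurface $\widetilde{S}_{2,1}$ entering the two gluings is literally the same cover of $S_{2,1}$ --- connected or a disjoint union of copies according to whether $\varphi_2|_{\pi_1 S_{2,1}}$ is onto $\mathbb{Z}/13\mathbb{Z}$ --- which again follows from the same sign computation.
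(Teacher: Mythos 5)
Your proposal is correct and reaches the theorem by the same overall architecture as the paper---lift $\rho_1^{\pm}$ to the degree-$13$ cyclic covers of the two pieces and reassemble as in the last paragraph of \Cref{mutants}'s proof---but the central step, verifying that the lifts exist, is argued by a genuinely different route. The paper lifts $\rho_1^-$ by observing (via the Six Theorem) that it carries the filling slope $\mu_0$ to $\mu_0'$ and hence extends over the Dehn-filled handlebodies of \Cref{M_21 minus}, and lifts $\rho_1^+$ by checking that the explicit normal generators $D$ and $E_8$ of $\ker\left(\pi_1 M_{2,1}^+\to\pi_1 M_{2,1}(\mu_0)\right)$ are carried to conjugates of $(D')^{-1}$ and $(E_8')^{-1}$. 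You instead compare the classifying surjections onto $\mathbb{Z}/13\mathbb{Z}$ purely homologically: the Mayer--Vietoris pushout description of $H_1$ of the glued manifold, plus the single input $(\mu_1)_*=-\mathrm{id}$ on $H_1(S_{2,1})$ from \Cref{mutation}, shows that the automorphism $(\mathrm{id},-\mathrm{id})$ of $H_1(M_{2,1}^+)\oplus H_1(M_{2,1}^-)$ intertwines the two relation subgroups, and your sign chase giving $\psi^{\pm}=\pm c\,\varphi_2^{\pm}$ is right; equal kernels and the lifts follow. What your route buys is independence from the handlebody structure of $M_{2,1}^-(\mu_0)$ and from any explicit generating set for the kernels: the paper's remark following this theorem explains that the $j=2$ case is open precisely because their method needs those ingredients, whereas your argument would apply verbatim for $j=2$ (with $\mathbb{Z}/22\mathbb{Z}$; the ``unit'' step never actually uses that $13$ is prime) once the analogue of \Cref{mutation}'s computation, that $\mu_2$ acts as $\pm\mathrm{id}$ on $H_1(S_{2,2})$, were established---that computation is the one input neither approach gets for free. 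What the paper's route buys is the concrete handlebody picture, which it uses elsewhere. Two small points for a final write-up: the identification $H_1(M_{2,1})\cong\left(A\oplus B\right)/\,\mathrm{im}\,H_1(S_{2,1})$ uses that $S_{2,1}$ is connected and separating, so the Mayer--Vietoris connecting map to $H_0(S_{2,1})$ vanishes; and the worry at the end of your last paragraph is vacuous, since $\widetilde{S}_{2,1}$ is by definition the full preimage of $S_{2,1}$ and is literally the common boundary of $\widetilde{M}_{2,1}^{+}$ and $\widetilde{M}_{2,1}^{-}$ (one can check from \Cref{main by hand} that $\varphi_2$ restricted to $\pi_1 S_{2,1}$ is onto, e.g.\ $E_3\mapsto C_1^9$, so it is in fact connected).
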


\begin{remark}
    We do not know that $\widetilde{M}_{2,2}$ and $\widetilde{M}_{3,2}$ are related by mutation. In order to use the proof strategy below to show this, we would need to know that $M_{2,2}^-$ is a knot complement in a handlebody (cf.~\Cref{good stuff}) and to understand the action of $(\rho_2^-)^{-1}\circ\rho_2^+$ on a full set of compressing disk boundaries for this handlebody. However, \Cref{scis cong} does at least imply that $\widetilde{M}_{2,2}$ and $\widetilde{M}_{3,2}$ are  OR-scissors congruent in the sense defined there.
\end{remark}

\begin{proof}
    We will show that the isometries $\rho_1^{\pm}$ from \Cref{mutants} lift to maps $\widetilde{M}_{2,1}^{\pm}\to\widetilde{M}_{3,1}^{\pm}$, where $\widetilde{M}^{\pm}_{i,1}$ is the preimage in $\widetilde{M}_{i,1}$ of $M^{\pm}_{i,1}$ for $i=2,3$. The result will then follow as in the final paragraph of the proof of \Cref{mutants}.

    We begin by noting that by \Cref{filling slope}, the Euclidean geodesic $\mu_0$ defined there is the only simple closed geodesic of length less than $6$ on its cusp cross-section. Therefore the image $\mu_0'$ of $\mu_0$ under $\rho_1^{-}$ has the same property, and it follows from the six theorem that the lens space filling of $M_{3,1}$ identified in \Cref{lem:lifts} must be along $\mu_0'$. Therefore $\rho_1^-$ extends to a homeomorphism $M_{2,1}^-(\mu_0)\to M_{3,1}^-(\mu_0')$ to a handlebody in the lens space $M_{3,1}(\mu_0')$ bounded by $S_{3,1}$.

    It follows that we may regard the lens space $M_{3,1}(\mu_0')$ as obtained from $M_{3,1}^+$ by attaching two-handles along the boundaries of compressing disks for $S_{3,1}$ in  $M_{3,1}^-(\mu_0')$, then capping off two-sphere boundary components with balls. The $\rho_1^-$-image of the two compressing disks for $S_{2,1}$ in $M_{2,1}^-(\mu_0)$ identified in  \Cref{M_21 minus} forms a full set of compressing disks for $S_{3,1}$ in $M_{3,1}^-(\mu_0')$. Then $D' = (\rho_1^-)_*(D)$ and $E_8' = (\rho_1^-)_*(E_8)$ represent their images in $\pi_1(S_{3,1})$, by \Cref{M_21 minus}. Since, as observed in \Cref{mutation}, the $\mu_1$-image of each compressing disk boundary is freely homotopic to itself with reversed orientation, up to conjugacy in $\pi_1 S_{2,1}$ we have $(\mu_1)_*(D) = D^{-1}$ and $(\mu_1)_*(E_8) = E_8^{-1}$. Therefore since $\mu_1 = (\rho_1^+)^{-1}\circ\rho_1^-$, we have that $(\rho_1^+)_*(D) = (D')^{-1}$ and $(\rho_1^+)_*(E_8) = (E_8')^{-1}$, again up to conjugacy in $\pi_1 S_{3,1}$.

    For $i=2$ or $3$, the cover $\widetilde{M}_{i,1}\to M_{i,1}$ corresponds to the kernel of the inclusion-induced map $\pi_1M_{i,1}\to\pi_1 M_{i,1}(\mu_0) \cong\mathbb{Z}/13\mathbb{Z}$ (replacing ``$\mu_0$'' by ``$\mu_0'$'' if appropriate). The same inclusion-induced map determines the covers $\widetilde{M}^{\pm}_{i,1}\to M^{\pm}_{i,1}$ for each $i$. Because $D$ and $E_8$ normally generate the kernel of the inclusion-induced map $\pi_1 M_{2,1}^+\to \pi_1 M_{2,1}(\mu_0)$, to see that $\rho_1^+$ lifts it is enough to know that $(\rho_1^+)_*$ maps these elements trivially. The previous paragraph's final sentence implies this. That $\rho_1^-$ lifts follows from the fact that it extends to a map $M_{2,1}^-(\mu_0)\to M_{3,1}^-(\mu_0')$.
\end{proof}

\section{Restrictions on prism orbifolds covered by knot complements}\label{sec:which prisms}

As mentioned in \Cref{subsec: compute}, the manifolds of \Cref{main_tech_thm} were found by an extensive, computer-aided search. We now give more details on this search, specifically on the \emph{pre-filtering}, \emph{enumeration}, and \emph{filtering} steps respectively outlined in \ref{pre-filter}, \ref{enumerate}, and \ref{filter}.

\subsection{Pre-filtering 1: cusps}\label{subsec: prefilter cusp} 
The pre-filtering process tested the prism orbifolds of Tables \ref{table236} and \ref{table333} for some necessary conditions to be covered by a knot complement, so that we could eliminate those not covered by one from further consideration. Tables \ref{table236Obstructions} and \ref{table333obsturctions} below record our end result. This subsection focuses on a pair of conditions involving the orbifolds' cusps. 

The first condition that we tested uses the ``cusp-killing homomorphism''.

\begin{definition} \label{cusp kill}
Let \( Q = \mathbb{H}^3 / \Gamma_Q \) be an orientable, one-cusped hyperbolic 3–orbifold of finite volume with a non-torus cusp. Denote the order of an element $\gamma$ in $\Gamma_Q$ by $|\gamma|$, denote the peripheral subgroup of $\Gamma_Q$ by $P_Q$, and let \( R = \left\{ \gamma \in P_Q \;|\; |\gamma| < \infty \right\} \). We define the \textit{cusp killing homomorphism} to be
\[
f: \Gamma_Q \rightarrow \Gamma_Q/\langle\langle R \rangle\rangle_{\Gamma_Q}.
\] 
\end{definition}

The Lemma below is a version of \cite[Corollary 4.11]{BBCW} and \cite[Proposition 2.3]{Neil_SmallKnots}.

\begin{lemma}[Boileau-Boyer-Cebanu-Walsh, Hoffman]\label{BBCW-H}
Let \( Q = \mathbb{H}^3 / \Gamma_Q \) be an orientable hyperbolic 3-orbifold with a non-torus cusp covered by knot complement. Let \( f \) denote the cusp killing homomorphism. Then, \( f(\Gamma_Q) \) is trivial. Additionally, \( |Q| \cong B^3 \), every component of the isotropy graph of \( Q \) is connected to the cusp, and the fundamental group is generated by peripheral torsion elements (\( |Q| \) denotes the underlying topological space of \( Q \)). 
 \end{lemma}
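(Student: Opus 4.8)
The plan is to prove \Cref{BBCW-H} by analyzing the covering $p \co \Sthree \setminus K \to Q$ and tracking the consequences of $\Sthree \setminus K$ being a knot complement in the sphere, in particular that $H_1(\Sthree\setminus K) \cong \ZZ$ and that a meridian of $K$ normally generates $\pi_1(\Sthree\setminus K)$. First I would set up notation: write $\Gamma = \pi_1(\Sthree\setminus K)$, realized as a finite-index subgroup of $\Gamma_Q$, and let $P_Q$ be the peripheral subgroup of $\Gamma_Q$. Since $\Sthree\setminus K$ is one-cusped and $Q$ is one-cusped, $P \doteq \Gamma \cap P_Q$ is the peripheral subgroup of $\Gamma$, and the key geometric input is that every finite-order element of $P_Q$ becomes trivial in $\Gamma_Q/\langle\langle R\rangle\rangle$ by construction. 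The heart of the argument is to show that the composite $\Gamma \hookrightarrow \Gamma_Q \stackrel{f}{\to} \Gamma_Q/\langle\langle R\rangle\rangle$ is already trivial on $\Gamma$, which forces $f(\Gamma_Q)$ to be trivial because $\Gamma$ has finite index and $\Gamma_Q/\langle\langle R\rangle\rangle$ would otherwise have a nontrivial finite quotient detectable on the index.

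The cleanest route to triviality of $f(\Gamma_Q)$ uses the meridian. A meridian $\mu$ of $K$ is a peripheral element of $\Gamma$; under the covering, $p$ sends $\mu$ to a peripheral element of $\Gamma_Q$, and I would argue that an appropriate power of $\mu$ maps into $R$ (the finite-order peripheral elements) once we pass to $\Gamma_Q$, because the cusp cross-section of $Q$ is a rigid Euclidean \emph{orbifold} rather than a torus, so its orbifold fundamental group contains the relevant torsion. Thus the image of $\mu$ dies in $\Gamma_Q/\langle\langle R\rangle\rangle$. Since $\mu$ normally generates $\Gamma = \pi_1(\Sthree\setminus K)$ — this is exactly the statement that a knot in $\Sthree$ has meridian normally generating its group — the image of all of $\Gamma$ is trivial under $f$. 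Combined with the finite-index observation above, $f(\Gamma_Q)$ is trivial, giving the first assertion.

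From triviality of $f$ the remaining topological conclusions follow. Triviality of $f(\Gamma_Q)$ means $\Gamma_Q = \langle\langle R\rangle\rangle$, i.e.~$\Gamma_Q$ is normally generated by peripheral torsion; passing to the quotient by the normal closure of $R$ collapses everything, which is precisely the statement that $\pi_1$ is generated by peripheral torsion elements. For the underlying space $|Q| \cong B^3$ and the connectivity of the isotropy graph, I would invoke the structure of the orbifold: killing all peripheral torsion and obtaining the trivial group means that $Q$ orbifold-Dehn-fills (along the cusp, killing $R$) to the trivial orbifold $\Sthree$ with underlying space $S^3$, or more directly that the Dehn-filled object $|Q|$ with its singular set removed is simply connected and hence $|Q|$ is a ball. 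The fact that $\Sthree\setminus K$ is connected as a cover, together with $f$ being trivial, forces every component of the singular (isotropy) locus to be joined to the cusp: any isolated singular component away from the cusp would contribute nontrivially to $\Gamma_Q/\langle\langle R\rangle\rangle$, contradicting triviality.

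The main obstacle I expect is making rigorous the claim that a meridian's image becomes finite-order (hence enters $R$) in $\Gamma_Q$, and more delicately, controlling the passage between $\Gamma$ and $\Gamma_Q$ so that triviality on the finite-index subgroup $\Gamma$ propagates to the whole group $\Gamma_Q$. This requires care because $f$ is defined on $\Gamma_Q$ while normal generation by the meridian lives in $\Gamma$; one must verify that $\langle\langle R\rangle\rangle_{\Gamma_Q} \cap \Gamma$ relates correctly to the normal closure of the meridian in $\Gamma$, and that no torsion is lost or gained in the restriction. I would address this by citing the precise versions in \cite[Corollary 4.11]{BBCW} and \cite[Proposition 2.3]{Neil_SmallKnots}, adapting their argument to the one-cusped, non-torus-cusp setting at hand, where the rigidity of the cusp cross-section guarantees the peripheral torsion needed to kill the meridian's image.
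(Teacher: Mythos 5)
The paper does not actually prove this lemma---it is imported verbatim from \cite[Corollary 4.11]{BBCW} and \cite[Proposition 2.3]{Neil_SmallKnots}---so there is no in-paper argument to compare against; what follows assesses your reconstruction against the argument in those sources. Your overall strategy (normal generation of the knot group by a meridian, plus torsion in the rigid cusp) is the right one, but two steps fail as written.

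First, the claim that ``an appropriate power of $\mu$ maps into $R$'' is false: $\mu$ is parabolic, so every nontrivial power of it has infinite order, whereas $R$ consists by \Cref{cusp kill} of the \emph{finite-order} peripheral elements. What is true, and what you need, is that $\mu$ lies in the normal closure $\langle\langle R\rangle\rangle_{\Gamma_Q}$: since $Q$ is orientable with a non-torus cusp, the peripheral subgroup $P_Q$ is a Euclidean $(2,3,6)$-, $(2,4,4)$-, or $(3,3,3)$-turnover group, which is generated by its torsion elements and whose quotient by the normal closure of that torsion is trivial; hence all of $P_Q$, and in particular $\mu$, dies under $f$. Second, and more seriously, triviality of $f$ on the finite-index subgroup $\Gamma=\pi_1(\Sthree\setminus K)$ does not ``force $f(\Gamma_Q)$ to be trivial'': it only forces $f(\Gamma_Q)$ to be a finite group, and a nontrivial finite quotient is not by itself a contradiction. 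The missing ingredient is again one-cuspedness. Because $\Sthree\setminus K$ and $Q$ each have a single cusp, there is exactly one double coset in $\Gamma\backslash\Gamma_Q/P_Q$, i.e.\ $\Gamma_Q=\Gamma P_Q$; since both $\Gamma$ (via normal generation by $\mu$) and $P_Q$ (via the turnover observation) lie in $\langle\langle R\rangle\rangle_{\Gamma_Q}$, so does all of $\Gamma_Q$. With these two repairs the first assertion holds, and the topological consequences follow roughly as you sketch ($\pi_1|Q|$ is a quotient of $\Gamma_Q/\langle\langle R\rangle\rangle$, and a simply connected compact $3$-manifold with $S^2$ boundary is a ball), though your one-sentence justification that a singular component disjoint from the cusp ``would contribute nontrivially'' to the quotient also needs an argument rather than an assertion---a non-peripheral torsion element can in principle lie in the normal closure of peripheral torsion.
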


Each orientable prism orbifold has underlying space $B^3$, and its isotropy graph's sole component---the prism's one-skeleton---is connected to the cusp. The following algorithm checks the effect of cusp-killing on the orbifold fundamental group; cf.~\cite[Remark 2.4]{Neil_SmallKnots}.

First, erase the edges corresponding to peripheral torsion. Next check any degree 2 vertices in the isotropy graph and relabel the torsion on these edges using the gcd of the two orders of torsion connected to that vertex. If the gcd is 1, prune these two edges. Then prune any edges incident to a degree 1 vertex.  Remove any vertices not incident to any edge. Finally, resolve each degree two vertex to be part of an edge (or a loop).  Repeat the process until either the graph is empty or cannot be resolved further. In the case that the resulting graph is non-empty, image of the fundamental group under cusp-killing is non-trivial; hence the orbifold is not covered by a knot complement. 

We now illustrate the algorithm above on the first two orbifolds from \Cref{table236}.

\begin{example}
 The effects of the cusp killing homomorphism on the isotropy graphs of $O^{236}_{1}$ and $O^{236}_{2}$ are demonstrated in Figure \ref{cuspkill}. The figure follows two parallel implementations of cusp-killing with $O^{236}_{1}$ on the left and $O^{236}_{2}$ on the right. It is easy to see that after one more iteration on the graphs in \ref{cuspkill}(d), the isotropy graph of $O^{236}_{1}$ remains a closed loop labeled 2 while that of $O^{236}_{2}$ disappears entirely. Therefore $O^{236}_{1}$ is not covered by a knot complement.

\begin{figure}[ht]
\centering
\begin{tikzpicture}
\begin{scope}[shift={(-4,3)}]
    \begin{scope}[shift={(-0.5,0.5)}]
        \draw [thick] (0,-2) -- (-2,-2);
        \fill [color=white] (-1.2,-2.6) -- (-1.35,-1.9) -- (-1.05,-1.9);
        \draw [thick] (0,-2) -- (0,0) -- (-1.2,-0.6) -- (-1.2,-2.6) -- (-2,-2);
        \draw [thick] (0,0) -- (0,-2) -- (-1.2,-2.6) -- (-1.2,-0.6) -- (0,0) -- (-2,0) -- (-2,-2) -- (-1.2,-2.6);
        \draw [thick] (-1.2,-0.6) -- (-2,0);
        \fill [color=white] (-1.2,-2.6) circle [radius=0.1];
        \draw (-1.2,-2.6) circle [radius=0.1];
        \node [right] at (0,-1) {\scriptsize $2$};
        \node [below right] at (-0.7,-2.2) {\scriptsize $3$};
        \node [below left] at (-1.5,-2.2) {\scriptsize $2$};
        \node [above] at (-0.6,-2.05) {\scriptsize $3$};
        \node [right] at (-1.25,-1.35) {\scriptsize $6$};
        \node [left] at (-2,-1) {\scriptsize $4$};
        \node [below right] at (-0.7,-0.2) {\scriptsize $2$};
        \node [below left] at (-1.5,-0.2) {\scriptsize $2$};
        \node [above] at (-1,-0.05) {\scriptsize $2$};
    \end{scope}
    
    \begin{scope}[shift={(-0.5,-3.5)}]
        \draw [thick] (0, 0) -- (-2, 0);
        \draw [thick] (0,0) .. controls (0,-2.5) and (-2,-2.5) .. (-2,0);
        \draw [thick] (0,0) .. controls (-1.2,-1.2) .. (-2,0);

        \node [above] at (-1,-2.35) {\scriptsize $1$};
        \node [right] at (-1.25,-1.1) {\scriptsize $2$};
        \node [above] at (-1,-0.05) {\scriptsize $2$};
    \end{scope}
    \node [below] at (-0.5,-2.5) {\small (a) The isotropy graphs before the process};

    \begin{scope}[shift={(2.5,0.5)}]
        \draw [thick] (0,-2) -- (-2,-2);
        \fill [color=white] (-1.2,-2.6) -- (-1.35,-1.9) -- (-1.05,-1.9);
        \draw [thick] (0,-2) -- (0,0) -- (-1.2,-0.6) -- (-1.2,-2.6) -- (-2,-2);
        \draw [thick] (0,0) -- (0,-2) -- (-1.2,-2.6) -- (-1.2,-0.6) -- (0,0) -- (-2,0) -- (-2,-2) -- (-1.2,-2.6);
        \draw [thick] (-1.2,-0.6) -- (-2,0);
        \fill [color=white] (-1.2,-2.6) circle [radius=0.1];
        \draw (-1.2,-2.6) circle [radius=0.1];
        \node [right] at (0,-1) {\scriptsize $2$};
        \node [below right] at (-0.7,-2.2) {\scriptsize $3$};
        \node [below left] at (-1.5,-2.2) {\scriptsize $2$};
        \node [above] at (-0.6,-2.05) {\scriptsize $3$};
        \node [right] at (-1.25,-1.35) {\scriptsize $6$};
        \node [left] at (-2,-1) {\scriptsize $4$};
        \node [below right] at (-0.7,-0.2) {\scriptsize $2$};
        \node [below left] at (-1.5,-0.2) {\scriptsize $2$};
        \node [above] at (-1,-0.05) {\scriptsize $3$};
    \end{scope}
    
    \begin{scope}[shift={(2.5,-3.5)}]
        \draw [thick] (0, 0) -- (-2, 0);
        \draw [thick] (0,0) .. controls (0,-2.5) and (-2,-2.5) .. (-2,0);
        \draw [thick] (0,0) .. controls (-1.2,-1.2) .. (-2,0);
        
        \node [above] at (-1,-2.35) {\scriptsize $1$};
        \node [right] at (-1.25,-1.1) {\scriptsize $2$};
        \node [above] at (-1,-0.05) {\scriptsize $3$};
    \end{scope}
    
    \node [below] at (-0.5,-6) {\small (c) The vertices with valency 2 resolved };
\end{scope}

\begin{scope}[shift={(2.5,3)}]
    \begin{scope}[shift={(-0.5,0.5)}]

        \draw [thick] (0, 0) -- (0,-2) -- (-2,-2) -- (-2, 0) -- (0, 0);
        \draw [thick] (0, 0) -- (-1.2,-0.6) -- (-2, 0);
        
        \draw [thick] (-1.2,-0.6) -- (-2,0);

        \node [right] at (0,-1) {\scriptsize $2$};
        \node [above] at (-1,-2.05) {\scriptsize $3$};
        \node [left] at (-2,-1) {\scriptsize $4$};
        \node [below right] at (-0.7,-0.2) {\scriptsize $2$};
        \node [below left] at (-1.5,-0.2) {\scriptsize $2$};
        \node [above] at (-1,-0.05) {\scriptsize $2$};
    \end{scope}
    
    \begin{scope}[shift={(-0.5,-3.5)}]
        \draw [thick] (0, 0) -- (-2, 0);

        \draw [thick] (0,0) .. controls (-1.2,-1.2) .. (-2,0);

        \node [right] at (-1.25,-1.1) {\scriptsize $2$};
        \node [above] at (-1,-0.05) {\scriptsize $2$};
    \end{scope}
    
    \begin{scope}[shift={(2.5,0.5)}]
        \draw [thick] (0, 0) -- (0,-2) -- (-2,-2) -- (-2, 0) -- (0, 0);
        \draw [thick] (0, 0) -- (-1.2,-0.6) -- (-2, 0);
        \draw [thick] (-1.2,-0.6) -- (-2,0);

        \node [right] at (0,-1) {\scriptsize $2$};
        \node [above] at (-1,-2.05) {\scriptsize $3$};
        \node [left] at (-2,-1) {\scriptsize $4$};
        \node [below right] at (-0.7,-0.2) {\scriptsize $2$};
        \node [below left] at (-1.5,-0.2) {\scriptsize $2$};
        \node [above] at (-1,-0.05) {\scriptsize $3$};
    \end{scope}
    \node [below] at (0.5,-2.5) {\small (b) Torsion edges at the cusp removed};

    \begin{scope}[shift={(2.5,-3.5)}]
        \draw [thick] (0, 0) -- (-2, 0);
        \draw [thick] (0,0) .. controls (-1.2,-1.2) .. (-2,0);
        
        \node [right] at (-1.25,-1.1) {\scriptsize $2$};
        \node [above] at (-1,-0.05) {\scriptsize $3$};
    \end{scope}
    
    \node [below] at (0,-6) {\small (d) The edges labeled 1 removed};
\end{scope}


\end{tikzpicture}
\caption{A step by step graphical interpretation of the effects of the cusp killing homomorphism for $O^{236}_{1}$ (left) and $O^{236}_{2}$ (right).}
\label{cuspkill}
\end{figure}
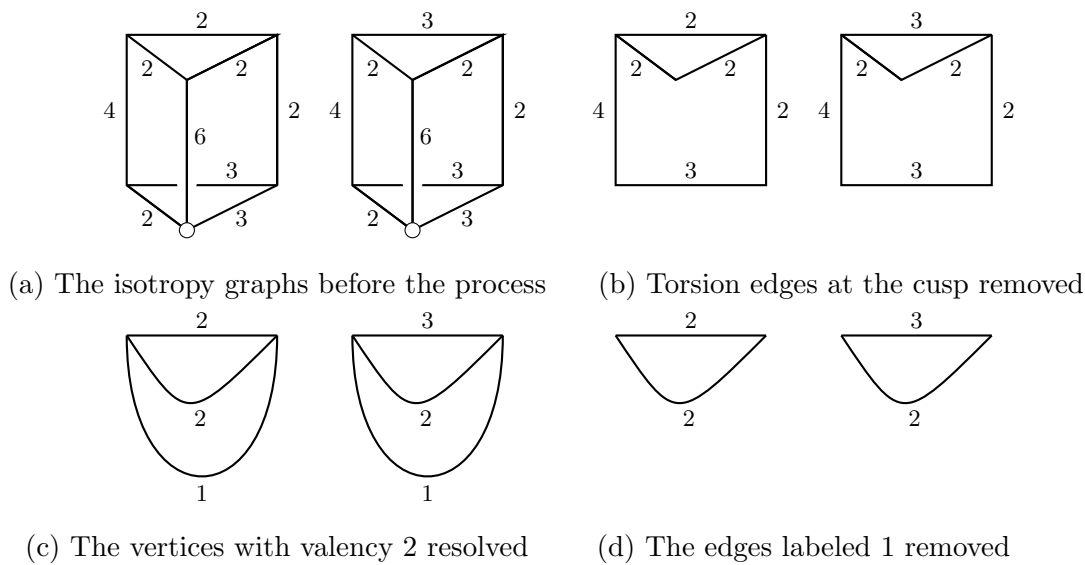

\end{example}

Applying the algorithm above to each orbifold from Tables \ref{table236} and \ref{table333} yields:

\begin{prop}\label{CK}
Each orbifold listed below is not covered by a knot complement because its orbifold fundamental group maps non-trivially under the cusp-killing homomorphism.
\begin{itemize}
        \item $\{\doh^{236}_{i} \mid\  i \in \{1, 3, 9, 10, 11, 14, 16, 18, 20, 23, 24, 25, 27, 28, 29, 32, 34, 36, 38\} \} $ 
       \item $\{\doh^{236}_{5,n}, \doh^{236}_{7,n} \mid n \geq 7\} \cup \{\doh^{236}_{13,n}, \doh^{236}_{22,n}, \doh^{236}_{31,n}, \doh^{236}_{40,n} \mid n \geq 6\}$
       \item $ \{\doh^{236}_{6,2k}, \doh^{236}_{8,2k} \mid k \geq 4 \} $ 
       \item $\{\doh^{333}_{i} \mid i \in \{1, 5, 7, 11, 13, 14, 15, 16, 20\}\}$
\end{itemize}
\end{prop}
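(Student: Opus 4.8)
The plan is to invoke \Cref{BBCW-H}: each $\doh(\be)$ in the list is orientable, has a rigid (non-torus) cusp, and has underlying space $B^3$ with isotropy graph equal to the one-skeleton of $P(\be)$, so it suffices to show that the image of $\dprgr(\be)$ under the cusp-killing homomorphism $f$ is non-trivial, whence by that Lemma the orbifold cannot be covered by a knot complement. Concretely I would run the graph-reduction algorithm stated just above \Cref{CK} on the labeled one-skeleton of $P(\be)$, each edge carrying the order $a_i$ of the rotation about it. The peripheral torsion to be erased is exactly the three edges $a_1,a_2,a_5$ meeting at the ideal vertex; one then collapses maximal arcs through valence-two vertices by the gcd rule until no valence-two vertex and no edge labeled $1$ remains, and $f$ is non-trivial precisely when a labeled edge survives.

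The key step is a single structural reduction valid for every tuple in \Cref{table236} or \Cref{table333}. Arrange the labels so that the compact triangular face carries edges $a_7,a_8,a_9$ and the three edges joining the two triangular faces are $a_4,a_5,a_6$; let $Q',R'$ be the endpoints of $a_9$ and $P'$ the third vertex of the compact face. Erasing $a_1,a_2,a_5$ drops the two non-cusp vertices of the non-compact triangular face (lying on $a_3$, and on $a_4$ and $a_6$ respectively) and the vertex $P'$ to valence two. Collapsing the maximal valence-two arc $Q'\!-\!\cdots\!-\!R'$ running through them merges $a_4,a_3,a_6$ into one edge, while collapsing $P'$ merges $a_7,a_8$; the whole graph thereby reduces to a theta graph on $\{Q',R'\}$ whose three parallel edges carry the labels
\[ a_9,\qquad \gcd(a_3,a_4,a_6),\qquad \gcd(a_7,a_8). \]
Since $\gcd$ is associative and symmetric, the output is independent of the order of processing, which settles confluence of the algorithm. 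Reducing the theta graph then yields the criterion: $f$ is non-trivial—equivalently, $\doh(\be)$ is not covered by a knot complement—exactly when either all three labels exceed $1$, or precisely one equals $1$ while the remaining two share a common factor. This matches the worked \Cref{cuspkill}: $O^{236}_1$ gives $(2,1,2)$, leaving a loop labeled $2$, whereas $O^{236}_2$ gives $(3,1,2)$, which collapses.

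With the criterion in hand the individual orbifolds (the first and fourth bullets) are dispatched by reading $(a_9,\gcd(a_3,a_4,a_6),\gcd(a_7,a_8))$ off each row and checking the condition. For the infinite families the parameter $n$ always occupies the $a_4$ slot, and since $a_3=2$ throughout it enters only through $\gcd(a_3,a_4,a_6)=\gcd(2,n,a_6)$: this equals $\gcd(2,n)$ when $a_6$ is even and equals $1$ when $a_6$ is odd. Combining with the $n$-independent values of $a_9$ and $\gcd(a_7,a_8)$ gives each family's verdict. The families with $a_8$ even, so $\gcd(a_7,a_8)=2$ and $a_9=2$ (namely $\doh^{236}_{5,n},\doh^{236}_{7,n},\doh^{236}_{13,n},\doh^{236}_{22,n},\doh^{236}_{31,n},\doh^{236}_{40,n}$), fail the test for every admissible $n$: whatever the middle label, one has either all three labels $>1$, or a lone $1$ against two labels equal to $2$. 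By contrast $\doh^{236}_{6,n}$ and $\doh^{236}_{8,n}$ have $a_8$ odd, hence $\gcd(a_7,a_8)=1$, so their theta triple is $(2,\gcd(2,n),1)$, which fails exactly when $\gcd(2,n)=2$—that is, for even $n=2k$—explaining why only those subfamilies appear.

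I expect the families, not the individual cases, to be the main obstacle: one must argue uniformly in $n$, tracking how $a_9$ and the two merged edges interact across all parities, and confirm that the stated ranges ($n\ge 6$, $n\ge 7$, $k\ge 4$)—inherited from the Lakeland–Roth classification recorded in \Cref{table236}—are the correct cutoffs. The only genuinely delicate point in the reduction itself is verifying the valence-two collapse at the compact face, namely that erasing $a_5$ really drops $P'$ to valence two and that a maximal arc collapses to the gcd of \emph{all} its labels (being removed outright when that gcd is $1$, with no stray valence-one remnant); everything downstream is bookkeeping against the two tables.
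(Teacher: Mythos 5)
Your proposal is correct and follows the paper's own route: both rest on \Cref{BBCW-H} and the graph-theoretic cusp-killing reduction on the prism's one-skeleton, the paper simply asserting the outcome of running that algorithm on every table entry after working the examples $O^{236}_{1}$ and $O^{236}_{2}$. Your uniform collapse to the theta graph on $Q',R'$ with labels $a_9$, $\gcd(a_3,a_4,a_6)$, $\gcd(a_7,a_8)$ is consistent with the paper's worked example and, on checking every row of Tables \ref{table236} and \ref{table333}, reproduces exactly the lists in the statement, including the parity dichotomy for $\doh^{236}_{6,n}$ and $\doh^{236}_{8,n}$ (and in every surviving case at least two labels equal $2$, so non-triviality is already visible on abelianizations and the criterion's ``all labels exceed $1$'' clause needs nothing beyond that).
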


The next condition that we tested follows from the result below, due to the third author.

\begin{lemma}[\cite{NeilOrbiCusps}]\label{factors through}
If a hyperbolic knot complement \(M = \mathbb{S}^3 - K\) covers an orbifold \(\mathcal{O}\) with a \((2,3,6)\)-cusp, then the cover \(\pi : M \to \mathcal{O}\) factors through an orbifold \(\mathcal{O}_0\) with a \((3,3,3)\)-cusp and a two-fold cover to \(\mathcal{O}\).
\end{lemma}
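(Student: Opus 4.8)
The plan is to realize the intermediate orbifold $\mathcal{O}_0$ as the two-fold cover of $\mathcal{O}$ associated to an index-two subgroup $\Gamma_0 \le \Gamma := \pi_1^{\mathrm{orb}}(\mathcal{O})$ that (i) contains $\Gamma_M := \pi_1(M)$ and (ii) meets the peripheral subgroup $P \le \Gamma$ in the canonical $(3,3,3)$-subgroup. First I would set up the peripheral picture. Since $\mathcal{O}$ is orientable with a rigid $(2,3,6)$-cusp, $P$ is conjugate to the orientation-preserving Euclidean triangle group $\Delta^+(2,3,6)\cong \mathbb{Z}^2\rtimes\mathbb{Z}/6$, with cusp cross-section the turnover $S^2(2,3,6)$. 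A direct computation gives $P^{\mathrm{ab}}\cong\mathbb{Z}/6$, with the translation subgroup $\mathbb{Z}^2$ (the unique maximal torsion-free subgroup, since every element with nontrivial rotational part is elliptic) lying in the kernel of $P\twoheadrightarrow\mathbb{Z}/6$. Reducing modulo $2$ yields a surjection $\psi\colon P\to\mathbb{Z}/2$ whose kernel $P_0$ is exactly $\Delta^+(3,3,3)$; geometrically $\psi$ corresponds to the two-fold orbifold cover $S^2(3,3,3)\to S^2(2,3,6)$ that unwraps the order-$2$ cone point, halves the order-$6$ cone point to order $3$, and splits the order-$3$ point into two. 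Because $M$ is a knot complement, its meridian $\mu$ is peripheral, so $[\mu]\in\pi_1(\partial M)\subseteq \mathbb{Z}^2\subseteq P_0=\ker\psi$.

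Granting for the moment an extension of $\psi$ to a homomorphism $\phi\colon\Gamma\to\mathbb{Z}/2$ (the crux, below), I would set $\Gamma_0=\ker\phi$. Then $[\Gamma:\Gamma_0]=2$ because $\phi$ is onto (as $\psi$ is), and $\Gamma_0\cap P=\ker\psi=P_0$, so $\mathcal{O}_0=\mathbb{H}^3/\Gamma_0$ is a two-fold cover of $\mathcal{O}$ with a $(3,3,3)$-cusp. The containment $\Gamma_M\subseteq\Gamma_0$ is precisely where the knot hypothesis enters: $\phi|_{\Gamma_M}$ factors through $H_1(M;\mathbb{Z}/2)$, which by $H_1(M)\cong\mathbb{Z}$ equals $\mathbb{Z}/2$ generated by the meridian class $[\mu]$; since $\phi([\mu])=\psi(\mu)=0$, the restriction $\phi|_{\Gamma_M}$ is trivial and $\Gamma_M\le\ker\phi=\Gamma_0$. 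Thus $M\to\mathcal{O}$ factors as $M\to\mathcal{O}_0\to\mathcal{O}$, as claimed.

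The hard part is producing the extension $\phi$, that is, showing the restriction $H^1(\mathcal{O};\mathbb{Z}/2)\to H^1(\text{cusp};\mathbb{Z}/2)=\mathbb{Z}/2$ hits $\psi$. One cannot simply invoke half-lives--half-dies with $\mathbb{Z}/2$ coefficients, since the order-$2$ and order-$6$ singular edges obstruct $\mathbb{Z}/2$-Poincar\'e--Lefschetz duality; this is exactly the feature distinguishing the $(2,3,6)$ cusp, and it explains why the statement is special to this cusp type. I would attack the extension combinatorially using \Cref{BBCW-H}: as $M$ covers $\mathcal{O}$, the underlying space $|\mathcal{O}|$ is a ball, the isotropy graph is connected to the cusp, and $\Gamma$ is normally generated by the peripheral torsion. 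A homomorphism $\Gamma\to\mathbb{Z}/2$ is therefore determined by its $\mathbb{Z}/2$-values on the edge-meridians of the singular graph, forced to be $0$ on order-$3$ edges and constrained at each interior trivalent vertex by the $\mathbb{Z}/2$-quotient of its (spherical) vertex group. Computing these quotients for the possible vertex groups --- $A_4$ and $A_5$ have trivial abelianization mod $2$, while $S_4$ (via the sign map) and the dihedral groups have nontrivial ones --- shows the vertex relations are compatible with the cusp-anchored values prescribed by $\psi$, so the labels can be propagated through the graph to a well-defined $\phi$.

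I expect the main obstacle to be exactly this propagation: verifying that the forced edge-labels are globally consistent, i.e. that no cycle of the singular graph produces a contradiction. Here I would lean on \Cref{BBCW-H} twice more --- that all torsion is peripheral after cusp-killing (so there are no ``extra'' constraints away from the cusp) and that $\Gamma$ is normally generated by the peripheral elliptics (so $\phi$, once defined on edge-meridians, is automatically determined and need only be checked for consistency). This consistency check, together with the identification of $P_0$ with $\Delta^+(3,3,3)$, is the technical heart of the argument; the remaining steps are formal.
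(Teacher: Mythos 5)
There is a wrinkle to note first: the paper does not actually prove \Cref{factors through} — it is imported wholesale from \cite{NeilOrbiCusps} — so there is no in-paper proof to compare against. Judged on its own terms, your reduction has the right skeleton, and its routine parts are correct: $P^{\mathrm{ab}}\cong\mathbb{Z}/6$ with the translation lattice in the kernel, $\ker\psi=\Delta^+(3,3,3)$, the meridian $\mu$ lying in the translations and hence in $\ker\psi$, and the deduction that any extension $\phi$ of $\psi$ kills $\Gamma_M$ because $\phi|_{\Gamma_M}$ factors through $H_1(M;\mathbb{Z}/2)=\langle[\mu]\rangle$. That last step is indeed where $H_1(S^3\setminus K)\cong\mathbb{Z}$ enters, and the identification of $\mathcal{O}_0=\mathbb{H}^3/\ker\phi$ as a one-$(3,3,3)$-cusped double cover is immediate once $\phi$ exists.

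The gap is the existence of the extension $\phi$, which you correctly flag as the crux but then only assert. Your proposed mechanism — propagating mod-$2$ labels through the isotropy graph using the structural conclusions of \Cref{BBCW-H} (ball underlying space, isotropy graph connected to the cusp, trivial cusp-killing, peripheral normal generation) — cannot succeed, because those conclusions do not imply that a consistent labeling exists. The paper itself demonstrates this: \Cref{dub cuv} is exactly your propagation condition packaged as a necessary combinatorial criterion, and \Cref{NO2} together with Table~\ref{table236Obstructions} exhibits $(2,3,6)$-cusped prism orbifolds (for instance $\doh^{236}_{4}$, $\doh^{236}_{15}$, $\doh^{236}_{17}$, $\doh^{236}_{21}$) that pass the cusp-killing test and have ball underlying space with isotropy graph connected to the cusp, yet admit \emph{no} $(3,3,3)$-cusped double cover: the forced edge labels are inconsistent around a cycle of the singular graph. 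So the consistency check you defer is not a technicality to be verified case by case from \Cref{BBCW-H} — it is the entire content of the lemma, and it requires injecting the knot-complement hypothesis somewhere other than the final containment step. (One natural route, which I believe is closer to what the cited reference does, is to pass to the finite group $\Gamma/\langle\langle\mu\rangle\rangle$ — the fundamental group of the spherical orbifold obtained by meridional filling, which is covered by $\mathbb{S}^3$ — and extract the required surjection to $\mathbb{Z}/2$ from the structure of that quotient rather than from the isotropy graph of $\mathcal{O}$ alone.) Also note that well-definedness of your proposed labeling on generators is itself not free: two torsion elements of $P$ with different $\psi$-values could a priori be conjugate in $\Gamma$, and ruling this out is part of the same missing argument.
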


This implies that a \((2,3,6)\)-cusped orbifold covered by a knot complement is also \emph{double}-covered by a $(3,3,3)$-cusped orbifold. A simple combinatorial condition tests for this.

\begin{lemma}\label{dub cuv}
Suppose \( O = \mathbb{H}^3 / \Pi \) is an orientable hyperbolic 3-orbifold with a single $(2,3,6)$-cusp, and $O_0 = \mathbb{H}^3/\Pi_0$ is a  $(3,3,3)$-cusped double cover of $O$, corresponding to an index-two subgroup $\Pi_0<\Pi$. Then the isotropy graph of $O$ has a cycle $C$ with the following properties: \begin{enumerate}
    \item $C$ contains the edges incident to the cusp having labels $2$ and $6$;
    \item Each vertex of $C$ belongs to exactly two edges of $C$; and
    \item Each edge of $C$ has an even label.\end{enumerate}
\end{lemma}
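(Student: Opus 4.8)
The statement concerns an orientable $(2,3,6)$-cusped orbifold $O$ that admits a $(3,3,3)$-cusped double cover $O_0$. The plan is to understand the deck transformation $\tau$ generating the order-two group of covering transformations of $O_0 \to O$, and to use its action on the isotropy graph of $O_0$ to produce the cycle $C$ in the isotropy graph of $O$. The key observation is that at the cusp, the $(2,3,6)$-turnover is covered by the $(3,3,3)$-turnover via a two-fold branched cover whose deck transformation is a rotation of order $2$: under this rotation, the three $(3,3,3)$-cone points (each of order $3$) are permuted, with one fixed cone point mapping to the order-$6$ cusp cone point of $O$ and the other two being swapped, their common image being the order-$3$ cusp cone point of $O$, while a new order-$2$ cone point of $O$ arises as the image of the fixed point of $\tau$ on that edge. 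This local picture dictates the structure I will globalize.

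\textbf{Key steps.} First I would fix the peripheral structure: the cusp cross-section of $O_0$ is a $(3,3,3)$-Euclidean turnover, and the restriction of $\tau$ to it is the unique orientation-preserving involution, which (up to relabeling) fixes one cone point and interchanges the other two. Lifting to the orbifold, this means that $\tau$ fixes one singular edge $e$ emanating from the cusp of $O_0$ and swaps the other two singular edges $e', e''$ emanating from the cusp. Second, I would trace these edges into the interior: since $\tau$ is an isometric involution of $O_0$ fixing $e$ setwise, its fixed-point set is a totally geodesic suborbifold (by the standard fact that fixed sets of finite-order isometries are totally geodesic), and $e$ lies in or near this fixed set. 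The image of $e$ in $O$ carries the order-$6$ singular edge at the cusp, and the common image of $e'$ and $e''$ carries the order-$3$ edge. The fixed axis of $\tau$ in the interior projects to edges of even order (order-$2$ edges) in $O$, since a point with $\tau$-stabilizer of order $2$ meeting the branch locus acquires an even local group. Third, I would assemble the cycle $C$: the edges of the isotropy graph of $O$ that arise either as images of $\tau$-fixed singular edges of $O_0$ or as projections of the $\tau$-fixed axis all carry even labels, and they close up into a cycle because the isotropy graph of $O$ is the prism one-skeleton (a trivalent graph whose singular edges form cycles by the edge-cycle structure coming from the presentation in (\ref{able})). I would verify conditions (1)--(3) directly from this description: (1) the edges labeled $2$ and $6$ at the cusp are exactly the image of $e$ and the projection of the fixed axis meeting the cusp, both in $C$; (2) trivalence of the prism graph together with the requirement that $C$ consist of even-labeled edges forces each vertex of $C$ to have exactly two incident $C$-edges; and (3) evenness is built in because every edge of $C$ is either a $\tau$-fixed branch locus edge (whose order must be even to admit the $2$-fold quotient, by the Riemann--Hurwitz / local-group index-$2$ condition) or a projection of the fixed axis (order $2$).

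\textbf{Main obstacle.} I expect the hardest part to be the rigorous globalization in the third step: passing from the purely local statement about $\tau$ near the cusp to the closed-up cycle $C$ in the interior, and in particular arguing that the collection of even-labeled edges really does close into a single cycle satisfying the valence-two condition (2) rather than, say, a more complicated even-labeled subgraph. This requires a careful analysis of how the deck involution $\tau$ acts on the full one-skeleton of the prism orbifold and which vertices of the isotropy graph are fixed versus swapped; the even-label condition at each vertex of $C$ must be reconciled with Andreev's constraints on vertex stabilizers (the $(2,3,4)$-, $(2,3,3)$-, and similar spherical vertex groups) so that the $\tau$-fixed edges locally match up into a path through each vertex. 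I would handle this by case analysis on the vertex group types that can occur and checking that in each case the even-labeled $\tau$-invariant edges through the vertex are exactly two in number, which simultaneously yields (2) and keeps (3) consistent. This combinatorial bookkeeping, rather than any deep geometry, is where the real work lies.
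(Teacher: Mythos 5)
Your geometric picture is the right one, and it is essentially dual to what the paper actually does: where you propose to study the deck involution $\tau$ of $O_0\to O$ and its fixed locus upstairs, the paper works downstairs with the quotient homomorphism $\phi\colon\Pi\to\Pi/\Pi_0\cong\mathbb{Z}/2\mathbb{Z}$ and asks which edge stabilizers map to $-1$. Your analysis of the cusp (the $(3,3,3)$-turnover double-branched-covering the $(2,3,6)$-turnover, with the order-$2$ and order-$6$ cone points ``branched'' and the order-$3$ cone point not) is correct and recovers property (1), and your observation that branched edges must have even label recovers property (3).

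The gap is in property (2), which you yourself flag as the main obstacle and then do not actually prove. The one argument you do offer --- that ``trivalence of the prism graph together with the requirement that $C$ consist of even-labeled edges forces each vertex of $C$ to have exactly two incident $C$-edges'' --- is not valid: an edge having even label does not put it in $C$ (membership in $C$ is the condition $\phi(\gamma)=-1$, not $2\mid|\gamma|$), and nothing about trivalence plus evenness by itself rules out one or three qualifying edges at a vertex. Deferring the rest to an unexecuted ``case analysis on vertex group types'' leaves the heart of the lemma unestablished. The paper closes this in one line: at any vertex the three incident edges correspond to elements $a$, $b$, $ab$ of the (trivalent, spherical) vertex group, and $\phi(ab)=\phi(a)\phi(b)$ forces an even number of $\{\phi(a),\phi(b),\phi(ab)\}$ to equal $-1$; hence if one incident edge lies in $C_0$ then exactly two do, with no case analysis needed. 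Two smaller issues: the lemma is stated for an arbitrary one-cusped orientable $(2,3,6)$-orbifold, so you should not invoke the prism one-skeleton, the presentation (\ref{able}), or Andreev's theorem (trivalence of the isotropy graph and the $\langle a,b\rangle$ structure of vertex groups hold for any orientable $3$-orbifold); and your parenthetical identifying the projection of the fixed axis with ``order-$2$ edges'' is misleading, since a $\tau$-invariant singular edge of $O_0$ labeled $k$ projects to an edge of $O$ labeled $2k$ for any $k$.
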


\begin{proof} Because $\Pi_0$ has index two in $\Pi$, it is normal. Let $\phi\co \Pi \to \Pi/\Pi_0 \cong \mathbb{Z}/2\mathbb{Z} = \{\pm 1\}$ be the quotient map. A peripheral subgroup $\Lambda$ of $\Pi$ is a Euclidean $(2,3,6)$-rotation group with presentation $\langle a,b\,|\,a^2 = b^3 = (ab)^6\rangle$, where $a$, $b$, and $ab$ correspond to the edges of the isotropy graph that are incident to the cusp and labeled with their respective orders. Since the corresponding peripheral subgroup $\Lambda_0$ of $\Pi_0$ is a $(3,3,3)$-rotation group we must have $\phi(a) = \phi(ab) = -1$, hence $\phi(b) = 1$.

Let $C_0$ be the set of edges of the isotropy graph with the property that the corresponding fundamental group elements map to $-1$. The union of the edges of $C_0$ divides into components; let $C\subseteq C_0$ consist of the edges in the component containing those incident to the cusp. Note that every edge in $C_0$ must have an even label, since an edge's label is the order of elements of the corresponding conjugacy class in $\Pi$ and $\mathbb{Z}/2\mathbb{Z}$ has order $2$. We thus have properties (1) and (3) by construction.

To see property (2), recall that the subgroup of $\Pi$ corresponding to a vertex of the isotropy graph is a spherical rotation group with presentation $\langle a,b\,|\,a^p = b^q = (ab)^r\rangle$, where $p$, $q$ and $r$ are the labels of the edges incident to the vertex. We now note that if at least one of $a$, $b$, and $ab$ maps to $-1$ then exactly two of the three do.
\end{proof}

Testing \Cref{dub cuv}'s criterion on the orbifolds of \Cref{table236} yields the following:

\begin{prop}\label{NO2}
    For each $i\in\{3,4,14,15,16,17,20,21,34,35,38,39\}$, $\doh^{236}_i$ does not have a $(3,3,3)$-cusped double cover; hence by \Cref{factors through} it is not covered by a knot complement.
\end{prop}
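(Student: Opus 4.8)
The plan is to apply the combinatorial criterion of \Cref{dub cuv} directly to each of the twelve orbifolds in question, showing that in each case no cycle $C$ in the isotropy graph can simultaneously satisfy properties (1), (2), and (3) of that lemma. Since a $(3,3,3)$-cusped double cover would force the existence of such a cycle, the absence of one rules out the double cover, and \Cref{factors through} then immediately yields that the orbifold cannot be covered by a knot complement. So the entire argument reduces to a finite, purely combinatorial check on the prism one-skeletons determined by the relevant nine-tuples in \Cref{table236}.

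First I would recall the structure of the isotropy graph of each $\doh^{236}_i$: it is the one-skeleton of the triangular prism $P^{236}_i$, with the nine edges labeled $a_1,\dots,a_9$ as in \Cref{fig:piOne}, and with the ideal vertex (shared by the edges labeled $a_1,a_2,a_5 = 2,3,6$) being the cusp. By \Cref{dub cuv}, any $(3,3,3)$-double cover requires a cycle $C$ through the two cusp-edges labeled $2$ and $6$ (i.e.\ the edges $a_1$ and $a_5$), meeting each of its vertices in exactly two edges, and with all edge-labels even. The key observation driving the argument is property (3): an edge can belong to $C$ only if its label is even. So I would begin each case by marking the even-labeled edges of the prism and asking whether the subgraph they span contains any cycle through both $a_1$ and $a_5$ in which every vertex has degree exactly $2$.

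The main work, then, is to tabulate for each of the twelve indices which of $a_1,\dots,a_9$ are even and to verify combinatorially that no admissible cycle exists. For the indices $i\in\{3,4,20,21,38,39\}$ (where $a_3 = 2$, $a_6 = 2,3,5$ varies but several of $a_4,a_9$ are odd) and $i\in\{14,15,16,17,34,35\}$ (where $a_4 = 3$ and $a_6$ is odd or the pattern of even edges is disconnected), I expect the even-edge subgraph either to fail to connect $a_1$ to $a_5$ through a closed loop, or to be forced through a vertex where only one even edge is available, violating the degree-two condition. I would present this as a short case analysis, possibly grouped by the shared combinatorial pattern of even labels, rather than drawing twelve separate graphs; a single representative picture plus a table of even-edge incidences should suffice.

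The hard part will be organizing the twelve cases efficiently and making the geometric/graph-theoretic reasoning airtight without exhaustively listing every potential cycle by hand. The subtlety is property (2): it is not enough that the even edges fail to form \emph{any} cycle through $a_1$ and $a_5$; I must confirm that no cycle with the degree-two constraint exists, and in a prism graph (which has several short cycles) I need to be careful that a candidate loop does not secretly revisit a vertex or pick up an odd edge. I anticipate that the cleanest route is to argue at the level of each vertex incident to $a_1$ or $a_5$: trace the forced continuation of $C$ through even edges and show it either terminates at a vertex with no second even edge or closes up in a way that excludes one of $a_1,a_5$. This local propagation argument, rather than global cycle enumeration, should make each case a one- or two-line verification once the even-edge data is laid out.
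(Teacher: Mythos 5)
Your proposal is correct and is essentially the paper's own argument: the paper proves \Cref{NO2} simply by testing the criterion of \Cref{dub cuv} on each listed orbifold, exactly the finite combinatorial check you describe. Your local-propagation version in fact closes uniformly — since $a_1=2$, $a_2=3$, $a_5=6$ in every row, the cycle is forced to leave the far endpoint of $a_1$ along $a_3$ or $a_4$, and for each of the twelve tuples it dies either there (when $a_3$ and $a_4$ are both odd, as for $i=3,4$, where $a_3=3$) or at the next vertex (where $a_2$ and $a_6$ are both odd in the remaining cases) — so no case-by-case cycle enumeration is needed.
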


Even among the $(2,3,6)$-cusped orbifolds that do satisfy cusp-killing and are covered by a $(3,3,3)$-cusped orbifold, combining the two criteria eliminates a few more.

\begin{prop}\label{CK2}
    For each $k\in\{12,26,30\}$, $\doh_{k}^{236}$ has a unique double cover by a $(3,3,3)$-cusped orbifold $\widetilde{P}^{236}_k$, and $\widetilde{P}_k^{236}$ maps non-trivially under cusp killing. Hence by Lemmas \ref{factors through} and \ref{BBCW-H} these are not covered by knot complements.
\end{prop}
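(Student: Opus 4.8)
The plan is to verify the three assertions of \Cref{CK2} in turn: (i) existence and uniqueness of the $(3,3,3)$-cusped double cover $\widetilde{P}^{236}_k$, (ii) non-triviality of its cusp-killing image, and (iii) the resulting exclusion. For (i) I would exploit the presentation (\ref{able}) of $\dprgr^{236}_k$ to turn the classification of $(3,3,3)$-cusped double covers into a small linear problem over $\mathbb{F}_2$. Such a cover corresponds to a nonzero homomorphism $\phi\co\dprgr^{236}_k\to\mathbb{Z}/2\mathbb{Z}$, determined by its values $(p,q,r,s)=(\phi(x),\phi(y),\phi(z),\phi(w))$ subject only to the odd-order relators forcing the corresponding value to vanish. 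Reading off the edge-to-word dictionary from (\ref{able})---$x,y,z,w$ carry the edges $a_1,a_4,a_5,a_7$, while $y^{-1}x,z^{-1}x,z^{-1}y,y^{-1}w,z^{-1}w$ carry $a_3,a_2,a_6,a_9,a_8$---the $(3,3,3)$-cusp criterion from the proof of \Cref{dub cuv} (the cusp edges labeled $2$ and $6$, namely $a_1$ and $a_5$, must map to the nontrivial element) forces $p=r=1$, whereupon the odd edges $a_4$ and $a_9$ force $q=s=0$. In each of $k\in\{12,26,30\}$ this yields exactly one $\phi$, with $(p,q,r,s)=(1,0,1,0)$ and nontrivial edges precisely $a_1,a_3,a_5,a_6,a_8$; this establishes both existence and uniqueness of $\widetilde{P}^{236}_k$.

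For (ii) I would construct the isotropy graph of $\widetilde{P}^{236}_k$ from the labeled prism one-skeleton of $\doh^{236}_k$ by the standard orbifold covering rules: an edge labeled $n$ lifts to two edges labeled $n$ when its class lies in $\ker\phi$ and to a single edge labeled $n/2$ otherwise; a vertex whose spherical group $G$ meets $\ker\phi$ in index two lifts to one vertex carrying that index-two group, whereas a vertex with $G\subset\ker\phi$ lifts to two copies of $G$. The crucial point is that the prism vertex carrying the group $(2,3,5)\cong A_5$ (for $k=12,30$) or $(2,3,3)\cong A_4$ (for $k=26$) has all three incident edges in $\ker\phi$, so---since $A_5$ and $A_4$ admit no index-two subgroup---it lifts to \emph{two} copies of the same perfect group, and these two vertices sit away from the cusp with all incident edges surviving.

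For the remainder of (ii) and for (iii), I would run the cusp-killing algorithm of \Cref{subsec: prefilter cusp} on $\widetilde{P}^{236}_k$: erase the three order-$3$ cusp edges and repeatedly resolve degree-two vertices. This collapses the diagram to a theta-graph joining the two perfect-group vertices by three arcs, of orders $5,2,3$ when $k=12,30$ and $3,2,3$ when $k=26$. Since this final graph has no degree-two vertices and no edges labeled $1$, the algorithm certifies that the cusp-killing image of $\widetilde{P}^{236}_k$ is non-trivial. Assertion (iii) is then immediate: if $\doh^{236}_k$ were covered by a knot complement $M$, then by \Cref{factors through} the cover would factor through a $(3,3,3)$-cusped double cover, which by (i) is $\widetilde{P}^{236}_k$; hence $M$ would cover $\widetilde{P}^{236}_k$, contradicting \Cref{BBCW-H} in view of (ii).

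The main obstacle I anticipate is in the reduction step: correctly tracking how the non-cusp spherical vertex groups behave once some incident edges are erased. For $k=26,30$ the octahedral vertices $(2,3,4)\cong S_4$ lift to tetrahedral $A_4$ vertices, and at the one adjacent to the cusp, erasing its two order-three cusp edges kills order-three subgroups that normally generate $A_4$; I must verify, using that two distinct Sylow $3$-subgroups generate $A_4$ and that the normal closure of an involution in $A_4$ is the Klein four-group (with quotient $\mathbb{Z}/3$), that this vertex collapses to a smooth arc and that the collapse propagates along the order-two edge $a_6$ to the second $A_4$ vertex---without touching the two perfect-group vertices, whose edges are never erased. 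Confirming these local quotient computations, and that the surviving theta-graph orbifold genuinely has non-trivial fundamental group, is the crux of the argument.
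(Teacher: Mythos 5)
Your proposal is correct and follows essentially the same route as the paper: identify the unique surjection $\dprgr^{236}_k\to\mathbb{Z}/2\mathbb{Z}$ compatible with a $(3,3,3)$ cusp, build the lifted isotropy graph, and run cusp-killing down to a $\theta$-graph joining two vertices whose groups ($A_4$ or $A_5$) certify a nontrivial image. The only differences are cosmetic: you derive uniqueness from the odd-order relators of the presentation (\ref{able}) rather than from the cycle criterion of \Cref{dub cuv}, and you explicitly treat the $k=26,30$ cases (where $a_6=4$ yields an order-$2$ edge between two lifted $A_4$ vertices and the extra collapse you describe) that the paper's proof dismisses as ``similar.''
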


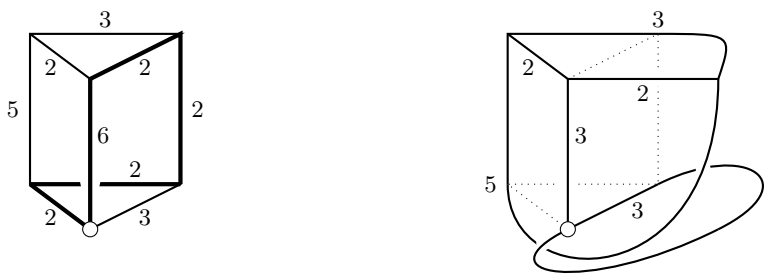
\begin{figure}[ht]
\begin{tikzpicture}

\begin{scope}[xshift=-2.5in]
    \draw [ultra thick] (0,-2) -- (-2,-2);
    \fill [color=white] (-1.2,-2.6) -- (-1.35,-1.9) -- (-1.05,-1.9);
    \draw [ultra thick] (0,-2) -- (0,0) -- (-1.2,-0.6) -- (-1.2,-2.6) -- (-2,-2);
    \draw [thick] (0,0) -- (0,-2) -- (-1.2,-2.6) -- (-1.2,-0.6) -- (0,0) -- (-2,0) -- (-2,-2) -- (-1.2,-2.6);
    \draw [thick] (-1.2,-0.6) -- (-2,0);
    \fill [color=white] (-1.2,-2.6) circle [radius=0.1];
    \draw (-1.2,-2.6) circle [radius=0.1];

    \node [right] at (0,-1) {\scriptsize $2$}; 
    \node [below right] at (-0.7,-2.2) {\scriptsize $3$}; 
    \node [below left] at (-1.5,-2.2) {\scriptsize $2$}; 
    \node [above] at (-0.6,-2.05) {\scriptsize $2$}; 
    \node [right] at (-1.25,-1.35) {\scriptsize $6$}; 
    \node [left] at (-2,-1) {\scriptsize $5$}; 
    \node [below right] at (-0.7,-0.2) {\scriptsize $2$}; 
    \node [below left] at (-1.5,-0.2) {\scriptsize $2$}; 
    \node [above] at (-1,-0.05) {\scriptsize $3$}; 
\end{scope}

\begin{scope}
    \draw [dotted] (-1.2,-0.6) -- (0,0) -- (0,-2) -- (-2,-2) -- (-1.2,-2.6);
    \fill [color=white] (-1.2,-2.6) -- (-1.35,-1.9) -- (-1.05,-1.9);
    \draw [thick] (-1.2,-0.6) -- (-1.2,-2.6) -- (0,-2);
    \draw [thick] (0,0) -- (-2,0) -- (-2,-2);
    \draw [thick] (-1.2,-0.6) -- (-2,0);

    \node [below right] at (-0.5,-2.1) {\scriptsize $3$}; 
    \node [right] at (-1.25,-1.35) {\scriptsize $3$}; 
    \node [left] at (-2,-2) {\scriptsize $5$}; 
    \node [below left] at (-1.5,-0.2) {\scriptsize $2$}; 
    \node [above] at (0,-0.05) {\scriptsize $3$}; 
    \node [below] at (-0.2,-0.55) {\scriptsize $2$};

    \fill [color=white] (0,-0.6) circle [radius=0.1];
    \draw [thick] (-1.2,-0.6) -- (0.8,-0.6);
    \draw [thick] (0,0) .. controls (1,0) .. (0.8,-0.6);
    \draw [thick] (0,-2) .. controls (1.2,-1.4) and (2,-2) .. (0.8,-2.6);
    \fill [color=white] (0.6,-1.8) circle [radius=0.1];
    \draw [thick] (-2,-2) .. controls (-2,-3.5) and (0.8,-3.5) .. (0.8,-0.6);
    \fill [color=white] (-1.52,-2.78) circle [radius=0.1];
    \draw [thick] (-1.2,-2.6) .. controls (-2.4,-3.2) and (-1,-3.5) .. (0.8,-2.6);

    \fill [color=white] (-1.2,-2.6) circle [radius=0.1];
    \draw (-1.2,-2.6) circle [radius=0.1];
\end{scope}

\end{tikzpicture}
\caption{$\doh^{236}_{12}$ (left) and its $(3,3,3)$-cusped double cover $\widetilde{P}^{236}_k$ (right).}
\label{fig: dc ck}
\end{figure}

\begin{proof} We give the proof for $\doh^{236}_{12}$; the rest are similar. The orbifold's isotropy graph is pictured on the left in \Cref{fig: dc ck}. It has a unique cycle $C$ that satisfies the criteria of \Cref{dub cuv}, pictured bold in the Figure. This determines a homomorphism $\phi\co\dprgr^{236}_{12}\twoheadrightarrow\mathbb{Z}/2\mathbb{Z}$, which maps the conjugacy class corresponding to an edge non-trivially if and only if that edge belongs to $C$. (As is visible in the presentation (\ref{able}), $\dprgr^{236}_{12}$ is generated by elements corresponding to edges of the isotropy graph.)

As argued in the proof of \Cref{NO2}, if a homomorphism $\dprgr^{236}_{12}\twoheadrightarrow\mathbb{Z}/2\mathbb{Z}$ maps the conjugacy classes corresponding to edges of $C$ nontrivially, then it must trivially map the conjugacy class corresponding to each edge not belonging to $C$ but having a vertex on $C$. Since every edge of the isotropy graph has a vertex on $C$, it follows that there is a unique homomorphism $\dprgr^{236}_{12}\twoheadrightarrow\mathbb{Z}/2\mathbb{Z}$ that maps all edges of $C$ nontrivially. Therefore by \Cref{NO2}, $\doh^{236}_{12}$ has a unique double cover by a $(3,3,3)$-cusped orbifold  $\widetilde{P}^{236}_{12}$, the one corresponding to $\ker\phi$.

The orbifold $\widetilde{P}^{236}_{12}$ is pictured on the right in \Cref{fig: dc ck}. Its underlying space is the double branched cover of the underlying space of $\doh^{236}_{12}$---which is $\mathbb{S}^3$ with one puncture---over the unknotted cycle $C$; hence it is again $\mathbb{S}^3$ with one puncture. The edge of $C$ labeled ``$6$'' yields an edge of the isotropy graph of $\widetilde{P}^{236}_k$ labeled ``$3$''; the edges of $C$ labeled ``$2$'' disappear since the corresponding conjugacy classes of order-two elements do not intersect $\ker\phi$.

Each edge $e$ of the isotropy graph of $\doh^{236}_{12}$ outside $C$ yields two edges of the isotropy graph of $\widetilde{P}^{236}_k$, with the same label as $e$ and sharing both their endpoints, which are exchanged by the covering transformation. At a vertex $v$ that $e$ shares with two edges of $C$ labeled ``$2$'', these edges meet at a $180$-degree angle (reflecting that the corresponding vertex group of $\dprgr^{236}_{12}$ has cyclic intersection with $\ker\phi$), so their union is a geodesic segment in the hyperbolic metric on $\widetilde{P}^{236}_k$, and the preimage of $v$ is not a vertex of the isotropy graph of $\widetilde{P}^{236}_{12}$. The edge labeled ``$5$'', and each one labeled ``$3$'', has a unique vertex with this property.

Inspecting the right side of \Cref{fig: dc ck}, we find that the cusp-killing algorithm described below \Cref{BBCW-H} terminates leaving a $\theta$-graph with edges labeled ``$2$'', ``$3$'', and ``$5$''. Therefore $\widetilde{P}^{236}_k$ maps non-trivially under cusp-killing.
\end{proof}

\begin{remark} Unlike Propositions \ref{CK} and \ref{NO2}, \Cref{CK2} does not provide a comprehensive list of orbifolds failing its condition---it only addresses cases that survived the previous two tests.\end{remark}

\subsection{Pre-filtering 2: arithmetic}\label{subsec: prefilter arith} We can further prune the list of candidate prism orbifolds to be covered by a knot complement using the following obstruction from arithmetic, recorded in work of the third author. Below, a \emph{ring of $S$-integers} is a subring of a number field, containing the field's ring of integers, in which a finite (possibly empty) collection of primes is inverted. For a reference on $S$-integers and units in the ring of $S$-integers see \cite[3.2.2]{narkiewicz1974elementary}.

\begin{lemma}[Hoffman, \cite{Neil_SmallKnots}]\label{unit}
Suppose that a complete hyperbolic $3$-manifold $M = \Hthree/\Gamma$ is homeomorphic to a knot complement $\knotcomp$, where $\Gamma \subset \mathrm{PSL}(2,O_s)$ for a ring of $S$-integers $O_s$. If $\mu \in \Gamma$ fixes $\infty$ and represents a meridian of $K$, then $\mu = \begin{bmatrix} 1 & x \\ 0 & 1 \end{bmatrix}$ for a unit $x$ in $O_s$. 
\end{lemma}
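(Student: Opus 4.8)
The plan is to argue by contradiction, leveraging that the meridian normally generates the knot group together with the (topological) fact that the longitude lies arbitrarily deep in the lower central series of $\Gamma$, and then matching this against the arithmetic congruence filtration. First I would normalize: since $\mu$ is parabolic fixing $\infty$, any lift to $\mathrm{SL}(2,O_s)$ has the form $\pm\begin{bmatrix} 1 & x \\ 0 & 1\end{bmatrix}$ with $x\in O_s$, and $x\neq 0$ because $\mu\neq 1$. To show $x$ is a unit it suffices to show $x$ lies in no proper prime ideal, so I assume for contradiction that $x\in\mathfrak p$ for some prime $\mathfrak p\subset O_s$. Because $K$ is a knot in $\Sthree$, filling $M$ along $\mu$ recovers the simply connected $\Sthree$, so van Kampen gives $\Gamma/\langle\langle\mu\rangle\rangle=1$; that is, $\mu$ normally generates $\Gamma$. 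Reducing matrix entries modulo $\mathfrak p$ sends $\mu$ to the identity of $\mathrm{PSL}(2,O_s/\mathfrak p)$, and since the image of $\Gamma$ is the normal closure of the image of $\mu$, all of $\Gamma$ reduces to the identity. Hence $\Gamma\subseteq\Gamma(\mathfrak p)$, the principal congruence subgroup. I stress that this containment alone is \emph{not} contradictory (congruence subgroups are themselves lattices), which is exactly why the next input is needed.

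Next I would bring in the longitude $\lambda=\begin{bmatrix} 1 & y \\ 0 & 1\end{bmatrix}$ (again up to sign), a nontrivial parabolic since the rank-two cusp group $\langle\mu,\lambda\rangle$ forces $y\neq 0$. The crucial topological fact is that $\lambda$ lies in every term $\Gamma_n$ of the lower central series of $\Gamma$. To see this I would apply Stallings' theorem to the homomorphism $\mathbb{Z}\to\Gamma$ sending a generator to $\mu$: it induces an isomorphism on $H_1$ (as $\mu$ generates $H_1(M)\cong\mathbb{Z}$) and an epimorphism on $H_2$ (as $H_2(\Gamma)=H_2(M)=0$, using that $M$ is aspherical and that $H_2$ of a knot complement vanishes by Alexander duality). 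Stallings then yields $\Gamma/\Gamma_n\cong\mathbb{Z}$ for all $n$, whence $\Gamma_n=[\Gamma,\Gamma]$ for every $n\ge 2$; since $\lambda$ is null-homologous it lies in $[\Gamma,\Gamma]=\bigcap_n\Gamma_n$.

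Finally I would combine this with the congruence filtration on $\Gamma(\mathfrak p)$. The standard commutator estimate $[\Gamma(\mathfrak a),\Gamma(\mathfrak b)]\subseteq\Gamma(\mathfrak a\mathfrak b)$ (which is insensitive to the center, hence descends from $\mathrm{SL}$ to $\mathrm{PSL}$) gives $\Gamma(\mathfrak p)_n\subseteq\Gamma(\mathfrak p^n)$. Since $\Gamma\subseteq\Gamma(\mathfrak p)$ implies $\Gamma_n\subseteq\Gamma(\mathfrak p)_n$, I obtain $\lambda\in\Gamma_n\subseteq\Gamma(\mathfrak p^n)$ for every $n$. By the Krull intersection theorem $\bigcap_n\mathfrak p^n=0$ in the Noetherian domain $O_s$, so $\bigcap_n\Gamma(\mathfrak p^n)$ is trivial; therefore $\lambda=1$, i.e.\ $y=0$, contradicting $y\neq 0$. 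Running this for an arbitrary prime containing $x$ shows $(x)$ lies in no maximal ideal, so $x$ is a unit.

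I expect the main obstacle to be the bridge between topology and arithmetic: making the Stallings hypotheses precise so that the longitude genuinely sits in \emph{all} terms of the lower central series (in particular verifying $H_2(\Gamma)=0$ and that $\mu$ generates $H_1$), and then pairing this with the congruence filtration through the estimate $\Gamma(\mathfrak p)_n\subseteq\Gamma(\mathfrak p^n)$. The remaining bookkeeping — the $\pm$ ambiguity from working in $\mathrm{PSL}$ rather than $\mathrm{SL}$ — should be harmless, absorbed either by passing to a lift or by noting that both the commutator estimate and the Krull intersection argument are unaffected by the center.
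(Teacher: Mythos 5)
The paper does not prove this lemma; it is imported by citation from \cite{Neil_SmallKnots}, so there is no internal proof to compare against. Your argument is correct and complete, and it is the standard route to this statement: triviality of $\pi_1$ of the meridional filling gives normal generation of $\Gamma$ by $\mu$, so if $x\in\mathfrak{p}$ then reduction modulo $\mathfrak{p}$ kills $\Gamma$ and $\Gamma\subseteq\Gamma(\mathfrak{p})$; Stallings' theorem applied to $\mathbb{Z}\to\Gamma$, $1\mapsto\mu$ (valid since $\mu$ generates $H_1(M)\cong\mathbb{Z}$ and $H_2(\Gamma)=H_2(M)=0$ by asphericity and Alexander duality) stabilizes the lower central series at $[\Gamma,\Gamma]$, which contains the null-homologous longitude $\lambda$; and the filtration estimate $[\Gamma(\mathfrak{a}),\Gamma(\mathfrak{b})]\subseteq\Gamma(\mathfrak{a}\mathfrak{b})$ together with Krull intersection forces $\lambda=1$, contradicting that the cusp subgroup is free abelian of rank two. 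The only two places needing care are exactly the ones you flagged, and both resolve as you say: the $\pm I$ ambiguity is harmless because the diagonal congruence $\pm 1\equiv 1\pmod{\mathfrak{p}^n}$ would force $2\in\mathfrak{p}^n$, which fails for large $n$ since $2\neq 0$; and $\lambda\neq 1$ because peripheral subgroups of cusps of finite-volume hyperbolic $3$-manifolds have rank two. I see no gap.
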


It is a standard consequence of Mostow-Prasad rigidity that every complete, finite-volume hyperbolic $3$-manifold is of the form $\Hthree/\Gamma$, where $\Gamma \subset \mathrm{PSL}(2,O_s)$ for some ring of $S$-integers $O_s$. We apply this in the context of matrix groups explicitly described in \cite{LakelandRoth}, isomorphic to the $\dprgr(\be)$ presented in (\ref{able}), generated by rotations pairing faces of the doubled prisms $\dpr(\be)$ embedded as discussed here in \Cref{subsec: embedding333}. For these explicit matrix groups, which we still refer to as ``$\dprgr(\be)$'', we have the following consequence of the famous ``Six Theorem'' \cite{AgolSix}, \cite{Lackenby_sixes}.

\begin{lemma}\label{meridians}
Suppose that a complete hyperbolic $3$-manifold $M = \Hthree/\Gamma$ is homeomorphic to a knot complement $\knotcomp$ covering a prism orbifold $O(\be)$, so that up to conjugation in $\mathrm{PSL}_2(\mathbb{C})$ we may assume that $\Gamma<\dprgr(\be)$. Let $O_s$ be the smallest ring of s-integers containing the entries of $\dprgr(\be)$. Then there exists $\mu \in \Gamma$ of the form $\begin{bmatrix} 1 & x \\ 0 & 1 \end{bmatrix}$, for a unit $x$ in $O_s$ with complex modulus strictly less than $6\cdot\max\{1,r\}$, for $r$ as in \Cref{maximal general}. 
\end{lemma}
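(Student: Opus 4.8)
The plan is to combine the unit condition from \Cref{unit} with the length bound from the Six Theorem, applied to a maximal cusp cross-section. First I would invoke Mostow--Prasad rigidity to realize $M$ as $\Hthree/\Gamma$ with $\Gamma\subset\mathrm{PSL}(2,O_s)$ for a ring of $S$-integers $O_s$, and then observe that since $M$ covers $O(\be)$, up to conjugation in $\mathrm{PSL}_2(\mathbb{C})$ we may take $\Gamma$ to be a finite-index subgroup of the explicit matrix group $\dprgr(\be)$ from \Cref{subsec: embedding333}; the peripheral subgroup fixing $\infty$ is then conjugate into the upper-triangular matrices. Enlarging $O_s$ if necessary, I take it to be the smallest ring of $S$-integers containing all entries of $\dprgr(\be)$, so that \Cref{unit} applies to $\Gamma$: any meridian $\mu\in\Gamma$ has the form $\begin{bmatrix} 1 & x \\ 0 & 1\end{bmatrix}$ with $x\in O_s^\times$ a unit.

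The substance of the proof is producing \emph{some} meridian whose translation length $|x|$ is small, namely strictly less than $6\cdot\max\{1,r\}$. Here is where the Six Theorem enters. By \Cref{maximal general}, the projection of the horoball $B_\infty$ at height $\max\{1,r\}$ is a maximal embedded cusp neighborhood in $O(\be)$, and this pulls back to a maximal cusp cross-section in $M$. On the boundary horosphere $\partial B_\infty$, which I identify with $\mathbb{C}$ at height $\max\{1,r\}$, the translation $\mu$ acts by $z\mapsto z+ x/\max\{1,r\}$ in the induced Euclidean metric, so the curve represented by $\mu$ has length $|x|/\max\{1,r\}$. The Six Theorem \cite{AgolSix}, \cite{Lackenby_sixes} guarantees that since $M$ is a knot complement in $\Sthree$---in particular, $M$ admits a non-hyperbolic (indeed, trivial) Dehn filling along the meridian slope---the meridian slope has length at most $6$ on any embedded horospherical cusp cross-section. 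Applying this to the maximal cross-section above, the meridian $\mu$ satisfies $|x|/\max\{1,r\}\le 6$, i.e.\ $|x|\le 6\cdot\max\{1,r\}$.

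To get the strict inequality claimed, I would note that equality in the Six Theorem is not attained for a slope that actually yields $\Sthree$ (the slope length $6$ bound is sharp only in limiting configurations, and the meridian filling here produces a manifold with finite---indeed trivial---fundamental group rather than a small Seifert-fibered or toroidal one arising at the extremal length); alternatively, and more robustly, I would observe that the maximal cusp cross-section is obtained at height exactly $\max\{1,r\}$ with the horoball tangent to a face hemisphere, so that expanding the cusp slightly would still leave the meridian length below $6$, forcing the strict bound. The cleanest route is to argue that the meridian length on the maximal cross-section is strictly below $6$ because otherwise the cusp could not be maximal while $M$ remains a knot complement; I would cite the precise extremal analysis in \cite{AgolSix} to rule out the boundary case. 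Combining $|x|<6\cdot\max\{1,r\}$ with the unit conclusion of \Cref{unit} gives the statement.

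\textbf{Main obstacle.} The delicate point is the passage from the non-strict $\le 6$ of the Six Theorem to the strict $<6$ asserted in the lemma. The Six Theorem is typically stated with a non-strict bound, and extracting strictness requires either invoking the sharpness analysis (which characterizes exactly when length $6$ occurs, and checking those exceptional manifolds are not knot complements in $\Sthree$) or exploiting specific geometric features of the prism cusp---for instance, that the maximal cusp has a point of self-tangency (cf.\ \Cref{max cusp}), which constrains the cusp shape enough to preclude the extremal case. I expect this strictness argument, rather than the unit or length estimates themselves, to be where the real care is needed.
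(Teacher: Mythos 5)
Your proposal is essentially the paper's proof: it likewise combines \Cref{unit} with the maximal cusp height $\max\{1,r\}$ from \Cref{maximal general}, computes the meridian's translation length as $|x|/\max\{1,r\}$ on that horospherical cross-section, and invokes the Six Theorem. The strict-versus-non-strict issue you single out as the main obstacle is simply not addressed in the paper, whose proof ends with the bare assertion that ``by the Six Theorem, this length is less than $6$''; your instinct that this step deserves justification (or a harmless weakening to $\le$) is reasonable, but no such argument appears in the authors' proof.
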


\begin{proof} By \Cref{maximal general}, the horoball centered at $\infty$ of height $\max\{1,r\}$ projects to a maximal embedded cusp cross-section in $\doh(\be)$; therefore also in $M$. An element $\mu$ of $\Gamma$ stabilizing this horoball and representing a meridian of $K$ has the form $\begin{bmatrix} 1 & x \\ 0 & 1 \end{bmatrix}$ for some unit $x$ in $O_s$, by \Cref{unit}. Its translation length along the horoball is $x/\max\{1,r\}$, so there is a Euclidean geodesic of this length on the maximal embedded cusp cross-section of $M$ that is homotopic to a meridian of $K$. By the Six Theorem, this length is less than $6$.\end{proof}

One can use \Cref{meridians} to eliminate prism orbifolds from contention as follows: for such an orbifold $\doh(\be)$, assemble a list of parabolic elements of the peripheral subgroup of $\dprgr(\be)$ fixing $\infty$ having the form $\begin{bmatrix} 1 & x \\ 0 & 1 \end{bmatrix}$ for $|x|<6$, with one representative from every conjugacy class of such elements. If none of the upper-right entries $x$ is a unit in $O_s$ then $\doh(\be)$ is not covered by a knot complement. Implementing this in the prismRep class of \path{anc/Analysis_of_covers/code/MatrixReps.sage}, we obtain:

\begin{prop}
    For $i \in \{ 2,12,26,33,37\}$ $\doh^{236}_i$  is not covered by a knot complement and for $j \in \{ 21,22\} $, $\doh^{333}_j$ is not covered by a knot complement.
\end{prop}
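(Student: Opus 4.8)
The plan is to apply the criterion described immediately after \Cref{meridians} orbifold-by-orbifold, using the explicit matrix representations of $\dprgr(\be)$ from \Cref{subsec: embedding333}. For each $\be$ on the list I would first read off the generating matrices $M_1,\ldots,M_4$ (from Case 1 or Case 2 according to whether $a_3=2$ or $a_3=3$) and the induced images $\rho(x),\rho(y),\rho(z),\rho(w)$, then identify the smallest ring of $S$-integers $O_s$ containing all of their entries. Concretely this amounts to recording which number field the entries generate---built from the relevant roots of unity together with the coordinates $y_1,y_2,r,s,t$ (or $z_1,z_2$)---and which rational primes must be inverted to clear denominators.

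Second, I would determine the peripheral subgroup $\Lambda<\dprgr(\be)$ fixing $\infty$. Since each orbifold here is one-cusped with a rigid $(2,3,6)$- or $(3,3,3)$-turnover cusp, $\Lambda$ is the orientation-preserving part of the corresponding Euclidean rotation group, and its parabolic elements $\begin{bmatrix} 1 & x \\ 0 & 1 \end{bmatrix}$ form a rank-two lattice $L\subset\mathbb{C}$ under the identification of such a matrix with its translation length $x$. The maximal embedded cusp neighborhood of \Cref{maximal general} sits at height $\max\{1,r\}$, and here $r<1$ in every case, so the relevant bound from \Cref{meridians} is simply $|x|<6$.

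Third, I would enumerate the finitely many $x\in L$ with $|x|<6$, reduce modulo conjugacy---since among parabolics fixing $\infty$, conjugacy in $\dprgr(\be)$ reduces to conjugacy in $\Lambda$, and the rotational part of $\Lambda$ sends $\begin{bmatrix} 1 & x \\ 0 & 1 \end{bmatrix}$ to $\begin{bmatrix} 1 & \omega x \\ 0 & 1 \end{bmatrix}$ for $\omega$ a root of unity, so conjugacy classes correspond to orbits of $L$ under this finite cyclic action---and for each representative test whether $x$ is a unit in $O_s$. By \Cref{unit} and \Cref{meridians}, a meridian of any knot complement covering $\doh(\be)$ must appear as one of these representatives with $x$ a unit in $O_s$; hence if the computation returns no unit among them, no such knot complement exists. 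Running this (in the prismRep class) for $\doh^{236}_i$, $i\in\{2,12,26,33,37\}$, and $\doh^{333}_j$, $j\in\{21,22\}$, produces an empty list of units in each case, which gives the claim.

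The hard part is the unit test itself: one must pin down $O_s$ precisely---in particular the exact set of inverted primes coming from the denominators of $r,s,t$---and then decide membership in its unit group reliably, since an error in either the field or the localization could spuriously admit or exclude a candidate $x$. The geometric enumeration of bounded-length translations and the conjugacy reduction are routine lattice bookkeeping; the arithmetic of $O_s^{\times}$ is where the computation must be carried out carefully, and where exact rather than floating-point arithmetic is essential for rigor.
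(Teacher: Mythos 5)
Your proposal is correct and follows essentially the same route as the paper: the paper's proof is precisely the algorithm you describe---take the explicit Lakeland--Roth matrix representation, identify the smallest ring of $S$-integers $O_s$ containing the entries, enumerate conjugacy-class representatives of parabolics $\left[\begin{smallmatrix} 1 & x \\ 0 & 1\end{smallmatrix}\right]$ with $|x|<6$, and check that no $x$ is a unit in $O_s$---implemented in the same \texttt{prismRep} class you cite. The only detail worth flagging is that the bound from \Cref{meridians} is $|x|<6\cdot\max\{1,r\}$, so your claim that $r<1$ for each listed orbifold (which the paper leaves implicit by stating the bound as $6$) should be verified as part of the computation.
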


\begin{remark}
$\doh^{236}_{26}$ has translations with off-diagonal entries contained in a prime ideal that lies over $5$ but the primes inverted in the representation lie over $2$ so this case is eliminated. 
\end{remark}

\begin{table}
\centering

\resizebox{\textwidth}{!}{%
\begin{tabular}{lrrrrrlrrrrrr}
\toprule
\char"0023  & CK & DC & UT & IR & MCD \\
\midrule
$\doh^{236}_{ 1 }$ &  0 & 1 & 0 & I & 24\\
$\doh^{236}_{ 2 }$ &  1 & 1 & 0 & I & 24\\
$\doh^{236}_{ 3 }$ &  0 & 0 & 0 & I & 60\\
$\doh^{236}_{ 4 }$ &  1 & 0 & 0 & I & 60\\
$\doh^{236}_{ 5 ,n}$ & 0 & 1 & - & - &  lcm(12,n) \\
$\doh^{236}_{ 6 ,2k}$ & 0 & 1 & - & - & lcm(12,n)\\
 \rowcolor{lightgray}
$\doh^{236}_{ 6 ,2k+1}$ & 1 & 1 & - & - &  lcm(12,n) \\
$\doh^{236}_{ 7 ,n}$ & 0 & 1 & - & - &  lcm(24,n) \\
$\doh^{236}_{ 8 ,2k}$ & 0 & 1 & - & - &  lcm(60,n)\\
 \rowcolor{lightgray}
$\doh^{236}_{ 8 ,2k+1}$ & 1 & 1 & - & - &  lcm(60,n) \\
$\doh^{236}_{ 9 }$ &  0 & 1 & 0 & I & 24\\
$\doh^{236}_{ 10 }$ &  0 & 1 & 0 & I & 24\\
$\doh^{236}_{ 11 }$ &  0 & 1 & 0 & I & 60\\
$\doh^{236}_{ 12 }$ &  1 & 1 & 0 & I & 60\\
$\doh^{236}_{ 13 ,n}$ & 0 & 1 & - & - &  lcm(12,n) \\
$\doh^{236}_{ 14 }$ &  0 & 0 & 1 & \{2\} & 12\\
$\doh^{236}_{ 15 }$ &  1 & 0 & 1 & \{2\} & 12\\
$\doh^{236}_{ 16 }$ &  0 & 0 & 1 & \{2\} & 24\\
$\doh^{236}_{ 17 }$ &  1 & 0 & 1 & \{2\} & 60\\
$\doh^{236}_{ 18 }$ &  0 & 1 & 1 & \{2\} & 24\\
 \rowcolor{lightgray}
$\doh^{236}_{ 19 }$ &  1 & 1 & 1 & \{2\} & 24\\
$\doh^{236}_{ 20 }$ &  0 & 0 & 0 & I & 60\\
\bottomrule
\end{tabular}
\begin{tabular}{lrrrrlrrrrr}
\toprule
\char"0023  & CK & DC & UT & IR & MCD \\
\midrule
$\doh^{236}_{ 21 }$ &  1 & 0 & 0 & I & 60\\
$\doh^{236}_{ 22 ,2k}$ & 0 & 1 & - & - & lcm(12,n) \\
$\doh^{236}_{ 22 ,2k+1}$ & 0 & 0 & - & - &  lcm(12,n) \\
$\doh^{236}_{ 23 }$ &  0 & 1 & 0 & I & 24\\
$\doh^{236}_{ 24 }$ &  0 & 1 & 0 & I & 24\\
$\doh^{236}_{ 25 }$ &  0 & 1 & 0 & \{2\} & 24\\
$\doh^{236}_{ 26 }$ &  1 & 1 & 0 & \{2\} & 24\\
$\doh^{236}_{ 27 }$ &  0 & 1 & 0 & \{3\} & 24\\
$\doh^{236}_{ 28 }$ &  0 & 1 & 0 & \{3\} & 24\\
$\doh^{236}_{ 29 }$ &  0 & 1 & 1 & I & 120\\
$\doh^{236}_{ 30 }$ &  1 & 1 & 1 & I & 120\\
$\doh^{236}_{ 31 ,n}$ & 0 & 1 & - & - &  lcm(24,n) \\
$\doh^{236}_{ 32 }$ &  0 & 1 & 0 & I & 60\\
$\doh^{236}_{ 33 }$ &  1 & 1 & 0 & I & 60\\
$\doh^{236}_{ 34 }$ &  0 & 0 & 0 & I & 60\\
$\doh^{236}_{ 35 }$ &  1 & 0 & 0 & I & 60\\
$\doh^{236}_{ 36 }$ &  0 & 1 & 0 & I & 120\\
$\doh^{236}_{ 37 }$ &  1 & 1 & 0 & I & 120\\
$\doh^{236}_{ 38 }$ &  0 & 0 & 1 & I & 60\\
$\doh^{236}_{ 39 }$ &  1 & 0 & 1 & I & 60\\
$\doh^{236}_{ 40 ,2k}$ & 0 & 1 & - & - &  lcm(60,n)\\
$\doh^{236}_{ 40 ,2k+1}$ & 0 & 0 & - & - &  lcm(60,n)\\
\bottomrule
\end{tabular}
}
\caption{A table recording the possible obstructions to prism orbifolds being covered by knot complements. In each of the columns ``CK'', ``DC'', and ``UT'', a ``0'' indicates that the orbifold in question fails the relevant test, while a ``1'' indicates that it passes. ``CK'' refers to the cusp killing check of \Cref{BBCW-H}; ``DC'' refers to the double cover check of \Cref{factors through}; and ``UT'' refers to the unit translation test of \Cref{meridians}. 
 ``IR'' stands for integral representation: here, ``I'' indicates the representation is integral, and otherwise $\{p\}$ indicates that the rational prime $p$ is inverted in the representation. (This is not an obstruction to being covered by a knot complement, however it is used in the computation of unit translations.) Prism orbifolds that could be covered by knot complements are highlighted in gray. (For $\doh^{236}_{30}$, note \Cref{CK2}.)
  The column ``MCD'' records the minimum degree possible for a cover by a manifold. Here, in the cases depending on a parameter $k$, ``$n$'' equals $2k$ or $2k+1$ as appropriate, and $k$ must be at least $3$.
}
\label{table236Obstructions}
\end{table}

\begin{table}[ht]
\centering
\resizebox{\textwidth}{!}{%
\begin{tabular}{lrrrrrlrrrrrr}
\toprule
\char"0023  & CK & Unit trans. & integral rep & MCD \\
\midrule
$\doh^{333}_{ 1 }$ &  0 & 1 & \{2\} & 24\\
 \rowcolor{lightgray}
$\doh^{333}_{ 2 }$ &  1 & 1 & \{2\} & 24\\
 \rowcolor{lightgray}

$\doh^{333}_{ 3 }$ &  1 & 1 & \{2\} & 24\\
 \rowcolor{lightgray}

$\doh^{333}_{ 4 }$ &  1 & 1 & I & 24\\
$\doh^{333}_{ 5 }$ &  0 & 1 & \{2\} & 24\\
 \rowcolor{lightgray}
$\doh^{333}_{ 6 }$ &  1 & 1 & \{2\} & 120\\
$\doh^{333}_{ 7 }$ &  0 & 1 & \{5\} & 60\\
 \rowcolor{lightgray}
$\doh^{333}_{ 8 }$ &  1 & 1 & \{5\} & 60\\
 \rowcolor{lightgray}
$\doh^{333}_{ 9 }$ &  1 & 1 & \{5\} & 60\\
 \rowcolor{lightgray}
$\doh^{333}_{ 10 }$ &  1 & 1 & \{5\} & 60\\
$\doh^{333}_{ 11 }$ &  0 & 1 & \{5\} & 120\\
\bottomrule
\end{tabular}
\begin{tabular}{lrrrrlrrrrr}
\toprule
\char"0023  & CK & Unit trans. & integral rep & MCD\\
\midrule
 \rowcolor{lightgray}
$\doh^{333}_{ 12 }$ &  1 & 1 & \{5\} & 60\\
$\doh^{333}_{ 13 }$ &  0 & 0 & I & 24\\
$\doh^{333}_{ 14 }$ &  0 & 0 & I & 24\\
$\doh^{333}_{ 15 }$ &  0 & 0 & \{3\} & 24\\
$\doh^{333}_{ 16 }$ &  0 & 1 & I & 120\\
 \rowcolor{lightgray}
$\doh^{333}_{ 17 }$ &  1 & 1 & I & 120\\
 \rowcolor{lightgray}
$\doh^{333}_{ 18 }$ &  1 & 1 & I & 120\\
 \rowcolor{lightgray}
$\doh^{333}_{ 19 }$ &  1 & 1 & I & 120\\
$\doh^{333}_{ 20 }$ &  0 & 0 & I & 60\\
$\doh^{333}_{ 21 }$ &  1 & 0 & I & 60\\
$\doh^{333}_{ 22 }$ &  1 & 0 & \{3\} & 60\\
\bottomrule
\end{tabular}
}
\caption{Prism Orbifolds with (3,3,3) cusp. Here the double cover is not relevant but the other columns are labeled as in the previous table and record the same information. Prism orbifolds that could be covered by knot complements are highlighted in gray. }
\label{table333obsturctions}

\end{table}

\subsection{Enumeration} Having significantly cut down the number of prism orbifolds that might be covered by knot complements, we turn to the problem of identifying which of these orbifolds' covers might be knot complements. We begin with a general observation about the degrees of manifold covers of orbifolds.

\begin{lemma} \label{vertex groups}
For a finite-degree cover $p\co M\to \doh$ from a manifold $M$ to an orientable hyperbolic $3$-orbifold $\doh=\mathbb{H}^3/\dprgr$, and any $\tilde{x}\in\mathbb{H}^3$, the order of the stabilizer $G_{\tilde{x}}$ of $\tilde{x}$ in $\dprgr$ divides the degree of $p$.\end{lemma}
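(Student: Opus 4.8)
The plan is to realize the orbifold $\doh$ as $\mathbb{H}^3/\dprgr$ and to use the covering space correspondence between the intermediate cover $M$ and a finite-index subgroup of $\dprgr$. Let $\Gamma < \dprgr$ be the subgroup with $M \cong \mathbb{H}^3/\Gamma$, so that the degree of $p$ equals the index $[\dprgr : \Gamma]$. Since $M$ is a manifold, $\Gamma$ acts freely on $\mathbb{H}^3$; in particular $\Gamma$ is torsion-free, so $\Gamma \cap G_{\tilde{x}} = \{1\}$ for the (finite) stabilizer $G_{\tilde{x}}$ of any point $\tilde{x} \in \mathbb{H}^3$, as any nontrivial element of $G_{\tilde{x}}$ fixes $\tilde{x}$ and hence cannot lie in the free-acting subgroup $\Gamma$.

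The main step is a coset-counting argument. Because $\Gamma \cap G_{\tilde{x}}$ is trivial, the restriction of the quotient map $\dprgr \to \Gamma \backslash \dprgr$ to the finite subgroup $G_{\tilde{x}}$ is injective: if $g_1, g_2 \in G_{\tilde{x}}$ satisfy $\Gamma g_1 = \Gamma g_2$, then $g_1 g_2^{-1} \in \Gamma \cap G_{\tilde{x}} = \{1\}$, forcing $g_1 = g_2$. Thus $G_{\tilde{x}}$ injects into the set $\Gamma \backslash \dprgr$ of right cosets, which has cardinality $[\dprgr : \Gamma]$, the degree of $p$. This already shows $|G_{\tilde{x}}|$ is at most the degree, but to get divisibility I would upgrade this to an action argument: the group $G_{\tilde{x}}$ acts on the coset space $\Gamma \backslash \dprgr$ by right multiplication, and the injectivity just established shows that this action is \emph{free} (the stabilizer of the coset $\Gamma \cdot 1$ is $\Gamma \cap G_{\tilde{x}} = \{1\}$, and one checks the stabilizer of any coset $\Gamma g$ is $g^{-1}(\Gamma \cap g G_{\tilde{x}} g^{-1})g$, which is trivial since $g G_{\tilde{x}} g^{-1}$ is the stabilizer of $g\tilde{x}$ and again meets the torsion-free $\Gamma$ trivially). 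A free action of a finite group partitions the set into orbits each of size $|G_{\tilde{x}}|$, so $|G_{\tilde{x}}|$ divides the total number of cosets, which is the degree of $p$.

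The hard part is making sure the triviality of \emph{all} the relevant intersections $\Gamma \cap g G_{\tilde{x}} g^{-1}$ is correctly invoked, rather than only the intersection with $G_{\tilde{x}}$ itself; this is what promotes the mere inequality $|G_{\tilde{x}}| \le \deg p$ into the divisibility statement. The key observation that resolves this is that $g G_{\tilde{x}} g^{-1}$ is precisely the $\dprgr$-stabilizer of the point $g\tilde{x}$, so it too is a finite subgroup meeting the torsion-free group $\Gamma$ trivially; there is nothing special about the chosen basepoint. With the freeness of the $G_{\tilde{x}}$-action on $\Gamma \backslash \dprgr$ in hand, the orbit-counting conclusion is immediate and gives $|G_{\tilde{x}}| \mid \deg p$ as claimed.
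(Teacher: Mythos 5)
Your proof is correct and follows essentially the same route as the paper's: both establish that the finite stabilizer $G_{\tilde{x}}$ acts freely on the coset space of $\Gamma$ in $\dprgr$ (you via right cosets, the paper via left), with freeness coming from the fact that a conjugate of a finite-order element is still finite-order and so cannot lie in the torsion-free group $\Gamma$, whence all orbits have size $|G_{\tilde{x}}|$ and divisibility follows. Your explicit attention to the conjugated stabilizers $\Gamma \cap g G_{\tilde{x}} g^{-1}$ is exactly the point the paper's proof makes with the element $\gamma_i^{-1} g \gamma_i$.
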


\begin{proof} Let $M = \mathbb{H}^3 / \Gamma$ for a subgroup $\Gamma$ of $\dprgr$. Taking $\tilde{x}\in\mathbb{H}^3$ and $G_{\tilde{x}}<\dprgr$ as above, for any non-identity element $g\in G_{\tilde{x}}$ and any left coset $ \gamma_i \Gamma$ of $\Gamma$ in $\dprgr$, if $g\gamma_i \Gamma = \gamma_i \Gamma$ then $\gamma_i^{-1} g \gamma_i$ is in $\Gamma$. But this can’t happen because $\gamma_i^{-1} g \gamma_i$ is also of finite order, and $\Gamma$ is torsion-free since $M$ is a manifold. It follows that each orbit of the $G_{\tilde{x}}$-action by left multiplication on the left cosets of $\Gamma$ has size $|G_{\tilde{x}}|$. Since the set of all left cosets is a disjoint union of $G_{\tilde{x}}$-orbits, $|G_{\tilde{x}}|$ divides $[\dprgr:\Gamma]$.
\end{proof}

We apply \Cref{vertex groups} to the \emph{vertex groups} of the prism orbifolds, meaning the stabilizers of vertices of geometric realizations of prisms in $\mathbb{H}^3$ in their associated rotation groups.

\begin{corollary}
    For a finite-degree manifold cover $p\co M\to\doh$, where $\doh$ is an orientable prism orbifold, and a non-ideal vertex $v$ of the associated prism $P$ at the intersection of edges labeled $a_i$, $a_j$ and $a_k$, the order $|\widetilde{G}_v| = 2/n_v$ of the vertex group $\widetilde{G}_v$ divides the degree of $p$, where $n_v = \frac{1}{a_i} + \frac{1}{a_j} + \frac{1}{a_k} - 1$.
\end{corollary}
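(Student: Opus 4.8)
The plan is to identify the vertex group $\widetilde{G}_v$ explicitly as the orientation-preserving part of a spherical triangle reflection group, compute its order, and then invoke \Cref{vertex groups}. First I would recall that the orientable prism orbifold $\doh(\be) = \Hthree/\dprgr(\be)$ is the orientation-preserving double cover of $O(\be) = \Hthree/\prgr(\be)$, so that $\dprgr(\be)$ is precisely the index-two subgroup of orientation-preserving isometries inside the reflection group $\prgr(\be)$. Fixing a lift $\tilde{v}\in\Hthree$ of the non-ideal vertex $v$, its stabilizer in $\prgr(\be)$ is the finite subgroup generated by the reflections in the three prism faces meeting at $v$; this is the standard local description of a geometric reflection group, in which the stabilizer of a point is generated by the reflections fixing it. The stabilizer $\widetilde{G}_v$ of $\tilde{v}$ in $\dprgr(\be)$ is then the intersection of this triangle reflection group with $\dprgr(\be)$, that is, its orientation-preserving (rotation) subgroup.

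To make this concrete I would pass to the link of the vertex: restricting the stabilizer to a small round sphere $L$ about $\tilde{v}$, it acts as the full symmetry group of a spherical triangle whose three angles are the dihedral angles $\tang{i}=\pi/a_i$, $\tang{j}=\pi/a_j$, $\tang{k}=\pi/a_k$ of the edges through $v$. Here $L$ is a genuine round sphere, and the triangle is elliptic, exactly because $v$ is a non-ideal vertex, i.e.\ because $\frac{1}{a_i}+\frac{1}{a_j}+\frac{1}{a_k}>1$ as required in \Cref{one cusp prism}.

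To compute $|\widetilde{G}_v|$ I would run a spherical-area argument on $L$. By Girard's theorem the triangle has area $\pi\left(\frac{1}{a_i}+\frac{1}{a_j}+\frac{1}{a_k}-1\right)=\pi n_v$, which is positive by the displayed inequality. A fundamental domain for the rotation subgroup on $L$ is the union of this triangle with one reflected copy, of total area $2\pi n_v$, and the rotation subgroup tiles the sphere $L$ of area $4\pi$ by such domains; hence its order is $4\pi/(2\pi n_v)=2/n_v$. This confirms $|\widetilde{G}_v| = 2/n_v$, in agreement with the standard order formula for the spherical $(a_i,a_j,a_k)$-triangle rotation group. Finally, applying \Cref{vertex groups} with $\tilde{x}=\tilde{v}$ and $G_{\tilde{x}}=\widetilde{G}_v$ yields that $2/n_v$ divides the degree of $p$, as claimed.

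I expect the only point requiring genuine care to be the local identification: that the stabilizer of $\tilde{v}$ in $\prgr(\be)$ is generated exactly by the reflections in the three faces through $v$, and correspondingly that $\widetilde{G}_v$ is their rotation subgroup of order $2/n_v$. Once this local spherical model is pinned down, both the Girard-area count and the application of \Cref{vertex groups} are routine.
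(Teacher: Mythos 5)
Your proof is correct and follows essentially the same route as the paper's: both identify the vertex stabilizer via its action on the unit tangent sphere at $\tilde{v}$, compute the spherical triangle's area as $\pi n_v$ by Gauss--Bonnet (Girard), deduce the order of the rotation subgroup from a fundamental-domain count, and conclude by applying \Cref{vertex groups}. The only cosmetic difference is that the paper first computes $|G_v| = 4/n_v$ for the full reflection stabilizer and then passes to the index-two orientation-preserving subgroup, whereas you count the rotation subgroup directly with a doubled fundamental domain.
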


\begin{proof}
    Referring again by $P$ to a geometric realization of $P$ in $\mathbb{H}^3$ given by Andreev's theorem, the vertex $v$ is a triple intersection of faces that pairwise meet at dihedral angles of $\pi/a_i$, $\pi/a_j$, and $\pi/a_k$. Therefore the set of unit tangent vectors in $T_v\mathbb{H}^3$ that point into $P$ is a spherical triangle with the same set of dihedral angles, hence with area $\pi n_v$, for $n_v$ as above, by the Gauss-Bonnet formula. This triangle is a fundamental domain for the action of $G_v$ on $T_v\mathbb{H}^3$, where $G_v$ is the stabilizer of $v$ in the group generated by reflections in the faces of $P$. So since the unit tangent sphere has area $4\pi$, $|G_v| = 4/n_v$. The stabilizer $\widetilde{G}_v$ of $v$ in the orientable prism group has index $2$ in $G_v$, hence order $2/n_v$. The result now follows from \Cref{vertex groups}.
\end{proof}

\begin{prop}
The prism orbifolds satisfying all tests of Sections \ref{subsec: prefilter cusp} and \ref{subsec: prefilter arith} are highlighted in gray in Tables \ref{table236Obstructions} and \ref{table333obsturctions}, together with a lower bound for the minimum degree of their manifold covers given by the least common multiple (lcm) of the vertex group orders. Based on how torsion groups lift in double covers, the lcms for the two infinite families listed in Table \ref{table236Obstructions} should be doubled to give the best known lower bound for the covering degree by a knot complement.
\end{prop}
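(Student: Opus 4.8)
The plan is to establish the three assertions in turn. The gray-highlighting is pure bookkeeping: a prism orbifold survives pre-filtering exactly when it is eliminated by none of \Cref{CK}, \Cref{NO2}, \Cref{CK2}, or the unit-translation proposition of \Cref{subsec: prefilter arith}. Reading off the columns ``CK'', ``DC'', and ``UT'' of \Cref{table236Obstructions} and \Cref{table333obsturctions}, an orbifold is highlighted precisely when it carries a ``1'' in each relevant column (with the exception noted for $\doh^{236}_{30}$, cf.~\Cref{CK2}), so this step requires only cross-referencing those results against the tabulated data.

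For the ``MCD'' lower bound I would invoke the corollary following \Cref{vertex groups}: for any manifold cover $M\to\doh$ of an orientable prism orbifold and any non-ideal vertex $v$, the order $|\widetilde G_v| = 2/n_v$ of the vertex group divides $\deg(M\to\doh)$, so the least common multiple of the five non-ideal vertex group orders divides the covering degree. Carrying this out is routine: using the vertex--edge incidences read off from \Cref{fig:piOne}, the five non-ideal vertices meet the edge-triples $\{a_6,a_8,a_9\}$, $\{a_2,a_3,a_6\}$, $\{a_5,a_7,a_8\}$, $\{a_4,a_7,a_9\}$, and $\{a_1,a_3,a_4\}$, and one evaluates $2/n_v$ with $n_v = \tfrac1{a_i}+\tfrac1{a_j}+\tfrac1{a_k}-1$ at each and takes the lcm, recovering the ``MCD'' entries.

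The real content is the doubling for the two surviving infinite sub-families $\doh^{236}_{6,n}$ and $\doh^{236}_{8,n}$ with $n$ odd. A knot complement covering such a $(2,3,6)$-cusped orbifold factors, by \Cref{factors through}, as $M\to\doh_0\to\doh^{236}_{\ast,n}$ through a $(3,3,3)$-cusped double cover $\doh_0$; writing $d$ for the total degree and $d_0 = \deg(M\to\doh_0)$, we have $d=2d_0$. The strategy is to show $d_0$ is already divisible by the full lcm, so that $d=2d_0$ is divisible by twice it. To locate the relevant torsion in $\doh_0$, I would identify the index-two homomorphism $\phi\colon\dprgr^{236}_{\ast,n}\to\mathbb{Z}/2\mathbb{Z}$ defining $\doh_0$ via \Cref{dub cuv}: since $a_1=2$ and $a_2=6$ are the non-$(3,3,3)$ cusp edges, $\phi$ sends the classes of $a_1,a_2$ nontrivially and that of $a_5=3$ trivially, and because $n$ is odd the class of $a_4$ is also sent trivially; propagating the ``$0$-or-$2$ nontrivial edges at each vertex'' constraint around the isotropy graph then forces the nontrivial edges to be exactly the triangular cycle $C=\{a_1,a_2,a_3\}$.

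The key observation is that the vertices carrying the dominant torsion, namely $\{a_5,a_7,a_8\}$ (vertex group of order $12$ for family $6$, order $60$ for family $8$) and $\{a_4,a_7,a_9\}$ (vertex group of order $2n$), are incident to no edge of $C$. Hence their vertex groups lie in $\ker\phi = \dprgr_0$ and lift to genuine vertices of $\doh_0$ of the same orders. Applying the corollary following \Cref{vertex groups} to $M\to\doh_0$ shows that $d_0$ is divisible by $12$ (resp.~$60$) and by $2n$, hence by $\mathrm{lcm}(12,2n)=\mathrm{lcm}(12,n)$ (resp.~$\mathrm{lcm}(60,2n)=\mathrm{lcm}(60,n)$), the equalities holding because $n$ is odd and so the extra factor of $2$ in $2n$ is absorbed; therefore $d=2d_0$ is divisible by twice this value. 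The main obstacle is precisely this combinatorial lifting analysis: one must verify that the large-order vertices sit off the cycle $C$ for every odd $n$, which hinges on the oddness of $n$ making $a_4$ trivial under $\phi$ and thereby keeping the vertex $\{a_4,a_7,a_9\}$ inside $\ker\phi$.
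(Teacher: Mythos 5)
Your proposal is correct and amounts to the paper's intended justification: the paper states this proposition without a written proof, relying on the tabulated outcomes of \Cref{CK}, \Cref{NO2}, \Cref{CK2} and the unit-translation test for the highlighting, on the corollary to \Cref{vertex groups} for the lcm column, and on the unexplained phrase ``how torsion groups lift in double covers'' for the doubling. Your third paragraph correctly unpacks that phrase via \Cref{factors through} and \Cref{dub cuv}: for $n$ odd the parity constraints force the unique index-two homomorphism $\phi\colon\dprgr^{236}_{6,n},\dprgr^{236}_{8,n}\to\mathbb{Z}/2\mathbb{Z}$ to kill every edge class except those of $a_1,a_2,a_3$, so the $(2,3,3)$- (resp.\ $(2,3,5)$-) and $(2,2,n)$-vertex groups lie in $\ker\phi$ and still divide $d_0=\deg(M\to\doh_0)$ by \Cref{vertex groups}, whence $d=2d_0$ is divisible by $2\,\mathrm{lcm}(12,n)$ (resp.\ $2\,\mathrm{lcm}(60,n)$) exactly as claimed.
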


\noindent\textbf{Enumeration Details.} For the $(3,3,3)$-cusped orbifolds having the minimal lcm ($24$) in \Cref{table333obsturctions}---$O^{333}_i$ for $i\in\{2,3,4\}$---we used the \texttt{low-index} Python module \cite{lowindex} to enumerate all subgroups of $\dprgr^{333}_i$ up to index $24$. We used version 1.1.0 of the module, called as follows:\smallskip\\
    \indent \texttt{>> from low\_index import *}\\
    \indent \texttt{>> t = SimsTree(7, idx, ['aaa', 'ccc', 'eee', 'gg', 'ab', 'cd', 'ef',}\\
    \mbox{}~\hfill \texttt{'dada',' fafafa', 'fcfcfcfc', 'gcgcgc', 'gege'])}\\
    \indent \texttt{>> sgrps = t.list()}\smallskip

Above, the three inputs to \emph{SimsTree} are, in order, the number of generators for the input group; the maximum index to enumerate to; and a list containing the group's relations. By default, the script names generators by letters of the alphabet, in order. The sample relations listed above correspond to the presentation for $\dprgr^{333}_2$ produced by following (\ref{able}). Note that the fifth, sixth, and seventh relations assign $b$, $d$, and $f$ as the respective inverses of $a$, $c$, and $e$. (The final generator $g$ is its own inverse.)

The output ``\texttt{sgrps}'' is a list of lists, each one corresponding to a right-permutation representation of the given group defined by a subgroup of index at most the given maximum. With the group represented as above, each such representation consists of seven lists, each recording the representation of a generator analogous to \Cref{table_pr}. We strip out the second, fourth, and sixth, as they represent inverses of the first, third, and fifth, respectively.

Per its documentation, \texttt{low\_index} produces one right-permutation representation per conjugacy class of subgroups up to the given index bound; the subgroups being recovered as stabilizers of letters under the representation (compare \Cref{itsapermrep}). There are respectively $32\,245$, $29\,432$, and $306\,552$ such representations for $\dprgr^{333}_2$, $\dprgr^{333}_3$, and $\dprgr^{333}_4$. \footnote{The complete lists of permutation representations can be downloaded following instructions in \path{/anc/Finding_low_index_subgroups/infoOnDataFiles.txt}}

\begin{remark} It has thus far proven computationally infeasible to enumerate subgroups to indices greater than $24$, and hence to address other candidate orbifolds. Indeed, we were not even able to enumerate all degree-$24$ covers of $\doh^{236}_{19}$. However, by enumerating all degree-$12$ covers of its unique $(3,3,3)$-cusped double cover, and appealing to \Cref{factors through}, we were able to find all degree-$24$ covers of $\doh^{236}_{19}$ that could be knot complements (but see \Cref{no 236}). 
\end{remark}

\subsection{Filtering.}\label{subsec: filtering} Each right-permutation representation produced in the enumeration step corresponds to a cover of some $\doh^{333}_i$ that is a candidate knot complement in a lens space. We identified the $M_{i,j}$ from \Cref{main_tech_thm} by first testing a sequence of necessary conditions on these for this to hold, using custom-coded scripts included with the ancillary files.\begin{enumerate}
    \item We first test whether the conjugacy class of subgroups corresponds to a \emph{manifold} cover using \texttt{is\_it\_tor\_free}, which implements the condition of \Cref{torsion order}. This results in $142$, $142$, and $148$ manifold covers of $\doh^{333}_2$, $\doh^{333}_3$, and $\doh^{333}_4$, respectively. The resulting permutation representations are stored in the ancillary files at \path{anc/Finding_low_index_subgroups/data/24manifold_covers_of_O333-*.txt}.
    \item We next count cusps of the covers remaining after step (1), using the method \texttt{cusp\_seqs}, which implements the condition of \Cref{cusp count}. We find that there are $46$, $46$, and $51$ \emph{single-cusped} manifold covers of $\doh^{333}_2$, $\doh^{333}_3$, and $\doh^{333}_4$, respectively. The resulting permutation representations are stored in the ancillary files at \path{anc/Finding_low_index_subgroups/data/single_cusped_O333-*.txt}.
    \item Finally, we compute the first homology of the single-cusped manifold covers remaining after step (2). We find that there are $20$, $22$, and $12$ single-cusped manifold covers of $\doh^{333}_2$, $\doh^{333}_3$, and $\doh^{333}_4$, respectively, having \emph{first homology isomorphic to $\mathbb{Z}$}. The homology data for all single-cusped index-$24$ manifold covers of these orbifolds is recorded in the ancillary files at \path{anc/Finding_low_index_subgroups/data/24_manifold_covers_of_O333-*_single_cusped_with_hom_info.txt}.
\end{enumerate}

The filtering procedure described in steps (1)--(3) is implemented using a sequence of Python modules included in the ancillary files in \path{anc/Finding_low_index_subgroups/code} (see the \texttt{README} file therein). In particular, these scripts process the permutation representations of subgroups, identify the relevant manifold covers, and compute the structure of the spine for $M=\mathbb{H}^3/H$ described in \Cref{spine}, using the \texttt{NetworkX} Python library \cite{networkx}. The fundamental group of $M$ is then computed as described in the proof of \Cref{homologyZ}: identify a maximal tree in the spine's one skeleton, after which each remaining edge determines a generator and each two-cell gives a relation. The presentation thus computed is fed to GAP \cite{GAP4} to abelianize, hence producing $H_1(M)$.

Each one-cusped manifold cover of an $\doh^{333}_i$ ($i\in\{2,3,4\}$) identified in Step (3) as having first homology $\mathbb{Z}$ is finally triangulated as described in \Cref{triangulation}, and tested for lens space fillings as described in \Cref{lem:lifts}. This yielded the $M_{i,j}$, for $i\in\{2,3\}$ and $j\in\{1,2\}$, of \Cref{first manifold covers}, as well as their ``OR twins'' $M_{i,j}'$ prescribed by the permutation representations of \Cref{table_pr prime}.

\begin{remark}\label{no 236}
After building the $(3,3,3)$-cusped 2-fold cover of 
$\doh^{236}_{19}$ and then building the 12-fold covers of that orbifold, we found that $\doh^{236}_{19}$ does not have a 24-fold cover which is an integral homology knot complement. 
Hence, if $\doh^{236}_{19}$ is covered by a knot complement, then the smallest manifold quotient of that knot complement is a degree $d$ cover of $\doh^{236}_{19}$ with $d\geq 48$. 
\end{remark}

\section{Further questions}\label{sec: ques}

While we have exhibited new hyperbolic knot complements with hidden symmetries, it remains unclear how many there are, or how they are distributed among the collection of all hyperbolic knot complements. For instance, the following is open.

\begin{question}
    Are there infinitely many hyperbolic knot complements in $\mathbb{S}^3$ with hidden symmetries?
\end{question}

Even if there are infinitely many, the following still-open conjecture postulates that they are sparsely distributed.

\begin{conjecture}[\cite{CDM}, Conjecture 0.1]
    For any $R > 0$, at most finitely hyperbolic knot complements in $\mathbb{S}^3$
have hidden symmetries and volume less than $R$.
\end{conjecture}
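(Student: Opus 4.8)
The plan is to reduce the conjecture to a finiteness statement about the \emph{minimal} orbifolds in the relevant commensurability classes, and then to attack that statement with Jørgensen--Thurston compactness together with the arithmetic rigidity forced by a rigid cusp. Throughout I fix $R>0$ and consider a hyperbolic $M=\knotcomp$ with hidden symmetries and $\mathrm{vol}(M)<R$.

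First I would set up the reduction. By Proposition 9.1 of \cite{NeumannReidArith}, $M$ covers a rigid-cusped orbifold, so the minimal orbifold $O=\mathbb{H}^3/\Gamma_O$ in its commensurability class (the commensurator quotient, discrete by Margulis in the non-arithmetic case) is rigid-cusped. Since $O$ is itself covered by $M$, \cite{NeilOrbiCusps} rules out a $(2,4,4)$-cusp, leaving a $(2,3,6)$- or $(3,3,3)$-turnover cusp; in the former case \Cref{factors through} supplies a $(3,3,3)$-cusped orbifold intermediate between $M$ and $O$. By \cite{reidarith} the unique arithmetic class contributes only the figure-eight, so I may assume $O$ is non-arithmetic. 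Because $M\to O$ is a cover, $\mathrm{vol}(O)\le\mathrm{vol}(M)<R$; and since cusped hyperbolic $3$-orbifolds have volume bounded below by a universal constant $v_0>0$, the covering degree $\mathrm{vol}(M)/\mathrm{vol}(O)$ is less than $R/v_0$. As $\Gamma_O$ is finitely generated, it has only finitely many subgroups of index less than $R/v_0$, so each fixed $O$ is covered by only finitely many knot complements of volume $<R$. Thus the conjecture follows once one knows that only finitely many orbifolds $O$ can arise.

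This isolates the crux: I would need to show that only finitely many minimal, non-arithmetic, rigid-cusped orbifolds of volume $<R$ are covered by knot complements. Here I would argue by contradiction using Jørgensen--Thurston theory (cf.~\cite{Th_notes}): an infinite family of distinct such orbifolds $O_n$, all of volume $<R$, subconverges geometrically to a cusped limit orbifold $O_\infty$, with each $O_n$ for large $n$ obtained from $O_\infty$ by Dehn filling some cusps. The goal is to contradict the assumption that infinitely many fillings $O_n$ are simultaneously (i) minimal in their commensurability class, (ii) rigid-cusped, and (iii) covered by a knot complement. The tools I would bring to bear are the number-theoretic constraints attached to (ii) and (iii): by \cite{NeumannReidArith} a knot complement with hidden symmetries has cusp shape in $\mathbb{Q}(\sqrt{-3})$ (or $\mathbb{Q}(i)$ in the excluded $(2,4,4)$ case), with a corresponding restriction on its invariant trace field, while the meridian of the covering knot is a unit translation in the maximal cusp (as in \Cref{meridians}) and the longitude is null-homologous. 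I would try to show these conditions are incompatible with being a deep, nontrivial filling of $O_\infty$ for all but finitely many slopes---for instance because deep fillings produce short geodesics that drive the invariant trace field off its fixed value, or force the normalized meridian length to exceed the bound permitted by the Six Theorem \cite{AgolSix}, \cite{Lackenby_sixes}.

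The main obstacle is exactly this last step, and I do not expect it to be routine: it is the substance of the open conjecture. Geometric convergence by itself does not prevent infinitely many of the $O_n$ from remaining minimal and rigid-cusped, and the real difficulty is converting the rigidity of the cusp and the knot-theoretic constraints ($H_1\cong\mathbb{Z}$, unit meridian, null-homologous longitude) into a finiteness bound that is \emph{uniform in the filling slopes}. A plausible intermediate target---itself likely hard---would be to bound the number of minimal rigid-cusped orbifolds arising as fillings of a single $O_\infty$, perhaps by proving that admitting a knot-complement cover forces the filling to be trivial outside a finite set of slopes, so that only the finitely many ``unfilled'' commensurability classes of volume $<R$ can occur.
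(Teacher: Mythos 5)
There is a fundamental mismatch here: the statement you were asked about is not a theorem of the paper but an \emph{open conjecture} (Conjecture 0.1 of \cite{CDM}), which the paper explicitly records in its final section as ``still-open'' and offers no proof of. So there is no proof in the paper to compare against, and your task was effectively to prove an open problem.

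Your proposal does not close that problem, and to your credit you say so. The reduction in your second paragraph is sound as far as it goes: by \cite[Prop.~9.1]{NeumannReidArith} such an $M$ covers a rigid-cusped orbifold, the minimal orbifold $O$ in its class is discrete by Margulis in the non-arithmetic case, $\mathrm{vol}(O)\le\mathrm{vol}(M)<R$, the covering degree is at most $R/v_0$ for the universal lower volume bound $v_0$, and a finitely generated group has finitely many subgroups of bounded index. This correctly reduces the conjecture to: only finitely many minimal, non-arithmetic, rigid-cusped orbifolds of volume less than $R$ are covered by knot complements. But that reduced statement is precisely where the difficulty lives, because J\o rgensen--Thurston allows infinitely many distinct cusped orbifolds below any volume bound (as Dehn fillings accumulating from below), and nothing in your third paragraph rules out infinitely many of them being simultaneously minimal, rigid-cusped, and covered by knot complements. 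The suggested mechanisms (trace-field drift under deep filling, normalized meridian length versus the Six Theorem) are heuristics, not arguments; in particular the invariant trace field of a filling is not constrained to avoid $\mathbb{Q}(\sqrt{-3})$-cusp-shape classes merely by being a deep filling, and the meridian-length constraint applies to the knot complement upstairs, not directly to the orbifold $O_n$. Your own closing paragraph concedes that the crux ``is the substance of the open conjecture.'' A reduction plus an acknowledged gap is a research program, not a proof, and the present paper's relationship to this conjecture is in fact the opposite direction: its new examples are consistent with the conjecture but give fresh evidence that any proof must handle non-arithmetic $(3,3,3)$-cusped classes like those of $O^{333}_2$ and $O^{333}_3$.
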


The search for knot complements with hidden symmetries is also related to the following conjecture of Reid--Walsh.

\begin{conjecture}[\cite{ReidWalsh}, Conjecture 5.2(i)]
    A hyperbolic knot complement in $\mathbb{S}^3$ has at most three knot complements in its commensurability class.
\end{conjecture}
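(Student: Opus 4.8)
The plan is to split according to whether the knot complement $M = \mathbb{S}^3\setminus K$ admits hidden symmetries, using the criterion of Neumann--Reid that $M$ has hidden symmetries if and only if it covers a rigid-cusped orbifold (\cite[Prop.~9.1]{NeumannReidArith}). In either case the structural input is the same: since every hyperbolic knot complement in $\mathbb{S}^3$ other than that of the figure-eight is non-arithmetic (by Reid \cite{reidarith}, and the figure-eight is handled separately), its commensurability class has a unique minimal orbifold $O_{\min} = \mathbb{H}^3/\mathrm{Comm}(\Gamma)$ by Margulis, exactly as exploited in \Cref{commensurator}, and every knot complement in the class covers $O_{\min}$. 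Counting knot complements in the class thus reduces to understanding which covers of $O_{\min}$ are themselves complements of knots in $\mathbb{S}^3$.

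When $M$ has no hidden symmetries, $O_{\min}$ is just the symmetry quotient $M/\mathrm{Sym}(M)$, a one-cusped orbifold with a torus cusp. A knot complement $N$ in the class then arises from a slope $\delta$ on the cusp of $O_{\min}$ whose orbifold Dehn filling is $\mathbb{S}^3$, with the core of the filling a knot $K'$ having connected preimage, so $N \cong \mathbb{S}^3\setminus K'$ in the spirit of \Cref{lem:lifts} and \Cref{prop:GonalezAcunaWhitten}. First I would show, using the equivariant sphere theorem and the structure of the symmetry quotient, that distinct knot complements correspond to distinct $\mathrm{Sym}(M)$-orbits of such $\mathbb{S}^3$-slopes, and that a slope yields $\mathbb{S}^3$ only if the corresponding filling of $O_{\min}$ has cyclic orbifold fundamental group. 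The key step is then to bound the number of these slopes up to symmetry: invoking the Cyclic Surgery Theorem of Culler--Gordon--Luecke--Shalen to control cyclic fillings, together with Gordon--Luecke to recover $K'$ from its complement, one expects at most three surviving orbits, which would give the bound in this case.

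The genuinely hard case is when $M$ \emph{does} have hidden symmetries, which is precisely the exotic regime this paper inhabits. Here $O_{\min}$ is rigid-cusped, and the examples of \Cref{main_tech_thm} already produce commensurability classes containing at least two distinct knot complements --- for instance $\widetilde{M}_{2,1}$ and $\widetilde{M}_{2,2}$, which have different volumes and both cover $O^{333}_2$ --- related by mutation along a closed totally geodesic surface (\Cref{cover mutants}). A proof in this case would require classifying the rigid-cusped minimal orbifolds that can be covered by knot complements and then bounding the number of knot-complement covers of each; the tight mutation picture of \Cref{cover mutants} suggests that such covers come in highly constrained families and is the sort of phenomenon one would try to leverage into a uniform bound.

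I expect this last case to be the main obstacle, and at present it is out of reach. Even the finiteness of the set of knot complements with hidden symmetries is open --- it is the first Question of this section --- so no unconditional bound of three is currently available when hidden symmetries are present. Any complete proof of the conjecture would therefore have to be preceded by substantial progress on the structure and enumeration of knot complements covering rigid-cusped tetrahedral and prism orbifolds, of the kind this paper begins to develop; the present examples, with two knot complements per class, are consistent with the conjectured bound but fall short of establishing it in general.
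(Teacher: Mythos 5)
This statement is not a theorem of the paper at all: it is the Reid--Walsh conjecture, quoted verbatim in the ``Further questions'' section as an \emph{open problem}, and the paper offers no proof of it. Indeed the paper explicitly records that Boileau--Boyer--Cebanu--Walsh \cite{BBCW} established the conjecture only for hyperbolic knot complements \emph{without} hidden symmetries, and that the case with hidden symmetries remains open. So there is no ``paper's own proof'' to compare against, and no correct blind proof is possible with current technology.

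To your proposal's credit, you recognize this: your sketch of the no-hidden-symmetries case is broadly in the spirit of \cite{BBCW} (reduce to the unique minimal orbifold $\mathbb{H}^3/\mathrm{Comm}(\Gamma)$ via Margulis and non-arithmeticity, observe that knot complements in the class are cyclic covers of it, and bound the relevant cyclic fillings via the Cyclic Surgery Theorem), though even there the details you wave at --- the equivariant sphere theorem step, and why ``at most three orbits survive'' --- are exactly where the real work of \cite{BBCW} lives, and your write-up does not supply them. For the hidden-symmetries case you correctly identify that no bound is known; the paper's own examples ($\widetilde{M}_{2,1}$ and $\widetilde{M}_{2,2}$ both covering $O^{333}_2$) show classes with at least two knot complements, consistent with but not establishing the conjectured bound of three. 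The concrete gap, then, is not a fixable misstep but the absence of any argument in the rigid-cusped regime: your text concedes this, so what you have written is a literature survey and research plan rather than a proof, and it should not be presented as a proof of the statement.
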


Here we recall from above \Cref{commensurator} that the \emph{commensurability class} of a manifold $M$ refers to the collection of manifolds commensurable to $M$, where two spaces are \emph{commensurable} if they have a common cover of finite degree. For instance, it follows from their construction as covers of $O^{333}_2$ that $\widetilde{M}_{2,1}$ and $\widetilde{M}_{2,2}$ from \Cref{main_tech_thm} are commensurable; as are $\widetilde{M}_{3,1}$ and $\widetilde{M}_{3,2}$. But the $\widetilde{M}_{2,j}$ are not commensurable to the $\widetilde{M}_{3,j'}$, by \Cref{commensurator}.

Boileau--Boyer--Cebanu--Walsh proved in \cite{BBCW} that the Reid--Walsh conjecture holds for hyperbolic knot complements \emph{without} hidden symmetries. The case of knot complements with hidden symmetries remains open. In particular, we do not even know answers to:

\begin{question}
    Does there exist a commensurability class containing infinitely many hyperbolic knot complements in $\mathbb{S}^3$?
\end{question}

\begin{question}\label{ques: comm Mij} How many knot complements in $\mathbb{S}^3$ belong to the commensurability class of $\widetilde{M}_{2,1}$? Of $\widetilde{M}_{3,1}$? \end{question}

The corresponding question to \ref{ques: comm Mij} has been answered for the previously-known hyperbolic knot complements with hidden symmetries: recall that the figure-eight knot complement is the only knot complement in its commensurability class, by \cite{reidarith}; and the known dodecahedral knot complements are the only two in their commensurability class, by \cite{NeilExperiment}.

Turning to the subject of totally geodesic surfaces in knot complements, we recall that Adams-Schoenfeld constructed the first examples of properly embedded totally geodesic surfaces in hyperbolic knot complements, in \cite{adamsurface}. Their examples are \emph{Seifert surfaces}---non-compact and single-cusped---and they remark that ``one could ask whether there are embedded totally geodesic surfaces in knot complements other than Seifert surfaces" (\cite[\S 1]{adamsurface}, final sentence). Our examples show that there are, but the following remains open:

\begin{question}
    Does there exist a hyperbolic knot complement in $\mathbb{S}^3$ containing a properly embedded, \emph{non-compact} totally geodesic surface that is not a Seifert surface?
\end{question}

There is also a rich literature on \emph{immersed} totally geodesic surfaces in hyperbolic knot complements, including algebraic obstructions to their existence---\cite[Th.~5.3.1, Cor.~5.3.2]{book} for closed surfaces, and \cite{calegarirealplaces} for all surfaces---and counting results \cite{twistsurfaces}, \cite{LePalmer}. These are lent additional import by the recent, spectacular characterization of arithmeticity in terms of totally geodesic surfaces \cite{arithmeticsurface}. As mentioned above \Cref{cor: immersed geod}, the list of known knot complements containing immersed \emph{closed} totally geodesic surfaces is short. Comparing that result with \Cref{cor:HidSymKnots} sheds light on the following perhaps surprising questions that Genevieve Walsh has asked:

\begin{question}
    Does there exist a hyperbolic knot complement in $\mathbb{S}^3$ without hidden symmetries that contains a \emph{closed}, immersed totally geodesic surface?\end{question}
    
\begin{question} Conversely, does every hyperbolic knot complement with hidden symmetries contain a closed, immersed totally geodesic surface?
\end{question}

We close with a challenge: to overcome one aspect of the present paper which may not be entirely satisfying---the indirect way in which we know the knot complements $\widetilde{M}_{i,j}$. Specifically:\begin{enumerate}
   \item Draw a diagram of a knot $K_{i,j}$ such that $\widetilde{M}_{i,j}\cong \mathbb{S}^3-K_{i,j}$, for $i\in\{2,3\}$ and $j\in\{1,2\}$, in a way that makes visible the totally geodesic surface $S_{i,j}$ and/or the covering transformation of its projection to a knot in a lens space.
   \item Compute knot invariants of the $K_{i,j}$ that would typically depend on a diagram, for instance their Jones and/or HOMFLY-PT polynomials.
\end{enumerate}

Toward item (1), we recall that Adams proved that for a knot $K\subset\mathbb{S}^3$ the volume of $\mathbb{S}^3-K$ is at most $(c-5)v_8+4v_3$ \cite{Adams_spansfc}, improving an estimate of D.~Thurston, where $c$ is the crossing number of $K$ and $v_3\approx 1.015$ and $v_8\approx 3.664$ are respectively the volumes of an ideal tetrahedron and octahedron. This implies that $\widetilde{M}_{2,1}$ and $\widetilde{M}_{3,1}$ are the complements of at-least-$159$-crossing knots, and $\widetilde{M}_{2,2}$ and $\widetilde{M}_{3,2}$ of at-least-$270$-crossing knots, cf.~\Cref{knot vol}. (We know of no reason to believe that these bounds are near sharp.)

Should (1) prove too challenging, we note that the non-compact side $M_{2,1}^-$ of $S_{2,1}$ in $M_{2,1}$ is itself a knot in a handlebody, by \Cref{M_21 minus}, with volume ``only'' $\approx 32.293$, cf.~\Cref{volumes}. It would already be interesting to give a diagram for this knot in a standardly-embedded handlebody.

Although we were not able to produce knot diagrams for the $\widetilde{M}_{i.j}$, we were able to produce surgery diagrams of each $M_{i,j}$ using SnapPy's \texttt{exterior\_to\_link} function: two-component links in $\mathbb{S}^3$, one component of which is an unknot that can be filled to produce a lens space in which the complement of the other component is $M_{i,j}$. The links produced by SnapPy for $M_{2,1}$, $M_{2,2}$, $M_{3,1}$ and $M_{3,2}$ have 234, 1193, 213, and 449 crossings, respectively. Diagrams and PD code for these links are available in the ancillary files \path{anc/LinkFiles} of this post.


\end{document}